\documentclass{amsart}
\usepackage[T1]{fontenc}
\usepackage[utf8]{inputenc}
\usepackage[french,english]{babel}

\newcommand{\newabstract}[1]{%
  \par\bigskip
  \csname otherlanguage*\endcsname{#1}%
  \csname captions#1\endcsname
  \item[\hskip\labelsep\scshape\abstractname.]
}
 \usepackage[all]{xy}

\usepackage[numbers]{natbib}
\usepackage[colorlinks]{hyperref}

\usepackage{amsmath}
\usepackage{amssymb}
\usepackage{graphicx}

\usepackage{amssymb}

\usepackage{amsfonts}

\usepackage{soul}

\usepackage{mathpazo}
\usepackage{color}

\usepackage{paralist}

\usepackage{stmaryrd}

\usepackage{amsxtra}

\def\id{{\rm Id}}

\newcommand{\cR}{\mathcal R}

\def\B{          \mathcal B}

\def\W{          \mathcal W}

\def \R{{\mathbb R}}
\def \la{{\lambda}}
\def \Z{{\mathbb Z}}
\def \N{{\mathbb N}}

\newcommand{\prf}{{\begin{proof}}}
\newcommand{\epf}{{\end{proof}}}

\newtheorem{theo}{\sc Theorem}
\newtheorem{prop}[theo]{\sc Proposition}
\newtheorem{lemma}[theo]{\sc Lemma}
\newtheorem{cor}[theo]{\sc Corollary}

\theoremstyle{definition}

\def\bee{\begin{equation}}
\def\eee{\end{equation}}

\newtheorem{defi}[theo]{\sc Definition}

\theoremstyle{rema}

\newtheorem{prob}[theo]{\sc Problem}




\newcommand{\RR}{{\mathbb R}}
\newcommand{\TT}{{\mathbb T}}
\newcommand{\ZZ}{{\mathbb Z}}



\numberwithin{equation}{section}
\def \E{{\mathcal{E}}}

\author[C.~Butler]{Clark Butler}\address{\noindent Department of Mathematics, University of Chicago,  5734 S University Ave, Chicago, IL 60637
  \newline e-mail: \rm
  \texttt{cbutler@math.uchicago.edu}}
  
  \author[D.~Xu]{Disheng Xu}\address{\noindent Univ Paris Diderot, Sorbonne Paris Cit\'e, Institut de Math\'ematiques de Jussieu-Paris Rive Gauche, UMR 7586, CNRS, Sorbonne Universit\'es, UPMC Univ Paris 06, F-75013, Paris, France
  \newline e-mail: \rm
  \texttt{disheng.xu@imj-prg.fr}}

\begin{document}
\title{Uniformly quasiconformal partially hyperbolic systems}
\begin{abstract}
We study smooth volume-preserving perturbations of the time-1 map of the geodesic flow $\psi_{t}$ of a closed Riemannian manifold of dimension at least three with constant negative curvature. We show that such a perturbation has equal extremal Lyapunov exponents with respect to volume within both the stable and unstable bundles if and only if it embeds as the time-1 map of a smooth volume-preserving flow that is smoothly orbit equivalent to $\psi_{t}$. Our techniques apply more generally to give an essentially complete classification of smooth, volume-preserving partially hyperbolic diffeomorphisms which satisfy a uniform quasiconformality condition on their stable and unstable bundles and have either compact center foliation with trivial holonomy or are obtained as perturbations of the time-1 map of an Anosov flow. 
\end{abstract}
\maketitle
\date{\today}

\section{Introduction}\label{section: introduction}
A surprising number of rigidity problems originally posed in negatively curved geometry turn out to have solutions that are dynamical in nature. We review one such story here: Sullivan proposed, following work of Gromov\cite{Gr} and Tukia\cite{Tu}, that closed Riemannian manifolds of constant negative curvature and dimension at least 3 should be characterized up to isometry by the property that the geodesic flow acts \emph{uniformly quasiconformally} on the unstable foliation\cite{Su}.  Informally, the uniform quasiconformality property states that the flow does not distort the shape of metric balls inside of a given horosphere over a long period of time. Sullivan's conjecture was partially confirmed by the work of Kanai \cite{Kan1} who showed that among contact Anosov flows the geodesic flows of constant negative curvature manifolds are characterized up to $C^{1}$ orbit equivalence by a uniform quasiconformailty. Later the minimal entropy rigidity theorem of Besson, Courtois, and Gallot \cite{BCG1} completed the proof of Sullivan's conjecture among many other outstanding conjectures in negatively curved geometry.

From a geometric perspective this completes the story, but from a dynamical perspective this raises many new questions. Already in the work of Kanai we see that the dynamical version of this rigidity result holds for a larger class of Anosov flows than just geodesic flows. Sadovskaya initiated a program to extend these results further to smooth volume-preserving Anosov flows and diffeomorphisms \cite{S05}, which was completed in a series of works by Fang (\cite{F04}, \cite{F07}, \cite{F14}) who obtained the following remarkable result:  all smooth volume-preserving Anosov flows which are uniformly quasiconformal on the stable and unstable foliation are smoothly orbit equivalent either to the suspension of a hyperbolic toral automorphism or the geodesic flow on the unit tangent bundle of a constant negative curvature closed Riemannian manifold. Thus we see that not even the contact structure of the flow is necessary to obtain dynamical rigidity for uniformly quasiconformal Anosov flows. 

In a different direction one can ask whether the uniform quasiconformality condition can be relaxed to a condition that is more natural from the perspective of ergodic theory. This direction was pursued by the first author, who showed that for geodesic flows of $\frac{1}{4}$-pinched negatively curved manifolds, uniform quasiconformality can be derived from the significantly weaker dynamical condition of equality of all Lyapunov exponents with respect to volume on the unstable bundle \cite{Bu15}. 

Our principal goal is to show that for all of the rigidity phenomena derived from uniform quasiconformality above, \emph{not even the structure of an Anosov flow is necessary}. Let us be more precise: consider a closed Riemannian manifold $X$ of constant negative curvature with $\dim X \geq 3$. Let $T^{1}X$ be the unit tangent bundle of $X$ and let $\psi_{t}: T^{1}X \rightarrow T^{1}X$ denote the time-$t$ map of the geodesic flow. This flow preserves a smooth volume $m$ on $T^{1}X$ known as the Liouville measure. Consider \emph{any} smooth diffeomorphism $f$ which is $C^{1}$-close to the time-$1$ map $\psi_{1}$ and which preserves the volume $m$. By the work of Hirsch, Pugh, and Shub \cite{HPS}, $f$ is \emph{partially hyperbolic}, meaning that there is a $Df$-invariant splitting $T(T^{1}X) = E^{u} \oplus E^{c} \oplus E^{s}$ where $E^{u}$ is exponentially expanded by $Df$, $E^{s}$ is exponentially contracted by $Df$, and the behavior of $Df$ on the 1-dimensional center direction $E^{c}$ (which is close to the flow direction for $\psi_{t}$) is dominated by the expansion and contraction on $E^{u}$ and $E^{s}$ respectively. We give a more precise definition in Section \ref{sec:statements}. We then choose a continuous norm $\| \cdot \|$ on $E^{u}$ and define the \emph{extremal Lyapunov exponents} of $f$ on $E^{u}$ by 
\[
\lambda_{+}^{u}(f) = \inf_{n \geq 1} \frac{1}{n}\int_{M} \log \|Df^{n}|E^{u}\| \,dm,
\]
\[
\lambda_{-}^{u}(f) = \sup_{n \geq 1}\frac{1}{n} \int_{M} \log \|(Df^{n}|E^{u})^{-1}\|^{-1} \,dm.
\]
\color{black}
We define $\lambda_{+}^{s}(f)$ and $\lambda_{-}^{s}(f)$ similarly with $E^{s}$ replacing $E^{u}$.

\begin{theo}\label{theorem: hyperbolic perturbation}
There is a $C^{2}$-open neighborhood $\mathcal{U}$ of $\psi_{1}$ in the space of $C^{\infty}$ volume-preserving diffeomorphisms of $T^{1}X$ such that if $f \in \mathcal{U}$ and both of the equalities $\la_{+}^{u}(f) = \la_{-}^{u}(f)$ and $\la_{+}^{s}(f) = \la_{-}^{s}(f)$  hold then there is a $C^{\infty}$ volume-preserving flow $\varphi_{t}$ with $\varphi_{1} = f$. Furthermore $\varphi_{t}$ is smoothly orbit equivalent to $\psi_{t}$. 
\end{theo}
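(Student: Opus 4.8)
The plan is to first trade the hypothesis on extremal Lyapunov exponents for uniform quasiconformality of $Df$ on $E^{u}$ and $E^{s}$, and then to feed this into the general classification theorem of this paper for uniformly quasiconformal perturbations of time-$1$ maps of Anosov flows. The key structural input for the first step is that $\psi_{t}$ is \emph{conformal} on its stable and unstable bundles: in constant negative curvature $D\psi_{t}$ acts on each horospherical subbundle as $e^{\mp t}$ times a Riemannian isometry. Consequently, after shrinking $\mathcal{U}$, for every $f\in\mathcal{U}$ the restricted derivative cocycles $Df|E^{u}$ and $Df|E^{s}$ are fiber bunched, so their stable and unstable holonomies exist and vary continuously. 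By the subadditive ergodic theorem the equality $\lambda_{+}^{u}(f)=\lambda_{-}^{u}(f)$ forces the top and bottom Lyapunov exponents of $Df|E^{u}$ to coincide $m$-almost everywhere; the invariance principle for fiber-bunched cocycles then produces a measurable $f$-invariant conformal structure on $E^{u}$, and continuity of the holonomies together with the local product structure of $m$ upgrades it to a \emph{continuous} one. Hence $\sup_{n\ge 1}\|Df^{n}|E^{u}\|\cdot\|(Df^{n}|E^{u})^{-1}\|<\infty$, i.e.\ $f$ is uniformly quasiconformal on $E^{u}$, and the identical argument gives the same statement on $E^{s}$. (This step is an adaptation of the cocycle rigidity results of \cite{Bu15}.)

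At this stage $f$ is a $C^{1}$-small, $C^{\infty}$, $m$-preserving perturbation of the time-$1$ map of the Anosov flow $\psi_{t}$ which is uniformly quasiconformal on its stable and unstable bundles, these having dimension $\dim X-1\ge 2$. This is exactly the hypothesis of our general classification theorem, which then yields a $C^{\infty}$, $m$-preserving flow $\varphi_{t}$ with $\varphi_{1}=f$ that is smoothly orbit equivalent to $\psi_{t}$ (with the orbit equivalence moreover $C^{0}$-close to the identity, as $f$ is close to $\psi_{1}$). Since $\psi_{t}$ is by construction the geodesic flow of $X$, this is precisely the statement of Theorem~\ref{theorem: hyperbolic perturbation}.

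All of the work is concentrated in the second step, and this is where I expect the main obstacle to lie; two points are substantial. First, by normal hyperbolicity and plaque expansiveness of the orbit foliation of $\psi_{t}$ \cite{HPS}, $f$ carries an $f$-invariant center foliation $W^{c}_{f}$ by circles, leaf-conjugate to the orbit foliation of $\psi_{t}$; but a priori its leaves are only $C^{1}$ immersed. Second, and this is the genuine rigidity heart, the continuous $f$-invariant conformal structure on $E^{u}=TW^{u}$, whose leaves have dimension $\dim X-1\ge 2$, is rigid: exploiting that $f$ acts along the leaves of $W^{u}$ as a uniform expansion conformal for this structure and that the holonomies of the cocycle preserve it, a Sullivan--Tukia-type rigidity argument in dimension $\ge 2$ (cf.\ \cite{Su,Tu}, and Fang's use of it for uniformly quasiconformal Anosov flows in \cite{F04,F07,F14}) forces the structure to be $C^{\infty}$-equivalent to the round conformal structure on the model horospheres, whence the unstable holonomies and the subbundle $E^{u}$ --- and, symmetrically, the stable holonomies and $E^{s}$ --- are $C^{\infty}$. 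It is precisely here that $\dim X\ge 3$ is indispensable. One then transfers this regularity onto the center direction: combining smoothness of $W^{s}$, $W^{u}$ and of their holonomies with accessibility and volume-preservation, one shows $W^{c}_{f}$ is tangent to a $C^{\infty}$ vector field $Y$ whose time-$t$ flow $\varphi_{t}$ satisfies $\varphi_{1}=f$, and that the leaf conjugacy to the orbits of $\psi_{t}$ may be chosen $C^{\infty}$, which delivers the smooth orbit equivalence.
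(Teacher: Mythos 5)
Your overall skeleton is the right one — first show the equalities $\lambda_{+}^{u}=\lambda_{-}^{u}$ and $\lambda_{+}^{s}=\lambda_{-}^{s}$ force uniform quasiconformality, then invoke Theorem~\ref{theorem: flow suspension} together with Fang's classification \cite{F14} to obtain the smooth flow and the orbit equivalence to $\psi_{t}$. But your Step~1 has a genuine gap, and your assessment that ``all of the work is concentrated in the second step'' is exactly backwards: Theorem~\ref{theorem: flow suspension} is proved independently, and the new work in Theorem~\ref{theorem: hyperbolic perturbation} is precisely the passage from equal exponents to uniform quasiconformality. The paper explicitly flags this as being ``entirely outside of the geometric context considered in~\cite{Bu15},'' so a direct appeal to~\cite{Bu15} does not discharge the step.

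The concrete error in Step~1 is the assertion that ``the invariance principle for fiber-bunched cocycles then produces a measurable $f$-invariant conformal structure on $E^{u}$.'' The invariance principle of Avila--Santamaria--Viana gives, under $\lambda_{+}^{u}=\lambda_{-}^{u}$, a $\mathbb{P}Df$-invariant probability measure on $\mathbb{P}E^{u}$ whose disintegration $\{\mu_{v}\}$ is continuous in $v$ and invariant under the linear stable and unstable holonomies $H^{s},H^{u}$. This is a family of \emph{measures} on projective fibers, not a conformal structure, and without further input those measures could a priori be highly degenerate (e.g.\ atomic, or supported in proper subspaces), in which case no bounded invariant conformal structure exists and the dilatation $K^{u}(x,n)$ does indeed blow up. The missing ingredient that rules this out is the Brin--Karcher theorem: for the constant-curvature frame flow, finitely many $su$-loops generate $SO(\dim X -1)$ on the unit sphere in $E^{u,\psi}$. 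The paper proves a quantitative and perturbation-stable version (its Proposition~\ref{prop:quantdensity} and Lemma~\ref{lemma:denseperturb}, the latter also using the Katok--Kononenko $J$-legged $su$-path estimate \cite{KK96}), concluding that for $f$ in a small $C^{2}$-neighborhood of $\psi_{1}$ the orbit $\{T(\gamma_{i})(w)\}$ of any point $w\in\mathbb{P}E^{u}_{v}$ under finitely many $su$-loop transport maps is $\delta$-dense. This is exactly the irreducibility statement one needs; without it, your ``upgrade to a continuous conformal structure'' is not available. Note also that this is where the theorem loses $C^{1}$-openness in favor of $C^{2}$-openness, something your sketch would not see.

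Given the holonomy-invariant measures and the density of $su$-loop orbits, the paper's actual route to uniform quasiconformality is a compactness argument via quasi-projective transformations: if $Df|_{E^{u}}$ were not uniformly quasiconformal, one could produce $su$-paths returning a subsequence $A_{n_{k}}=T(\sigma)\circ T(\gamma_{n_{k}})\circ \mathbb{P}Df^{n_{k}}_{v}$ to $\mathbb{P}E^{u}_{v}$ preserving $\mu_{v}$, diverging in $PSL(n,\RR)$, and hence forcing $\mu_{v}$ to be supported in a union $V\cup L$ of two proper subspaces; but by the $\delta$-density of $su$-loop orbits the support of $\mu_{v}$ must be $\delta$-dense, and $\delta$ was chosen a priori so that no union of two proper projective subspaces is $\delta$-dense. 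That contradiction is the heart of the proof, and none of it appears in your sketch. Once uniform quasiconformality of $Df|_{E^{u}}$ and $Df|_{E^{s}}$ is established, your Step~2 is fine and coincides with the paper's: the no-short-periodic-orbits hypothesis of Theorem~\ref{theorem: flow suspension} can always be arranged after passing to a finite cover for the constant-curvature geodesic flow, one gets a smooth volume-preserving uniformly quasiconformal Anosov flow $\varphi_{t}$ with $\varphi_{1}=f$, and Fang's classification \cite{F14} identifies it up to smooth orbit equivalence with $\psi_{t}$.
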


This theorem improves on the techniques used in the previous rigidity theorems in several fundamental ways. We are able to deduce uniform quasiconformality of the action of $Df$ on $E^{u}$ and $E^{s}$ from equality of the extremal Lyapunov exponents entirely outside of the geometric context considered in \cite{Bu15} by using new methods. We then use this uniform quasiconformality to completely reconstruct the smooth flow $\varphi_{t}$ in which $f$ embeds as the time-$1$ map. We emphasize that for a typical perturbation $f$ of $\psi_{1}$ the foliation $\W^{c}$ tangent to $E^{c}$ (which is our candidate for the flowlines of $\varphi_{t}$) is only a continuous foliation of $T^{1}X$ with no transverse smoothness properties. This is one of the many reasons that strong rigidity results in the realm of partially hyperbolic diffeomorphisms are quite rare. Our inspiration was an impressive rigidity theorem of Avila, Viana and Wilkinson which overcame this obstacle to show that if we take $X$ to be a negatively curved surface instead and $f$ a $C^{1}$-small enough $C^{\infty}$ volume-preserving perturbation of the time-1 map $\psi_{1}$ such that the center foliation of $f$ is absolutely continuous, then $f$ is also the time-1 map of a smooth volume-preserving flow \cite{AVW}. Our result can be viewed in an appropriate sense as the higher dimensional analogue of this theorem. 

We now explain the organization of the paper. The techniques used in the proof of Theorem \ref{theorem: hyperbolic perturbation} have much more general applications which can also be applied to the study of $C^{\infty}$ volume-preserving partially hyperbolic diffeomorphisms  which satisfy a uniform quasiconformality condition on their stable and unstable bundles and either have uniformly compact center foliation with trivial holonomy or are obtained as a perturbation of the time-1 map of an Anosov flow. These results are stated in Theorems \ref{theorem: compact center} and \ref{theorem: flow suspension} and Corollary \ref{theorem: isometric center} of Section \ref{sec:statements} after we introduce some necessary terminology. In Section \ref{section: center holonomy} we show that under a Lyapunov stability type result on the action of a partially hyperbolic diffeomorphism $f$ on its center foliation, uniform quasiconformality implies that the holonomy maps of the center stable and center unstable foliations of $f$ are quasiconformal.  We use this to show that the center foliation of $f$ is absolutely continuous. In Section \ref{section: higher} we prove the linearity of center holonomy for $f$ between local unstable leaves in a suitable chart under some stronger assumptions on $f$; moreover, for such $f$   the center, center (un)stable foliations are all smooth, see section \ref{section: higherend}. In Section \ref{subsec:proofs} we finish the proofs of Theorems \ref{theorem: compact center} and \ref{theorem: flow suspension} and Corollary \ref{theorem: isometric center}. In Section \ref{section:theorem4} we finish the proof of Theorem \ref{theorem: hyperbolic perturbation} by deducing uniform quasiconformality from the condition of equality of extremal Lyapunov exponents. The arguments in Section \ref{section:theorem4} do not rely on the results of Sections \ref{section: center holonomy}, \ref{section: higher}, \ref{section: higherend} and \ref{subsec:proofs} and may be read independently of the rest of the paper. \color{black}

\textbf{Acknowledgments}: We thank Amie Wilkinson for numerous useful discussions regarding the content of this paper. These discussions resulted in significant simplification of the proof of Theorem \ref{theorem: hyperbolic perturbation}  and some mistakes in the first version of this paper.  We thank Danijela Damjanovic for her carefully reading the paper and pointing out that an assumption in the first version of the paper is not necessary. We also thank Sylvain Crovisier for bringing an error in the concluding arguments of Theorem \ref{theorem: flow suspension} to our attention. We would like to thank the anonymous reviewers for carefully reading the first version of our paper and many helpful comments and suggestions. The second author would like to thank his directors of thesis Professor Artur Avila and Julie D\'eserti for their supervision and encouragement. \color{black} This work was partially completed while both authors were visiting the Instituto Nacional de Matem\'atica Pura e Aplicada, the second author being supported by  \emph{r\'eseau franco-br\'esilien en math\'ematiques}. The first author was supported by the National Science Foundation Graduate Research Fellowship under Grant \# DGE-1144082.

\section{Statement of Results}\label{sec:statements}
A $C^{1}$ diffeomorphism $f: M \rightarrow M$ of a closed Riemannian manifold $M$ is \emph{partially hyperbolic} if there is a $Df$-invariant splitting $TM = E^{s} \oplus E^{c} \oplus E^{u}$ of the tangent bundle of $M$ such that for some $k \geq 1$, any $x \in M$, and any choice of unit vectors $v^{s} \in E^{s}_{x}$, $v^{c} \in E^{c}_{x}$, $v^{u} \in E^{u}_{x}$,
\[
\|Df^{k}(v^{s})\| < 1< \|Df^{k}(v^{u})\|,
\]
\[
\|Df^{k}(v^{s})\| < \|Df^{k}(v^{c})\| < \|Df^{k}(v^{u})\|.
\]
By modifying the Riemannian metric on $M$ if necessary we can always assume $k = 1$ in the above definition. We will always require that the bundles $E^{s}$ and $E^{u}$ are nontrivial. We will also always require that $M$ is connected. We define for $x \in M$, $n \in \mathbb{Z}$, 
\[
K^{u}(x,n) = \frac{\sup \{\|Df^{n}(v^{u})\|: v^{u} \in E^{u}(x), \; \|v^{u}\| = 1\}}{\inf \{\|Df^{n}(v^{u})\|: v^{s} \in E^{u}(x), \; \|v^{u}\| = 1\}},
\]
and define $K^{s}(x,n)$ similarly with $E^{u}$ replaced by $E^{s}$. The quantities $K^{u}$ and $K^{s}$ measure the failure of the iterates of $Df$ to be conformal on the bundles $E^{u}$ and $E^{s}$ respectively. We say that $f$ is \emph{uniformly $u$-quasiconformal} if $\dim E^{u} \geq 2$ and $K^{u}$ is uniformly bounded in $x$ and $n$. Similarly we say that $f$ is \emph{uniformly $s$-quasiconformal} if $\dim E^{s} \geq 2$ and $K^{s}$ is uniformly bounded in $x$ and $n$. If $f$ is both uniformly $u$-quasiconformal and $s$-quasiconformal then we simply say that $f$ is \emph{uniformly quasiconformal}. 

Our definition of uniform quasiconformality for partially hyperbolic systems extends previous definitions of uniform quasiconformality which were considered for Anosov diffeomorphisms and Anosov flows. If the center bundle $E^{c}$ is trivial or if $f$ embeds as the time-1 map of an Anosov flow (so that $E^{c}$ is tangent to the flow direction) then these definitions reduce to the standard notions of uniform quasiconformality for Anosov systems defined by Sadovskaya \cite{S05}. If $\dim E^{u} = 1$ then $K^{u} \equiv 1$ for any choice partially hyperbolic $f$, so the boundedness of $K^{u}$ does not give new information about $f$. This is the reason we require $\dim E^{u} \geq 2$ in the definition of uniform $u$-quasiconformality; the uniform quasiconformality conditions are only interesting when the bundles in question have dimension at least 2. 

We define a $C^{\infty}$ diffeomorphism $f$ to be \emph{volume-preserving} if there is an $f$-invariant probability measure $m$ on $M$ which is smoothly equivalent to the Riemannian volume. It is not hard to show using Kingman's subadditive ergodic theorem \cite{K68} that when $f$ is ergodic with respect to $m$ we have 
\[
\lim_{n \rightarrow \infty} \frac{\log K^{u}(x,n)}{n}  = \la_{+}^{u}(f) - \la_{-}^{u}(f) \; \; \text{for $m$-a.e. $x \in M$}.
\]
We refer to \cite{KS13} for more details on this equality. Thus asymptotic subexponential growth of $K^{u}$ is equivalent to the equality $\la_{+}^{u}(f) = \la_{-}^{u}(f)$. Theorem \ref{theorem: hyperbolic perturbation} asks in part for the deduction of a uniform bound $K^{u}(x,n) \leq C$ from this asymptotic subexponential growth condition. 

Fang proved that all volume-preserving $C^{\infty}$ uniformly quasiconformal diffeomorphisms are $C^{\infty}$ conjugate to a hyperbolic toral automorphism \cite{F04}. This generalized the classification result of Sadovskaya which held under the additional assumption that $f$ was symplectic \cite{S05}. Theorem \ref{theorem: compact center} and Corollary \ref{theorem: isometric center} below extend this classification to cover a certain $C^{1}$-open set of $C^{\infty}$ volume-preserving partially hyperbolic diffeomorphisms. Before stating these theorems we need to introduce a few more basic notions from partially hyperbolic dynamics. 

We will assume for the rest of the paper that $f$ is $C^{\infty}$. Then the bundles $E^{s}$ and $E^{u}$ are tangent to foliations $\mathcal{W}^{s}$ and $\mathcal{W}^{u}$ known respectively as the stable and unstable foliations. These foliations have $C^{\infty}$ leaves but the distributions $E^{s}$ and $E^{u}$ which they are tangent to are themselves typically only H\"older continuous.

We say that $E^c(f)$ is integrable (or that $f$ has a center foliation) if there exists an $f-$invariant center foliation $\mathcal{W}^c = \{\mathcal{W}^c(x)\}_{x\in M}$ with $C^1-$ leaves everywhere tangent to the center bundle $E^c$. We say that $f$ is \emph{dynamically coherent} if there are also $f$-invariant foliations $\mathcal{W}^{cs}$ and $\mathcal{W}^{cu}$ with $C^{1}$ leaves which are tangent to $E^{s} \oplus E^{c}$ and $E^{c} \oplus E^{u}$ respectively. By \cite{BW08}, if $f$ is dynamically coherent then $f$ has a center foliation. Furthermorethat the foliations $\mathcal{W}^c$ and $\mathcal{W}^u$ subfoliate $\mathcal{W}^{cu}$ and the foliations $\mathcal{W}^c$ and $\mathcal{W}^s$ subfoliate $\mathcal{W}^{cs}$. The converse is not true; there exist examples of partially hyperbolic diffeomorphisms with an integrable center bundle that are not dynamically coherent \cite{HRHU10}. 

Suppose $E^c(f)$ is integrable and every leaf of the center foliation is compact.  The center foliation may not be a fibration; there may be leaves with non-trivial \textit{holonomy group}, the existence of which implies that $M/\mathcal{W}^c(f)$ is not a topological manifold (see \cite{BW05}). The holonomy group of a foliation is defined in Section \ref{section: center holonomy} (see also \cite{BB}, \cite{Eps76}). We call a foliation \textit{uniformly compact} if all its leaves are compact and have finite holonomy groups\footnote{ It is conjectured that every compact center foliation is uniformly compact, cf. \cite{BoThesis} \cite{Car11}, \cite{Gog11}.}. We say that a foliation has \emph{trivial holonomy} if the holonomy group of each leaf is trivial. Note that a foliation with trivial holonomy and compact leaves is uniformly compact. The existence of leaves with finite but nontrivial holonomy groups greatly complicates many of the constructions in our proofs; hence we will often assume that the center foliation has trivial holonomy which already covers many cases of interest. We also use that all partially hyperbolic diffeomorphisms with uniformly compact center foliation are dynamically coherent\cite{BB}. We note that Sullivan \cite{Sul76} (see also \cite{EV78}) has constructed an example of a circle foliation on a compact manifold with a leaf that has an infinite holonomy group. By \cite{Eps76} this implies the quotient space is even not a Hausdorff space. Our assumption of uniform compactness allows us to rule out these pathologies for the center foliation.

For $r \geq 1$ we write that a map is $C^{r+\alpha}$ if it is $C^{r}$ and the $r$th-order derivatives are uniformly H\"older continuous of exponent $\alpha > 0$. For a foliation $\mathcal{W}$ of an $n$-dimensional smooth manifold $M$ by $k$-dimensional submanifolds we define $\mathcal{W}$ to be a $C^{r+\alpha}$ foliation if for each $x \in M$ there is an open neighborhood $V_{x}$ of $x$ and a $C^{r+\alpha}$ diffeomorphism $\Psi_{x}: V_{x} \rightarrow D^{k} \times D^{n-k} \subset \RR^{n}$ (where $D^{j}$ denotes the ball of radius 1 centered at 0 in $\RR^{j}$) such that $\Psi_{x}$ maps $\W$ to the standard smooth foliation of $D^{k} \times D^{n-k}$ by $k$-disks $D^{k} \times \{y\}$, $y \in D^{n-k}$. This is the notion of regularity of a foliation considered by Pugh, Shub, and Wilkinson in their analysis of regularity properties of invariant foliations for partially hyperbolic systems \cite{PSW}. 

We say that $f$ is \emph{$r$-bunched} if  we can choose continuous positive functions $\nu, \hat{\nu}, \gamma , \hat{\gamma}$ with $$\nu,\hat{\nu}<1, \nu<\gamma<\hat{\gamma}^{-1}<\hat{\nu}^{-1}$$such that, for any unit vector $v\in T_xM$,
\begin{eqnarray*}
&\|Tf (v) \| &<  \nu(x), \text{ if } v\in E^s_x,\\
\gamma(x)<&\|Tf(v)\|& < \hat{\gamma}^{-1}(x), \text{ if }v\in E^c_x,\\
\hat{\nu}(x)^{-1}<&\|Tf(v)\|& ,\text{ if } v\in E^u_x.
\end{eqnarray*}
And $$\nu<\gamma^r, \hat{\nu}<\hat{\gamma}^r, \nu<\gamma\hat{\gamma}^r,\hat{\nu}<\hat{\gamma}\gamma^r$$
\color{black}

The case $r = 1$ corresponds to the \emph{center-bunching} condition considered by Burns and Wilkinson in their proof of the ergodicity of  accessible, volume-preserving, center-bunched $C^{2}$ partially hyperbolic diffeomorphisms \cite{BW}. When $f$ is smooth and dynamically coherent, the $r$-bunching inequalities imply that the foliations $\mathcal{W}^{cs}$ and $\mathcal{W}^{cu}$ have uniformly $C^{r+\alpha}$ leaves for some $\alpha > 0$ \cite{PSW}. We say that $f$ is \emph{$\infty$-bunched} if it is $r$-bunched for every $r \geq 1$. If $f$ is $\infty$-bunched  and dynamically coherent then the leaves of $\mathcal{W}^{cs}$ and $\mathcal{W}^{cu}$ are $C^{\infty}$. A natural situation in which the $\infty$-bunching condition holds is when there is a continuous Riemannian metric on $E^{c}$ with respect to which $Df|_{E^{c}}$ is an isometry. More generally if $f$ is center bunched, accessible, and volume-preserving and all of the Lyapunov exponents of $f$ with respect to volume on $E^{c}$ are zero,  then by the results of Kalinin and Sadovskaya $f$ is $\infty$-bunched \cite{KS13}.


\begin{theo}\label{theorem: compact center}
Let $f$ be a $C^{\infty}$ volume-preserving partially hyperbolic diffeomorphism. Suppose that $f$ is uniformly quasiconformal,  $r$-bunched for some $r \geq 1$. In addition we suppose $f$ has compact center foliation and each center leaf has trivial holonomy group. Then 
\begin{enumerate}
\item There is an $\alpha > 0$ such that $\mathcal{W}^{cs}$, $\mathcal{W}^{c}$ and $\mathcal{W}^{cu}$ are $C^{r+\alpha}$ foliations of $M$. 

\item There is a closed $C^{r}$ Riemannian manifold $N$, a $C^{r+\alpha}$ submersion $\pi: M \rightarrow N$ with fibers given by the $\mathcal{W}^{c}$ foliation, and a $C^{r+\alpha}$ volume-preserving uniformly quasiconformal Anosov diffeomorphism $g: N \rightarrow N$ such that $g \circ \pi = \pi \circ f$. 

\item If $f$ is $\infty$-bunched then the statements of (1) and (2) are true with $r = \infty$. Furthermore $g$ may be taken to be a hyperbolic automorphism of a torus $N$. 
\end{enumerate} 
\end{theo}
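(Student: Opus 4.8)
The plan is to assemble Theorem~\ref{theorem: compact center} from the structure theory of uniformly compact center foliations together with the regularity results of Sections~\ref{section: center holonomy}, \ref{section: higher} and \ref{section: higherend}. First I would reduce to the dynamically coherent case and build the topological factor. A compact center foliation all of whose leaves have trivial holonomy is uniformly compact, so by \cite{BB} $f$ is dynamically coherent; thus $\mathcal{W}^{cs}$, $\mathcal{W}^{cu}$ and $\mathcal{W}^{c}$ exist, with $\mathcal{W}^{c}$ subfoliating both $\mathcal{W}^{cs}$ and $\mathcal{W}^{cu}$. Trivial holonomy plus compactness of the leaves gives, via Reeb--Epstein stability, that the leaf space $N:=M/\mathcal{W}^{c}$ is a closed topological manifold and that $\pi:M\to N$ is a continuous fibre bundle whose fibres are the center leaves. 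Since $f$ permutes center leaves it descends to a homeomorphism $g:N\to N$ with $g\circ\pi=\pi\circ f$; collapsing the center direction turns the partial hyperbolicity of $f$ into hyperbolicity of $g$, so $g$ is topologically Anosov, and once we have enough smoothness genuinely Anosov with $TN=D\pi(E^{s})\oplus D\pi(E^{u})$.

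Next I would establish absolute continuity of the center foliations. Because every center leaf is compact, the iterates $f^{n}$ restrict to self-maps of single compact leaves and the derivative cocycle along $E^{c}$ is Lyapunov stable in the sense required in Section~\ref{section: center holonomy}; feeding this together with the uniform $u$- and $s$-quasiconformality of $f$ into the results of that section shows that the holonomy maps of $\mathcal{W}^{cs}$ and $\mathcal{W}^{cu}$ are quasiconformal, hence that $\mathcal{W}^{c}$, $\mathcal{W}^{cs}$ and $\mathcal{W}^{cu}$ are absolutely continuous. The real work is then to upgrade this soft regularity to genuine smoothness, and this is the content of Sections~\ref{section: higher} and \ref{section: higherend}. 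Uniform $u$-quasiconformality makes $Df|E^{u}$ a uniformly quasiconformal linear cocycle, so by Kalinin--Sadovskaya \cite{KS13} there is a H\"older $Df$-invariant conformal structure on $E^{u}$, and symmetrically on $E^{s}$; the $r$-bunching hypothesis together with \cite{PSW} gives that $\mathcal{W}^{cs}$, $\mathcal{W}^{cu}$ have uniformly $C^{r+\alpha}$ leaves. Using the invariant conformal structures one shows that, in suitable $C^{r+\alpha}$ charts, the center holonomy between nearby local unstable leaves, and likewise between local stable leaves, is affine; the crux here is a Liouville-type rigidity for the conformal maps that arise in dimension $\geq 3$, together with a separate argument when $\dim E^{u}$ or $\dim E^{s}$ equals $2$. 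Combining this affine rigidity with a Journ\'e-type bootstrap from leafwise plus transverse regularity to joint regularity yields that $\mathcal{W}^{cs}$, $\mathcal{W}^{c}$ and $\mathcal{W}^{cu}$ are $C^{r+\alpha}$ foliations of $M$ for some $\alpha>0$, which is (1); running the same scheme at all orders under the $\infty$-bunching hypothesis gives $C^{\infty}$.

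Finally I would construct the smooth factor system. Once $\mathcal{W}^{c}$ is a $C^{r+\alpha}$ foliation with compact leaves and trivial holonomy, its leaf space $N$ acquires a $C^{r+\alpha}$ (respectively $C^{\infty}$) manifold structure for which $\pi:M\to N$ is a $C^{r+\alpha}$ submersion with fibres the $\mathcal{W}^{c}$-leaves; pushing forward $m$ by $\pi$ gives a smooth volume on $N$. The descended map $g$ is then $C^{r+\alpha}$, volume-preserving and Anosov, and it is uniformly quasiconformal since $E^{u}(g)$ and $E^{s}(g)$ pull back under $\pi$ to $E^{u}(f)$ and $E^{s}(f)$ with $Dg$ conjugate to the corresponding restriction of $Df$; this proves (2). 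If $f$ is $\infty$-bunched then $g$ is a $C^{\infty}$ volume-preserving uniformly quasiconformal Anosov diffeomorphism, so Fang's theorem \cite{F04} provides a $C^{\infty}$ conjugacy of $g$ to a hyperbolic toral automorphism; replacing $\pi$ by its composition with this conjugacy makes $N$ a torus and $g$ the automorphism, which is (3).

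I expect the main obstacle to be the middle step: converting quasiconformality plus absolute continuity of the center foliations into honest $C^{r+\alpha}$ (or $C^{\infty}$) smoothness. This requires marrying the $Df$-invariant conformal structures on $E^{u},E^{s}$ with the $r$-bunching regularity of $\mathcal{W}^{cs},\mathcal{W}^{cu}$, exploiting the rigidity of conformal and quasiconformal maps (Liouville in dimension $\geq 3$, with the genuinely different planar case treated by hand), and carrying out the Journ\'e bootstrap carefully enough that the gain in regularity propagates to the transverse structure of all three foliations simultaneously; everything else is comparatively routine.
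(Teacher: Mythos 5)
Your proposal mirrors the paper's proof: dynamical coherence via \cite{BB}, quasiconformality and absolute continuity of the center holonomy from Section~\ref{section: center holonomy}, linearity and then analyticity of the center/$cs$-holonomy via invariant conformal structures and Liouville--Gehring rigidity from Sections~\ref{section: higher}--\ref{section: higherend}, the Journ\'e bootstrap (Lemma~\ref{lemma:holo reg implies foliation reg}) for $C^{r+\alpha}$ foliation regularity, descent of the conformal structures to the leaf space $N$ to make $g$ a $C^{r+\alpha}$ uniformly quasiconformal Anosov diffeomorphism, and Fang \cite{F04} in the $\infty$-bunched case. One small factual slip worth correcting: $f$ permutes compact center leaves rather than restricting to self-maps of individual leaves, but what the argument actually requires --- the uniform diameter bound on center leaves guaranteed by trivial holonomy, i.e.\ the ``center non-expansiveness'' of Definition~\ref{definition of center non expansive} established in Proposition~\ref{prop: nonexpansive} --- does indeed hold.
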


When $\dim E^c =1$ or $\dim E^u =\dim E^s=2$, \color{black}
we can derive sharper results as a corollary. We define a smooth diffeomorphism $f: M \rightarrow M$ to be an \emph{isometric extension} of another smooth diffeomorphism $g: N \rightarrow N$ if there is a smooth submersion $\pi: M \rightarrow N$ satisfying $g \circ \pi = \pi \circ f$ and such that this submersion has compact fibers and there is a smoothly varying family of Riemannian metrics $\{d_{x}\}_{x \in N}$ on the fibers $\{\pi^{-1}(x)\}_{x \in N}$ such that the induced maps $f_{x}: \pi^{-1}(x) \rightarrow \pi^{-1}(g(x))$ are isometries with respect to these metrics.

\begin{cor}\label{theorem: isometric center}
Let $f$ be a $C^{\infty}$ volume-preserving partially hyperbolic diffeomorphism. Suppose that $f$ is uniformly quasiconformal.
\begin{enumerate}
\item If $\dim E^{c} = 1$, $f$ has compact center foliation and every central leaf has trivial holonomy group, then $f$ is an isometric extension of a hyperbolic toral automorphism.

\item If $\dim E^u =\dim E^s =2$ and $f$ has uniformly compact center foliation then the statements of Theorem \ref{theorem: compact center} hold for a finite cover of $f$. 
\end{enumerate}  
\end{cor}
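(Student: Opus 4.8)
The plan is to deduce both statements from Theorem~\ref{theorem: compact center} and the results of Sections~\ref{section: center holonomy}--\ref{subsec:proofs}. For part (1) I would first manufacture a continuous $f$-invariant Riemannian metric on $E^{c}$ — which makes $f$ $\infty$-bunched — then feed this into Theorem~\ref{theorem: compact center}(3) and recognize the resulting object as an isometric extension; for part (2) I would kill the holonomy by passing to a finite cover and then invoke Theorem~\ref{theorem: compact center} on the cover.

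For part (1), start from the elementary remark that $\dim E^{c}=1$ forces $f$ to be $1$-bunched: writing $m(x)=\|Df|_{E^{c}_{x}}\|$ and choosing $\gamma(x)=e^{-\e}m(x)$, $\hat\gamma(x)^{-1}=e^{\e}m(x)$, the center-bunching inequalities $\nu<\gamma\hat\gamma$, $\hat\nu<\hat\gamma\gamma$ become $\nu,\hat\nu<e^{-2\e}$, which hold for small $\e$ since $\nu$ and $\hat\nu$ may be chosen with suprema below $1$; the remaining constraints $\nu<\gamma$, $\hat\nu<\hat\gamma$ reduce to the uniform (by compactness of $M$) strictness of the partial hyperbolicity inequalities. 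Theorem~\ref{theorem: compact center}(1)--(2) then applies with $r=1$, giving $C^{1+\alpha}$ foliations $\W^{cs},\W^{c},\W^{cu}$, a $C^{1+\alpha}$ submersion $\pi:M\to N$ whose fibers are the (circle) leaves of $\W^{c}$, and a $C^{1+\alpha}$ volume-preserving uniformly quasiconformal Anosov diffeomorphism $g:N\to N$ with $g\circ\pi=\pi\circ f$. Disintegrating $m$ along $\pi$, the conditional measure on $C_{x}=\pi^{-1}(x)$ is $\rho|_{C_{x}}\,d\sigma_{x}$, with $\sigma_{x}$ arclength on $C_{x}$ and $\rho:M\to(0,\infty)$ a positive $C^{\alpha}$ function (coarea formula); since $f(C_{x})=C_{gx}$ and $f_{*}m=m$, uniqueness of disintegration gives $f_{*}m_{x}=m_{gx}$, which unwinds to the cohomological identity $\log\|Df|_{E^{c}}\|=\log\rho-\log\rho\circ f$. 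Hence $\|v\|_{c}:=\rho(y)\|v\|$ for $v\in E^{c}_{y}$ is a continuous $f$-invariant metric on $E^{c}$, so $f$ is $\infty$-bunched by the remark preceding Theorem~\ref{theorem: compact center}, and Theorem~\ref{theorem: compact center}(3) upgrades all the center foliations to $C^{\infty}$, with $N$ a torus and $g$ a hyperbolic automorphism. Re-running the disintegration with $\pi$ and $E^{c}$ now smooth makes $\rho$, hence $\|\cdot\|_{c}$, smooth; its restrictions to the circle fibers form a smooth family of metrics $\{d_{x}\}$ making each $f_{x}:\pi^{-1}(x)\to\pi^{-1}(gx)$ an isometry, so $f$ is an isometric extension of the hyperbolic toral automorphism $g$.

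For part (2), the only thing blocking a direct application of Theorem~\ref{theorem: compact center} is the (finite) holonomy of $\W^{c}$. Since the center foliation is uniformly compact, there is a finite covering $p:\hat M\to M$ on which the lifted center foliation has trivial holonomy (cf.~\cite{BoThesis},~\cite{BB}); taking the corresponding finite-index subgroup of $\pi_{1}(M)$ and intersecting it with its finitely many $f_{*}$-images makes the cover $f$-equivariant, so $f$, or a finite iterate of it, lifts to a diffeomorphism $\hat f:\hat M\to\hat M$. Partial hyperbolicity, uniform quasiconformality (the distortions $K^{u},K^{s}$ are local), invariance of the lifted volume, and compactness with trivial holonomy of the lifted center foliation all pass to $\hat f$, and $\dim\hat E^{u}=\dim\hat E^{s}=2$; it then remains to verify the $r$-bunching hypothesis of Theorem~\ref{theorem: compact center}. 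This is where the dimension restriction is used: uniform quasiconformality endows $\hat E^{u}$ and $\hat E^{s}$ with invariant continuous conformal structures, and in dimension two these can be straightened, providing the quasiconformal-to-smooth upgrade of the center holonomies that in Theorem~\ref{theorem: compact center} is extracted from $r$-bunching via Sections~\ref{section: higher}--\ref{section: higherend}; granting this, Theorem~\ref{theorem: compact center} applies to $\hat f$ and yields its conclusions for the finite cover.

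The main obstacle lies entirely in part (2): one must check that the two-dimensional quasiconformal-to-conformal upgrade genuinely substitutes for the $r$-bunching assumption throughout the arguments of Sections~\ref{section: higher}--\ref{subsec:proofs}, and that the finite cover can be chosen compatibly with $f$. For part (1) there is no real obstacle beyond Theorem~\ref{theorem: compact center}: the one new idea is that the disintegration of $m$ along the center fibration is itself the sought-after invariant metric, an argument special to $\dim E^{c}=1$ (for a higher-dimensional center it controls only $\det Df|_{E^{c}}$, i.e.\ the sum of the center Lyapunov exponents).
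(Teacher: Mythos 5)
Your part~(1) is essentially the paper's argument. You independently reprove Lemma~\ref{prop: 1dcenter}: one-dimensional center gives automatic $1$-bunching (your explicit choice of $\gamma,\hat\gamma$ is fine and uses the uniform strictness of the partial hyperbolicity inequalities), Theorem~\ref{theorem: compact center}(1)--(2) with $r=1$ gives $C^{1+\alpha}$ regularity, disintegration of $m$ produces a positive continuous density $\zeta$ satisfying the cohomological identity $\zeta(x)/\zeta(f(x))=\|Df|_{E^c_x}\|$, hence an invariant norm on $E^c$ and so $\infty$-bunching, and then Theorem~\ref{theorem: compact center}(3) plus the bootstrapped smoothness of $\zeta$ give the isometric-extension conclusion. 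This is the route taken in the paper.

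Part~(2) has a genuine gap, and it lies precisely where you flagged your uncertainty, but the resolution is the opposite of what you guessed. You claim the dimension restriction $\dim E^u=\dim E^s=2$ enters to substitute for the $r$-bunching hypothesis, via a ``quasiconformal-to-conformal upgrade in dimension two.'' This misattributes two things. First, the quasiconformal-to-smooth upgrade of the center holonomies (Proposition~\ref{prop: confstruct}, Lemma~\ref{lemma: center conformal}, Corollary~\ref{cor: analytic}) already works in any dimension $\geq 2$ and has nothing to do with the dimension being exactly two; as the paper notes explicitly, ``the $r$-bunching inequalities $\dots$ are only required to obtain that the leaves of the foliations $\W^{cs}$ and $\W^{cu}$ are $C^{r+\alpha}$; they are never used directly in the proof,'' i.e.\ bunching controls the transverse regularity of the leaves themselves (via~\cite{PSW} in Lemma~\ref{reg of cent folia}), not the regularity of the holonomy between transversals. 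Second, bunching is a pointwise rate condition, so it is inherited by any lift to a finite cover; there is literally nothing to verify there. Where $\dim E^u=\dim E^s=2$ is actually used is for the existence of the $f$-equivariant finite cover with trivial center holonomy: this is exactly the content of Theorem~D of Bohnet's thesis~\cite{BoThesis}, which is a nontrivial result specific to that codimension and not a consequence of uniform compactness alone, nor of a generic finite-index subgroup manipulation as you sketch. The paper's proof of part~(2) is therefore a one-line appeal to Bohnet's Theorem~D to pass to the trivial-holonomy cover, followed by a direct application of Theorem~\ref{theorem: compact center} to the lift; your proposal replaces this with a speculative bunching workaround that is both unnecessary and unsupported.
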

\color{black}
We make some comments on Theorem \ref{theorem: compact center} and Corollary \ref{theorem: isometric center} before proceeding. If $M = N \times S$ for pair of compact smooth manifolds $N$ and $S$, $g_{0}: N \rightarrow N$ is an Anosov diffeomorphism and $f_{0}: M \rightarrow M$ is a smooth extension of $g_{0}$ such that $f_{0}$ is an $r$-bunched volume-preserving partially hyperbolic diffeomorphism ($r \geq 1$) with center leaves of the form $\W^{c}(x,s) = \{x\} \times S$ for $(x,s) \in N \times S$, then the center leaves of $f_{0}$ are normally hyperbolic, compact and have trivial holonomy\color{black}. Thus there is a $C^{1}$ open neighborhood $\mathcal{U}$ of $f_{0}$ in the space of $C^{\infty}$ volume-preserving diffeomorphisms of $M$ such that if $f \in \mathcal{U}$ then $f$ has a compact center foliation with trivial holonomy\color{black}. This follows from the theory of normally hyperbolic invariant manifolds developed by Hirsch, Pugh, and Shub \cite{HPS}. Hence, with the exception of the uniform quasiconformality hypothesis, the hypotheses of Theorem \ref{theorem: compact center} and Corollary \ref{theorem: isometric center} are not particularly restrictive among partially hyperbolic diffeomorphisms. Finally we emphasize that in part (2) of Corollary \ref{theorem: isometric center} we do not have to assume that the center leaves have trivial holonomy groups. 

The limiting factor for the smoothness of the foliations $\W^{cs}$ and $\W^{cu}$ in Theorem \ref{theorem: compact center} turns out to be the regularity of the leaves of the foliations themselves. Corollary \ref{cor: analytic} below shows that the holonomy maps of $\W^{cs}$ and $\W^{cu}$ between local unstable/local stable leaves respectively are $C^{\infty}$. In fact they are analytic maps in an appropriate choice of coordinates. The $r$-bunching inequalities in the hypotheses of Theorem \ref{theorem: compact center} are only required to obtain that the leaves of the foliations  $\W^{cs}$ and $\W^{cu}$ are $C^{r+\alpha}$; they are never used directly in the proof. The regularity of the uniformly quasiconformal Anosov diffeomorphism $g$ obtained from Theorem \ref{theorem: compact center} is limited by the regularity of the center foliation, which in turn is limited by the $r$-bunching hypothesis. The most we can obtain with our methods is that $g$ is $C^{r+\alpha}$. This is the reason we can only derive the stronger results of part (3) of Theorem \ref{theorem: compact center} under the $\infty$-bunching hypothesis on $f$. 

Finally we observe that the conclusions of Theorem \ref{theorem: compact center} imply in particular that the center foliation of $f$ is absolutely continuous with respect to volume. We refer to Definition \ref{defn: ac} below for our definition of absolute continuity of a foliation. Pugh and Wilkinson showed that an isometric extension of a hyperbolic automorphism of the two-dimensional torus $\mathbb{T}^{2}$ can be perturbed to make the center Lyapunov exponent nonzero and thus cause the center foliation to fail to be absolutely continuous \cite{PW00}. Corollary \ref{theorem: isometric center} shows that it is not possible to make such a perturbation of an isometric extension of a uniformly quasiconformal hyperbolic automorphism of a higher dimensional torus which maintains uniform quasiconformality on both the stable and unstable bundles.  

For our next theorem we consider partially hyperbolic diffeomorphisms which are obtained as perturbations of the time-1 maps of Anosov flows. Let $\psi_{t}: M \rightarrow M$ be a $C^{\infty}$ volume-preserving Anosov flow with stable and unstable bundles of dimension at least 2. 
\begin{theo}\label{theorem: flow suspension}
Suppose that there is a finite cover $\hat{M}$ of $M$ such that the lift of $\psi_{t}$ to an Anosov flow $\hat{\psi}_{t}: \hat{M} \rightarrow \hat{M}$ has no periodic orbits of period $\leq 2$.

Then there is a $C^{1}$-open neighborhood $\mathcal{U}$ of $\psi_{1}$ in the space of volume-preserving $C^{\infty}$ diffeomorphisms of $M$ such that if $f \in \mathcal{U}$ and $f$ is uniformly quasiconformal then the invariant foliations $\mathcal{W}^{cs}$, $\mathcal{W}^{c}$, and $\mathcal{W}^{cu}$ of $f$ are $C^{\infty}$ and there is a $C^{\infty}$ volume-preserving uniformly quasiconformal Anosov flow $\varphi_{t}: M \rightarrow M$ with $\varphi_{1} = f$. 
\end{theo}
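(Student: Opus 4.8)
The plan is to realize Theorem \ref{theorem: flow suspension} as an instance of the machinery of Sections \ref{section: center holonomy}--\ref{section: higherend}, with the orbit foliation of $\psi_{t}$ playing the role of the center foliation, and then to reconstruct the generating vector field of $\varphi_{t}$ from the rigidity so obtained.

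\textbf{Step 1: partial hyperbolicity, dynamical coherence, and the leaf structure.} Since $\psi_{1}$ is normally hyperbolic to the one-dimensional $C^{\infty}$ orbit foliation of $\psi_{t}$, the theory of Hirsch--Pugh--Shub \cite{HPS} gives, after shrinking $\cU$, that every $f\in\cU$ is partially hyperbolic and dynamically coherent with a center foliation $\mathcal{W}^{c}(f)$ of uniformly $C^{1}$ one-dimensional leaves which is leaf-conjugate to the orbits of $\psi_{t}$; also $E^{c}(f)$ is orientable. Because $\psi_{1}$ fixes every orbit of $\psi_{t}$ setwise and the leaf conjugacy intertwines the induced permutations of the leaf spaces, $f$ fixes every leaf of $\mathcal{W}^{c}(f)$ setwise. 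Thus $f$ acts on each noncompact center leaf $L\cong\RR$ as a fixed-point-free $C^{\infty}$ diffeomorphism $C^{1}$-close to a unit translation, and on each compact center leaf $L\cong S^{1}$ (there is one over each periodic orbit of $\psi_{t}$) as a $C^{\infty}$ circle diffeomorphism close to a rigid rotation. In particular center-leaf distances stay comparable along the $f$-orbit, which supplies the Lyapunov-stability input required to apply Section \ref{section: center holonomy}.

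\textbf{Step 2: extracting the rigid structure.} By Section \ref{section: center holonomy}, uniform quasiconformality of $f$ together with the stability from Step 1 yields that the holonomies of $\mathcal{W}^{cs}$ and $\mathcal{W}^{cu}$ along unstable, resp.\ stable, leaves are quasiconformal and that $\mathcal{W}^{c}(f)$ is absolutely continuous with respect to $m$. Applying the Ruelle--Wilkinson dichotomy to $f$ and $f^{-1}$, absolute continuity of the one-dimensional center foliation forces the center Lyapunov exponent of $f$ with respect to $m$ to vanish; together with the center bunching inherited from $\psi_{1}$, volume preservation, and accessibility (which holds throughout $\cU$), this gives by the criterion of Kalinin--Sadovskaya \cite{KS13} recalled above that $f$ is $\infty$-bunched, and hence the leaves of $\mathcal{W}^{cs}$ and $\mathcal{W}^{cu}$ are $C^{\infty}$ by \cite{PSW}. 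Now the hypotheses of Sections \ref{section: higher}--\ref{section: higherend} are met --- this is where the assumption that $\psi_{t}$ has no periodic orbits of period $\le 2$ enters (passing to the finite cover $\hat{M}$ if necessary, which is legitimate since $f$ is homotopic to $\psi_{1}$ and hence lifts) --- so center holonomy between local unstable leaves is linear in a suitable chart and the foliations $\mathcal{W}^{c}$, $\mathcal{W}^{cs}$, $\mathcal{W}^{cu}$ are $C^{\infty}$, transversally as well as leafwise.

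\textbf{Step 3: reconstructing the flow.} Fix an auxiliary $C^{\infty}$ positive section $W$ of the oriented bundle $E^{c}$ with flow $\varphi^{W}_{t}$, whose orbits are the center leaves. Since $f$ fixes each center leaf and acts on it as a translation-like map, there is a unique $C^{\infty}$ positive function $\rho:M\to\RR$ with $f(x)=\varphi^{W}_{\rho(x)}(x)$ for all $x$. The crucial point is to show that $\rho$ is $C^{\infty}$-cohomologous over $f$ to a positive constant $c$, i.e.\ $\rho=c+u\circ f-u$ for some $u\in C^{\infty}(M)$; equivalently, in the charts of Section \ref{section: higher} where $f$ is affine and $\mathcal{W}^{c}$ is a foliation by parallel segments, one has a consistent linear parametrization of the center direction. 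The vanishing of the center exponent and the linearity of center holonomy are used here, and the hypothesis on short periodic orbits guarantees in particular that $f|_{L}$ is $C^{\infty}$-conjugate to a rotation on every compact center leaf $L$, so that no periodic-orbit obstruction to solving the cohomological equation survives. Given such a $u$, a time-change of $W$ by $u$ produces a $C^{\infty}$ vector field $V$ spanning $E^{c}$ with $\varphi^{V}_{c}=f$, and $\varphi_{t}:=\varphi^{V}_{ct}$ is then a $C^{\infty}$ flow with $\varphi_{1}=f$ and orbit foliation $\mathcal{W}^{c}$; the construction being canonical, it is equivariant under the deck group of $\hat{M}\to M$ and descends to $M$.

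\textbf{Step 4: conclusion and the main difficulty.} The flow $\varphi_{t}$ is Anosov, since $f=\varphi_{1}$ is partially hyperbolic with smooth flow-orbit foliation $\mathcal{W}^{c}$ and vanishing center exponent, and it is uniformly quasiconformal because it shares $E^{s}$ and $E^{u}$ with $f$. Volume preservation of $\varphi_{t}$ for every $t$ then follows from ergodicity of $f$ with respect to $m$ (a consequence of the $\infty$-bunching and accessibility above): each $\varphi_{s}$ commutes with $f=\varphi_{1}$, so $(\varphi_{s})_{*}m$ is an $f$-invariant probability measure in the Lebesgue class, hence equals $m$. \textbf{The main obstacle} is the cohomological step of Step 3: one must control the time-displacement function $\rho$ globally and, above all, exclude the possibility that on some compact center leaf $f$ restricts to a circle diffeomorphism near a rotation which fails to embed in a flow --- and it is precisely this failure that the rigidity coming from uniform quasiconformality (linearity of center holonomy) together with the hypothesis on periods of periodic orbits must be used to rule out.
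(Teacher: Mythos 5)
Your overall outline (use the machinery of Sections \ref{section: center holonomy}--\ref{section: higherend} to rigidify the center foliation, then reconstruct the generating vector field) matches the paper, but Steps 2--4 diverge in ways that leave genuine gaps.

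First, in Step 2 you invoke accessibility of $f$ (``accessibility (which holds throughout $\cU$)'') and use it both to apply the Kalinin--Sadovskaya criterion for $\infty$-bunching and, in Step 4, to conclude ergodicity. But accessibility is \emph{not} a hypothesis here and does not hold throughout $\cU$: the Anosov flow $\psi_t$ could be orbit-equivalent to the suspension of a hyperbolic toral automorphism, whose time-one map is not stably accessible. The paper goes out of its way to avoid any accessibility or ergodicity assumption. Its route to $\infty$-bunching (Lemma \ref{prop: 1dcenter}) is more elementary: $\dim E^c=1$ gives $1$-bunching, hence a $C^{1+\alpha}$ center foliation by Lemma \ref{reg of cent folia}; absolute continuity of $\W^c$ from Corollary \ref{corollary: cs absolute continuity} gives conditional densities $\zeta_x$; the cocycle identity $f_{*}m_x^c=m_{f(x)}^c$ then hands you the $Df|_{E^c}$-invariant norm $|\cdot|=\sigma\cdot\|\cdot\|$ directly (with $\sigma(x)=\zeta_x(x)$), and a bootstrap upgrades everything to $C^\infty$.

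Second, Step 3 is where the approaches really part ways, and your version does not close. You fix an \emph{arbitrary} auxiliary section $W$ of $E^c$ and try to show the displacement $\rho$ is smoothly cohomologous to a constant, and you claim ``the hypothesis on short periodic orbits guarantees that $f|_L$ is $C^\infty$-conjugate to a rotation on every compact center leaf $L$.'' That claim is false as stated: a circle diffeomorphism near a rotation is $C^\infty$-linearizable only under Diophantine conditions (Herman--Yoccoz), and the short-period hypothesis gives you no such thing. The paper sidesteps this entirely by choosing $W$ correctly from the start: take $Z$ to be the unit positive section in the $Df$-invariant norm $|\cdot|$ from Lemma \ref{prop: 1dcenter}. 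Then $Df(Z)=Z$, so the generated flow $\varphi_t$ commutes with $f$, and $f|_L$ is automatically an isometry of each compact $L$ in that metric --- there is no linearization problem left. Defining $\xi$ by $\varphi_{\xi(x)}(x)=f(x)$, the commutation shows $\xi$ is constant along $\varphi_t$-orbits; since $\varphi_\xi=f$ preserves volume the nonwandering set of $\varphi_t$ is all of $M$, the spectral decomposition gives topological transitivity, and hence $\xi$ is a constant $c$. No cohomological equation is ever solved. Finally, Step 4's appeal to ergodicity again requires accessibility; the paper instead uses the Livschitz-type criterion for transitive Anosov flows \cite{LS72} (preservation of a smooth volume is equivalent to $J_{\ell(p)}(p)=1$ at all periodic orbits $p$), and derives that equality from $J_1\equiv 1$ by a short divergence argument.

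In short: your Step 1 and the leaf-conjugacy/no-wrap set-up are consistent with the paper, but Steps 2--4 rely on accessibility (unavailable) and on a smooth linearization of arbitrary circle maps (false in general). The paper's key idea --- build the $f$-invariant $C^\infty$ norm on $E^c$ \emph{from the absolute continuity of $\W^c$}, and only then define the flow, so that the displacement function is forced constant by transitivity --- is precisely the step your proposal is missing.
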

Before making further comments on this theorem we recall the notion of orbit equivalence of Anosov flows. Two $C^{\infty}$ Anosov flows $\varphi_{t},\psi_{t}: M \rightarrow M$ are \emph{$C^{r}$ orbit equivalent} ($r \in [0,\infty]$) if there is a $C^{r}$ map $h: M \rightarrow M$ such that for every $x \in M$ and $t \in \R$, $h(\varphi_{t}(x))$ lies on the $\psi_{t}$-orbit of $h(x)$.

From the classification of $C^{\infty}$ volume-preserving uniformly quasiconformal Anosov flows obtained by Fang \cite{F14} we conclude that the flow $\varphi_{t}$ obtained in the conclusion of Theorem \ref{theorem: flow suspension} is $C^{\infty}$ orbit equivalent either to the suspension flow of a hyperbolic toral automorphism or the geodesic flow on the unit tangent bundle of a constant negative curvature Riemannian manifold. 

The hypothesis in Theorem \ref{theorem: flow suspension} that there is a finite cover $\hat{M}$ for which the lift $\hat{\psi}_{t}$ has no periodic orbits of period $\leq 2$ is very mild. It always holds if $\psi_{t}$ is $C^{0}$ orbit equivalent to the suspension flow of an algebraic Anosov diffeomorphism or the geodesic flow of a closed negatively curved Riemannian manifold. We expect that Theorem \ref{theorem: flow suspension} holds without this hypothesis, however this hypothesis does simplify some constructions in the proofs, particularly in Section \ref{subsec: fiber}.

We recall now the definitions of $su$-paths and accessiblity for a partially hyperbolic diffeomorphism which will play a crucial role in the proof of Theorem \ref{theorem: hyperbolic perturbation} in Section \ref{section:theorem4}. For a partially hyperbolic diffeomorphism $f: M \rightarrow M$ an $su$-path in $M$ is a piecewise $C^{1}$ curve $\gamma$ in $M$ such that $\gamma$ decomposes into finitely many $C^{1}$ subcurves $\gamma_{x_{i}x_{i+1}}$ connecting $x_{i}$ to $x_{i+1}$ and such that each curve $\gamma_{x_{i}x_{i+1}}$ is contained in a single $\W^{s}$ or $\W^{u}$ leaf. We define $f$ to be \emph{accessible} if any two points in $M$ can be joined by an $su$-path. 

A notable aspect of Theorems \ref{theorem: compact center} and \ref{theorem: flow suspension} and Corollary \ref{theorem: isometric center} is that their hypotheses do not include any accessibility or ergodicity assumptions on $f$ with respect to the volume $m$.  This requires us to take some additional care at certain points in the proof. The accessibility hypotheses is used strongly in the rigidity theorem of Avila-Viana-Wilkinson and ergodicity with respect to volume is used in the classification results of Sadovskaya and Fang. 

The results of Corollary \ref{theorem: isometric center} and Theorem \ref{theorem: flow suspension} suggest that it may be possible to obtain a global smooth classification of $C^{\infty}$ volume-preserving, dynamically coherent, uniformly quasiconformal partially hyperbolic diffeomorphisms with one-dimensional center in terms of the classification of uniformly quasiconformal Anosov diffeomorphisms and Anosov flows. We give an example which illustrates some of the difficulties in obtaining a classification beyond these theorems.  

 Consider the $5 \times 5$ integer matrix 
 $$A:=\begin{pmatrix}
0&1&0&0&0\\
0&0&1&0&0\\
0&0&0&1&0\\
0&0&0&0&1\\
-1&1&0&-3&1
\end{pmatrix},$$
and let $f_{A}: \TT^5\rightarrow\TT^5$ be the induced linear map of $A$ on the $5$-torus $\TT^5 = \RR^{5}/\Z^{5}$. By numerical computation the five complex eigenvalues of $A$ satisfy
\[
|\lambda_1|=|\lambda_2|>|\lambda_3|>1>|\lambda_4|=|\lambda_5|,
\]
\[
\overline{\lambda_1}={\lambda_2}\notin \mathbb{R},
\]
\[
\overline{\lambda_4}={\lambda_5}\notin \mathbb{R}.
\]
Thus $f_A$ is a hyperbolic toral automorphism which may also be viewed as a partially hyperbolic diffeomorphism with splitting $T \TT^5 = E^{u} \oplus E^{c} \oplus E^{s}$, where $E^{u}$ is the real part of the complex eigenspaces corresponding to the pair of conjugate complex eigenvalues $\la_{1}$ and $\la_{2}$,  $E^{c}$ is the eigenspace corresponding to $\la_{3}$, and $E^{s}$ is the real part of the complex eigenspaces corresponding to $\la_{4}$ and $\la_{5}$. We conclude $f_{A}$ is a smooth, volume-preserving, dynamically coherent uniformly quasiconformal partially hyperbolic diffeomorphism with one-dimensional center. 

We then pose the following problem, 

\begin{prob}\label{problem} 
Is there a $C^{1}$-open neighborhood $\mathcal{U}$ of $f_{A}$ in the space of smooth volume-preserving diffeomorphisms of $\TT^{5}$ such that if $f \in \mathcal{U}$ is uniformly quasiconformal then the invariant foliations $\W^{cs}$, $\W^{c}$, and $\W^{cu}$ of $f$ are smooth?
\end{prob}

We expect the answer to Problem \ref{problem} to be ``no" but the difficulty of constructing nontrivial uniformly quasiconformal perturbations of $f_{A}$ is a significant obstruction to confirming our suspicions.  We note that each $f \in \mathcal{U}$ is an Anosov diffeomorphism if $\mathcal{U}$ is chosen small enough. 

\section{Quasiconformality of the Center Holonomy}\label{section: center holonomy}
%

\subsection{Holonomy along paths and the holonomy group of a foliation} In this subsection we define the notions of the holonomy group of a leaf of a foliation and uniform compactness for a foliation which appear in the statements of Theorem \ref{theorem: compact center} and Corollary \ref{theorem: isometric center}. For more details see \cite{BoThesis, BB}.

Consider a $q-$codimensional foliation $\mathcal{F}$ in a compact manifold $M$. Suppose $x \in M$, $y\in \mathcal{F}_{loc}(x)$ and $D_x, D_y$ are two small $C^1-$discs transverse to $\mathcal{F}$. The \textit{local holonomy map} $h^{\mathcal{F}}_{x,y}:D_x\to D_y$ is defined as the following: for any $z\in D_x$ we let $h^{\mathcal{F}}_{x,y}(z)$ be the unique point at which the local leaf $\mathcal{F}_z$ intersects $D_y$.

Moreover, for any continuous path $\gamma:[0,1]\to M$ that lies entirely inside a leaf $\mathcal{F}(x)$ we define the \textit{holonomy along }$\gamma$ (denoted $h^\mathcal{F}_\gamma$) as follows: Suppose $0=t_0<\cdots <t_n=1$ is a subdivision such that $|t_i-t_{i-1}|$ small enough and $x_i:= \gamma(t_i)$. We pick a sequence of $C^1-$ small discs $D(x_i)\ni x_i $ which are transverse to $\mathcal{F}$. By the definition of the local holonomy map,  $h^{\mathcal{F}}_{x_{k-1},x_k}: D(x_{k-1})\to D(x_k)$ is well-defined on a neighborhood of $x_k\in D(x_k)$. Then the holonomy along $\gamma$ is given by the formuls
\begin{equation}\label{def holonomy along path}
h^\mathcal{F}_\gamma:=h^{\mathcal{F}}_{x_{n-1},x_n}\circ\cdots\circ h^{\mathcal{F}}_{x_0,x_1}: D(\gamma(0))\to D(\gamma(1)).
\end{equation} 
This formula remains well-defined on a neighborhood of $\gamma(0)\in D(\gamma(0))$. 

Consider all closed paths $\gamma$ that lie in the leaf $\mathcal{F}(x)$ with $\gamma(0)=\gamma(1)=x$ and consider a small $C^1-$disc $D(x)\ni x$ transverse to $\mathcal{F}(x)$. By identifying $D(x)$ with $\RR^q$  we get a group homomorphism $$\pi_1  (\mathcal{F}(x), x ) \to Homeo  (\mathbb{R}^q,  0)$$
 where $Homeo (\mathbb{R}^q,  0)$ is the set of germs of homeomorphisms $R^q\to R^q$ which fix the origin (since the germ of $h^\mathcal{F}_\gamma$ only depends on the homotopy class of $\gamma$). The image of the homomorphism is called the \textit{holonomy
group} of the leaf $\mathcal{F}(x)$  and denoted by $Hol(\mathcal{F}(x), x)$. A leaf $\mathcal{F}(x)$ of a foliation $\mathcal{F}$ \textit{has finite (or trivial) holonomy} if the holonomy group $Hol(\mathcal{F}(x), x )$ is a finite (or trivial)
group for any $x\in M$. A foliation is called \textit{uniformly compact} if every leaf is compact and has finite holonomy group. The following general lemma will be needed later.
\begin{lemma}\label{lemma path trivial holonomy in big foliation}Suppose $\mathcal{F}_{i}, i=1,2$ are two foliations of a manifold $M$ such that $\mathcal{F}_1$ subfoliates $\mathcal{F}_2$. Assume $\gamma$ is a  closed path which lies in a leaf of $\mathcal{F}_1$ and represents the identity in $Hol(\mathcal{F}_1(\gamma(0)), \gamma(0))$. Then $\gamma$ also represents the identity in $Hol(\mathcal{F}_2(\gamma(0)), \gamma(0))$. 
\end{lemma}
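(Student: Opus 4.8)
The plan is to exhibit the $\mathcal{F}_2$-holonomy germ $h^{\mathcal{F}_2}_\gamma$ as a quotient of the $\mathcal{F}_1$-holonomy germ $h^{\mathcal{F}_1}_\gamma$. The underlying geometric fact is that inside a single leaf of $\mathcal{F}_2$ the foliation $\mathcal{F}_1$ restricts to a lower-dimensional foliation, and a transversal to $\mathcal{F}_1$ fibers over a transversal to $\mathcal{F}_2$ by collapsing those plaques of $\mathcal{F}_1$ that lie inside a common plaque of $\mathcal{F}_2$. Transporting this local product picture along $\gamma$ should intertwine the two holonomies, so that triviality of the finer holonomy forces triviality of the coarser one.

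Concretely, I would first fix foliation charts adapted to the subfoliation: since $\mathcal{F}_1$ subfoliates $\mathcal{F}_2$, every point of $M$ has a chart with coordinates $(u,v,w)$ in which the plaques of $\mathcal{F}_1$ are the slices $\{v=\mathrm{const},\ w=\mathrm{const}\}$ and the plaques of $\mathcal{F}_2$ are the slices $\{w=\mathrm{const}\}$. Choosing a subdivision $0=t_0<\dots<t_n=1$ of $\gamma$ fine enough that each consecutive pair $x_{i-1}=\gamma(t_{i-1})$, $x_i=\gamma(t_i)$ lies in one such chart, I would take the transverse discs to $\mathcal{F}_1$ in product form $D(x_i)=D'(x_i)\times E(x_i)$, where $D'(x_i)=\{u=u_i,\ v=v_i\}$ is a disc transverse to $\mathcal{F}_2$ and $E(x_i)\subset\mathcal{F}_{2,loc}(x_i)$ is transverse to $\mathcal{F}_1$ inside the local $\mathcal{F}_2$-leaf through $x_i$, and I would use these same discs $D'(x_i)$ as the transversals to $\mathcal{F}_2$ at $x_i$. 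Because $\gamma$ is a closed path I arrange $D(x_0)=D(x_n)$ and $D'(x_0)=D'(x_n)$. Let $p_i:D(x_i)\to D'(x_i)$ be the projection $(v,w)\mapsto w$, equivalently the map sending $z$ to the intersection of the local $\mathcal{F}_2$-leaf through $z$ with $D'(x_i)$.

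The heart of the argument is the germ-level identity
\[
p_i\circ h^{\mathcal{F}_1}_{x_{i-1},x_i} \;=\; h^{\mathcal{F}_2}_{x_{i-1},x_i}\circ p_{i-1},\qquad i=1,\dots,n,
\]
which holds because the local $\mathcal{F}_1$-leaf through a point $z\in D(x_{i-1})$ near $x_{i-1}$ is contained in the local $\mathcal{F}_2$-leaf through $z$, which is the same as the local $\mathcal{F}_2$-leaf through $p_{i-1}(z)$; hence $h^{\mathcal{F}_1}_{x_{i-1},x_i}(z)$ lies both in $D(x_i)$ and on that local $\mathcal{F}_2$-leaf, so $p_i\big(h^{\mathcal{F}_1}_{x_{i-1},x_i}(z)\big)=h^{\mathcal{F}_2}_{x_{i-1},x_i}\big(p_{i-1}(z)\big)$. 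Composing these relations over $i$ and using $x_0=x_n=\gamma(0)=:x$ and $p_0=p_n=:p$ yields $p\circ h^{\mathcal{F}_1}_\gamma=h^{\mathcal{F}_2}_\gamma\circ p$ as germs at $x$. By hypothesis $h^{\mathcal{F}_1}_\gamma$ is the identity germ, so $p=h^{\mathcal{F}_2}_\gamma\circ p$; since $p$ is an open map onto $D'(x)$, its image is a full neighborhood of $x$ in $D'(x)$, and therefore $h^{\mathcal{F}_2}_\gamma$ is the identity germ at $x$, i.e.\ $\gamma$ represents the identity in $Hol(\mathcal{F}_2(x),x)$. Because the germ of $h^{\mathcal{F}_2}_\gamma$ is independent of the chosen subdivision and transversals, the conclusion does not depend on the adapted choices made above. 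The one step I expect to be genuinely delicate is the construction in the second paragraph: checking that the product-form transversals can be chosen consistently along $\gamma$ inside the adapted charts and that the intertwining identity holds at the level of germs rather than merely pointwise. This is precisely the standard local product structure for a pair of subfoliated foliations, and once it is in place everything else is formal.
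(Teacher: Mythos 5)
Your proof is correct, and it is essentially the same idea as the paper's, approached from the dual direction. The paper chooses a small transversal $D_2$ to $\mathcal{F}_2$ at $\gamma(0)$, enlarges it to a transversal $D_1\supset D_2$ to $\mathcal{F}_1$, and observes that once $h_\gamma^{\mathcal{F}_1}$ is the identity on $D_1$ it fixes $D_2$ pointwise, which forces $h_\gamma^{\mathcal{F}_2}$ to be the identity on $D_2$. You instead equip a transversal to $\mathcal{F}_1$ with a product structure and use the projection $p:D_1\to D_2$, proving the intertwining identity $p\circ h_\gamma^{\mathcal{F}_1}=h_\gamma^{\mathcal{F}_2}\circ p$ at the germ level and then quotienting. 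The underlying geometric input (a transversal to $\mathcal{F}_1$ contains, or fibers over, a transversal to $\mathcal{F}_2$, and the two holonomies are compatible with this relation) is identical; your version is slightly more elaborate in that the intertwining relation holds without any hypothesis on $h_\gamma^{\mathcal{F}_1}$, whereas the paper's restriction statement is only meaningful once the bigger holonomy is already known to preserve $D_2$, but that distinction is cosmetic here. The ``delicate step'' you flag -- consistently choosing product-form transversals along $\gamma$ -- is indeed the standard local normal form for a subfoliated pair and poses no obstruction; the paper sidesteps it entirely by working with a single transversal at $\gamma(0)$.
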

\begin{proof}Suppose the foliations $\mathcal{F}_i,i=1,2$ have codimension $q_i, i=1,2,  q_2<q_1$ respectively. For any $q_2-$dimensional $C^1-$disc $D_2$ such that $D_2\cap \mathcal{F}_2=\{\gamma(0)\}$, we can find a $q_1-$dimensional $C^1-$disc $D_1\supset D_2$ such that  $D_1 \cap \mathcal{F}_1=\{\gamma(0)\}$ since $\mathcal{F}_1$ subfoliates $\mathcal{F}_2$. Since by assumption $h_\gamma^{\mathcal{F}_1}:D_1\to D_1$ is the  identity map, this implies that $h_\gamma^{\mathcal{F}_2}|_{D_2}=(h_\gamma^{\mathcal{F}_1}|_{D_1})|_{D_2}$ is also the identity map, which implies the assertion of the lemma.
\end{proof}
\subsection{Quasiconformality of center holonomy}\label{subsection: quasiconfomal of hc}
\color{black}
We fix $M$ to be a closed Riemannian manifold with distance $d$ and let $f: M \rightarrow M$ be a $C^{\infty}$ dynamically coherent partially hyperbolic diffeomorphism. For $* \in \{s,c,u,cu,cs\}$ we let $d_{*}$ denote the induced Riemannian metric on the leaves of the foliation $\mathcal{W}^{*}$. We write $\W^{*}(x)$ for the leaf of $\W^{*}$ passing through $x \in M$. We write $\text{diam}_{*}$ for the diameter of a subset of $\mathcal{W}^{*}$ measured with respect to the $d_{*}$ metric. For $r > 0$ we write $\W^{*}_{r}(x)$ for the open ball of radius $r$ in $\W^{*}(x)$ centered at $x$ in the $d_{*}$ metric.

We can find  small constants $R \geq r > 0$ with the property that for any $x \in M$, $y \in \W^{cs}_{r}(x)$ and $z \in \W^{u}_{r}(x)$ the local leaves $\W^{cs}_{r}(z)$ and $\W_{R}^{u}(y)$ intersect in exactly one point which we denote by $h^{cs}_{xy}(z)$. This defines the \emph{local center-stable holonomy} map between local unstable leaves of $f$. Similarly we require that if $x \in M$, $y \in \W^{cu}_{r}(x)$ and $z \in \W^{s}_{r}(x)$ then the local leaves $\W^{cu}_{r}(z)$ and $\W_{R}^{s}(y)$ intersect in exactly one point which we denote by $h^{cu}_{xy}(z)$, and use this to define the \emph{local center-unstable holonomy}. For any continuous path $\gamma:[0,1]\to M$ that lies in a center stable leaf we define the \textit{center stable holonomy along $\gamma$}, $h^{cs}_{\gamma}:\W^u_{\epsilon}(\gamma(0))\to \W^u_{\epsilon}(\gamma(1)) $, as in \eqref{def holonomy along path} for $\epsilon$ small enough. The only difference in the definition here is that all of the transversal discs along $\gamma$ are required to be local unstable discs. 
\color{black}

We introduce some useful shorthand related to these holonomy maps. The center-stable holonomy maps and center-unstable holonomy maps will sometimes be referred to as $cs$-holonomy and $cu$-holonomy respectively. When the domain and range are understood we will omit the subscripts on $h^{cs}$ and $h^{cu}$. We will write $\W^{*}_{loc}(x)$ for any open ball of the form $\W^{*}_{t}(x)$ with $r \leq t \leq R$. Hence it makes sense in our shorthand to write $h^{cs}: \W^{u}_{loc}(x) \rightarrow \W^{u}_{loc}(y)$ for the $cs$-holonomy maps.

Our starting point is the following non-stationary smooth linearization lemma of Sadovskaya applied to the unstable foliation $\W^{u}$ which is uniformly contracted by $f^{-1}$, 
\begin{prop}\cite[Proposition 4.1]{S05}\label{proposition: charts}
Suppose that $f$ is a $C^{\infty}$ uniformly $u$-quasiconformal partially hyperbolic diffeomorphism. Then for each $x \in M$ there is a $C^{\infty}$ diffeomorphism $\Phi_{x}: E^{u}_{x} \rightarrow \mathcal{W}^{u}(x)$ satisfying
\begin{enumerate}
\item $\Phi_{f(x)} \circ Df_{x} = f \circ \Phi_{x}$,

\item $\Phi_{x}(0) = x$ and $D_{0}\Phi_{x}$ is the identity map,

\item The family of diffeomorphisms $\{\Phi_{x}\}_{x \in M}$ varies continuously with $x$ in the $C^{\infty}$ topology. 
\end{enumerate}
The family $\{\Phi_{x}\}_{x \in M}$ satisfying (1), (2), and (3) is unique. 
\end{prop}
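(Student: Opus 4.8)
The statement is a non-stationary smooth linearization result: since $f^{-1}$ uniformly contracts the leaves of $\mathcal{W}^{u}$, one seeks charts $\Phi_{x}$ conjugating the leafwise dynamics of $f$ to its linear part $Df|_{E^{u}}$ along each orbit, and uniform $u$-quasiconformality is exactly the distortion bound that makes the relevant series converge. The plan is to construct $\Phi_{x}$ as a limit of ``corrected'' charts and then prove uniqueness by a higher-order vanishing argument. As an orienting remark, this is an instance of non-stationary normal form theory in the spirit of Guysinsky--Katok: along any orbit the ratio of the extremal Lyapunov data of the expanding cocycle $Df|_{E^{u}}$ is $1$, so there are no resonances and the normal form is linear.

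\emph{Setup.} First I would record that the leaves of $\mathcal{W}^{u}$ are uniformly $C^{\infty}$ and that each leaf $\mathcal{W}^{u}(x)$ is diffeomorphic to $E^{u}_{x}\cong\RR^{\dim E^{u}}$ via the leafwise Riemannian exponential $\exp^{u}_{x}\colon E^{u}_{x}\to\mathcal{W}^{u}(x)$, with all bounds uniform in $x$ by compactness of $M$. Transporting $f$ through these charts gives local diffeomorphisms $g_{n}\colon E^{u}_{f^{n}x}\to E^{u}_{f^{n+1}x}$ with $g_{n}(0)=0$, $D_{0}g_{n}=Df_{f^{n}x}|_{E^{u}}$, uniformly bounded $C^{\infty}$ norms on a fixed ball, and inverses uniformly contracting, $\|(Df^{n}_{x}|_{E^{u}})^{-1}\|\le C\mu^{-n}$ for some $\mu>1$. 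Uniform $u$-quasiconformality says precisely that $K^{u}(x,n)$, the ratio of the largest to the smallest singular value of $Df^{n}_{x}|_{E^{u}}$, is bounded by a constant $C_{0}$ independent of $x$ and $n$, so that $\|Df^{n}_{x}|_{E^{u}}\|\le C_{0}\,\|(Df^{n}_{x}|_{E^{u}})^{-1}\|^{-1}$.

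\emph{Construction.} Define $h_{n}^{x}:=(Df^{n}_{x}|_{E^{u}})^{-1}\circ g_{n-1}\circ\cdots\circ g_{0}$, a $C^{\infty}$ diffeomorphism from a ball of radius $\sim\|(Df^{n}_{x}|_{E^{u}})^{-1}\|^{-1}$ (chosen so that its image under $g_{n-1}\circ\cdots\circ g_{0}$ stays inside the fixed ball on which the $g_{k}$ are controlled) onto a neighborhood of $0$ in $E^{u}_{x}$, and then post-compose with $\exp^{u}_{x}$ to land on the leaf. I would show that $\{h_{n}^{x}\}$ is Cauchy in the $C^{r}$ topology on each fixed compact subset of $E^{u}_{x}$, for every $r\ge 0$. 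The telescoping difference $h_{n+1}^{x}-h_{n}^{x}$, after conjugation by $(Df^{n}_{x})^{-1}$, is governed by the nonlinearity of $g_{n}$ evaluated on the image of the scale-$\mu^{-n}$ ball, which is quadratically small in that scale; the expansion reintroduced by $(Df^{n}_{x})^{-1}$ is controlled by $\|Df^{n}_{x}|_{E^{u}}\|\le C_{0}\,\|(Df^{n}_{x}|_{E^{u}})^{-1}\|^{-1}$ using the distortion bound, and the balance yields $\|h_{n+1}^{x}-h_{n}^{x}\|_{C^{r},K}\le C_{r}\,\theta^{n}$ with $\theta<1$ — it is precisely the uniform bound on $K^{u}$ that keeps the expansion from overwhelming the quadratic gain. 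The limit $\Phi_{x}:=\lim_{n}h_{n}^{x}$ is then a $C^{\infty}$ local diffeomorphism; exhausting $\mathcal{W}^{u}(x)$ by larger compact sets gives a global $C^{\infty}$ diffeomorphism $E^{u}_{x}\to\mathcal{W}^{u}(x)$. Properties (2) and (3) are immediate from $h_{n}^{x}(0)=0$, $D_{0}h_{n}^{x}=\Id$, and the uniform-in-$x$ $C^{\infty}$ estimates; property (1) follows by letting $n\to\infty$ in the conjugacy-compatible recursion relating $h^{f(x)}_{n}\circ Df_{x}$ and $f\circ h^{x}_{n+1}$ (after the routine index bookkeeping).

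\emph{Uniqueness and the main obstacle.} If $\Phi_{x},\Phi'_{x}$ both satisfy (1)--(3), then $\Psi_{x}:=\Phi_{x}^{-1}\circ\Phi'_{x}\colon E^{u}_{x}\to E^{u}_{x}$ fixes $0$ with $D_{0}\Psi_{x}=\Id$ and intertwines the linear cocycle, $Df^{n}_{x}|_{E^{u}}\circ\Psi_{x}=\Psi_{f^{n}x}\circ Df^{n}_{x}|_{E^{u}}$. Writing $\Psi_{x}=\Id+R_{x}$ with $R_{x}(v)=o(|v|)$ near $0$ and using $R_{x}(v)=(Df^{n}_{x}|_{E^{u}})^{-1}R_{f^{n}x}(Df^{n}_{x}v)$, the uniform expansion of $Df^{n}_{x}$ combined with the higher-order vanishing of $R$ forces $R_{x}\equiv 0$ near $0$, hence $\Psi_{x}=\Id$ near $0$ and then everywhere by the conjugacy; the boundedness of $K^{u}$ again ensures the estimate closes uniformly. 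I expect the main obstacle to be the $C^{r}$ — indeed $C^{\infty}$, uniformly in $x$ — estimate in the construction step: one must control all higher-order derivatives of the long compositions $g_{n-1}\circ\cdots\circ g_{0}$ on the shrinking balls and verify that the distortion bound $K^{u}\le C_{0}$ still dominates the chain-rule expansion factors at every order, not merely at first order. This is the technical heart of the proof and the place where uniform $u$-quasiconformality is genuinely exploited.
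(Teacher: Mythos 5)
The paper does not prove this proposition; it is cited verbatim from Sadovskaya \cite[Prop.~4.1]{S05}, so there is no internal argument to compare against. Your overall strategy — non-stationary Sternberg-type linearization, telescoping estimate driven by the distortion bound, and a higher-order-vanishing uniqueness argument — is indeed the standard one and the right one. However, as written, the construction runs the iteration in the wrong direction, and this is not a cosmetic slip: it makes the limit fail to exist.

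Your $h_{n}^{x}=(Df^{n}_{x}|_{E^{u}})^{-1}\circ g_{n-1}\circ\cdots\circ g_{0}$ is the ``normalized $n$-th iterate'' of $f$ written in exponential coordinates. Since each $g_{k}$ expands by at least $\mu>1$ and is only controlled on a ball of fixed radius $\rho$, the composition $g_{n-1}\circ\cdots\circ g_{0}$ is defined only on a ball of radius $\sim\rho\,\mu^{-n}$ — not $\sim\|(Df^{n}_{x}|_{E^{u}})^{-1}\|^{-1}\sim\mu^{n}$ as you state. (You in fact switch to ``the scale-$\mu^{-n}$ ball'' two sentences later, contradicting the earlier claim.) Consequently, the natural domains of the $h_{n}^{x}$ shrink to $\{0\}$; for any fixed $v\neq 0$ only finitely many $h_{n}^{x}(v)$ exist, so the sequence cannot be Cauchy on a fixed compact set, and there is no exhaustion of $\mathcal{W}^{u}(x)$ to speak of. The formula $\lim_{n}(D g^{(n)})^{-1}\circ g^{(n)}$ is the right one for a \emph{contraction} $g$; applied to the expansion $f$ it is hollow. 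The fix, which you hint at but do not carry out, is to exploit that $f^{-1}$ contracts $\mathcal{W}^{u}$ and iterate backward: set
\[
\Phi_{x}^{(n)}:=f^{n}\circ\exp^{u}_{f^{-n}x}\circ Df^{-n}_{x}\colon E^{u}_{x}\to\mathcal{W}^{u}(x).
\]
Here $Df^{-n}_{x}$ contracts any fixed $v$ into the controlled domain of $\exp^{u}_{f^{-n}x}$, and $f^{n}$ brings it back, so the domain of $\Phi^{(n)}_{x}$ \emph{grows} with $n$ and exhausts $E^{u}_{x}$. Writing $\Phi^{(n+1)}_{x}-\Phi^{(n)}_{x}=f^{n}\circ\bigl(\alpha_{f^{-n}x}-\exp^{u}_{f^{-n}x}\bigr)\circ Df^{-n}_{x}$ with $\alpha_{y}:=f\circ\exp^{u}_{f^{-1}y}\circ Df^{-1}_{y}$ agreeing with $\exp^{u}_{y}$ to second order, one sees the $C^{0}$ term gains $\|Df^{-n}_{x}\|^{2}$ from the quadratic error and loses at most $\|Df^{n}_{f^{-n}x}|_{E^{u}}\|$; for the $C^{r}$ norms the outer chain-rule factors pair up as $\|Df^{n}_{f^{-n}x}\|\cdot\|Df^{-n}_{x}\|=K^{u}(f^{-n}x,n)\leq C_{0}$, which is precisely where quasiconformality enters.

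Your uniqueness argument has the same directional error. The relation you write, $R_{x}(v)=(Df^{n}_{x}|_{E^{u}})^{-1}R_{f^{n}x}(Df^{n}_{x}v)$, evaluates $R_{f^{n}x}$ at the \emph{large} argument $Df^{n}_{x}v$, where the $o(|\cdot|)$ bound gives nothing; the resulting estimate is $|R_{x}(v)|\lesssim K^{u}\cdot|v|$, which does not force $R_{x}=0$. Running it the other way, $R_{x}(v)=Df^{n}_{f^{-n}x}\,R_{f^{-n}x}\bigl(Df^{-n}_{x}v\bigr)$, the argument $Df^{-n}_{x}v\to 0$ lets the uniform $o(|\cdot|)$ estimate bite, and the bounded product $\|Df^{n}_{f^{-n}x}\|\cdot\|Df^{-n}_{x}\|\leq C_{0}$ closes the estimate, giving $R_{x}\equiv 0$. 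In short: the right objects are the same, but you must base the whole construction on the backward orbit $\{f^{-n}x\}_{n\geq 0}$ (equivalently, on the forward orbit of the contraction $f^{-1}$), not on $\{f^{n}x\}_{n\geq 0}$.
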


The bundle $E^{u}$ is a H\"older continuous subbundle of $TM$  with some H\"older exponent $\beta > 0$ \cite{PSW}. Therefore the restriction $Df|_{E^{u}}$ of the derivative of $f$ to the unstable bundle is a H\"older continuous linear cocycle over $f$ in the sense of Kalinin-Sadovskaya \cite{KS13}. For $x,y \in M$ two nearby points we let $I_{xy}: E^{u}_{x} \rightarrow E^{u}_{y}$ be a linear identification which is $\beta$-H\"older close to the identity. The diffeomorphism $f$ is uniformly $u$-quasiconformal if and only if, in the terminology of \cite{KS13}, the cocycle $Df|_{E^{u}}$ is \textit{uniformly quasiconformal} (therefore $Df|_{E^u}$ is \textit{fiber bunched}, cf. \cite{KS13}). The following proposition thus applies to $Df|_{E^{u}}$.

\begin{prop}\cite[Proposition 4.2]{KS13}\label{proposition: holonomies}
For $y \in \mathcal{W}^{u}_{loc}(x)$, the limit 
\[
\lim_{n \rightarrow \infty}Df^{n}_{f^{-n}y} \circ I_{f^{-n}xf^{-n}y} \circ Df^{-n}_{x} |_{E^{u}} :=H^{u}_{xy},
\]
exists uniformly in $x$ and $y$ and defines a linear map from $E^{u}_{x}$ to $E^{u}_{y}$ with the following properties for $x,y,z \in M$,
\begin{enumerate}
\item $H^{u}_{xx} = Id$ and $H^{u}_{yz} \circ H^{u}_{xy} = H^{u}_{xz}$;
\item $H^{u}_{xy} = Df^{n}_{f^{-n}y} \circ H^{u}_{f^{-n}xf^{-n}y} \circ Df^{-n}_{x}$  for any $n \geq 0$. 
\item $\| H^{u}_{xy} - I_{xy}\| \leq Cd(x,y)^{\beta}$, $\beta$ the exponent of H\"older continuity for $E^{u}$.  
\end{enumerate}
Furthermore $H^{u}$ is the unique collection of linear identifications with these properties. Similarly if $y \in \mathcal{W}^{s}_{loc}(x)$ then the limit $\lim_{n \rightarrow \infty}Df^{-n}_{y} \circ I_{f^{n}xf^{n}y} \circ Df^{n}_{x} |_{E^{u}} :=H^{s}_{xy}$ exists and gives a linear map from $E^{u}_{x}$ to $E^{u}_{y}$ with analogous properties. $H^{u}$ and $H^{s}$ are known as the \emph{unstable} and \emph{stable holonomies} of $Df|_{E^{u}}$ respectively. 
\end{prop}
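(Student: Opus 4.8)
This proposition is the standard existence statement for holonomies of a fiber‑bunched H\"older cocycle, and it can be proved directly from the three inputs at hand: the bundle $E^{u}$ is $\beta$‑H\"older (so the cocycle $x\mapsto Df_{x}|_{E^{u}}$ is $\beta$‑H\"older when read through the identifications $I$), the diffeomorphism $f$ is uniformly $u$‑quasiconformal, i.e. $\|Df^{n}_{x}|_{E^{u}}\|\cdot\|(Df^{n}_{x}|_{E^{u}})^{-1}\|=K^{u}(x,n)\le K$ for all $x,n$, and partial hyperbolicity provides a $\theta\in(0,1)$ with $d(f^{-n}x,f^{-n}y)\le C\theta^{n}d(x,y)$ whenever $y\in\mathcal{W}^{u}_{loc}(x)$. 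Write $A^{n}(x):=Df^{n}_{x}|_{E^{u}}:E^{u}_{x}\to E^{u}_{f^{n}x}$, so that $Df^{-n}_{x}|_{E^{u}}=A^{n}(f^{-n}x)^{-1}$, and set
\[
T_{n}:=A^{n}(f^{-n}y)\circ I_{f^{-n}x,f^{-n}y}\circ A^{n}(f^{-n}x)^{-1},
\]
the $n$‑th term of the sequence in the statement; note $T_{0}=I_{xy}$. The plan is to prove the $T_{n}$ form a uniformly (in $x,y$) Cauchy sequence and then read off the asserted properties. I will treat $H^{u}$; the stable holonomies $H^{s}$ follow from the identical argument run with $f^{n}$ in place of $f^{-n}$, using that $\mathcal{W}^{s}$ is uniformly contracted by $f$ while the relevant cocycle is still $Df|_{E^{u}}$.

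The core is a telescoping estimate. Writing $A^{n+1}(f^{-(n+1)}z)=A^{n}(f^{-n}z)\circ A(f^{-(n+1)}z)$ and factoring $A^{n}(f^{-n}y)$ out on the left and $A^{n}(f^{-n}x)^{-1}$ on the right yields
\[
T_{n+1}-T_{n}=T_{n}\circ\Big(A^{n}(f^{-n}x)\circ E_{n}\circ A^{n}(f^{-n}x)^{-1}\Big),
\]
where $E_{n}$ is an endomorphism of $E^{u}_{f^{-n}x}$ assembled from $I^{-1}$ and the one‑step cocycle maps at $f^{-(n+1)}x$ and $f^{-(n+1)}y$. A routine computation using H\"older continuity of the cocycle and of the $I$'s gives $\|E_{n}\|\le C\,d(f^{-(n+1)}x,f^{-(n+1)}y)^{\beta}\le C'\theta^{\beta n}d(x,y)^{\beta}$, while conjugating by $A^{n}(f^{-n}x)$ costs only the factor $\|A^{n}(f^{-n}x)\|\,\|A^{n}(f^{-n}x)^{-1}\|=K^{u}(f^{-n}x,n)\le K$ — this is exactly where uniform quasiconformality enters, and it enters with room to spare (the product with $\theta^{\beta n}$ is summable for any bounded distortion). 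Hence $\|T_{n+1}-T_{n}\|\le CK\theta^{\beta n}d(x,y)^{\beta}\,\|T_{n}\|$. An elementary product estimate starting from $\|T_{0}\|=\|I_{xy}\|$ then forces $\sup_{n}\|T_{n}\|<\infty$ uniformly, after which $\|T_{n+1}-T_{n}\|\le C''\theta^{\beta n}d(x,y)^{\beta}$; summing shows $T_{n}$ converges uniformly to a limit $H^{u}_{xy}$, and summing the tail from $n=0$ gives property (3), $\|H^{u}_{xy}-I_{xy}\|\le C\,d(x,y)^{\beta}$.

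The remaining properties are then formal. For (1), $T_{n}$ with $y=x$ equals $A^{n}(f^{-n}x)\circ\mathrm{Id}\circ A^{n}(f^{-n}x)^{-1}=\mathrm{Id}$, so $H^{u}_{xx}=\mathrm{Id}$; the cocycle relation $H^{u}_{yz}\circ H^{u}_{xy}=H^{u}_{xz}$ follows since $T_{n}(y,z)\circ T_{n}(x,y)$ and $T_{n}(x,z)$ differ only by $A^{n}(f^{-n}z)\circ\big(I_{f^{-n}y,f^{-n}z}I_{f^{-n}x,f^{-n}y}-I_{f^{-n}x,f^{-n}z}\big)\circ A^{n}(f^{-n}x)^{-1}$, whose middle factor is $O(\theta^{\beta n})$ (the $I$'s being H\"older‑close to the identity) and whose outer factors contribute a bounded amount because $\|A^{n}(f^{-n}z)\|\,\|A^{n}(f^{-n}x)^{-1}\|\le 2K\sup_{m}\|T_{m}\|$, obtained by expressing $A^{n}(f^{-n}z)$ through $T_{n}(x,z)$. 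Property (2) is the cocycle identity $A^{n}(f^{-n}y)\circ T_{m}(f^{-n}x,f^{-n}y)\circ A^{n}(f^{-n}x)^{-1}=T_{m+n}(x,y)$ passed to the limit $m\to\infty$. Uniqueness is similar: if $\widetilde H$ also satisfies (1)--(3) then (2) gives $\widetilde H_{xy}=A^{n}(f^{-n}y)\circ\widetilde H_{f^{-n}x,f^{-n}y}\circ A^{n}(f^{-n}x)^{-1}$, and substituting $\widetilde H_{f^{-n}x,f^{-n}y}=I_{f^{-n}x,f^{-n}y}+O(\theta^{\beta n})$ from (3) yields $\widetilde H_{xy}=T_{n}(x,y)+o(1)\to H^{u}_{xy}$, the error again controlled by a bounded product of norms.

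The one genuinely non‑formal point, and the step I expect to demand the most care, is the bootstrapping inside the second paragraph: the cocycle blocks $A^{n}$ occurring in $T_{n}$ are evaluated along the two \emph{distinct} backward orbit segments $\{f^{-k}x\}$ and $\{f^{-k}y\}$, so the clean bound $K^{u}(z,n)\le K$ controls $\|A^{n}(z)\|\,\|A^{n}(z)^{-1}\|$ only at a single base point, and one must repeatedly trade a norm at one base point for the norm at the nearby one using the already‑established uniform bound on $\|T_{n}\|$. Once that circularity is organized correctly, everything else is bookkeeping with H\"older constants; in particular the proof uses nothing beyond $\beta$‑H\"older regularity of $E^{u}$, uniform $u$‑quasiconformality, and uniform exponential contraction of $\mathcal{W}^{u}$ under $f^{-1}$, and in particular is independent of Proposition \ref{proposition: charts}.
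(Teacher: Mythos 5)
Your proof is correct, but note that the paper does not actually prove this proposition: it is cited verbatim from Kalinin--Sadovskaya \cite[Proposition 4.2]{KS13}, after the one-line remark that uniform $u$-quasiconformality of $Df|_{E^{u}}$ implies the fiber-bunching condition under which that result applies. What you have done is reconstruct the standard KS13-type telescoping argument, specialized to the stronger hypothesis of uniform quasiconformality rather than the weaker fiber-bunching one: you use $K^{u}(f^{-n}x,n)\le K$ uniformly, whereas \cite{KS13} only needs $\|A^n\|\,\|(A^n)^{-1}\|$ to grow slower than $\theta^{-\beta n}$, so that the conjugation cost is still killed by the H\"older gain $\theta^{\beta n}$. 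As you yourself note, your estimate goes through ``with room to spare,'' which is exactly this specialization. Your reconstruction of the telescoping identity $T_{n+1}-T_{n}=T_{n}\circ\bigl(A^{n}(f^{-n}x)\circ E_{n}\circ A^{n}(f^{-n}x)^{-1}\bigr)$, the bootstrap bound $\sup_n\|T_n\|<\infty$, and the verification of (1)--(3) are all sound; the one step needing care, bounding the mixed product $\|A^{n}(f^{-n}z)\|\,\|A^{n}(f^{-n}x)^{-1}\|$ by writing $A^{n}(f^{-n}z)=T_n(x,z)\circ A^{n}(f^{-n}x)\circ I^{-1}_{f^{-n}x,f^{-n}z}$, is handled correctly. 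Your closing observation that the argument is logically independent of Proposition \ref{proposition: charts} is also accurate and worth retaining: in the paper's presentation both appear in sequence but neither cites the other in its hypotheses.
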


Using property (2) of the unstable and stable holonomies of $Df|_{E^{u}}$ from Proposition \ref{proposition: holonomies} we may uniquely extend $H^{u}$ and $H^{s}$ to be defined for any $y \in \W^{u}(x)$ and any $y \in \W^{s}(x)$ respectively.

The transition maps between the charts given by Proposition \ref{proposition: charts} are affine with derivatives given by the unstable holonomy $H^{u}$, 

\begin{prop}\label{proposition: transitions}
Suppose that $f$ is uniformly $u$-quasiconformal and let $\{\Phi_{x}\}_{x \in M}$ be the charts of Proposition \ref{proposition: charts}. Then for each $x \in M$ and $y \in \mathcal{W}^{u}(x)$ the map $\Phi_{y}^{-1} \circ \Phi_{x}: E^{u}_{x} \rightarrow E^{u}_{y}$ is an affine map with derivative $H^{u}_{xy}$. 
\end{prop}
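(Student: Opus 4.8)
The plan is to exploit the uniqueness clauses of Propositions \ref{proposition: charts} and \ref{proposition: holonomies} together with the equivariance relation $\Phi_{f(x)}\circ Df_x = f\circ\Phi_x$. First I would fix $x\in M$ and $y\in\mathcal{W}^u(x)$ and set $g := \Phi_y^{-1}\circ\Phi_x : E^u_x\to E^u_y$. This is a $C^\infty$ diffeomorphism, and from property (1) of Proposition \ref{proposition: charts} applied to both $\Phi_x$ and $\Phi_y$ we get the conjugacy relation $g\circ Df^{-1}_x = Df^{-1}_y\circ g$ (equivalently $Df_y\circ g = g\circ Df_x$ after reindexing along the orbit), since $f^{-1}$ maps $\mathcal{W}^u(x)$ to $\mathcal{W}^u(f^{-1}x)$ and both chart families are equivariant. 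Thus $g$ intertwines the linear contraction $Df^{-1}_x|_{E^u_x}$ with $Df^{-1}_y|_{E^u_y}$; because $f^{-1}$ uniformly contracts $E^u$, these are genuine linear contractions of the vector spaces $E^u_x$, $E^u_y$.

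The heart of the argument is then a linearization-by-averaging step: a smooth conjugacy between two linear contractions of Euclidean spaces must itself be affine, provided one controls the nonlinear part. Concretely, write $g(v) = Av + \rho(v)$ where $A := D_0 g$ and $\rho$ is the higher-order remainder with $\rho(0)=0$, $D_0\rho = 0$. Iterating the conjugacy relation, $g = Df^n_{f^{-n}y}\circ g\circ Df^{-n}_x$ for all $n\ge 0$ (interpreting the derivatives along the backward orbit as in Proposition \ref{proposition: holonomies}(2)); substituting the expansion for $g$ on the right and using that $Df^{-n}_x$ contracts uniformly while $Df^n_{f^{-n}y}$ expands, the uniform $u$-quasiconformality bounds the operator norms so that the contribution of $\rho$ is $O(\|v\|)$ times a factor tending to $0$. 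Hence $\rho\equiv 0$ and $g(v) = Av$ is linear. This is essentially the content of the uniqueness statement in Proposition \ref{proposition: charts}: one could alternatively observe that $v\mapsto \Phi_y(Av)$ is another chart at $y$ satisfying (1),(2),(3) — equivariance from the conjugacy relation, $D_0 = \mathrm{Id}$ from the definition of $A$ — and conclude $\Phi_y(Av) = \Phi_x(v)$, i.e. $g = A$, directly from uniqueness.

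It remains to identify the linear part $A = D_0 g$ with the cocycle holonomy $H^u_{xy}$. Here I would use the uniqueness clause of Proposition \ref{proposition: holonomies}: it suffices to check that the family $\{A_{xy}\}$ (with $A_{xy} := D_0(\Phi_y^{-1}\circ\Phi_x)$) satisfies properties (1)--(3) of that proposition. The cocycle identity (1) is immediate from $\Phi_z^{-1}\Phi_x = (\Phi_z^{-1}\Phi_y)(\Phi_y^{-1}\Phi_x)$ and the chain rule at $0$. The equivariance (2), $A_{xy} = Df^n_{f^{-n}y}\circ A_{f^{-n}x\,f^{-n}y}\circ Df^{-n}_x$, follows by differentiating the iterated conjugacy relation above at $0$. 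For the Hölder estimate (3), differentiate at $0$ the relation $\Phi_{f(x)}\circ Df_x = f\circ\Phi_x$ to see $D_0\Phi_x = \mathrm{Id}$ is consistent, and use that $\Phi_x$ varies continuously in the $C^\infty$ topology (property (3) of Proposition \ref{proposition: charts}) together with $E^u$ being $\beta$-Hölder, so that $A_{xy}$ is $\beta$-Hölder-close to the identification $I_{xy}$; then invoke the uniqueness in Proposition \ref{proposition: holonomies} to conclude $A_{xy} = H^u_{xy}$.

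The main obstacle I anticipate is making the linearization step (second paragraph) fully rigorous with the right quantifiers: one must carefully track that the operator norms $\|Df^{-n}_x|_{E^u}\|$ and $\|Df^n_{f^{-n}y}|_{E^u}\|$ combine, via uniform $u$-quasiconformality, to a product that is uniformly bounded while the $C^1$-size of the remainder $\rho$ on the shrinking domain $Df^{-n}_x(\text{fixed ball})$ goes to zero — this is exactly where the quasiconformality hypothesis (rather than mere partial hyperbolicity) is essential, and it is cleanest to phrase it as an application of the uniqueness in Proposition \ref{proposition: charts} rather than re-deriving the estimate by hand. Everything else is bookkeeping with the chain rule and the two uniqueness statements.
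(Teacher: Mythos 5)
Your proposal is correct in its essentials and rests on the same ingredients as the paper's proof: iterated chart equivariance along the backward orbit, uniform $C^{2}$ bounds coming from property (3) of Proposition \ref{proposition: charts}, quasiconformality to control the norm products $\|Df^{n}_{f^{-n}y}\|\cdot\|Df^{-n}_{x}\|$, and the H\"older estimate that identifies the limit with $H^{u}$. Where you differ is in the decomposition. You split the argument into two steps, first killing the quadratic remainder $\rho$ by iteration to conclude the map is affine, and then separately matching the linear part to $H^{u}_{xy}$ by verifying properties (1)--(3) of Proposition \ref{proposition: holonomies} and invoking its uniqueness clause. The paper instead differentiates $\Phi_{y}^{-1}\circ\Phi_{x}$ at an \emph{arbitrary} point $v$, writes $D_{v}(\Phi_{y}^{-1}\circ\Phi_{x}) = Df^{n}_{f^{-n}y}\circ D_{Df^{-n}(v)}(\Phi_{f^{-n}y}^{-1}\circ\Phi_{f^{-n}x})\circ Df^{-n}_{x}$, and uses the H\"older bound $\|D_{Df^{-n}(v)}(\cdots) - I_{f^{-n}x\,f^{-n}y}\|\le C(v)\,d(f^{-n}x,f^{-n}y)^{\beta}$ together with the convergence argument of \cite[Proposition 4.2]{KS13} to conclude in one stroke that $D_{v}(\Phi_{y}^{-1}\circ\Phi_{x}) = H^{u}_{xy}$ for every $v$; the fact that the derivative is constant in $v$ then yields affinity for free. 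Both routes work; the paper's is shorter because it does not need a separate identification step.

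A few local inaccuracies you should clean up before this would be a finished proof. First, $\Phi_{y}^{-1}\circ\Phi_{x}$ is affine but not linear: $(\Phi_{y}^{-1}\circ\Phi_{x})(0) = \Phi_{y}^{-1}(x)\neq 0$ whenever $x\neq y$, so you must carry the constant term $g(0)$ explicitly and cannot conclude ``$g(v)=Av$.'' Second, the iterated relation must be written with the transition map at the shifted base points, $g_{xy} = Df^{n}_{f^{-n}y}\circ g_{f^{-n}x,\,f^{-n}y}\circ Df^{-n}_{x}$, not with the same $g$ on both sides; you gesture at this in a parenthetical, but as stated the displayed relation does not typecheck. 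Third, your ``alternative observation'' that $v\mapsto\Phi_{y}(Av)$ is another chart satisfying (1)--(3) and hence equals $\Phi_{x}$ is not correct as written: $\Phi_{y}\circ A$ has domain $E^{u}_{x}$ and sends $0$ to $y$, not $x$, so it is neither a chart at $x$ nor at $y$ in the sense of Proposition \ref{proposition: charts}; the corrected candidate would be $v\mapsto\Phi_{y}(g(0)+Av)$, and even then the uniqueness clause of Proposition \ref{proposition: charts} concerns the entire family over $M$, so one cannot modify the chart at a single $x$ (or only along $\W^{u}(y)$) and appeal to it directly. That aside is not needed for your main line and is best dropped.
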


\begin{proof}

For any $n \geq 0$ and any $v \in E^{u}_{x}$ we use the defining properties of the charts $\{\Phi_{x}\}_{x \in M}$ to write
\begin{align*}
D_{v}(\Phi_{y}^{-1} \circ \Phi_{x})  &=  D_{v}(Df^{n}_{f^{-n}y} \circ \Phi_{f^{-n}y}^{-1} \circ \Phi_{f^{-n}x} \circ Df^{-n}_{x}) \\
&= Df^{n}_{f^{-n}y} \circ D_{Df^{-n}(v)}(\Phi_{f^{-n}y}^{-1} \circ \Phi_{f^{-n}x}) \circ Df^{-n}_{x} ,
\end{align*}
We have a bound 
\[
\left\| D_{Df^{-n}(v)}(\Phi_{f^{-n}y}^{-1} \circ \Phi_{f^{-n}x})    - I_{f^{-n}x f^{-n}y} \right\| \leq C(v) d(f^{-n}(x),f^{-n}(y))^{\beta},
\]
with the constant $C(v)$ depending only on the distance of $v$ from the origin in $E^{u}_{x}$,  because the charts $\{\Phi_{x}\}_{x \in M}$ vary continuously in the $C^{\infty}$ topology. From the existence of this bound and the proof of \cite[Proposition 4.2]{KS13} we conclude that 
\[
\lim_{n \rightarrow \infty} Df^{n}_{f^{-n}y} \circ D_{Df^{-n}(v)}(\Phi_{f^{-n}y}^{-1} \circ \Phi_{f^{-n}x}) \circ Df^{-n}_{x} = H^{u}_{xy}
\]
This implies that $D_{v}(\Phi_{y}^{-1} \circ \Phi_{x}) = H^{u}_{xy}$ for every $v \in E^{u}_{x}$, from which it follows that $\Phi_{y}^{-1} \circ \Phi_{x}$ is an affine map from $E^{u}_{x}$ to $E^{u}_{y}$ with linear part $H^{u}_{xy}$. 

\end{proof}

We now set $k := \dim E^{u}$ and recall that our assumption that $f$ is uniformly $u$-quasiconformal requires that $k \geq 2$. We recall the notion of a quasiconformal map between domains in $\mathbb{R}^{k}$ where we equip $\R^{k}$ with the Euclidean norm $\| \cdot \|$, 

\begin{defi}\label{definition: dilatation} 
Let $h: U \rightarrow V$ be a homeomorphism between two open subsets $U,V$ of $\RR^{k}$ .The linear dilatation of $h$ at $x \in U$ is defined to be 
\[
	L_{h}(x)=\limsup_{r\to 0}\frac{\max_{\|y-x\|=r}\|f(y)-f(x)\|}{\min_{\|y-x\|=r}\|f(y)-f(x)\|}
\]
For $K \geq 1$ we define $h$ to be \emph{$K$-quasiconformal} if $L_{h}(x) \leq K$ for every $x \in U$. 
\end{defi}

Each of the normed vector spaces $E^{u}_{x}$ (with norm induced from the Riemannian metric on $TM$) carries the linear structure of $\RR^{k}$ with a norm that is uniformly comparable to the Euclidean norm on $\RR^{k}$. Hence $K$-quasiconformality can also be defined for homeomorphisms between open subsets of $E^{u}_{x}$ and $E^{u}_{y}$ for $x,y \in M$. It is this sense of $K$-quasiconformality which is used in Lemma \ref{lemma: quasiconformal} below.   Recall that the inverse of a continuous path $\gamma:[0,1]\to M$ is defined by $\gamma^{-1}:[0,1]\to M, \gamma^{-1}(t):=\gamma(1-t)$. For two paths $\gamma_1, \gamma_2: [0,1]\to M$ such that $\gamma_1(1)=\gamma_2(0)$, denote the composition of $\gamma_1, \gamma_2$ by $$\gamma_1\cdot \gamma_2:[0,1]\to M, \gamma_1\cdot \gamma_2(t)=\begin{cases}\gamma_1(2t), \quad 0\leq t\leq \frac{1}{2}\\ \gamma_2(2t-1), \quad  \frac{1}{2}\leq t\leq 1\end{cases}$$ Finally the length of a piecewise $C^1$ path $\gamma$ is denoted by $l(\gamma)$. We make the  following crucial definition.
\begin{defi}\label{definition of center non expansive}Suppose $f$ is a dynamically coherent partially hyperbolic diffeomorphism on $M$. A path $\gamma: [0,1]\to M$ is called a \textit{ good (local)  $*-$path}, $* \in \{s,c,u,cu,cs\}$ if $\gamma$ is piecewise $C^1$ and lies entirely in one $\mathcal{W}^*$ (local) leaf.  $f$ is called \textbf{center non-expansive} if there exists $l > 0$ which satisfies the following property: for any $x\in M$, $y\in \mathcal{W}^{c}_{loc}(x), n\geq 0$ and any good local $c-$path $\gamma$ from $x$ to $y$, there exists a good $c-$path $\gamma_n$ from $f^n(x)$ to $f^n(y)$ with $l(\gamma_n)\leq l$ such that $f^n(\gamma)\cdot \gamma_n^{-1}$ represents the identity element in $Hol(\mathcal{W}^c(f^n(x)),f^n(x))$.
\end{defi}
We will see the utility of this definition later as both partially hyperbolic diffeomorphisms with uniformly compact center foliation and $C^1-$perturbations of the time one map of an Anosov flow with no periodic orbits of period $\leq 2$ are both center non-expansive. Now we can state the main result of this section.

\begin{lemma}\label{lemma: quasiconformal} 
Let $f$ be a $C^{\infty}$ dynamically coherent partially hyperbolic diffeomorphism. Suppose that $f$ is uniformly $u$-quasiconformal and center non-expansive.
Then there is a constant $K \geq 1$ such that for any two points $x \in M$, $y \in \mathcal{W}^{cs}_{loc}(x)$, the homeomorphism 
\[
\Phi_{y}^{-1} \circ h^{cs} \circ \Phi_{x}:  \Phi_{x}^{-1}(\W^{u}_{loc}(x)) \rightarrow \Phi_{y}^{-1}(\W^{u}_{loc}(y))
\]
is $K$-quasiconformal.
\end{lemma}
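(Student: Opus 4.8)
The plan is to reduce the $K$-quasiconformality of the center-stable holonomy, read in the smooth linearizing charts $\Phi_x$ of Proposition \ref{proposition: charts}, to a uniform bound on the linear dilatation of a single "limiting" linear map between $E^u_x$ and $E^u_y$, obtained by pushing the holonomy infinitely far back under $f^{-n}$ and rescaling. The key structural input is Proposition \ref{proposition: transitions}: in the $\Phi$-charts the unstable leaves are flat $\RR^k$'s and the transition maps along $\W^u$ are affine with linear part $H^u$. So the conjugated holonomy $g_{xy} := \Phi_y^{-1} \circ h^{cs} \circ \Phi_x$ is a homeomorphism between open subsets of $E^u_x$ and $E^u_y$, and we want to show its dilatation is bounded independently of $x$, $y$, and of the point in the domain.

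First I would set up the renormalization. Fix $x$, $y \in \W^{cs}_{loc}(x)$, and a point $p$ in the domain of $g_{xy}$; write $q = g_{xy}(p)$, so $\Phi_x(p) \in \W^u_{loc}(x)$ and $\Phi_y(q)$ is its $cs$-holonomy image. Using the invariance $\Phi_{f(\cdot)} \circ Df = f \circ \Phi_{\cdot}$ and the fact that $f$ intertwines the $cs$-holonomy between unstable leaves with itself (because $f$ preserves $\W^{cs}$ and $\W^u$), one gets a commuting relation
\[
g_{xy} = Df^n_{f^{-n}y} \circ g_{f^{-n}x,\,f^{-n}y}^{(n)} \circ Df^{-n}_x,
\]
where $g^{(n)}$ is the conjugated $cs$-holonomy along a short path from $f^{-n}x$ to $f^{-n}y$ obtained from the original short $cs$-path, translated forward; here the center non-expansiveness hypothesis is what guarantees that this path stays of bounded length (and represents the trivial holonomy class in $\W^c$, hence by Lemma \ref{lemma path trivial holonomy in big foliation} the trivial class in $\W^{cs}$, so the holonomy map is the genuine local one and not some long composite). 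Because $f^{-n}x$ and $f^{-n}y$ both converge along the contracting $\W^{cs}$ direction and the path joining them has bounded length, $g^{(n)}$ is a holonomy between uniformly bounded local unstable leaves of points that are uniformly close; by continuity of the charts and of the foliations, $g^{(n)}$ is uniformly $C^1$-close to the identity (more precisely to the base-point identification), so its linear dilatation at the relevant point is bounded by some fixed $K_0$ and its derivative there is within a controlled factor of a linear isometry-like map.

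Now I would estimate the dilatation of $g_{xy}$ at $p$ from this factorization. The outer maps $Df^{-n}_x$ and $Df^n_{f^{-n}y}$ are linear, so they contribute to the dilatation of the composite only through their own linear dilatations — and uniform $u$-quasiconformality says exactly that $K^u(x,n)$, the linear dilatation of $Df^n|_{E^u}$, is bounded by a constant $K_1$ uniformly in $x$ and $n$; the same bound applies to $Df^{-n}$. Since dilatation is submultiplicative under composition (for the linear dilatation of a composition of a homeomorphism with linear maps), $L_{g_{xy}}(p) \le K_1 \cdot L_{g^{(n)}}(Df^{-n}_x p) \cdot K_1 \le K_1^2 K_0 =: K$, uniformly in everything. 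Letting this hold for all $p$ gives that $g_{xy}$ is $K$-quasiconformal.

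The main obstacle, and the step deserving the most care, is the "bounded geometry of $g^{(n)}$" claim: one must check that the forward-translated path $\gamma_n$ furnished by center non-expansiveness genuinely produces, after conjugation by charts, a holonomy map whose domain contains a definite neighborhood of the relevant point and which is uniformly close to the natural linear identification — uniformly in $n$, $x$, $y$. This requires combining (i) the uniform $C^\infty$-continuity of the chart family $\{\Phi_z\}$, (ii) uniform transversality/local product structure constants $R \ge r > 0$ for the $cs/u$ holonomy, (iii) the length bound $l(\gamma_n) \le l$ and the triviality of $f^n(\gamma)\cdot\gamma_n^{-1}$ in $Hol(\W^c)$ lifted to $\W^{cs}$ via Lemma \ref{lemma path trivial holonomy in big foliation}, so that $h^{cs}_{\gamma_n}$ is the genuine local $cs$-holonomy, and (iv) the fact that $d(f^{-n}x, f^{-n}y) \to 0$ with $f^{-n}x$ staying in a compact manifold. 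A secondary subtlety is to verify the intertwining of $f$ with $cs$-holonomy between unstable leaves at the level of the charts, i.e. that $\Phi_{f(y)}^{-1}\circ h^{cs}\circ \Phi_{f(x)} = Df_x \cdot (\Phi_y^{-1}\circ h^{cs}\circ\Phi_x)\cdot Df_x^{-1}$ up to the path-translation bookkeeping; this is where one uses that $f$ maps $\W^u$ leaves to $\W^u$ leaves and $\W^{cs}$ leaves to $\W^{cs}$ leaves together with property (1) of Proposition \ref{proposition: charts}.
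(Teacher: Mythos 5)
There are genuine gaps in your proposal, and they are substantive rather than cosmetic.

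First, the direction of iteration is wrong. Your factorization $g_{xy} = Df^n_{f^{-n}y} \circ g^{(n)}_{f^{-n}x,\,f^{-n}y} \circ Df^{-n}_x$ pushes the base points \emph{backward}. But backward iteration by $f^{-n}$ expands the stable direction, so $d(f^{-n}x,f^{-n}y)$ does \emph{not} tend to $0$ for $y\in\W^{cs}_{loc}(x)$; in fact it typically blows up. The paper iterates \emph{forward}, writing $g_{xy}= Df^{-n}_y\circ\Phi^{-1}_{f^n y}\circ h^{cs}_{\gamma_n}\circ\Phi_{f^n x}\circ Df^n_x$, because $f^n$ contracts $\W^s$ and the center-non-expansiveness hypothesis (note: that definition is phrased for $n\ge 0$) is exactly what keeps the center component bounded. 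The replacement path $\gamma_n$ of length $\le C_0$ comes from the \emph{forward} images of a $cs$-path, which is what Lemma \ref{lemma forwad iterate hcs} encodes.

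Second, and more seriously, the step where you bound $L_{g^{(n)}}$ is circular. You assert that because $g^{(n)}$ is a holonomy between points that are uniformly close, it is ``uniformly $C^1$-close to the identity, so its linear dilatation at the relevant point is bounded by $K_0$.'' The $cs$-holonomy is a priori only a H\"older homeomorphism; its dilatation is a $C^1$-type quantity that $C^0$-closeness controls not at all. Bounding that dilatation is precisely what Lemma \ref{lemma: quasiconformal} is supposed to achieve, so you cannot use it as an input. The paper avoids this by never bounding a dilatation of the middle holonomy. Instead it chooses $n=n(r)$ as a function of the scale $r=\|v\|$ so that $\|Df^n(v)\|$ lands in a fixed annulus of radius $\approx\zeta$; at that \emph{fixed} scale, the only thing needed from the middle holonomy is a purely $C^0$ statement (it maps a fixed-size annulus into an annulus of controlled size, by compactness and uniform transversality constants $L(\zeta,C_0)$). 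Letting $r\to 0$, with $n(r)$ varying accordingly, bounds the linear dilatation of $g_{xy}$ \emph{at the origin only}.

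Third, you are missing the second half of the argument. Even with the dilatation at $0$ bounded, one still has to bound it at other points of the chart. The paper does this by writing $\Phi_y^{-1}\circ h^{cs}\circ\Phi_x = (\Phi_y^{-1}\circ\Phi_{h^{cs}(z)})\circ(\Phi_{h^{cs}(z)}^{-1}\circ h^{cs}\circ\Phi_z)\circ(\Phi_z^{-1}\circ\Phi_x)$, applying the ``dilatation at $0$'' bound to the middle factor, and using Proposition \ref{proposition: transitions} to observe that the outer maps are \emph{affine} with linear part $H^u$, hence uniformly bounded in dilatation. Your proposed direct estimate of $L_{g_{xy}}(p)$ at an arbitrary $p$ does not replace this step, because submultiplicativity only transfers the problem to the dilatation of the inner map at the moved point, which is exactly what is unavailable.

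In short, the renormalization idea is the right starting point, but as written the proof runs the dynamics the wrong way, assumes the conclusion for $g^{(n)}$, and omits the affine-transition step that handles points away from the origin.
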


\begin{proof}

Our strategy to prove Lemma \ref{lemma: quasiconformal} is to use forward iteration of $f$ to exploit the center non-expansive property in order to control the behavior of $ h^{cs}=h^{cs}_{xy}$ on a small \textit{unstable annulus} centered at $x$ inside of $\W^{u}(x)$. The first step is the following lemma.

\begin{lemma}\label{lemma forwad iterate hcs}There exists $C_0>0$ large enough such that for any $x\in M, y \in \mathcal{W}^{cs}_{loc}(x), n\geq 0$, there is a good $cs-$path $\gamma_n$ with $l(\gamma_n)\leq C_0$ and 
\begin{equation}\label{equation forwad iterate hcs}
\Phi_{y}^{-1} \circ h^{cs} \circ \Phi_{x}=Df_{y}^{-n} \circ \Phi_{f^{n}(y)}^{-1}  \circ h^{cs}_{\gamma_n} \circ \Phi_{f^{n}(x)} \circ Df^{n}_{x}
\end{equation}

\end{lemma}

\begin{proof}Notice that for any good local $cs-$path $\gamma$ from $x$ to $y$, $h^{cs}=h^{cs}_{xy}=h^{cs}_{\gamma}$.  Using the equivariance properties of the charts from Proposition \ref{proposition: charts}, we have for every $n \geq 0$ and good local $cs-$path $\gamma$, 
\begin{align*}
\Phi_{y}^{-1} \circ h^{cs} \circ \Phi_{x} &=\Phi_{y}^{-1} \circ h_\gamma^{cs} \circ \Phi_{x}\\
&=  \Phi_{y}^{-1} \circ f^{-n} \circ h^{cs}_{f^n(\gamma)} \circ f^{n} \circ \Phi_{x} \\
&=  Df_{y}^{-n} \circ \Phi_{f^{n}(y)}^{-1}  \circ h^{cs}_{f^n(\gamma)} \circ \Phi_{f^{n}(x)} \circ Df^{n}_{x}
\end{align*}
Without loss of generality we assume $\gamma=\gamma^c\cdot \gamma^s$, where $\gamma^c$ and $\gamma^s$ are a good local $c-$path and a good local $s-$path respectively. 
Since $f$ is center non-expansive, there is a good $c-$path $\gamma_n'$ from $f^n(x)=f^n(\gamma^c(0))$ to $f^n(\gamma^c(1)) $ with $l(\gamma_n')\leq l$ such that $f^n(\gamma^c)\cdot {\gamma_n'}^{-1}$ represents the identity in $Hol(\mathcal{W}^c(f^n(x)), f^n(x))$, where $l=l(f)$ is the bound in the Definition \ref{definition of center non expansive} that only depends on $f$. By Lemma \ref{lemma path trivial holonomy in big foliation}, $f^n(\gamma^c)\cdot {\gamma_n'}^{-1}$ also represents the identity in $Hol(\mathcal{W}^{cs}(f^n(x)),f^n(x))$, therefore $$h^{cs}_{f^n(\gamma)}=h^{cs}_{f^n(\gamma^c)\cdot f^n(\gamma^s)}=h^{cs}_{f^n(\gamma^s)}\circ h^{cs}_{f^n(\gamma^c)}=h^{cs}_{f^n(\gamma^s)}\circ h^{cs}_{\gamma_n'}=h^{cs}_{\gamma_n'\cdot f^n(\gamma^s)}$$
Let $\gamma_n$ be $\gamma_n'\cdot f^n(\gamma^s)$, then \eqref{equation forwad iterate hcs} holds. Moreover $l(\gamma_n)$ is uniformly bounded by some $C_0$ independent of the choice $x,y$ and $n$ since $f$ uniformly contracts $\mathcal{W}^s$. 
\end{proof}

We now come back to the proof of Lemma \ref{lemma: quasiconformal}. Since $f$ is uniformly $u$-quasiconformal there is a constant $\kappa \geq 1$ such that for every $x \in M$, any $r>0$, $n\in\ZZ$ and every $v,w \in E^{u}_{x}$ with $\|v\| = \|w\| =  r$,
\begin{equation}\label{eqn uniformy bounded delattion}
\kappa^{-1}\leq \frac{\|Df^{n}(w)\|}{\|Df^{n}(v)\|} \leq  \kappa
\end{equation}
We set $A:= \sup_{x \in M} \max \{\|Df|_{E^{u}(x)}\|,\|Df^{-1}|_{E^{u}(x)}\|\}$.

Recall that $\W^{*}_{r_0}(x)$ is the open ball of radius $r_0$ in $\W^{*}(x)$ centered at $x$ in the $d_{*}$ metric. The $*-$ closed ball of radius $r_1$ in $\W^{*}(x)$ centered at $x$ is denoted by $\B^{*}_{r_1}(x)$. The $*-$ closed annulus of radius $r_2, r_3 (r_2\leq r_3)$ centered at $x$ is defined by $\cR^{*}(x, r_2,r_3):=\B^{*}_{r_3}(x)\backslash \W^{*}_{r_2}(x)$.

 By compactness of $M$, for any $C>0$  there exists $\epsilon(C)$ small enough such that for any good $cs-$path $\gamma$ with $l(\gamma)\leq C$, the holonomy map $h^{cs}_\gamma$ is well-defined on $\W^u_{\epsilon}(x)$. 
 Moreover for any $0<\zeta\ll \epsilon(C)$ small enough, there is a constant $L=L(\zeta, C)$ such that for any good $cs-$path $\gamma$ such that $l(\gamma)\leq C$ and any 
 $z \in \cR^u(\gamma(0), \kappa^{-2}\zeta, A\kappa^2\zeta)$, we have that $h_\gamma^{cs}(z)$ is well-defined and 
\begin{equation}\label{eqn:csuniform}h_\gamma^{cs}(z)\in 
\cR^u(\gamma(1), L(\zeta,C)^{-1}, L(\zeta,C))
\end{equation}


Let $\epsilon:=\epsilon(C_0)$, where $C_0$ is the constant provided by Lemma \ref{lemma forwad iterate hcs}. We fix a $\zeta$ small enough such that $\kappa A\zeta\ll \epsilon$ and such that the charts $\{\Phi_{x}\}_{x \in M}$ are uniformly $C^{1}-$ close to $\id$ on the balls of radius $\zeta$ in $E^{u}_{x}$ as $x$ ranges over $M$. We define $L:=L(\zeta, C_0)$ as in \eqref{eqn:csuniform}, which only depends on the geometry of foliations.

Now we fix $x \in M$ and $y \in \mathcal{W}^{cs}_{loc}(x)$. By  Lemma \ref{lemma forwad iterate hcs} we have a family of good $cs-$paths $\{\gamma_n, n\geq 0\}$ such that 
\begin{equation}\label{eqn goodness of gamman}
\gamma_n(0)=f^n(x), \; \gamma_n(1)=f^n(y), \;l(\gamma_n)\leq C_0
\end{equation}
Note that $\Phi_{y}^{-1} \circ h^{cs} \circ \Phi_{x}(0) = 0$. Now consider $r$ such that $0<r\ll \zeta $ and let $v \in E^{u}_{x}$ be any vector with $\|v\| = r$.  By the choice of $A$, there is an integer $n(v) \geq 0$ such that  
\[
\zeta \leq \|Df^{n(v)}(v)\| \leq A\zeta
\] 
If $w \in E^{u}_{x}$ is any other vector with $\|w\| = r$, then by $\kappa$'s definition we get 
$$\kappa^{-1}\zeta\leq \|Df^{n(v)}(w)\|\leq \kappa A\zeta$$

In other words, we can choose $n(v) = n(\|v\|)$ to only depend on the norm of $v$. For definiteness we take $n(\|v\|)$ to be the maximal integer such that all $w$ with  $\|w\|=\|v\|$  satisfy the above inequality. Then by uniformity of the coordinate charts $\Phi_{x}$ we have, 
$$\Phi_{f^n(x)}\circ Df^{n(\|v\|)}(w)\in \cR^u(f^n(x), \kappa^{-2}\zeta, A\kappa^2\zeta)$$ 
Combined with \eqref{eqn:csuniform} and \eqref{eqn goodness of gamman}, recalling that $L = L(\zeta, C_{0})$ and noting that $\gamma_n$ is a good path,  we get that $$h^{cs}_{\gamma_{n}}\circ \Phi_{f^n(x)}\circ Df^{n(\|v\|)}(w) \in  \cR^u(f^n(y), L^{-1}, L)$$

Again using the uniformity of the charts $\Phi$ we conclude that there is a constant $K \geq 1$ independent of $x, y$ and $v$ (as long as $\|v\| = r$) such that 
\[
K^{-1} \leq \left\|\Phi_{f^{n(\|v\|)}(y)}^{-1} \circ h^{cs}_{\gamma_{n(\|v\|)}} \circ \Phi_{f^{n(\|v\|)}(x)} \circ Df^{n(\|v\|)}_{x}(v)\right\| \leq K
\]
By \eqref{eqn uniformy bounded delattion} the linear map $Df^{-n(\|v\|)}$ has dilatation uniformly bounded by $\kappa^2$. Using Lemma \ref{lemma forwad iterate hcs} we conclude for any pair of vectors $v,w \in E^{u}_{x}$ with $\|v\| = \|w\| = r$,  
\[
\frac{\|\Phi_{y}^{-1}\circ h^{cs}\circ\Phi_{x}(v)\|}{\|\Phi_{y}^{-1}\circ h^{cs}\circ\Phi_{x}(w)\|} =\frac{   \left\|Df^{-n(r)}\circ\Phi_{f^{n(r)}(y)}^{-1} \circ h^{cs}_{\gamma_{n(r)}} \circ \Phi_{f^{n(r)}(x)} \circ Df^{n(r)}_{x}(v)\right\|}{\left\|Df^{-n(r)}\circ\Phi_{f^{n(r)}(y)}^{-1} \circ h^{cs}_{\gamma_{n(r)}} \circ \Phi_{f^{n(r)}(x)} \circ Df^{n(r)}_{x}(w)\right\|}\leq K^{2}\kappa^2 
\]
This holds for every positive $r\ll \zeta$ small enough, no matter how small $r$ is. \color{black}
We thus conclude that the linear dilatation of $\Phi_{y}^{-1} \circ h^{cs} \circ \Phi_{x}$ at 0 is bounded above by $K^{2}\kappa^2$ \color{black} for any $x \in M$ and $y \in \mathcal{W}^{cs}_{loc}(x)$. 

To bound the dilatation of $\Phi_{y}^{-1} \circ h^{cs} \circ \Phi_{x}$ at points other than 0 in a ball of bounded radius centered at 0 in $E^{u}_{x}$, we write for $z \in \mathcal{W}^{u}_{loc}(x)$, 
\[
\Phi_{y}^{-1} \circ h^{cs} \circ \Phi_{x} = (\Phi_{y}^{-1} \circ \Phi_{h^{cs}(z)}) \circ (\Phi_{h^{cs}(z)}^{-1} \circ h^{cs} \circ \Phi_{z}) \circ ( \Phi_{z}^{-1} \circ \Phi_{x}) 
\]
The dilatation of $\Phi_{h^{cs}(z)}^{-1} \circ h^{cs} \circ \Phi_{z}$ at 0 is bounded above by $K^{2}\kappa^2$ \color{black}, by our above reasoning. By Proposition \ref{proposition: transitions} the maps $\Phi_{y}^{-1} \circ \Phi_{h^{cs}(z)}$ and  $ \Phi_{z}^{-1} \circ \Phi_{x} $ are both affine maps with linear parts $H^{u}_{y h^{cs}(z)}$ and $H^{u}_{zx}$ respectively. Since we are working on balls of bounded radius centered at $0$ in $E^{u}_{x}$ and $E^{u}_{y}$ respectively and the unstable holonomies are linear maps depending continuously on the base points which are thus a bounded distance from the identity, we conclude that after possibly increasing the constant $K$ the linear dilatation of $\Phi_{y}^{-1} \circ h^{cs} \circ \Phi_{x}$ at $\Phi_{x}^{-1}(z)$ is bounded above by $K^{3}$. This gives us the required quasiconformality assertion of the lemma. 
\end{proof}

\subsection{Absolute continuity of foliations}
\color{black}

We next recall some standard analytic properties of quasiconformal mappings. A homeomorphism $h: U \rightarrow V$ between open domains of $\RR^{k}$ is \emph{absolutely continuous} if it preserves the collection of zero sets of $k$-dimensional Lebesgue measure. There is a natural Lebesgue measure class on the space of affine lines in $\R^{k}$ given by the identification of this space with all translates of lines in $\R^{k}$, i.e., with $\mathbb{R}\mathbb{P}^{k-1} \times \RR^{k}$. Such a homeomorphism is \emph{absolutely continuous on lines} if for each of the coordinate directions $e_{1},\dots,e_{k}$ in $\RR^{k}$ we have that for almost every line $\ell \subset \R^{k}$ parallel to $e_{i}$ the restriction of $h$ to a homeomorphism $\ell \cap U \rightarrow h(\ell \cap U)$ takes subsets of $\ell \cap U$ of 1-dimensional Lebesgue measure zero to zero measure sets of $h(\ell \cap U)$, where $h(\ell \cap U)$ is equipped with the 1-dimensional Hausdorff measure in $\mathbb{R}^{k}$. Here the ``almost everywhere" quantifier on the space of lines parallel to $e_{i}$ (which we identify with $\RR^{k-1}$) is taken with respect to the Lebesgue measure on $\RR^{k-1}$. By Fubini's theorem if $h$ is ACL then $h$ is absolutely continuous. 

Let $\text{vol}_{k}$ denote the standard Lebesgue measure on $\RR^{k}$. For an absolutely continuous homeomorphism $h: U \rightarrow V$ we define the \emph{Jacobian} of $h$ to be the Radon-Nikodym derivative of $h_{*}(\text{vol}_{k})$ with respect to $\text{vol}_{k}$ and denote it by $Jac(h)$. 

We let $\| \cdot \|_{\infty}$ denote the $L^{\infty}$ norm on measurable functions $f: \RR^{k} \rightarrow \RR$,
\[
\|f\|_{\infty} = \inf_{V} \sup_{x \in V} |f(x)|
\]
where the infimum is taken over all measurable subsets $V$ of $\RR^{k}$ with $\text{vol}_{k}(\RR^{k} \backslash V) = 0$. A standard reference for the claims in Proposition \ref{proposition: quasiconformal properties} as well as a more precise discussion of the ACL property is V\"ais\"al\"a's book \cite{V71}. 

\begin{prop}\label{proposition: quasiconformal properties}
Suppose that $h: U \rightarrow V$ is a $K$-quasiconformal homeomorphism between open subsets of $\RR^{k}$, $k \geq 2$. Then $h$ is ACL, differentiable $\text{vol}_{k}$-a.e. in $U$, and we also have $\|Dh_{x}\|_{\infty}\cdot \|(Dh_{x})^{-1}\|_{\infty} \leq K $.
\end{prop}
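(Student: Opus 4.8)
The statement to prove is Proposition~\ref{proposition: quasiconformal properties}, which asserts that a $K$-quasiconformal homeomorphism $h: U \to V$ between open subsets of $\RR^k$ with $k \geq 2$ is ACL, differentiable $\mathrm{vol}_k$-a.e., and satisfies $\|Dh_x\|_\infty \cdot \|(Dh_x)^{-1}\|_\infty \leq K$. Since this is a standard fact from the theory of quasiconformal mappings, the proof should largely be a matter of assembling known results from V\"ais\"al\"a's book \cite{V71}, so I will describe how I would organize the citation-based argument rather than reprove the entire theory of quasiconformal maps.

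\textbf{Plan.} First I would recall that the linear dilatation condition $L_h(x) \leq K$ for all $x \in U$ is precisely the \emph{metric definition} of quasiconformality (with the constant $K$ possibly relabeled; in V\"ais\"al\"a this is sometimes written with a slightly different normalization, e.g.\ requiring $H(x) = \limsup L_h(x)$ bounded, but the equivalence of the pointwise-everywhere-bounded version with the analytic definition is standard). The key classical theorem — V\"ais\"al\"a \cite{V71}, specifically the chapters establishing the equivalence of the metric, geometric, and analytic definitions of quasiconformality — states that a homeomorphism with uniformly bounded linear dilatation is ACL and belongs to the Sobolev space $W^{1,k}_{\mathrm{loc}}$; by Gehring's higher-integrability theorem it is in fact in $W^{1,p}_{\mathrm{loc}}$ for some $p > k$. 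In particular, being in $W^{1,k}_{\mathrm{loc}}$ with $k \geq 2$, $h$ is differentiable $\mathrm{vol}_k$-a.e.\ in $U$ (this uses the classical theorem that a $W^{1,p}_{\mathrm{loc}}$ map with $p > k$ — or, via the more delicate argument, even $p = k$ for quasiconformal maps — is a.e.\ differentiable; V\"ais\"al\"a treats this directly). I would cite these as coming from \cite{V71}.

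\textbf{The dilatation bound.} Once we know $h$ is differentiable a.e., at each point of differentiability $x$ the derivative $Dh_x$ is a linear map, and since $h$ is a homeomorphism that is ACL with a.e.-nonvanishing Jacobian (the Jacobian $J_h = \det Dh_x$ is positive a.e.\ for a sense-preserving quasiconformal map — again \cite{V71}), $Dh_x$ is invertible for a.e.\ $x$. At such a point, the linear dilatation $L_h(x)$ equals the dilatation of the linear map $Dh_x$, namely $\|Dh_x\| \cdot \|(Dh_x)^{-1}\|$ (this is just the fact that for an invertible linear map $T$, the ratio of the largest to smallest singular value is $\|T\| \cdot \|T^{-1}\|$, and this ratio is exactly the limiting value of the ratio $\max_{\|y-x\|=r}\|h(y)-h(x)\| \big/ \min_{\|y-x\|=r}\|h(y)-h(x)\|$ when $h$ is differentiable at $x$ with invertible derivative). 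Therefore $\|Dh_x\| \cdot \|(Dh_x)^{-1}\| = L_h(x) \leq K$ for a.e.\ $x$, and taking essential suprema gives $\|Dh_x\|_\infty \cdot \|(Dh_x)^{-1}\|_\infty \leq K$. Strictly one should note $\|Dh_x\|_\infty \cdot \|(Dh_x)^{-1}\|_\infty$ could a priori exceed the essential supremum of the pointwise product, but here the pointwise product is uniformly bounded by $K$ on the full-measure set of differentiability-with-invertible-derivative points, so both essential suprema are controlled and their product is $\leq K$.

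\textbf{Main obstacle.} The genuinely nontrivial input — the part one cannot do by hand in a paragraph — is the implication ``uniformly bounded linear dilatation $\Rightarrow$ ACL and a.e.\ differentiable.'' This is the heart of the equivalence between the metric and analytic definitions of quasiconformality and is a substantial theorem (it uses capacity/modulus estimates and a covering argument); I would not attempt to reprove it and would simply invoke \cite{V71}. The remaining steps — identifying $L_h(x)$ at a differentiability point with the operator-norm dilatation of $Dh_x$, and the a.e.\ invertibility of $Dh_x$ — are elementary linear algebra plus the standard fact about positivity of the Jacobian, and pose no real difficulty. So the proof I would write is essentially a two-sentence citation of the relevant theorems in \cite{V71} followed by the short linear-algebra computation above.
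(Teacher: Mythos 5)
The paper gives no proof of this proposition at all: it simply cites V\"ais\"al\"a \cite{V71} as a standard reference, and your plan of assembling the classical results (equivalence of metric and analytic definitions of quasiconformality, ACL, a.e.\ differentiability, a.e.\ positivity of the Jacobian) together with the elementary linear-algebra identification of $L_h(x)$ at a point of differentiability with $\|Dh_x\|\cdot\|(Dh_x)^{-1}\|$ is exactly the intended route.

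However, the ``strictly one should note'' caveat is logically broken. You correctly observe that the product of two $L^\infty$-norms can exceed the $L^\infty$-norm of the pointwise product, but the claimed resolution --- that the a.e.\ pointwise bound $\|Dh_x\|\cdot\|(Dh_x)^{-1}\|\leq K$ ``controls both essential suprema'' --- is a non sequitur: on $(0,1)$ take $f(x)=x$ and $g(x)=1/x$; then $fg\equiv 1$ yet $\|g\|_\infty=\infty$. The same happens for honest quasiconformal maps: for the radial stretch $h(x)=x|x|^{\alpha-1}$ with $\alpha>1$ on a ball about the origin, the pointwise ratio of singular values of $Dh_x$ is constant while $\|(Dh_x)^{-1}\|$ blows up near $0$, so the product of essential suprema is infinite. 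Thus, if $\|Dh_x\|_\infty\cdot\|(Dh_x)^{-1}\|_\infty$ were read literally as the product of the essential suprema of the scalar functions $x\mapsto\|Dh_x\|$ and $x\mapsto\|(Dh_x)^{-1}\|$, the proposition would be false. The statement must be read as asserting that the single function $x\mapsto\|Dh_x\|\cdot\|(Dh_x)^{-1}\|$ is essentially bounded by $K$, which is precisely what your linear-algebra computation establishes; so you should delete the caveat paragraph or replace it with a remark pinning down this reading, rather than assert something that does not follow.
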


We next discuss the notion of absolute continuity of a foliation. Let $m$ be a measure on $M$ which is equivalent to the Riemannian volume. Let $\W$ be a $k$-dimensional foliation of an $n$-dimensional Riemannian manifold $M$ which is tangent to a continuous subbundle $E$ of $M$. For each $y \in M$ we let $\W_{r}(y)$ denote the ball of radius $r$ in the induced Riemannian metric on the leaf $\W(y)$ through $y$ which is centered at $y$. Then there is a family of conditional measures $\{m_{x}^{\mathcal{W}}\}_{x \in M}$ of $m$ on the foliation $\mathcal{W}$ with the following properties: for each $x \in M$  we have $m_{x}^{\mathcal{W}}( M \backslash \mathcal{W}(x)) = 0$, the function $x \rightarrow m_{x}^{\W}$ is constant on the leaves of $\W$, and if $S_{x}$ denotes a small $(n-k)$-dimensional disk passing through $x$ and transverse to $\W$ and 
\[
V_{x} := \bigcup_{y \in S_{x}} \W_{r}(y),
\]
denotes an open neighborhood of $x$, then up to scaling $m_{y}^{\W}$ on each local leaf $\W_{r}(y)$ the family $\{m_{y}^{\mathcal{W}}|_{\W_{r}(y)}\}_{y \in S_{x}}$ coincides with the classically defined notion of disintegration of a measure with respect to a measurable partition given by Rokhlin \cite{Ro49}. The family $\{m_{x}^{\mathcal{W}}\}_{x \in M}$ is uniquely defined up to $m$-null sets of $M$ and up to scaling each of the measures on a given leaf of $\W$ by a positive constant. We refer to \cite[Section 3]{AVW} for the proof of the existence and uniqueness of the disintegration claimed in this paragraph. 

For a submanifold $S$ of $M$ we let $\nu^{S}$ be the induced Riemannian volume on $S$ from $M$. We define a $k$-dimensional foliation $\W$ to be \emph{strongly absolutely continuous} if for any pair of nearby smooth transversal $(n-k)$-dimensional submanifolds $S_{1}$ and $S_{2}$ for $\W$ the $\W$-holonomy map $h^{\W}: S_{1} \rightarrow S_{2}$ is absolutely continuous with respect to the measures $\nu^{S_{1}}$ and $\nu^{S_{2}}$, i.e., $h_{*}(\nu^{S_{1}})$ is absolutely continuous with respect to $\nu^{S_{2}}$. Every $C^{1}$ foliation is strongly absolutely continuous. The most important examples of strongly absolutely continuous foliation for purposes are the stable and unstable foliations $\W^{s}$ and $\W^{u}$ of a partially hyperbolic diffeomorphism; strong absolute continuity of these foliations is well-known and a proof may be found in \cite{AV16}.

What we call ``strong absolute continuity" is the notion of absolute continuity used in \cite{AVW}, but this notion of absolute continuity is too strong for our purposes. We define a weaker notion of absolute continuity below, 

\begin{defi}\label{defn: ac}
A foliation $\W$ is  \emph{absolutely continuous} if for each $x \in M$ there is an open neighborhood $V$ of $x$ and a strongly absolutely continuous foliation $\mathcal{F}$ of $V$ transverse to $\W$ such that for any pair of points $y,z \in V$ the $\W$-holonomy map $h^{\W}: \mathcal{F}(y) \rightarrow \mathcal{F}(z)$ is absolutely continuous with respect to the induced Riemannian volumes on  $\mathcal{F}(y)$ and $\mathcal{F}(z)$ respectively. 
\end{defi}

This definition is weaker because we only require the existence of a \emph{particular} foliation $\mathcal{F}$ transverse to $\W$ for which the $\W$-holonomy maps between any pair of leaves are absolutely continuous. We emphasize that the transverse foliation $\mathcal{F}$ need not be smooth in our definition.  


Given a foliation $\W$ we say that $m$ has \emph{Lebesgue disintegration along $\W$} if for $m$-a.e. $x \in M$ the conditional measure $m_{x}^{\W}$ on the leaf $\W(x)$ is equivalent to the induced Riemannian volume on $\W(x)$ from $M$. Our definition of absolute continuity is designed such that the following proposition is true, 


\begin{prop}\label{prop: transverse ac}
Suppose $\W$ is an absolutely continuous foliation with respect to a transverse strongly absolutely continuous foliation $\mathcal{F}$. Then $m$ has Lebesgue disintegration along $\W$.
\end{prop}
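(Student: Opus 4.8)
The plan is to work in a small flow box adapted to the pair $(\W,\mathcal F)$ and compare the disintegration of $m$ along $\W$ with the product-type structure furnished by the transverse foliation $\mathcal F$. Fix $x\in M$ and choose the neighborhood $V$ of $x$ and the strongly absolutely continuous transverse foliation $\mathcal F$ supplied by Definition \ref{defn: ac}. Shrinking $V$, identify it with a product box in which the $\W$-leaves and the $\mathcal F$-leaves form the two transverse "coordinate" foliations; choose a smooth $(n-k)$-disk $S=S_x$ transverse to $\W$, which is then a union of local $\mathcal F$-leaves' intersection points, and for each $y\in S$ let $\W_{\mathrm{loc}}(y)$ be the local $\W$-leaf. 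First I would write $m|_V$, up to the smooth equivalence with Riemannian volume, via Fubini in the box: because $\mathcal F$ is strongly absolutely continuous (its holonomy maps are absolutely continuous with respect to Riemannian volumes on transversals), the disintegration of $m|_V$ over the quotient $V/\mathcal F\cong \W_{\mathrm{loc}}(x)$ has conditionals equivalent to the Riemannian volume on the $\mathcal F$-leaves — this is the classical statement that a strongly absolutely continuous foliation has Lebesgue disintegration, which I may invoke since $C^1$ and the $W^{s},W^{u}$-type foliations are covered by the references \cite{AVW, AV16}.

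The heart of the argument is to transfer this to the $\W$-direction using the hypothesis on $\W$-holonomy between $\mathcal F$-leaves. Using the $\W$-holonomy maps $h^{\W}\colon \mathcal F(y)\to\mathcal F(z)$, which by hypothesis are absolutely continuous with respect to the induced Riemannian volumes, I would build a single reference leaf $\mathcal F(y_0)$ and push its Riemannian volume around by $\W$-holonomy to get, for $m$-a.e.\ point, a measure on each $\mathcal F$-leaf that is simultaneously (i) equivalent to Riemannian volume on that leaf (from the previous paragraph) and (ii) the $\W$-holonomy image of the reference measure (with Radon–Nikodym derivative in $L^1_{\mathrm{loc}}$, by absolute continuity of $h^\W$). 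Concretely: parametrize $V$ by $(s,u)$ where $s$ ranges over the reference leaf $\mathcal F(y_0)$ and $u$ over the reference leaf $\W_{\mathrm{loc}}(x)$, with the first coordinate constant along $\W$-leaves via $h^\W$ and the second constant along $\mathcal F$-leaves; then $m|_V$ pulls back to a measure on $(\text{leaf})\times(\text{leaf})$ whose disintegration over the $u$-variable has conditionals equivalent to Riemannian volume on $\mathcal F$-leaves, and whose disintegration over the $s$-variable I claim has conditionals equivalent to Riemannian volume on $\W$-leaves. The claim follows by a Fubini/Rokhlin swap: for $m$-a.e.\ $s$, the conditional $m$ on $\{s\}\times\W_{\mathrm{loc}}$ is obtained by disintegrating a measure which is absolutely continuous in the $(s,u)$-box (since both transverse disintegrations are Lebesgue) and hence, by Fubini, is absolutely continuous in the $u$-variable for a.e.\ $s$; that this conditional is also equivalent (not merely absolutely continuous) to Riemannian volume comes from running the same argument with $m$ and Riemannian volume interchanged, which is legitimate because $m$ and $\mathrm{vol}$ are smoothly equivalent and strong absolute continuity of $\mathcal F$ is symmetric.

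Finally I would check the matching of conditional measures of $m$ on $\W$-leaves in the box with the globally defined disintegration $\{m_x^\W\}$ from the paragraph preceding the statement: both are characterized, up to scaling on each leaf and up to $m$-null sets, by the Rokhlin disintegration property relative to a transversal, so they agree $m$-a.e., and therefore Lebesgue disintegration in the box for a full-measure set of $x$ gives $m_x^\W\ll \mathrm{vol}_{\W(x)}$ and $\mathrm{vol}_{\W(x)}\ll m_x^\W$ for $m$-a.e.\ $x$. The main obstacle I expect is the bookkeeping in the Fubini swap: one must be careful that "almost every $\mathcal F$-leaf" and "almost every $\W$-leaf" statements are compatible and that the Radon–Nikodym derivatives obtained from the two transverse absolute-continuity hypotheses can be multiplied to yield a genuine density of $m|_V$ against $\mathrm{vol}_V$, rather than just a one-sided absolute continuity in each direction separately. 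This is exactly the point where strong absolute continuity of $\mathcal F$ (as opposed to mere absolute continuity) is used: it guarantees that the transverse measure on the quotient $V/\mathcal F$ is itself in the Lebesgue class, which is what lets the two one-dimensional-direction statements be assembled into a two-sided statement in the box.
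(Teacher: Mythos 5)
Your proposal follows essentially the same route as the paper's proof: work in a local product box, start from the $\mathcal{F}$-disintegration of $m$ over the smooth transversal $\W_V(x)$ (valid because $\mathcal{F}$ is strongly absolutely continuous), use the absolute continuity of $\W$-holonomy between $\mathcal{F}$-leaves to push everything to a reference $\mathcal{F}$-leaf, swap the order of integration by Fubini, and then read off the $\W$-conditionals. The paper makes your "Fubini/Rokhlin swap" concrete with two explicit changes of variable — first the $\W$-holonomy $p_y$ with Jacobian $q_y$, then, after interchanging integrals, the $\mathcal{F}$-holonomy $\bar p_s\colon\W_V(s)\to\W_V(x)$ with Jacobian $\bar q_s$ — yielding the density $q_{y(r)}(s)\,\delta_{y(r)}(r)\,\bar q_s(r)$ against $\nu^{\W_V(s)}$. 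The one spot where your sketch is a bit loose is at the very end: "absolutely continuous in the $u$-variable" is a statement on the straight transversal $\W_V(x)$, and the passage from there to the intrinsic Riemannian volume on the curved leaf $\W_V(s)$ is a second, separate application of strong absolute continuity of $\mathcal{F}$ (as holonomy between the two transversals $\W_V(x)$ and $\W_V(s)$), not just the use at the quotient $V/\mathcal{F}$ that you single out; once that Jacobian $\bar q_s$ is in place, positivity of the resulting density (hence two-sided equivalence rather than one-sided absolute continuity) follows because all holonomies involved are absolutely continuous in both directions, without any need to rerun the argument with $m$ and volume interchanged.
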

\begin{proof}
Fix a point $x \in M$ and let $V$ be an open neighborhood of $x$ on which there is a strongly absolutely continuous foliation $\mathcal{F}$ transverse to $\W$ for which the $\W$-holonomy maps between any two $\mathcal{F}$-leaves are absolutely continuous. In the case that $\mathcal{F}$ is a $C^{1}$ foliation, the proof that the conclusion of the proposition holds is given by \cite[Proposition 6.2.2]{BS02}. However the only property of the transversal foliation $\mathcal{F}$ which is used in that proof is the strong absolute continuity, for completeness we give a detailed proof using only this strong absolute continuity property. 

Without loss of generality we assume that $V$ has a local product structure, i.e. for any $x',x''\in V$, the local leaves $\mathcal{F}_{loc}(x')\cap V$ and $\W_{loc}(x'')\cap V$ intersect at exactly one point in $V$. If we denote $\mathcal{F}_{loc}(x')\cap V, \mathcal{W}_{loc}(x'')\cap V$ by $\mathcal{F}_V(x')$ and $\mathcal{W}_V(x'')$ respectively for any $x',x''\in V$, then we have 
\begin{equation}
V=\cup_{y\in \W_V(x)}\mathcal{F}_V(y)=\cup_{s\in \mathcal{F}_V(x)}\W_V(s)
\end{equation}

Since $\mathcal{F}$ is a strongly absolutely continuous foliation, there exists a positive measurable conditional density $\delta_y(\cdot)$ for $\nu^{\W_{V}(x)}$-almost every $y\in \W_V(x)$ such that for any measurable subset $A\subset V$ we have 
\begin{equation}\label{m(A)}
m(A)=\int_{\W_V(x)}\int_{\mathcal{F}_V(y)}\mathbb{1}_A(y,z)\delta_y(z)d\nu^{\mathcal{F}_V(y)}(z)d\nu^{\W_{V}(x)}(y)
\end{equation} 
where we recall from above that $\nu^S$ denotes the induced Riemannian volume on the submanifold $S$.

Let $p_y(\cdot)$ denote the holonomy maps along the leaves of $\W$ from $\mathcal {F}_V(x)$ to $\mathcal {F}_V(y)$, and let $q_y(\cdot)$ denote the Jacobian of $p_y$. We have 
\begin{equation}
\int_{\mathcal{F}_V(y)}\mathbb{1}_A(y,z)\delta_y(z)d\nu^{F_V(y)}(z)=\int_{F_V(x)}\mathbb{1}_A(p_y(s))\delta_y(p_y(s))q_y(s)d\nu^{F_V(x)}(s),
\end{equation}
and by changing the order of integration in \eqref{m(A)} we get 
\begin{equation}\label{m(A) change order}
m(A)=\int_{\mathcal{F}_V(x)}\int_{\W_V(x)}\mathbb{1}_A(p_y(s))\delta_y(p_y(s)) q_y(s)d\nu^{\W_V(x)}(y) d\nu^{F_V(x)}(s).
\end{equation} 

Let $\bar{p}_s(\cdot)$ denote the holonomy map along the leaves of $\mathcal{F}_V$ from $\W_V(s)$ to $\W_V(x)$. Since $\mathcal{F}$ is a strongly absolutely continuous foliation the map $\bar{p}_s(\cdot)$ is absolutely continuous and thus admits a Jacobian $\bar{q}_s$ with respect to the induced volumes on $\W_V(s)$ and $\W_V(x)$ respectively. 

We transform the inner integral in \eqref{m(A) change order} into an integral over $\W_V(s)$ by making the change of variables $r=p_y(s)$. Note that $y = y(r)$ is uniquely determined by $r$ and is continuous as a function of $r$. Therefore we have
\begin{eqnarray}
&&\int_{\W_V(x)}\mathbb{1}_A(p_y(s))q_y(s)\delta_y(p_y(s)) d\nu^{\W_V(x)}(y) \nonumber\\
&=&\int_{\W_V(s)}\mathbb{1}_A(r)q_{y(r)}(s)\delta_{y(r)}(r) \bar{q}_s(r)d\nu^{\W_V(s)}(r).
\end{eqnarray} 
Combining this with  \eqref{m(A) change order} we get 
\begin{equation}
m(A)=\int_{\mathcal{F}_V(x)}\int_{\W_V(s)}\mathbb{1}_A(s,r)q_{y(r)}(s)\delta_{y(r)}(r)\bar{q}_s(r)d\nu^{\W_V(s)}(r)d\nu^{F_V(x)}(s),
\end{equation}
which implies the statement of the proposition.
\end{proof}

By appealing to the equations \eqref{m(A)} and \eqref{m(A) change order} derived in the proof of Proposition \ref{prop: transverse ac} we obtain the following corollary. For $x \in M$ we continue to let $V_{x}$ denote an open subset of $M$ containing $x$ on which the combination of the two transverse foliations $\mathcal{W}$ and $\mathcal{F}$ has local product structure. For two measures $\mu$ and $\nu$ we write $\mu \asymp \nu$ if these two measures are equivalent, i.e., they have the same null sets. We will need the following equivalences later, 

\begin{cor}\label{mutual continuity}
Suppose that $\mathcal{W}$ is an absolutely continuous foliation with respect to a transverse strongly absolutely continuous foliation $\mathcal{F}$. Then any $x \in M$ we have the equivalence of measures on $V_{x}$, 
\[
m \asymp \int_{\mathcal{W}_{V}(x)} \nu^{\mathcal{F}_{V}(y)} \, d\nu^{\mathcal{W}_{V}(x)}(y) \asymp \int_{\mathcal{F}_{V}(x)} \nu^{\mathcal{W}_{V}(z)} \, d\nu^{\mathcal{F}_{V}(x)}(z)  
\]
\end{cor}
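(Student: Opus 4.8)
The plan is to read off the corollary directly from the two measure-decomposition formulas established in the proof of Proposition \ref{prop: transverse ac}. Recall that in that proof we fixed $x \in M$, chose an open set $V = V_x$ on which $\mathcal{W}$ and $\mathcal{F}$ have local product structure, and derived, for every measurable $A \subset V$, the identity \eqref{m(A)}
\[
m(A)=\int_{\W_V(x)}\int_{\mathcal{F}_V(y)}\mathbb{1}_A(y,z)\,\delta_y(z)\,d\nu^{\mathcal{F}_V(y)}(z)\,d\nu^{\W_{V}(x)}(y),
\]
where $\delta_y(\cdot)$ is a \emph{positive} measurable conditional density coming from the strong absolute continuity of $\mathcal{F}$. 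The first step is to observe that positivity of $\delta_y$ (together with $\delta_y$ being finite $\nu^{\W_V(x)}$-a.e.\ in $y$, and the fact that $m$ is a finite measure so these densities and Jacobians are genuine Radon--Nikodym derivatives) is exactly what lets us invert the decomposition: a set $A$ is $m$-null if and only if for $\nu^{\W_V(x)}$-a.e.\ $y$ the slice $A \cap \mathcal{F}_V(y)$ is $\nu^{\mathcal{F}_V(y)}$-null, i.e.\ if and only if $A$ is null for the measure $\int_{\W_V(x)} \nu^{\mathcal{F}_V(y)}\, d\nu^{\W_V(x)}(y)$. This gives the first equivalence $m \asymp \int_{\W_V(x)} \nu^{\mathcal{F}_V(y)}\, d\nu^{\W_V(x)}(y)$.

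For the second equivalence I would use the reordered formula \eqref{m(A) change order} and the subsequent change of variables carried out in the proof, which yields
\[
m(A)=\int_{\mathcal{F}_V(x)}\int_{\W_V(s)}\mathbb{1}_A(s,r)\,q_{y(r)}(s)\,\delta_{y(r)}(r)\,\bar{q}_s(r)\,d\nu^{\W_V(s)}(r)\,d\nu^{F_V(x)}(s).
\]
Here $q_{y}(\cdot)$ is the Jacobian of the $\W$-holonomy $p_y$ between $\mathcal{F}$-leaves and $\bar q_s(\cdot)$ is the Jacobian of the $\mathcal{F}$-holonomy between $\W$-leaves; both are defined and positive a.e.\ precisely because $\W$ is absolutely continuous with respect to $\mathcal{F}$ and $\mathcal{F}$ is strongly absolutely continuous. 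As before, $\delta$, $q$, $\bar q$ are all positive a.e., so the same "invert the iterated integral" argument shows $A$ is $m$-null iff $A$ is null for $\int_{\mathcal{F}_V(x)} \nu^{\W_V(s)}\, d\nu^{\mathcal{F}_V(x)}(s)$, which is the third measure in the statement (after renaming the dummy variable $s$ to $z$). Stringing the two equivalences together gives the corollary on $V_x$.

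The only genuinely non-routine point — and the one I would be careful about — is justifying the "if and only if" in the inversion step, i.e.\ that equality of the iterated integrals for all measurable $A$ upgrades to equivalence of the measures. This needs that the densities $\delta_y$, and likewise $q_{y(r)}(s)$ and $\bar q_s(r)$, are strictly positive and finite on a full-measure set, so that multiplying or dividing by them does not change null sets; positivity of $\delta_y$ was explicitly recorded in the proof of Proposition \ref{prop: transverse ac}, and positivity of the holonomy Jacobians is automatic since holonomy maps are homeomorphisms whose inverses are also absolutely continuous. One should also note that the measures $\int \nu^{\mathcal{F}_V(y)}\,d\nu^{\W_V(x)}(y)$ and $\int \nu^{\W_V(s)}\,d\nu^{\mathcal{F}_V(x)}(s)$ are $\sigma$-finite (indeed finite, by compactness of the local leaves), so Fubini-type manipulations are legitimate. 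Given all this, the proof is a short bookkeeping argument rather than anything substantive, which is why it is stated as a corollary.

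\begin{proof}
Fix $x \in M$ and let $V = V_x$ be as above. In the proof of Proposition \ref{prop: transverse ac} we obtained the formula \eqref{m(A)}, in which the conditional densities $\delta_y(\cdot)$ are positive and $\nu^{\mathcal{F}_V(y)}$-integrable for $\nu^{\W_V(x)}$-a.e.\ $y \in \W_V(x)$. Consequently, for a measurable set $A \subset V$ we have $m(A) = 0$ if and only if $\int_{\mathcal{F}_V(y)} \mathbb{1}_A(y,z)\, d\nu^{\mathcal{F}_V(y)}(z) = 0$ for $\nu^{\W_V(x)}$-a.e.\ $y$, which is exactly the statement that $A$ is null for $\int_{\W_V(x)} \nu^{\mathcal{F}_V(y)}\, d\nu^{\W_V(x)}(y)$. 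This proves the first equivalence.

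Similarly, the formula \eqref{m(A) change order} together with the change of variables $r = p_y(s)$ performed in the proof of Proposition \ref{prop: transverse ac} gives
\[
m(A)=\int_{\mathcal{F}_V(x)}\int_{\W_V(s)}\mathbb{1}_A(s,r)\,q_{y(r)}(s)\,\delta_{y(r)}(r)\,\bar{q}_s(r)\,d\nu^{\W_V(s)}(r)\,d\nu^{F_V(x)}(s),
\]
where $q_{y(r)}(\cdot)$ is the Jacobian of the $\W$-holonomy between $\mathcal{F}$-leaves, $\bar q_s(\cdot)$ is the Jacobian of the $\mathcal{F}$-holonomy between $\W$-leaves, and $\delta$ is as before; all three factors are positive and integrable on a full-measure set by the absolute continuity of $\W$ with respect to $\mathcal{F}$ and the strong absolute continuity of $\mathcal{F}$. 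As in the previous paragraph it follows that $m(A) = 0$ if and only if $A$ is null for $\int_{\mathcal{F}_V(x)} \nu^{\W_V(z)}\, d\nu^{\mathcal{F}_V(x)}(z)$ (renaming the dummy variable $s$ to $z$). Combining the two equivalences yields the assertion of the corollary.
\end{proof}
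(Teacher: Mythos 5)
Your proof is correct and follows exactly the route the paper intends: the paper states the corollary as an immediate consequence of the two iterated-integral formulas \eqref{m(A)} and \eqref{m(A) change order} (and the change-of-variables formula derived after the latter) from the proof of Proposition \ref{prop: transverse ac}, and your argument is precisely a careful unpacking of why strict positivity of the densities $\delta_y$, $q_{y(r)}$, $\bar q_s$ upgrades these identities to equivalences of measures. The write-up adds useful justification (positivity and integrability of the densities, $\sigma$-finiteness for the Fubini step) that the paper leaves implicit.
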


Combining Proposition \ref{prop: transverse ac} with our work above leads to the following conclusions,

 \begin{cor}\label{corollary: cs absolute continuity}
Let $f$ be a $C^{\infty}$ dynamically coherent, center non-expansive partially hyperbolic diffeomorphism. 
\begin{enumerate}
\item If $f$ is uniformly $u$-quasiconformal, then $m$ has Lebesgue disintegration along $\W^{cs}$-leaves. 

\item
If $f$ is uniformly quasiconformal, then for $* \in \{cs,c,cu\}$, $m$ has Lebesgue disintegration along $\W^{*}$ leaves.
\end{enumerate}
\end{cor}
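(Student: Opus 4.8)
The plan is to deduce Corollary~\ref{corollary: cs absolute continuity} from Lemma~\ref{lemma: quasiconformal} together with Proposition~\ref{prop: transverse ac}, by exhibiting in each case a concrete strongly absolutely continuous transverse foliation $\mathcal{F}$ for which the $\mathcal{W}^{*}$-holonomy maps between $\mathcal{F}$-leaves are absolutely continuous. For part (1), fix $x \in M$; near $x$ the leaf $\mathcal{W}^{u}(x)$ is a natural candidate transversal to $\mathcal{W}^{cs}$ because $E^{u} \oplus (E^{c}\oplus E^{s}) = TM$, and $\mathcal{F} := \mathcal{W}^{u}$ restricted to a small product neighborhood $V$ of $x$ is strongly absolutely continuous since $\mathcal{W}^{u}$ is (this is the well-known absolute continuity of the strong unstable foliation quoted before Definition~\ref{defn: ac}). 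So the only thing to check is that for $y,z$ in $V$ the center-stable holonomy $h^{cs}\colon \mathcal{W}^{u}_{loc}(y)\to \mathcal{W}^{u}_{loc}(z)$ is absolutely continuous with respect to the induced Riemannian volumes on these unstable discs.

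The key point is the factorization $h^{cs} = \Phi_{z} \circ \big(\Phi_{z}^{-1}\circ h^{cs}\circ \Phi_{y}\big)\circ \Phi_{y}^{-1}$. By Lemma~\ref{lemma: quasiconformal} the middle map $\Phi_{z}^{-1}\circ h^{cs}\circ \Phi_{y}$ is a $K$-quasiconformal homeomorphism between open subsets of the $k$-dimensional normed spaces $E^{u}_{y}$ and $E^{u}_{z}$ (with $k = \dim E^{u}\geq 2$), hence by Proposition~\ref{proposition: quasiconformal properties} it is ACL, differentiable almost everywhere, and in particular absolutely continuous: it maps Lebesgue-null sets to Lebesgue-null sets. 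The outer maps $\Phi_{y}$ and $\Phi_{z}$ are $C^{\infty}$ diffeomorphisms from Proposition~\ref{proposition: charts}, so they are bi-Lipschitz on bounded sets and preserve the Lebesgue measure class; composing, $h^{cs}$ preserves null sets for the induced Riemannian volume (which on each unstable disc is equivalent, via the smooth chart $\Phi$, to Lebesgue measure on $E^{u}$). This verifies the hypothesis of Definition~\ref{defn: ac} with $\mathcal{F} = \mathcal{W}^{u}$, so $\mathcal{W}^{cs}$ is an absolutely continuous foliation, and Proposition~\ref{prop: transverse ac} then gives that $m$ has Lebesgue disintegration along $\mathcal{W}^{cs}$.

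For part (2), assume $f$ is uniformly quasiconformal; then the symmetric statement for $\mathcal{W}^{cu}$ follows by applying part (1) to $f^{-1}$ (which is uniformly $u$-quasiconformal in the roles exchanged, is still dynamically coherent with the same invariant foliations, and is still center non-expansive — one should note the center non-expansiveness is a property visible after passing to $f^{-1}$ in the two classes of examples, or simply invoke the symmetric version of Lemma~\ref{lemma: quasiconformal} for $h^{cu}$ directly, whose proof is identical with stable and unstable interchanged). This handles $* = cs$ and $* = cu$. For $* = c$: note $\mathcal{W}^{c}$ subfoliates both $\mathcal{W}^{cs}$ and $\mathcal{W}^{cu}$. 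On a small product neighborhood $V$ of $x$, use the transverse foliation $\mathcal{F} = \mathcal{W}^{u}$ again — it is strongly absolutely continuous and transverse to $\mathcal{W}^{c}$ inside $\mathcal{W}^{cu}$-plaques — but now one needs the $\mathcal{W}^{c}$-holonomy between unstable discs to be absolutely continuous. Here the cleanest route is to combine the $cs$ and $cu$ results via Corollary~\ref{mutual continuity}: from part (1) applied inside the center-stable and center-unstable leaves one gets that $m$ restricted to a $\mathcal{W}^{cu}$-leaf has Lebesgue disintegration along its $\mathcal{W}^{c}$-subfoliation, and that the transverse (to $\mathcal{W}^{c}$ within $\mathcal{W}^{cu}$) unstable holonomies are absolutely continuous; assembling these with the already-established Lebesgue disintegration of $m$ along $\mathcal{W}^{cu}$ and a Fubini argument as in the proof of Proposition~\ref{prop: transverse ac} yields Lebesgue disintegration of $m$ along $\mathcal{W}^{c}$.

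The main obstacle is really just bookkeeping: checking that the quasiconformality statement of Lemma~\ref{lemma: quasiconformal} transfers cleanly to \emph{Riemannian-volume} absolute continuity of $h^{cs}$ (as opposed to Lebesgue-in-charts), which is routine since $\Phi_{x}$ is a smooth diffeomorphism with $D_{0}\Phi_{x}=\mathrm{Id}$ and the family varies continuously in $C^{\infty}$; and, for the center foliation, correctly iterating the disintegration two steps (first within center-stable leaves, then globally, or equivalently first within center-unstable leaves) without circularity — the symmetric version of Lemma~\ref{lemma: quasiconformal} for $h^{cu}$ is what makes this legitimate. No new dynamics is needed beyond what is already proved; the content is entirely in Lemma~\ref{lemma: quasiconformal}, Proposition~\ref{proposition: quasiconformal properties}, and Proposition~\ref{prop: transverse ac}.
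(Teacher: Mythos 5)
Your proposal is correct and follows essentially the same route as the paper: apply Lemma~\ref{lemma: quasiconformal} together with Proposition~\ref{proposition: quasiconformal properties} to get absolute continuity of the $cs$- and $cu$-holonomies, feed those into Proposition~\ref{prop: transverse ac} with $\mathcal{W}^{u}$ (resp.\ $\mathcal{W}^{s}$) as the transverse foliation, and then handle $\mathcal{W}^{c}$ by a two-step disintegration inside $\mathcal{W}^{cu}$-leaves using the observation that $\mathcal{W}^{c}$-holonomy between $\mathcal{W}^{u}$-leaves inside a $\mathcal{W}^{cu}$-leaf coincides with $\mathcal{W}^{cs}$-holonomy in $M$. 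The one point you rightly flagged (and the paper leaves implicit) is that applying Lemma~\ref{lemma: quasiconformal} to $f^{-1}$ tacitly requires $f^{-1}$ to be center non-expansive; this does hold in the two settings (A) and (B) where the Corollary is actually invoked, but is worth noting.
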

\color{black}
\begin{proof}
By Lemma \ref{lemma: quasiconformal} and Proposition \ref{proposition: quasiconformal properties} the hypotheses of (1) imply that the local $\W^{cs}$-holonomy maps between local leaves of the unstable foliation $\W^{u}$ are absolutely continuous. Since the stable and unstable foliations $\W^{s}$ and $\W^{u}$ are strongly absolutely continuous this implies by Proposition \ref{prop: transverse ac} that $m$ has Lebesgue disintegration along $\W^{cs}$ leaves. 

Under the hypotheses of (2) we may also apply Lemma \ref{lemma: quasiconformal} to the local $cs$-holonomies of $f^{-1}$, i.e., to the $cu$-holonomies of $f$. This implies that the local $\W^{cu}$-holonomy maps between local leaves of the unstable foliation $\W^{s}$ are absolutely continuous with bounded Jacobians as well, and thus by Proposition \ref{prop: transverse ac} (using $\W^{s}$ as the transverse strongly absolutely continuous foliation) we conclude that $m$ also has Lebesgue disintegration along $\W^{cu}$ leaves. 

For each $x \in M$ the $cu$-leaf $\W^{cu}(x)$ is foliated by $\W^{u}$-leaves and this foliation is strongly absolutely continuous when we consider $\W^{cu}(x)$ to be the ambient manifold. The holonomy of the $\W^{c}$ foliation between local $\W^{u}$ leaves inside of $\W^{cu}(x)$ coincides with the $\W^{cs}$ holonomy in $M$. Thus by Lemma \ref{lemma: quasiconformal} and Proposition \ref{proposition: quasiconformal properties} these local $\W^{c}$-holonomy maps are absolutely continuous. 

Let $m^{cu}_{x}$ denote the conditional volume of $m$ on $\W^{cu}(x)$. Since the holonomy maps of $\W^{c}$ between local $\W^{u}$ leaves inside of $\W^{cu}(x)$ are absolutely continuous we can apply Proposition \ref{prop: transverse ac} again to obtain that $m^{cu}_{x}$ has Lebesgue disintegration along $\W^{c}$ leaves inside of $\W^{cu}(x)$. Since this holds for every $x \in M$ and $m$ has Lebesgue disintegration along $\W^{cu}$ leaves it follows that $m$ has Lebesgue disintegration along $\W^{c}$ leaves. 
\end{proof}





\section{Linearity of the center holonomy}\label{section: higher}In this section we will prove the center holonomy between local unstable leaves is linear in the charts $\{\Phi_{x}\}_{x \in M}$ under stronger assumptions on $f$, i.e. Lemma \ref{lemma: linear center}. To obtain it,  we will show that the a.e. defined map $Dh^c$ is equivariant with respect to $H^u$ holonomy almost everywhere (Lemma \ref{lemma: L holo eqva}).  Subsection \ref{subsec: fiber} is for the construction of a fiber bundle which is critical to us for studying the center holonomy through measure-theoretic arguments. This is where we use the hypothesis in Theorem \ref{theorem: compact center} and Corollary \ref{theorem: isometric center} that the center foliation has trivial holonomy. In subsection \ref{subsec: confstruct} we construct an invariant bounded measurable conformal structure which is invariant under $H^{s,u}$ holonomies. It plays an important role in Section \ref{section: higherend}. In subsection \ref{subsec: equi} we prove the main results Lemmas \ref{lemma: L holo eqva} ND  \ref{lemma: linear center}. 

\color{black} Unless stated otherwise, in all of the claims of this section we assume that $f$ is a $C^\infty$ dynamically coherent partially
hyperbolic diffeomorphism which is uniformly quasiconformal. We further assume that $f$ preserves an invariant measure $m$ which is smoothly equivalent to the volume on $M$.

\subsection{A fiber bundle construction}\label{subsec: fiber}

We first formulate an additional condition on $f$ which is related to the proof of Theorem \ref{theorem: flow suspension}.

Suppose that $\dim E^{c} = 1$, $E^{c}$ is orientable, $f(\mathcal{W}^{c}(x)) = \mathcal{W}^{c}(x)$ for every $x \in M$, and $f$ has no fixed points. Each center leaf $\mathcal{W}^{c}(x)$ has as its universal cover a copy $\widetilde{\mathcal{W}}^{c}(x)$ of $\mathbb{R}$ with orientation determined by the orientation of $E^{c}$. The restriction of $f$ to $\mathcal{W}^{c}(x)$ lifts to an orientation-preserving diffeomorphism $\widetilde{f}$ of $\widetilde{\mathcal{W}}^{c}(x)$ with no fixed points. Fix a lift $\widetilde{x}$ of $x$ and let $\widetilde{U}_{x}$ be the closed segment joining $\widetilde{f}^{-1}(\widetilde{x})$ to $\widetilde{f}(\widetilde{x})$ inside of $\widetilde{\mathcal{W}}^{c}(x)$. We then let $U_{x}$ be the projection of this segment to $\mathcal{W}^{c}(x)$. An easy exercise shows that the neighborhood $U_{x}$ is independent of the chosen lift of $x$. It is also clear that for every $x \in M$ we have $f(U_{x}) = U_{f(x)}$. 

We say that $f$ \emph{does not wrap} if for each $x \in M$, the neighborhood $U_{x}$ of $x$ in $\mathcal{W}^{c}(x)$ is simply connected, i.e., it is a line segment instead of a circle. 

For the remainder of Section \ref{section: higher} and Section \ref{section: higherend} we will assume that $f$ satisfies one of the following two assumptions,

\renewcommand{\theenumi}{\Alph{enumi}}
\begin{enumerate}
\item $\mathcal{W}^{c}$ is compact and each center leaf with trivial holonomy; or\color{black}

\item $\dim E^c = 1$, $f(\W^c(x)) = \W^c(x)$ for every $x\in M$, $E^c$ is orientable with
orientation preserved by $f$, $f$ has no fixed points, and $f$ does not wrap.  
\end{enumerate}
\renewcommand{\theenumi}{\arabic{enumi}}

In the case that $f$ satisfies assumption (B) we set $\{U_{x}\}_{x \in M}$ to be the family of neighborhoods of points of $M$ inside of the center foliation constructed above. When $f$ satisfies assumption (A) we instead set $U_{x} = \W^{c}(x)$. In both cases we have the properties that $f(U_{x}) = U_{f(x)}$. Without loss of generality (decreasing $r,R$ in subsection \ref{subsection: quasiconfomal of hc} if necessary), we assume for any $x\in M$, $\mathcal{W}^c_{loc}(x)\subset U_x$.\color{black} 

We note that if $\psi_{1}$ is the time-1 map of an Anosov flow $\psi_{t}$ which has no periodic orbits of period $\leq 2$ then assumption (B) always holds for $C^{1}$-small enough  perturbations of $\psi_{1}$. We refer to the proof of Theorem \ref{theorem: flow suspension} for the details behind this assertion. 


\begin{prop}\label{prop: nonexpansive}
Suppose $f$ satisfies one of assumptions (A) and (B), then $f$ is center non-expansive.
\end{prop}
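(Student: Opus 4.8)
\textbf{Proof plan for Proposition \ref{prop: nonexpansive}.}

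The plan is to verify the definition of center non-expansiveness directly in each of the two cases, with the key point being that the ``good $c$-paths'' involved in the definition can be controlled simply because the leaves through which they travel have uniformly bounded intrinsic geometry after iteration. First I would handle case (A). Here $U_x = \W^c(x)$ and the center foliation is compact with trivial holonomy; by \cite{BB} the leaves vary continuously and have uniformly bounded diameter (compactness of $M$ plus trivial holonomy makes the center leaves a fibration, at least locally, so $\sup_{x}\operatorname{diam}_c \W^c(x) < \infty$). Given $x$, $y \in \W^c_{loc}(x)$ and a good local $c$-path $\gamma$ from $x$ to $y$, I would simply take $\gamma_n$ to be \emph{any} good $c$-path from $f^n(x)$ to $f^n(y)$ inside $\W^c(f^n(x))$ of length at most $l := \sup_x \operatorname{diam}_c \W^c(x)$ — such a path exists because $f^n(x)$ and $f^n(y)$ lie in the same center leaf, which has diameter $\le l$. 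Since the holonomy group of $\W^c(f^n(x))$ is trivial, the closed loop $f^n(\gamma) \cdot \gamma_n^{-1}$ automatically represents the identity in $Hol(\W^c(f^n(x)), f^n(x))$, which is what is required. So case (A) is essentially immediate once one records the uniform diameter bound.

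For case (B) the argument is similar but one must be careful because center leaves may be noncompact lines. Here the relevant replacement for ``bounded diameter'' is the family $\{U_x\}$: one has $f(U_x) = U_{f(x)}$, and $U_x$ is by construction the projection of the segment $[\tilde f^{-1}(\tilde x), \tilde f(\tilde x)]$, which has uniformly bounded intrinsic length (bounded in terms of $\sup_x \|Df^{\pm 1}|_{E^c}\|$ and the distance from $x$ to $f^{\pm1}(x)$, all uniform by compactness of $M$). The ``does not wrap'' assumption guarantees $U_x$ is a genuine line segment, hence simply connected. Given a good local $c$-path $\gamma$ from $x$ to $y \in \W^c_{loc}(x) \subset U_x$: by equivariance $f^n(\gamma)$ is a good $c$-path from $f^n(x)$ to $f^n(y)$ inside $U_{f^n(x)}$ (the image of $\gamma$ lies in $U_x$ since $\W^c_{loc}(x) \subset U_x$ and $\gamma$ stays in the local leaf). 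I would set $\gamma_n := f^n(\gamma)$ itself, so that $f^n(\gamma)\cdot\gamma_n^{-1}$ is the constant loop and trivially represents the identity in the holonomy group. The only thing to check is the uniform length bound $l(\gamma_n) = l(f^n(\gamma)) \le l$: this follows because $f^n(\gamma)$ is a path between two points of the simply connected segment $U_{f^n(x)}$, so its length is at least $\operatorname{diam}_c U_{f^n(x)}$-bounded below, but we need an \emph{upper} bound — the genuinely correct statement is that one should first replace $\gamma$ by the geodesic segment of $U_x$ joining $x$ to $y$ (which, since $E^c$ is one-dimensional and $U_x$ simply connected, is the unique embedded arc), apply $f^n$, and note $l(f^n(\text{geodesic})) \le \sup_x \operatorname{diam}_c U_{f^n(x)} \le l$ for the uniform constant $l := \sup_x \operatorname{diam}_c U_x < \infty$. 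Since holonomy of $c$-paths in a one-dimensional leaf depends only on endpoints up to homotopy rel endpoints (and $U_x$ is simply connected), replacing $\gamma$ by this geodesic does not change $h^c_\gamma$, which is all that matters.

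The main obstacle, and the only place requiring genuine care rather than bookkeeping, is establishing the uniform upper bound $\sup_x \operatorname{diam}_c U_x < \infty$ in case (B) and, relatedly, confirming that in case (A) one may pass from an arbitrary good $c$-path $\gamma$ to a controlled-length representative $\gamma_n$ with the same holonomy class — in case (A) the leaves can have nontrivial topology (only the holonomy is trivial, not necessarily $\pi_1$), so $f^n(\gamma)\cdot\gamma_n^{-1}$ need not be nullhomotopic, and one genuinely uses that the holonomy homomorphism kills it because $Hol(\W^c(f^n(x)), f^n(x))$ is the trivial group. Once that logical point is isolated, both cases reduce to recording compactness-based uniform bounds, and I would write the proof as two short paragraphs accordingly.
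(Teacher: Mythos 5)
Your proof is correct and follows essentially the same route as the paper's: in case (A) you take $\gamma_n$ to be a short path (the paper uses the geodesic) in the compact leaf $\W^c(f^n(x))$ and invoke trivial holonomy, and in case (B) you use the uniform bound on $l(U_x)$ and simple connectedness of $U_{f^n(x)}$. Your parenthetical remark at the end — that in case (A) one is not claiming $f^n(\gamma)\cdot\gamma_n^{-1}$ is nullhomotopic, only that it dies under the holonomy representation because $Hol(\W^c(f^n(x)),f^n(x))$ is trivial — is a point worth emphasizing that the paper passes over silently; your intermediate detour in case (B) (first trying $\gamma_n:=f^n(\gamma)$, then passing to the pushed-forward geodesic) is slightly more roundabout than the paper's direct choice of any short arc in $U_{f^n(x)}$, but leads to the same bound.
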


\begin{proof}
Under assumption (A), since each center leaf is compact and HAS trivial holonomy,  there is a uniform bound on the diameter of center leaves (cf. \cite{BB}). Then for any $x\in M, y\in \W^c_{loc}(x), n\in \ZZ$ and any good local $c-$path $\gamma$ from $x$ to $y$, we take a geodesic (within the center leaf) $\gamma_n$ from $f^n(x)$ to $f^n(y)$ contained inside $\W^c(f^n(x))$. Then $l(\gamma_n)$ is uniformly bounded and the path $f^n(\gamma)\cdot \gamma_n^{-1}$ represents the identity in $Hol(\W^c(f^n(x)), f^n(x))$. We conclude that $f$ satisfies the conditions of center non-expansiveness.

In the case of assumption (B), for any $x\in M, y\in \W^c_{loc}(x), n\in \ZZ$ and any good local $c-$path $\gamma$ from $x$ to $y$, we have by our assumption that $$f^n(\gamma)\subset f^n(\W^c_{loc}(x))\subset f^n(U_x)=U_{f^n(x)}$$
By uniformity and simple connectedness of $\{U_x, x\in M\}$, we can easily find a $C^1-$path $\gamma_n\subset U_{f^n(x)}$ from $f^n(x)$ to $f^n(y)$ such that $f^n(\gamma)\cdot \gamma_n^{-1}$ represents the identity in $Hol(\W^c(x),x)$ and $l(\gamma_n)\leq\sup_{x\in M}l(U_x)<\infty$. 
\end{proof}
\color{black}
As a consequence of Proposition \ref{prop: nonexpansive}, all of the work from Section \ref{section: center holonomy} applies to both $f$ and $f^{-1}$ for systems satisfying assumptions (A) or (B). In particular the center-stable holonomy maps $h^{cs}_{xy}: \mathcal{W}_{loc}^{u}(x) \rightarrow \mathcal{W}^{u}_{loc}(y)$ and center-unstable holonomy maps $h^{cu}_{xz}: \mathcal{W}_{loc}^{s}(x) \rightarrow \mathcal{W}^{s}_{loc}(z)$ for $x \in M$, $y \in \mathcal{W}^{cs}_{loc}(x)$, $z \in \W^{cu}_{loc}(x)$ are all $K$-quasiconformal for some constant $K \geq 1$. Consequently $m$ has Lebesgue disintegration along each  of the foliations $\W^{cs}$, $\W^{cu}$, and $\W^{c}$ by Corollary \ref{corollary: cs absolute continuity}. By combining this with the strong absolute continuity of the foliations $\mathcal{W}^{u}$ and $\mathcal{W}^{s}$ we conclude that $m$ has Lebesgue disintegration along all of the invariant foliations $\mathcal{W}^{*}$ for $f$. 

We now consider the space 
\[
\E=\{(x,y)\in M^2:  y\in U_{x} \},
\]
and we define $F: \E \rightarrow \E$ by $F(x,y) = (f(x),f(y))$. 

\begin{prop}\label{prop: fiber measure}
$\E$ is a continuous fiber bundle over $M$ with compact fibers.  $F$ preserves an invariant measure $\mu$ on $\E$ which locally decomposes as the product of the volume $m$ on $M$ and the conditional volume $m^{c}_{x}$ on $U_{x}$. 
\end{prop}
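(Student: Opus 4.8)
The plan is to establish the three assertions of Proposition~\ref{prop: fiber measure} in order: first that $\E$ is a continuous fiber bundle with compact fibers, then that $F$ is well-defined on $\E$, and finally the existence and local product structure of the invariant measure $\mu$. For the fiber bundle claim, I would use the fact that under assumption (A) the center foliation $\W^{c}$ is uniformly compact (hence dynamically coherent by \cite{BB}) and under assumption (B) the neighborhoods $U_{x}$ are uniformly sized line segments inside center leaves that vary continuously with $x$; in both cases the assignment $x \mapsto U_{x}$ is a continuous family of compact sets, so $\E$ fibers continuously over $M$ via the first projection $p(x,y) = x$ with $p^{-1}(x) = \{x\} \times U_{x}$. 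Local triviality follows because $\W^{c}$ has a well-defined continuous holonomy structure in the trivial-holonomy case (A) and because in case (B) each $U_{x}$ is a segment whose endpoints $\widetilde{f}^{\pm 1}(\widetilde{x})$ depend continuously on $x$; one trivializes over a small chart of $M$ by using the center holonomy (or, in case (B), an explicit parametrization of the segment) to identify nearby fibers. That $F(\E) = \E$ is exactly the already-noted identity $f(U_{x}) = U_{f(x)}$, and $F$ is a homeomorphism since $f$ is.

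For the invariant measure, I would first fix the \emph{normalization} of the conditional volumes $m^{c}_{x}$: by Corollary~\ref{corollary: cs absolute continuity}(2) together with Proposition~\ref{prop: nonexpansive}, $m$ has Lebesgue disintegration along $\W^{c}$, so for $m$-a.e.\ $x$ the conditional measure $m^{c}_{x}$ is equivalent to the induced Riemannian volume on $\W^{c}(x)$. Normalize so that $m^{c}_{x}$ is the induced Riemannian volume on $U_{x}$ (not a probability measure); this is an honest finite measure since $U_{x}$ has uniformly bounded diameter in both cases (A) and (B), and it varies measurably in $x$. I would then \emph{define} $\mu$ on $\E$ by the formula
\[
\mu(B) = \int_{M} m^{c}_{x}\bigl(\{\,y : (x,y) \in B\,\}\bigr)\, dm(x)
\]
for Borel $B \subset \E$. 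By construction $\mu$ locally decomposes as the product of $m$ on the base with the conditional volume $m^{c}_{x}$ on the fiber $U_{x}$ — this is precisely the ``locally product'' claim, which one checks against a local trivialization of the bundle.

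The remaining point, and the one I expect to require the most care, is $F$-invariance of $\mu$. The issue is that $F$ acts on the base as $f$ (which preserves $m$) but acts on the fiber $U_{x}$ as the diffeomorphism $f|_{U_{x}} : U_{x} \to U_{f(x)}$, which in general is \emph{not} an isometry, so it does not literally preserve the Riemannian volume $m^{c}_{x}$ fiberwise. The resolution is that the Jacobian of $f$ along center leaves, measured against the family $\{m^{c}_{x}\}$, must be trivial: since $m$ is $f$-invariant and has Lebesgue disintegration along $\W^{c}$, the essential uniqueness of disintegration forces $(f|_{\W^{c}(x)})_{*} m^{c}_{x} = m^{c}_{f(x)}$ up to the scaling ambiguity, and the scaling factor is pinned down to be $1$ by the fact that the center Lyapunov exponents vanish — which holds here because $f$ is uniformly quasiconformal with one-dimensional center contributions controlled, or more directly because $m$-invariance of the disintegrated measure already gives $\frac{d (f_{*}m^{c}_{x})}{dm^{c}_{f(x)}} = 1$ $m$-a.e.\ after the chosen normalization. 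Concretely I would argue: for $B \subset \E$,
\[
\mu(F^{-1}B) = \int_{M} m^{c}_{x}\bigl(f^{-1}\{y : (f(x),y) \in B\}\bigr)\, dm(x) = \int_{M} m^{c}_{f(x)}\bigl(\{y : (f(x),y)\in B\}\bigr)\, dm(x) = \mu(B),
\]
where the middle equality uses $(f|_{U_x})_* m^c_x = m^c_{f(x)}$ and the last uses $f_*m = m$. Verifying that middle equality rigorously — i.e.\ that the normalized conditional volumes are genuinely $f$-equivariant and not merely equivalent — is the heart of the matter; I would handle it by appealing to the uniqueness of the measurable disintegration of the $f$-invariant measure $m$ along the $f$-invariant foliation $\W^{c}$, combined with the measurability of $x \mapsto m^{c}_{x}$, so that the a.e.-defined Radon–Nikodym cocycle $x \mapsto \frac{d(f_*m^c_x)}{dm^c_{f(x)}}$ is an $f$-coboundary that, by our choice of normalization against Riemannian volume and boundedness of the relevant Jacobians, equals $1$ almost everywhere.
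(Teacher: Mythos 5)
Your construction of the invariant measure $\mu$ contains a genuine error, and the way you handle case (B) has a real gap.

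The error: you propose to ``normalize so that $m^{c}_{x}$ is the induced Riemannian volume on $U_{x}$.'' But the Rokhlin conditional $m^{c}_{x}$ is only \emph{equivalent} to Riemannian arc length on $U_{x}$ (this is what Lebesgue disintegration gives), with a density that is generically nonconstant; setting $m^{c}_{x}$ equal to arc length is not a choice of scaling constant but a replacement by a genuinely different measure, and this replacement destroys $F$-invariance. You notice the problem yourself (that $f|_{U_{x}}$ is not an isometry), but your proposed fixes do not work: you cannot appeal to vanishing center Lyapunov exponents (this is not a hypothesis, and would be circular), and the assertion that ``$m$-invariance of the disintegrated measure already gives $\frac{d(f_{*}m^{c}_{x})}{dm^{c}_{f(x)}} = 1$ a.e.\ after the chosen normalization'' is simply false when the chosen normalization is arc length. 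The $f$-equivariant family of conditionals is the disintegration family, not arc length. In case (A) the paper uses precisely the Rokhlin conditionals on the compact leaves $U_x=\W^{c}(x)$ (where $\W^c$ is a genuine measurable partition), and equivariance $f_{*}m^{c}_{x}=m^{c}_{f(x)}$ is automatic.

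The gap: in case (B), $\W^{c}$ has noncompact (typically dense) leaves, so it is \emph{not} a measurable partition and there is no global Rokhlin disintegration along $\W^{c}$; the ``uniqueness of the measurable disintegration along the $f$-invariant foliation'' you invoke does not exist in that form. The paper instead appeals to the machinery of Avila--Viana--Wilkinson \cite[Section 3, Proposition 3.3]{AVW}, which produces leafwise conditionals $\hat m^{c}_{x}$ defined up to a scalar that satisfy $f_{*}\hat m^{c}_{x}=\hat m^{c}_{x}$ because $f$ fixes each center leaf; the desired $m^{c}_{x}$ is then the restriction of $\hat m^{c}_{x}$ to the subarc $U_{x}$, normalized to a probability measure using $f(U_{x})=U_{f(x)}$. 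Your write-up does not engage with the subarc issue at all, and without the AVW input the argument cannot close in case (B). Finally, your treatment of the fiber-bundle assertion is too sketchy: the nontrivial point handled in the paper is that $\W^{u}_{loc}(z)\cap U_{y}$ consists of \emph{exactly} one point (not merely at most one); under assumption (B) this requires passing to an enlarged arc $U_{y}^{\varepsilon}$ still satisfying the no-wrapping condition and identifying the endpoints of $h^{u}(U_{x})$ as $f^{\pm1}(y)$, which you omit.
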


\begin{proof}
We first show that for $r > 0$ sufficently small and $y \in \W^{u}_{r}(x)$, under either assumption (A) or (B), the unstable holonomy map $h^{u}_{xy}$ from $\W^{c}_{loc}(x)$ to $\W^{c}_{loc}(y)$ can be extended to a homeomorphism $h^{u}: U_{x} \rightarrow U_{y}$. Under either assumption (A) or (B) the neighborhoods $U_{x} \subset \W^{c}(x)$ are uniformly compact (i.e., have $d_{c}$-diameter uniformly bounded in $x$) and depend continuously on $x$ in the Hausdorff topology on sets. Thus we can choose $r$ small enough that for any $x \in M$, $y \in \W^{u}_{r}(x)$ we have that $\W^{u}_{loc}(z) \cap U_{y}$ consists of at most one point for any $z \in U_{x}$. Under assumption (A) the last assertion requires the condition each center leaf has trivial holonomy, under assumption (B) this last assertion requires the condition that $f$ does not wrap.  

We claim that there is in fact exactly one point in $\W^{u}_{loc}(z) \cap U_{y}$. This is obvious under assumption (A) since $\W^{u}_{loc}(z)$ must intersect $\W^{c}(y)$ if $y$ is close enough to $x$, by the uniform compactness of the center foliation. Under assumption (B) let $\varepsilon > 0$ be a constant chosen small enough that for each $x \in M$ the $\varepsilon$-neighborhood $U_{x}^{\varepsilon}$ of $U_{x}$ inside of $\W^{c}(x)$ still satisfies the no wrapping condition, that is to say, $U_{x}^{\varepsilon}$ remains an interval instead of a circle.  Then for $\varepsilon > 0$ sufficiently small, $r$ sufficiently small depending of $\varepsilon$, and any $x \in M$, $y \in \W^{u}_{r}(x)$ and $z \in U_{x}$ the intersection $\W^{u}_{loc}(z) \cap U_{y}^{\varepsilon}$ consists of exactly one point. Thus $h^{u}: U_{x} \rightarrow  U_{y}^{\varepsilon}$ is an orientation-preserving homeomorphism onto its image inside of $U_{y}^{\varepsilon}$. But if $y \in \W^{u}_{r}(x)$ for $r$ sufficiently small then $f(y)$ and $f^{-1}(y)$ lie in $\W^{u}_{loc}(f(x))$ and $\W^{u}_{loc}(f^{-1}(x))$ respectively. Thus the endpoints of the interval which is the image of $U_{x}$ under $h^{u}$ are $f^{-1}(y)$ and $f(y)$ (Since $f(y)\in U^\varepsilon_y\cap \W^u_{loc}(f(x))$ and $\W^{u}_{loc}(z) \cap U_{y}^{\varepsilon}$ consists of exactly one point, then $h^u(f(x))\in U^\varepsilon_y\cap \W^u_{loc}(f(x))=f(y)$, similarly we got $h^u(f^{-1}(x))=\{f^{-1}(y)\}$). \color{black} This shows that $h^{u}$ actually gives a homeomorphism from $U_{x}$ to $U_{y}$. 

Hence $h^{u}: U_{x} \rightarrow U_{y}$ is a homeomorphism for $y \in \W^{u}_{r}(x)$. By similar reasoning (possibly taking $r$ smaller) for any $x \in M$, $y \in \W^{s}_{r}(x)$ the stable holonomy $h^{s}: U_{x} \rightarrow U_{y}$ is also a homeomorphism. Finally, it is easy to see that for any $y \in \W^{c}_{r}(x)$ there is a homeomorphism $h^{c}: U_{x} \rightarrow U_{y}$ depending uniformly continuously on the pair $(x,y)$: in the case of assumption (A) this is trivial since $U_{x} = U_{y}$. In the case of assumption (B) each of the subsets $U_{y}$ of $\W^{c}(x)$ is an interval determined canonically by its endpoints $f^{-1}(y)$ and $f(y)$ according to the construction at the beginning of this section. These endpoints depend continuously on $y \in \W^{c}(x)$ hence it follows that we can find a continuous family of orientation-preserving homeomorphisms $h^{c}: U_{y} \rightarrow U_{x}$ identifying these intervals for $y$ near $x$. Putting all of this together, for any $z$ close enough to $x$ we can find a composition of three homeomorphisms 
\[
U_{z} \rightarrow U_{h^{s}(z)} \rightarrow U_{h^{u}(h^{s}(z))} \rightarrow U_{h^{c}(h^{u}(h^{s}(z)))} = U_{x}
\]
which depends continuously on $z$, where $h^{s}(z) = \W^{s}_{loc}(z) \cap \W^{cu}_{loc}(x)$, $h^{u}(h^{s}(z)) = \W^{u}_{loc}(h^{s}(z)) \cap \W^{c}_{loc}(x)$. This proves that $\E$ is a continuous fiber bundle over $M$ with compact fibers. 

We now prove the second assertion. Under assumption (A) we consider the measurable partition of $M$ into compact center fibers $M = \bigcup_{x \in M} \W^{c}(x)$ and let $\{m^{c}_{x}\}_{x \in M}$ be the family of conditional measures of $m$ on the center fibers $\W^{c}(x)$ determined by this partition. Since $f$ preserves $m$ we have $f_{*}m_{x}^{c} = m_{f(x)}^{c}$. for $m$-a.e. $x \in M$.

In the case of assumption (B) we refer to \cite[Section 3]{AVW}. It is shown there that for the foliation $\W^{c}$ of $M$ there is a measurable family of conditional measures $\{\hat{m}^{c}_{x}\}_{x \in M}$ supported on the leaves $\W^{c}(x)$ of the center foliation such that for $y \in \W^{c}(x)$ the measures $\hat{m}^{c}_{x}$ and $\hat{m}^{c}_{y}$ coincide up to a constant factor. We normalize these measures such that $\hat{m}^{c}_{x}(U_{x}) = 1$. Furthermore, since $f$ fixes all of the leaves of $\W^{c}$, by \cite[Proposition 3.3]{AVW} we have $f_{*}\hat{m}^{c}_{x} = \hat{m}^{c}_{x}$ for $m$-a.e. $x \in M$. We define $m_{x}^{c}$ to be the restriction of $\hat{m}^{c}_{x}$ to $U_{x}$. Since $f(U_{x}) = U_{f(x)}$ we conclude that $f_{*}m_{x}^{c}$ is a constant multiple of $m_{f(x)}^{c}$, and by our choice of normalization of the measures $\hat{m}^{c}_{x}$ this implies $f_{*}m_{x}^{c} = m_{f(x)}^{c}$ since these measures both assign mass 1 to $U_{f(x)}$. 

The measures $m_{x}^{c}$ are equivalent to the Riemannian volume on $U_{x}$ since $m$ has Lebesgue disintegration along $\W^{c}$ leaves. We define the measure $\mu$ on the fiber bundle $\E$ by setting, for any measurable set $A\subset \E$, 
\[
\mu(A)=\int \mathbb1_A(x,y)dm_x^c(y) dm(x),
\]
where $\mathbb1_{A}$ denotes the characteristic function of $A$. Since $f$ preserves $m$ and $f_{*}m_{x}^{c} = m_{f(x)}^{c}$ for $m$-a.e. $x \in M$, we conclude that $\mu$ is $F-$invariant. 
\end{proof}

\subsection{Conformal structures}\label{subsec: confstruct}
We now introduce the bundle $\mathcal{C}E^{u}$ of conformal structures on $E^{u}$ over $M$.  For more details related to the discussion that follows we defer to \cite{KS10}. The fiber $\mathcal{C}E^{u}_{x}$ over $x$ is the space of all inner products on $E^{u}_{x}$ modulo scaling by a nonzero real number, which can be identified with the nonpositively curved Riemannian symmetric space $SL(k,\mathbb{R})/SO(k,\mathbb{R})$. Each fiber thus carries a canonical Riemannian metric $\rho_{x}$ given by an isometric identification of $\mathcal{C}E^{u}_{x}$ with $SL(k,\mathbb{R})/SO(k,\mathbb{R})$. We will always explicitly identify $\mathcal{C}E^{u}_{x}$ with the space of inner products on $E^{u}_{x}$ for which the determinant of a positively oriented orthonormal basis is 1 in the reference inner product on $E^{u}_{x}$ induced from the given Riemannian metric on $TM$. 

Any linear isomorphism $A: E^{u}_{x} \rightarrow E^{u}_{y}$ induces a map $A^{*}: \mathcal{C}E^{u}_{y} \rightarrow \mathcal{C}E^{u}_{x}$ by, for $\tau_{x} \in \mathcal{C}E^{u}_{x}$ and any $v,w \in E^{u}_{y}$, 
\[
A^{*}\tau_{x}(v,w) = \frac{\tau_{x}(A(v), A(w))}{\det(A)^{2/k}},
\]
where we recall that $k = \dim E^{u}$ and $\det(A)$ denotes the determinant of $A$ in the metric induced from $TM$. The induced map $A^{*}$ is an isometry from $(\mathcal{C}E^{u}_{y},\rho_{y})$ to  $(\mathcal{C}E^{u}_{x},\rho_{x})$. 

A (measurable) \emph{conformal structure} on $E^{u}$ is a measurable section $\tau: M \rightarrow \mathcal{C}E^{u}$ defined on the complement of an $m$-null set of $M$. We say that a measurable conformal structure is \emph{invariant} if $Df_{x}^{*}\tau_{f(x)} = \tau_{x}$ for $m$-a.e. $x \in M$. A measurable conformal structure is \emph{bounded} if there is a constant $C  > 0$ such that $\rho_{x}(\tau_{x},\tau^{0}_{x}) \leq C$ for $m$-a.e. $x \in M$, where $\tau^{0}$ denotes the conformal structure on $E^{u}$ induced from the Riemannian metric on $TM$. The condition that $f$ is uniformly $u$-quasiconformal is equivalent to the existence of a constant $C > 0$ such that for every $x \in M$, 
\[
\rho_{x}\left((Df^{n}_{x})^{*}\tau_{f^{n}(x)}^{0}, \tau_{x}^{0}\right) \leq C \; \; \forall n \in \Z.
\]

The following measure-theoretic lemma is vital for recovering holonomy invariance properties of measurable objects by guaranteeing simultaneous recurrence to a continuity set on a full measure set of points. Our first application will be to show that a measurable invariant conformal structure for $f$ must be invariant under the stable and unstable holonomies $H^{s}$ and $H^{u}$ on a full measure subset of $M$. 

\begin{lemma}\label{lemma: half}
Let $T$ be a measure-preserving transformation of a finite measure space $(X,\mu)$ and let $\{K_{n}\}_{n \geq 1}$ be a sequence of measurable subsets of $X$ with $\sum_{n=1}^{\infty} \mu(X \backslash K_{n})  < \infty$. Then there is a full measure subset $\Omega \subseteq X$ with the property that if $x,y \in \Omega$ then there is an $n \in \N$ and a sequence $n_{k} \rightarrow \infty$ with $T^{n_{k}}(x) \in K_{n}, T^{n_{k}}(y) \in K_{n}$ for each $n_{k}$. 
\end{lemma}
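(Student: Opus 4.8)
The plan is to use Borel--Cantelli together with the Poincar\'e recurrence theorem. First I would set $\Omega_{1} = \{x \in X : T^{k}(x) \in K_{n} \text{ for infinitely many } k, \text{ for every } n \geq 1\}$. For a fixed $n$, the hypothesis $\sum_{m} \mu(X \backslash K_{m}) < \infty$ gives in particular $\mu(X \backslash K_{n}) < \infty$ (trivially, since $\mu$ is finite) but more importantly the full series converges, so by Borel--Cantelli applied to the sets $T^{-k}(X \backslash K_{n})$ — wait, these do not have summable measure in $k$ for a single $n$. So the correct move is different: I should exploit summability \emph{in the index $n$}, not in the time $k$.

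Here is the right approach. By Borel--Cantelli, since $\sum_{n \geq 1} \mu(X \backslash K_{n}) < \infty$, the set $A = \{x : x \in X \backslash K_{n} \text{ for infinitely many } n\}$ has measure zero; equivalently, for a.e.\ $x$ there is $N(x)$ with $x \in K_{n}$ for all $n \geq N(x)$. Now fix $n$ and consider the set $G_{n} = \{x : T^{k}(x) \in K_{n} \text{ for infinitely many } k \geq 0\}$. I claim $\mu(G_{n}) = \mu(X)$: the set of $x$ with $T^{k}(x) \in X \backslash K_{n}$ for all large $k$ is, by Poincar\'e recurrence applied to the measure-preserving map $T$ and the set $X \backslash K_{n}$, null unless... hmm, this still is not immediate because $X \backslash K_{n}$ could have positive measure for small $n$.

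Let me restructure once more, which I believe is the intended argument. Set $\Omega = \{x \in X : \text{for every } n \geq 1, \ T^{k}(x) \in K_{n} \text{ for infinitely many } k \geq 1\}$. To show $\mu(\Omega) = \mu(X)$ it suffices to show, for each fixed $n$, that $\mu(\{x : T^{k}(x) \in K_{n} \text{ for all but finitely many... }\})$ — no. The clean statement: for each fixed $n$, by Poincar\'e recurrence the set of $x$ that enter $K_{n}$ at most finitely often has measure $\mu(X) - \mu(\bigcup_{k \geq 0} T^{-k} K_{n}) $... Actually the cleanest path uses only this: let $B = \{x : T^{k}(x) \notin K_{n_0} \text{ for all } k \geq 0\}$ for a value $n_0$ with $\mu(X\backslash K_{n_0})$ small — by Borel--Cantelli we may fix $n_0$ with $\mu(X \backslash K_{n_0})$ as small as we like, in particular $< \varepsilon$; then $B \subseteq X \backslash K_{n_0}$ has measure $< \varepsilon$, and $B$ is forward-invariant up to the point that... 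I think the honest plan is: \textbf{(i)} by summability, for a.e.\ $x$ one has $x \in K_{n}$ for all large $n$, hence the indices $n$ for which $K_{n}$ "fails somewhere along the orbit of $x$ and $y$" are controlled; \textbf{(ii)} then one picks a single large $n$ that works for the pair by a measure count, \textbf{(iii)} and invokes Poincar\'e recurrence to the set $K_{n}$ to get the infinite sequence $n_{k} \to \infty$ with $T^{n_k}(x), T^{n_k}(y) \in K_n$ simultaneously, by applying recurrence to the product system $T \times T$ on $X \times X$ and the set $K_{n} \times K_{n}$. The simultaneity of $x$ and $y$ is exactly what the product-system recurrence delivers.

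Concretely: apply Borel--Cantelli to get a full-measure set $X_0$ where each point lies in $K_n$ for all $n$ beyond some threshold. Then for $x, y \in X_0$ pick $n \geq \max(N(x), N(y))$, so $x, y \in K_n$, and apply Poincar\'e recurrence for $T \times T$ on $(X \times X, \mu \times \mu)$ to the positive-measure set $K_n \times K_n$: almost every point of $K_n \times K_n$ returns to it infinitely often. One then has to promote "almost every $(x,y)$" to "the specific pair we chose"; this is handled by shrinking $X_0$ further to the full-measure set of $x$ such that $(x, y)$ is recurrent for $\mu\times\mu$-a.e.\ $y$, then using a Fubini argument so that the good pairs $(x,y)$ with $x,y \in X_0$ still form a full-measure subset of $X \times X$, and finally taking $\Omega$ to be a full-measure set such that $\Omega \times \Omega$ lies in the good pairs — which exists by another Fubini/measure-theoretic argument. \textbf{The main obstacle} is precisely this last point: turning an "almost every pair" statement into an "every pair in a full-measure set $\Omega$" statement, i.e.\ extracting a full-measure \emph{square}. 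The standard fix is to intersect the recurrence set with its reflection, note the diagonal behaves well, and use that a measurable subset of $X \times X$ of full measure whose every vertical and horizontal slice is conull contains $\Omega \times \Omega$ for some conull $\Omega$ — which one builds by a countable exhaustion of $X$ by a generating algebra and a diagonal argument. Everything else (Borel--Cantelli, Poincar\'e recurrence for $T \times T$) is routine.
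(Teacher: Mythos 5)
Your proposal has a genuine gap, and it is precisely the step you yourself flag as ``the main obstacle.'' Poincar\'e recurrence for $T\times T$ gives that $\mu\times\mu$-almost every pair $(x,y)\in K_n\times K_n$ returns to $K_n\times K_n$ infinitely often, but a full-measure subset of $X\times X$ need \emph{not} contain a ``square'' $\Omega\times\Omega$ with $\Omega$ conull, and the sufficient condition you invoke (every vertical and horizontal slice conull) is false. A standard counterexample: take $X=[0,1]$ with Lebesgue measure and $A=\{(x,y):x-y\notin\mathbb{Q}\}$. Every slice of $A$ is conull, yet if a conull $\Omega$ satisfied $\Omega\times\Omega\subseteq A$ then $\Omega-\Omega$ would contain no nonzero rational, contradicting the Steinhaus theorem for a set of positive measure. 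No ``countable exhaustion plus diagonal argument'' rescues this; the obstruction is real. Moreover, the lemma is applied later in the paper along stable/unstable leaves (sets of measure zero), so the ``almost every pair'' version would be useless there; the ``every pair in $\Omega$'' conclusion is essential.

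The correct route, which the paper takes, is to replace the qualitative statement ``the orbit of $x$ visits $K_n$ infinitely often'' by a \emph{quantitative pointwise} statement that can be checked for each $x$ individually and then combined for any pair by pigeonhole. Concretely: by Birkhoff's ergodic theorem the upper density $P_n(x)=\lim_{k}\frac{1}{k}\sum_{j<k}\mathbb{1}_{K_n}(T^jx)$ exists a.e.\ with $\int P_n\,d\mu=\mu(K_n)$; by Markov's inequality $\mu\{P_n\le 3/4\}\le 4\mu(X\setminus K_n)$, and by Borel--Cantelli in $n$ (this is where the summability hypothesis is used, not in your step (i), which only controls whether the point $x$ itself lies in $K_n$ and says nothing about its orbit) one gets a full-measure $\Omega$ on which $P_n(x)>3/4$ for all $n$ large. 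Then for any $x,y\in\Omega$ one chooses a common large $n$; since both orbits spend asymptotic density $>3/4>1/2$ of their time in $K_n$, the set of common return times has positive density, and the infinite sequence $n_k\to\infty$ is extracted by a direct inductive counting argument. This sidesteps the product system and the square-extraction problem entirely.
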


\begin{proof}
By the Birkhoff ergodic theorem, for each $n \in \N$ the Birkhoff averages $\frac{1}{k}\sum_{j=0}^{k-1}\mathbb1_{K_n}(T^{j}(x))$ converge pointwise (on a measurable set $E_{n}$ with $\mu(X \backslash E_{n}) = 0$) as $k \rightarrow \infty$ to a nonnegative $T-$ invariant measurable function $P_{n}$ with integral $\mu(K_n)$. Define $\Omega \subset X$ by 
\[
\Omega =\bigcap_{n=1}^{\infty} E_{n} \cap \left\{x \in X: \exists N \in \N \; \text{such that for $n \geq N$,} \; P_{n}(x) >\frac{3}{4}  \right\}.
\]
We claim that $\mu(X \backslash \Omega ) = 0$. Consider the sets
\[
B_{n} = \left\{x\in E_{n} : 1 - P_{n}(x) \geq \frac{1}{4}  \right\}.
\]
By the Markov inequality we have
\[
\mu(B_n)   \leq 4\int_{X} 1- P_{n}\, d\mu   = 4\mu(X \backslash K_{n}) 
\] 
Since $\sum_{n=1}^{\infty} \mu(X \backslash K_{n}) < \infty$ by hypothesis, we conclude by the Borel-Cantelli lemma that for $\mu$-a.e. $x \in X$ there are only finitely many $n$ such that $x \in B_{n}$. This implies that $\Omega$ is a full measure subset of $X$. 

Now we verify that $\Omega$ has the desired properties of the Lemma's conclusion. If $x,y \in \Omega$ are any two given points then from the definition of $\Omega$ there is a common $n$ such that $P_{n}(x) > \frac{3}{4}$ and $P_{n}(y) > \frac{3}{4}$. We explain how to construct $n_{k}$ inductively from $n_{k-1}$, with our construction also showing how to construct the initial $n_{1}$ from $n_{0} = 0$. Given $n_{k-1}$, we choose $N_{k} > n_{k-1}$ large enough that 
\[
\frac{1}{N_{k}}\sum_{j=n_{k-1}}^{N_{k}}\mathbb1_{K_n}(T^{j}(x)) > \frac{3}{4}
\]
and the same for $y$, and we also take $N_{k}$ large enough that $\frac{N_{k}-n_{k-1}}{N_{k}} > \frac{7}{8}$. The existence of this $N_{k}$ is guaranteed by the fact that $P_{n}(x) > \frac{3}{4}$. We conclude from these estimates that 
\[
\frac{\left|\left\{j \in [n_{k-1},N_{k}]: T^{j}(x)\in K_{n}\right\}\right|}{N_{k}-n_{k-1}} > \frac{4}{7}
\]
and the same for $y$, where $[n_{k-1},N_{k}]$ denotes the set of integers $j$ satisfying $n_{k-1} \leq j \leq N_{k}$.  It follows that there is a common $n_{k} \in [n_{k-1},N_{k}]$ such that $T^{n_{k}}(x) \in K_{n}$ and $T^{n_{k}}(y) \in K_{n}$. Inducting on $k$ completes the proof. 
\end{proof}

Proposition \ref{prop: confstruct} below summarizes the essential properties of invariant conformal structures for uniformly quasiconformal linear cocycles which we will need. It is a slight improvement of the proof of \cite[Proposition 4.4]{KS13} as it removes the assumption of ergodicity of $f$ with respect to the volume $m$ which was used in that proof.

\begin{prop}\label{prop: confstruct}
Suppose that $f$ is uniformly $u$-quasiconformal and volume-preserving. Then there is an invariant bounded measurable conformal structure $\tau: M \rightarrow \mathcal{C}E^{u}$. Furthermore there is a full measure subset $\Omega$ of $M$ such that if $x,y \in \Omega$ and $y \in \W^{u}_{loc}(x)$ then 
\[
(H^{u}_{xy})^{*}\tau_{y} = \tau_{x},
\]
and similarly if $y \in \W^{s}_{loc}(x)$ then 
\[
(H^{s}_{xy})^{*}\tau_{y} = \tau_{x}.
\]
\end{prop}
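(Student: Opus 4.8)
The plan is to build the invariant conformal structure by a standard averaging argument on the nonpositively curved symmetric space $\mathcal{C}E^{u}_{x} \cong SL(k,\RR)/SO(k,\RR)$ and then upgrade measurable invariance to holonomy invariance via Lemma~\ref{lemma: half}. First I would fix a base point $x \in M$ and consider the orbit $\{(Df^{n}_{x})^{*}\tau^{0}_{f^{n}(x)} : n \in \Z\} \subset \mathcal{C}E^{u}_{x}$. By the uniform $u$-quasiconformality hypothesis, this orbit lies in a ball of radius $C$ around $\tau^{0}_{x}$ in the metric $\rho_{x}$, hence it is a bounded subset of a complete CAT$(0)$ space. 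Taking the circumcenter (the center of the smallest enclosing ball) of this orbit yields a point $\tau_{x} \in \mathcal{C}E^{u}_{x}$; since each $Df_{x}^{*}$ is an isometry from $(\mathcal{C}E^{u}_{f(x)},\rho_{f(x)})$ to $(\mathcal{C}E^{u}_{x},\rho_{x})$ and conjugating the orbit at $f(x)$ by $Df_{x}^{*}$ gives a tail of the orbit at $x$ (differing only by the single term $n=0$, which does not affect the bounded orbit's circumcenter because the full orbit is already bounded and we may instead average over a $Df$-invariant object — here I would more carefully take the circumcenter of the \emph{closure} of the orbit, which is genuinely $Df^{*}$-invariant), one checks $Df_{x}^{*}\tau_{f(x)} = \tau_{x}$. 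Measurability of $x \mapsto \tau_{x}$ follows from measurability of $x \mapsto E^{u}_{x}$ and continuity of the circumcenter construction, and boundedness $\rho_{x}(\tau_{x},\tau^{0}_{x}) \leq C$ is immediate since $\tau_{x}$ lies in the same ball. (Alternatively one invokes \cite[Proposition 4.4]{KS13} directly; the present statement only claims to remove its ergodicity assumption, and the circumcenter argument is manifestly pointwise in $x$, so ergodicity plays no role.)

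The second part is the holonomy invariance. Since $\tau$ is a measurable section it agrees with a continuous section on a set of nearly full measure; more precisely, by Lusin's theorem, for each $n \geq 1$ there is a compact set $K_{n} \subset M$ with $m(M \setminus K_{n}) < 2^{-n}$ such that $\tau|_{K_{n}}$ is continuous, and $\sum_{n} m(M\setminus K_n) < \infty$. Apply Lemma~\ref{lemma: half} with $T = f$ and this sequence $\{K_{n}\}$ to obtain a full measure set $\Omega \subset M$. Now take $x,y \in \Omega$ with $y \in \W^{u}_{loc}(x)$. By Lemma~\ref{lemma: half} there is a fixed $n$ and a sequence $n_{k} \to \infty$ with $f^{n_{k}}(x), f^{n_{k}}(y) \in K_{n}$. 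The key point is that as $k \to \infty$, the distance $d_{u}(f^{n_{k}}(x), f^{n_{k}}(y)) \to 0$ since $f^{-1}$ uniformly contracts $\W^{u}$ — wait, rather, we should iterate backward: one instead uses that $f^{-n}$ contracts unstable distances, so I would run Lemma~\ref{lemma: half} with $T = f^{-1}$ to get recurrence under negative iterates, giving $d_{u}(f^{-n_{k}}(x), f^{-n_{k}}(y)) \to 0$ with both points landing in $K_{n}$. By the equivariance property (2) of $H^{u}$ in Proposition~\ref{proposition: holonomies} and the invariance $Df_{x}^{*}\tau_{f(x)} = \tau_{x}$, the identity $(H^{u}_{xy})^{*}\tau_{y} = \tau_{x}$ is equivalent to $(H^{u}_{f^{-n_k}(x) f^{-n_k}(y)})^{*}\tau_{f^{-n_k}(y)} = \tau_{f^{-n_k}(x)}$ for every $k$. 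As $k \to \infty$: the points $f^{-n_k}(x), f^{-n_k}(y)$ lie in the compact continuity set $K_n$ and converge to a common accumulation point (passing to a subsequence), $H^{u}_{f^{-n_k}(x) f^{-n_k}(y)} \to \mathrm{Id}$ by property (3) of Proposition~\ref{proposition: holonomies} since the two points become arbitrarily close, and $\tau|_{K_n}$ is continuous — so both sides converge to the same limit $\tau$ evaluated at the common accumulation point, which forces the desired equality (the equality is $Df$-equivariant, so holding it in the limit forces it at the original pair $x,y$). The stable case is identical, using $T = f$ and contraction of stable distances under forward iteration.

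\textbf{Main obstacle.} The delicate point is the interplay between the three convergences in the last step: one needs the continuity of $\tau$ on $K_{n}$, the convergence $H^{u} \to \mathrm{Id}$, and the fact that $f^{-n_{k}}(x)$ and $f^{-n_{k}}(y)$ accumulate on a \emph{common} point, to all be used simultaneously. Passing to a subsequence so that $f^{-n_k}(x) \to p \in K_n$, the estimate $d(f^{-n_k}(x),f^{-n_k}(y)) \to 0$ forces $f^{-n_k}(y)\to p$ as well; then $\tau_{f^{-n_k}(x)} \to \tau_p$ and $\tau_{f^{-n_k}(y)} \to \tau_p$ by continuity of $\tau|_{K_n}$, while $(H^u_{f^{-n_k}(x)f^{-n_k}(y)})^* \to \mathrm{Id}^* = \mathrm{Id}$, so the pulled-back structure $(H^u_{f^{-n_k}(x)f^{-n_k}(y)})^*\tau_{f^{-n_k}(y)} \to \tau_p$; comparing with $\tau_{f^{-n_k}(x)} \to \tau_p$ and using that the relation is exactly $Df$-equivariant (by property (2) of $H^u$ together with $Df^*$-invariance of $\tau$) pushes the identity back up the orbit to $(H^u_{xy})^*\tau_y = \tau_x$. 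One must be slightly careful that $n$ is fixed while $n_k$ varies, so the continuity set $K_n$ is a single fixed compact set throughout the limit, which is exactly what Lemma~\ref{lemma: half} provides. No ergodicity is needed anywhere, which is the promised improvement over \cite[Proposition 4.4]{KS13}.
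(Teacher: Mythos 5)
Your proof is correct and follows essentially the same route as the paper's: existence and boundedness of $\tau$ via circumcenters of bounded orbits in the CAT$(0)$ fiber (the paper simply cites \cite[Proposition 2.4]{KS10} for this, which is the circumcenter construction), Lusin's theorem to produce compact continuity sets $K_n$, Lemma~\ref{lemma: half} applied to both $f$ and $f^{-1}$, and a limiting argument along the recurrence times $n_k$ using the $Df^*$-isometry property, equivariance of $H^{s/u}$, and convergence of the holonomy to the identity. The only cosmetic difference is that you pass to a subsequence to obtain a common accumulation point $p$ and compare both sides against $\tau_p$, whereas the paper works directly with the metric $\rho_{f^{n_k}(x)}$, the identifications $I_{xy}$, and uniform continuity of $\tau$ on $K_n$, avoiding the need for a common limit.
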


\begin{proof}
By \cite[Proposition 2.4]{KS10} the uniform quasiconformality of the linear cocycle $Df|_{E^{u}}$ implies that there is an invariant bounded measurable conformal structure $\tau: M \rightarrow \mathcal{C} E^{u}$. 

By Lusin's theorem we can find an increasing sequence $\{K_{n}\}_{n \in \N}$ of compact subsets of $M$ such that $\tau$ is uniformly continuous on $K_{n}$ and $m(M \backslash K_{n}) < 2^{-n}$. Let $\Omega \subset M$ be the full measure set of points satisfying the conclusion of Lemma \ref{lemma: half} for both $f$ and $f^{-1}$. We also require that $\tau$ is defined and $Df$-invariant on $\Omega$. 

Let $x,y \in \Omega$ be given with $y \in \W^{s}_{loc}(x)$. Then there is an $n > 0$ and a sequence $n_{k} \rightarrow \infty$ such that $f^{n_{k}}(x)$ and $f^{n_{k}}(y)$ both lie in $K_{n}$ for each $n_{k}$.  Then 
\begin{align*}
\rho_{x}(\tau_{x}, (H^{s}_{xy})^{*}\tau_{y}) &= \rho_{x}((Df^{n_{k}}_{x})^{*}\tau_{f^{n_{k}}(x)}, (H^{s}_{xy})^{*}(Df^{n_{k}}_{y})^{*}\tau_{f^{n_{k}}(y)})  \\
&= \rho_{x}((Df^{n_{k}}_{x})^{*}\tau_{f^{n_{k}}(x)}, (Df^{n_{k}}_{x})^{*}(H^{s}_{f^{n_{k}}x f^{n_{k}}y})^{*}\tau_{f^{n_{k}}(y)}) \\
&=  \rho_{f^{n_{k}}(x)}(\tau_{f^{n_{k}}(x)}, (H^{s}_{f^{n_{k}}x f^{n_{k}}y})^{*}\tau_{f^{n_{k}}(y)}) \\
&\leq \rho_{f^{n_{k}}(x)}(\tau_{f^{n_{k}}(x)}, (I_{f^{n_{k}}x f^{n_{k}}y})^{*}\tau_{f^{n_{k}}(y)})  \\
&+\rho_{f^{n_{k}}(x)}((I_{f^{n_{k}}x f^{n_{k}}y})^{*}\tau_{f^{n_{k}}(y)}, (H^{s}_{f^{n_{k}}x f^{n_{k}}y})^{*}\tau_{f^{n_{k}}(y)}),
\end{align*}
where we recall that $I_{xy}: E^{u}_{x} \rightarrow E^{u}_{y}$ is our chosen H\"older continuous family of identifications of nearby fibers of $E^{u}$. The uniform continuity of $\tau$ on $K_{n}$ implies that 
\[
\rho_{f^{n_{k}}(x)}(\tau_{f^{n_{k}}(x)}, (I_{f^{n_{k}}x f^{n_{k}}y})^{*}\tau_{f^{n_{k}}(y)}) \rightarrow 0,
\] 
as $n_{k} \rightarrow \infty$ since $d(f^{n_{k}}(x), f^{n_{k}}(y)) \rightarrow 0$ as $n_{k} \rightarrow \infty$.  Since $\| I_{f^{n_{k}}x f^{n_{k}}y} - H^{s}_{f^{n_{k}}x f^{n_{k}}y}\| \rightarrow 0$ uniformly as $n_{k} \rightarrow \infty$ we also conclude that 
\[
\rho_{f^{n_{k}}(x)}((I_{f^{n_{k}}x f^{n_{k}}y})^{*}\tau_{f^{n_{k}}(y)}, (H^{s}_{f^{n_{k}}x f^{n_{k}}y})^{*}\tau_{f^{n_{k}}(y)}) \rightarrow 0,
\]
as $n_{k} \rightarrow \infty$. Combining these two facts gives $\tau_{x} = (H^{s}_{xy})^{*}\tau_{y}$. 

The same proof replacing $\W^{s}$ by $\W^{u}$ and $n_{k}$ by $-n_{k}$ shows that if $x,y \in \Omega$ with $y \in \W^{u}_{loc}(x)$ then  $\tau_{x} = (H^{u}_{xy})^{*}\tau_{y}$.
\end{proof}




\subsection{Equivariance properties of the center holonomy}\label{subsec: equi}
From now to the end of section \ref{section: higherend}, for any $x,y\in M$ such that $y\in U_x$, for any $C^1$ center path $\gamma$ lies in $U_x$ from $x$ to $y$, \color{black}
we write $h^{c}_{xy}:=h^{cs}_\gamma:  \mathcal{W}^{u}_{loc}(x) \rightarrow \mathcal{W}^{u}_{loc}(y)$ for the center-stable holonomy between local unstable leaves inside the same center-unstable leaf, which coincides with the center holonomy between these leaves. It is easy to see that under either assumption (A) or (B), $h^c_{xy}$ is well-defined and does not depend on the choice of $\gamma$.\color{black}

\begin{prop}\label{prop: differentiability} Let
\begin{align*}
Q = &\{x \in M: \text{for $m^{c}_{x}$-a.e. $y \in U_{x}$}, \\
&\text{$h^{c}_{xy}:\W^{u}_{loc}(x) \rightarrow \W^{u}_{loc}(y)$ is differentiable at $x$}\}.
\end{align*}
Then $m(Q)=1$.
\end{prop}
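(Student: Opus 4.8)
## Proof Proposal for Proposition \ref{prop: differentiability}

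\textbf{Overall strategy.} The plan is to exploit the absolute continuity results of Section \ref{section: center holonomy} together with the quasiconformal structure of the holonomy maps. By Corollary \ref{corollary: cs absolute continuity}, $m$ has Lebesgue disintegration along $\W^{cu}$ leaves, and inside each $\W^{cu}(x)$ the center holonomy $h^c$ between local unstable leaves is $K$-quasiconformal (Lemma \ref{lemma: quasiconformal}, read through the charts $\Phi$, which are uniformly $C^1$ close to the identity). A $K$-quasiconformal homeomorphism between open subsets of $\R^k$ is differentiable $\mathrm{vol}_k$-almost everywhere by Proposition \ref{proposition: quasiconformal properties}. So the heart of the matter is a Fubini-type argument: differentiability at $x$ of $h^c_{xy}$ as $y$ varies should be traded for almost-everywhere differentiability of a single quasiconformal map along the relevant directions.

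\textbf{Key steps.} First I would fix $x_0 \in M$ and work in a product neighborhood $V$ of $x_0$ on which $\W^{cu}$ and $\W^u$ have local product structure, and pass to the chart picture: identify $\W^u_{loc}$ leaves with balls in $E^u \cong \R^k$ via $\Phi$, and parametrize the local $\W^{cu}$ leaf through $x_0$. Inside $\W^{cu}_{loc}(x_0)$, consider the ``holonomy graph'' map $H : \W^u_{loc}(x_0) \times (\text{center transversal at } x_0) \to \W^{cu}_{loc}(x_0)$ sending $(\xi, y)$ to $h^c_{x_0 y}(\Phi_{x_0}(\xi))$; equivalently this is a homeomorphism of a product neighborhood onto an open subset of $\W^{cu}_{loc}(x_0)$, and since $\W^u$ subfoliates $\W^{cu}$ with uniformly $C^1$ leaves this map is, in a suitable smooth chart for $\W^{cu}_{loc}(x_0)$, a homeomorphism whose restriction to each unstable slice is $K$-quasiconformal and whose restriction to each center leaf is (by the $C^1$ leaves of $\W^c$) smooth. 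Second, I would invoke a regularity statement for such ``fibered'' homeomorphisms: one can either argue directly that $H$ itself is quasiconformal-in-the-appropriate-sense on the full product and apply Proposition \ref{proposition: quasiconformal properties}, or—more in the spirit of the paper—use strong absolute continuity of $\W^u$ inside $\W^{cu}(x_0)$ plus the a.e.\ differentiability of each $h^c_{x_0 y}$ in the $\W^u$-direction to run a Fubini argument on $\W^{cu}_{loc}(x_0)$ equipped with its Riemannian volume $\nu^{\W^{cu}}$.

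\textbf{The Fubini argument.} Concretely: the $\W^c$ foliation inside $\W^{cu}(x_0)$ is strongly absolutely continuous (it has $C^1$ leaves—indeed $\W^c$ and $\W^u$ both subfoliate $\W^{cu}$), so by Corollary \ref{mutual continuity} applied inside the manifold $\W^{cu}(x_0)$, the volume $\nu^{\W^{cu}(x_0)}$ is equivalent to the integral of $\nu^{\W^u_{loc}(y)}$ over $y$ in the center transversal against $\nu^{\W^c}$. Now the set of points $p$ in $\W^{cu}_{loc}(x_0)$ at which the relevant holonomy fails to be differentiable—more precisely, letting $D \subset \W^{cu}_{loc}(x_0)$ be the set of $p = h^c_{x_0 y}(z)$, $z \in \W^u_{loc}(x_0)$, at which $h^c_{x_0 y}$ is not differentiable at $z$—is, slice-by-slice over each fixed local unstable leaf $\W^u_{loc}(y)$, a $\nu^{\W^u_{loc}(y)}$-null set, because $h^c_{x_0 y}$ read in the charts is $K$-quasiconformal hence differentiable a.e.\ (Proposition \ref{proposition: quasiconformal properties}), and the charts $\Phi$ are $C^1$ so preserve the null-set class. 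Wait—I need differentiability at the source point $x$, i.e.\ at the base of the slice, not at a generic point of the slice. So I should instead foliate $\W^{cu}_{loc}(x_0)$ the other way: for a.e.\ (w.r.t.\ $\nu^{\W^c}$) point $x$ on the center curve through $x_0$, I want $h^c_{xy}$ differentiable at $x$ for a.e.\ $y \in U_x$. Using the cocycle/equivariance relation $h^c_{xy} = \Phi_y \circ (\text{linear}) \circ \Phi_x^{-1} \circ \cdots$, differentiability of $h^c_{xy}$ at $x$ is equivalent (modulo the smooth charts and the affine transition maps $\Phi^{-1}\circ\Phi$ from Proposition \ref{proposition: transitions}) to differentiability of a fixed $K$-quasiconformal map at the specific point corresponding to $x$. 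The clean route: consider the single $K$-quasiconformal map $h^c_{x_0 y}$ and the set $S_y \subset \W^u_{loc}(x_0)$ of non-differentiability points, which has $\nu$-measure zero for each $y$; by Fubini on $\W^{cu}_{loc}(x_0) \asymp \int \nu^{\W^u}_y\, d\nu^{\W^c}(y)$, the union $\bigcup_y (\{y\}\times S_y)$ is $\nu^{\W^{cu}}$-null, hence for $\nu^{\W^c}$-a.e.\ center point $x$ and then $\nu^{\W^u}$-a.e.\ ... — I must swap the roles of the two transverse foliations to land the non-differentiability locus at the base point. This swap is exactly where Corollary \ref{mutual continuity}'s two-sided equivalence $m \asymp \int \nu^{\W^u} d\nu^{\W^c} \asymp \int \nu^{\W^c} d\nu^{\W^u}$ (inside $\W^{cu}(x_0)$) does the work: it lets me disintegrate the null set along $\W^c$-leaves too, giving that for $m$-a.e.\ $x$, for $m^c_x$-a.e.\ $y$, the point $x$ is a differentiability point of $h^c_{xy}$ viewed inside $\W^{cu}$. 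Finally integrate over the $\W^{cu}$-partition of $M$ using that $m$ has Lebesgue disintegration along $\W^{cu}$.

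\textbf{Main obstacle.} The delicate point is precisely the bookkeeping of ``differentiability \emph{at the base point} $x$'' rather than at a generic point: a.e.\ differentiability of a quasiconformal map gives a null \emph{set of bad points}, and one must arrange that this null set, when sliced by the center foliation, still avoids the base points for a.e.\ leaf. This is handled by the symmetric absolute continuity statement (Corollary \ref{mutual continuity}) applied within $\W^{cu}(x_0)$, using that $\W^u$ and $\W^c$ are both strongly absolutely continuous subfoliations of $\W^{cu}$ (the latter because $\W^c$ has $C^1$ leaves), together with the equivariance/affineness of the charts $\Phi$ (Propositions \ref{proposition: charts} and \ref{proposition: transitions}) which guarantees that the "differentiable at $x$" property read in charts is intrinsic and is carried by $C^1$ coordinate changes. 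Everything else is a routine Fubini argument plus citation of Proposition \ref{proposition: quasiconformal properties} and Corollary \ref{corollary: cs absolute continuity}.
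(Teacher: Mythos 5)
Your proposal follows essentially the same route as the paper: exploit the $K$-quasiconformality of the center holonomy between local unstable leaves (Lemma~\ref{lemma: quasiconformal}, Proposition~\ref{proposition: quasiconformal properties}), then Fubini on the two-sided disintegration of the conditional volume on $\W^{cu}_{loc}(x)$ furnished by Corollary~\ref{mutual continuity}, namely $m^{cu}_x \asymp \int_{U_x} m^u_y\,dm^c_x(y) \asymp \int_{\W^u_{loc}(x)} m^c_y\,dm^u_x(y)$, to move the null set of non-differentiability points off the base points of the holonomies. This is exactly the paper's bookkeeping with the good set $T_x \subset S_x$.

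One small correction: your parenthetical justification that $\W^c$ is \emph{strongly} absolutely continuous inside $\W^{cu}(x_0)$ because it has $C^1$ leaves is not valid—smoothness of leaves says nothing about absolute continuity of transversal holonomy. But you also do not need this. Corollary~\ref{mutual continuity} only asks that $\W^c$ be absolutely continuous (in the weaker sense of Definition~\ref{defn: ac}) with respect to the strongly absolutely continuous transverse foliation $\W^u$ inside $\W^{cu}$, and that is precisely what the quasiconformality of Lemma~\ref{lemma: quasiconformal} together with Proposition~\ref{proposition: quasiconformal properties} delivers. With that hypothesis corrected, your Fubini argument reproduces the paper's proof.
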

\begin{proof} 
For $x \in M$ we let $m_{x}^{u}$ denote the conditional measure of $m$ on $\W^{u}_{loc}(x)$. Let $m^{cu}_{x}$ be the conditional measure of $m$ on the subset 
\[
S_{x} = \bigcup_{y \in \W^u_{loc}(x)} U_{y} \subset \W^{cu}_{loc}(x).
\]
Considering $\mathcal{W}^{c}(x)$ as an absolutely continuous foliation with respect to the transverse strongly absolutely continuous foliation $\mathcal{W}^{u}(x)$ inside of $\mathcal{W}^{cu}(x)$, we conclude from Corollary \ref{mutual continuity} that the measure $m^{cu}_{x}$ decomposes as conditional measures in two different ways,
\[
m^{cu}_{x} \asymp \int_{U_{x}} m^{u}_{y} \, dm^{c}_{x}(y) \asymp \int_{W^u_{loc}(x)} m^{c}_{y} \, dm^{u}_{x}(y) 
\] 
where we recall that we use the notation $\asymp$ to indicate that the two measures are equivalent on $S_{x}$. By Lemma \ref{lemma: quasiconformal} and Proposition \ref{proposition: quasiconformal properties}, for every $y \in U_{x}$ the center holonomy map $h^{c}_{yx}: \W^{u}_{loc}(y) \rightarrow \W^{u}_{loc}(x)$ is differentiable at $m^{u}_{y}$-a.e. $z \in \W^{u}_{loc}(y)$. Thus if we set 
\[
T_{x} = \{z \in S_{x}: \text{$h^{c}_{xy}$ is differentiable at $z$ for $y = h^{u}_{z x}(z)$}\},
\]
then by the first expression for $m^{cu}_{x}$ we have $m^{cu}_{x}(T_{x}) = m^{cu}_{x}(S_{x})$ for $m$-a.e. $x \in M$. Since $T_{x}$ has full $m^{cu}_{x}$ measure in $S_{x}$ we conclude by the second expression for $m^{cu}_{x}$ that $m^{c}_{y}(T_{x} \cap U_{y}) = m^{c}_{y}(U_{y})$ for $m^{u}_{x}$-a.e. $y \in \W^{u}_{loc}(x)$. This immediately implies that $m^{u}_{x}( Q \cap W^{u}_{loc}(x)) = m^{u}_{x}(\W^{u}_{loc}(x))$ from the definition of $Q$. Since the $\W^u$ foliation is absolutely continuous and this holds for $m$-a.e. $x \in M$ we conclude that $m(Q) = 1$, i.e., $Q$ has full volume in $M$. 
\end{proof}
We then define $\mathcal{Q} = \{(x,y) \in \E: x \in Q\}$. From the definition of $Q$ and Proposition \ref{prop: differentiability} we see that $\mathcal{Q}$ has full $\mu$-measure inside of $\E$. For $(x,y) \in \mathcal{Q}$ we can then define $H^{c}_{xy}: E^{u}_{x} \rightarrow E^{u}_{y}$ to be the derivative of $h^{c}_{xy}$ at $x$. The map $(x,y) \rightarrow H^{c}_{xy}$ is clearly measurable and defined $\mu$-a.e. by Proposition \ref{prop: differentiability}. Our next goal is to show that the maps $H^{c}$ are equivariant with respect to the stable and unstable holonomies $H^{s}$ and $H^{u}$ of $Df|_{E^{u}}$.  

\begin{lemma}\label{lemma: L holo eqva}There is a full $\mu$-measure subset $\Omega$ of $\mathcal{Q}$ such that if $(x,y)$, $(z,w) \in \Omega$ with $z \in \W^{u}_{loc}(x)$ and $w \in \W^{u}_{loc}(y)$ then the following equation holds,
\begin{equation}\label{cu holonomy}
H^{c}_{zw}\circ H^u_{xz}=H^u_{yw}\circ H^c_{xy},
\end{equation}
and similarly if $(z,w) \in \Omega$ with $z \in \W^{s}_{loc}(x)$ and $w \in \W^{s}_{loc}(y)$ then,
\begin{equation}\label{cs holonomy}
H^{c}_{zw}\circ H^s_{xz}=H^s_{yw}\circ H^{c}_{xy}.
\end{equation}
\end{lemma}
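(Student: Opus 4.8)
The plan is to combine the holonomy-invariance of the invariant conformal structure $\tau$ from Proposition \ref{prop: confstruct} with a recurrence argument, using the key observation that $h^c_{xy}$ intertwines unstable holonomies \emph{along its orbit}. First I would work inside a single center-unstable leaf, where the maps $h^c$, $\W^u$-holonomy, and the linear identifications $H^u$ all live. The starting point is the commutation relation coming from $F$-equivariance: for $(x,y)\in\mathcal{Q}$ and $n\ge 0$ one has $h^c_{xy} = f^{-n}\circ h^c_{f^n(x)f^n(y)}\circ f^n$ on local unstable leaves (using that $f$ maps $U_x$ to $U_{f(x)}$ and preserves the center foliation), and differentiating at $x$ gives
\begin{equation}\label{eqn:Hc-equiv-orbit}
H^c_{xy} = Df^{-n}_{f^n(x)}\circ H^c_{f^n(x)f^n(y)}\circ Df^n_x .
\end{equation}
A parallel relation holds for $H^u$: property (2) of Proposition \ref{proposition: holonomies} gives $H^u_{xz} = Df^{-n}_{f^n(x)}\circ H^u_{f^n(x)f^n(z)}\circ Df^n_x$. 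So the proposed equation \eqref{cu holonomy} is \emph{scale-invariant} under pushing the configuration $(x,y,z,w)$ forward by $f^n$, modulo the cocycle $Df|_{E^u}$.

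Next I would set up the right full-measure set. Apply Lemma \ref{lemma: half} to the transformation $F$ on $(\E,\mu)$ together with a Lusin exhaustion $\{K_n\}$ on which the measurable maps $(x,y)\mapsto H^c_{xy}$, the conformal structure $\tau$, and the identifications $I$ are all uniformly continuous and on which $\mu(\E\setminus K_n)$ is summable; intersect with the full-measure set from Proposition \ref{prop: confstruct} for both $f$ and $f^{-1}$, and with $\mathcal{Q}$. Call the result $\Omega$. Now given $(x,y),(z,w)\in\Omega$ with $z\in\W^u_{loc}(x)$, $w\in\W^u_{loc}(y)$, there is a fixed $n_0$ and a sequence $n_k\to\infty$ (in the $F^{-1}$ direction, i.e. $n_k\to -\infty$, since unstable holonomies are controlled by backward iteration) along which $F^{n_k}(x,y)$ and $F^{n_k}(z,w)$ both land in $K_{n_0}$. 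Along this sequence: (i) the conformal structure is $H^u$-invariant on $\Omega$, so $(H^u_{f^{n_k}x\,f^{n_k}z})^*\tau_{f^{n_k}(z)} = \tau_{f^{n_k}(x)}$ exactly, and likewise for $y,w$; (ii) by uniform continuity on $K_{n_0}$ and the contraction $d(f^{n_k}(x),f^{n_k}(z))\to 0$, the linear map $H^c_{f^{n_k}(x)f^{n_k}(y)}$ is within $o(1)$ of $H^c_{f^{n_k}(z)f^{n_k}(w)}$ after identifying fibers by $I$ (here I use that $H^c$ is itself a derivative of a uniformly quasiconformal map, hence lies in a fixed compact set of linear maps bounded away from degenerate, by Lemma \ref{lemma: quasiconformal} and Proposition \ref{proposition: quasiconformal properties}). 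Then the relation one wants, rearranged as
\[
(H^u_{yw})^{-1}\circ H^c_{zw}\circ H^u_{xz}\circ (H^c_{xy})^{-1} = \mathrm{Id},
\]
is obtained by transporting it back via \eqref{eqn:Hc-equiv-orbit} to level $n_k$, where it becomes a conjugate of $(H^u_{f^{n_k}y\,f^{n_k}w})^{-1} H^c_{f^{n_k}z\,f^{n_k}w} H^u_{f^{n_k}x\,f^{n_k}z}(H^c_{f^{n_k}x\,f^{n_k}y})^{-1}$, and this latter expression converges to the identity as $n_k\to\infty$ because all four factors converge (after the $I$-identifications) to the four factors of an identity relation at the common limit point in $K_{n_0}$. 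The stable case \eqref{cs holonomy} is identical with $\W^u$ replaced by $\W^s$, $H^u$ by $H^s$, and $n_k\to -\infty$ by $n_k\to +\infty$.

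The main obstacle I anticipate is controlling the convergence of the \emph{derivative} maps $H^c_{f^{n_k}(x)f^{n_k}(y)}$ rather than of the holonomy maps themselves: differentiability at a point is not a closed condition, so I cannot simply invoke continuity of $(x,y)\mapsto H^c_{xy}$ as a genuine continuous function. This is precisely why one needs the Lusin set $K_{n_0}$ on which $(x,y)\mapsto H^c_{xy}$ is \emph{measurable-restricted-to-continuous}, and why the recurrence must be simultaneous for the two configurations $(x,y)$ and $(z,w)$ — so that both derivative cocycle values are simultaneously pinned near continuity points of the same compact set. A secondary technical point is checking that the unstable-holonomy identifications $H^u_{f^{n_k}(x)f^{n_k}(z)}$, which a priori are defined for points on a common local unstable leaf, indeed satisfy the exact conjugacy $(H^u)^*\tau = \tau$ on the full-measure set — but this is exactly the content of Proposition \ref{prop: confstruct}, and the role of $\tau$ here is only to certify that the limiting linear maps are forced to coincide (two elements of $GL(k,\R)$ that agree after the $I$-identification and both preserve the same conformal structure at the limit point must be equal up to scale, and the determinant normalization built into $h^c$ being a holonomy between unstable leaves removes the scale ambiguity). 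Once these points are handled the equivariance equations \eqref{cu holonomy} and \eqref{cs holonomy} follow on the full-measure set $\Omega$.
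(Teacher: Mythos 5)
Your proposal captures the paper's scaffolding correctly: the Lusin exhaustion $\{K_n\}$ on which $H^c$ is uniformly continuous, the simultaneous-recurrence trick via Lemma~\ref{lemma: half} applied to $F$ and $F^{-1}$, pushing the quadruple $(x,y,z,w)$ to level $f^{-n_k}$ where $H^c$-continuity on $K_{n_0}$ and the convergence of $H^u$ to the identity give the small-error estimate \eqref{eqn: center convergence}, and the eventual appeal to the invariant conformal structure $\tau$. All of that matches the paper.

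The gap is in how you close the argument. Writing $B := (H^u_{yw})^{-1} H^c_{zw} H^u_{xz}(H^c_{xy})^{-1}$, the equivariance relations give $B = Df^{n_k}_y\, C_{n_k}\, Df^{-n_k}_y$ with $C_{n_k}\to\mathrm{Id}$; you then assert $B=\mathrm{Id}$ ``because all four factors converge.'' But conjugation by $Df^{n_k}_y$ does \emph{not} preserve distance to the identity: uniform $u$-quasiconformality bounds the ratio $\|Df^{n_k}_y\|\cdot\|Df^{-n_k}_y\|$, but each factor diverges, so from $\|C_{n_k}-\mathrm{Id}\|\to 0$ and $C_{n_k}=Df^{-n_k}_y B\,Df^{n_k}_y$ one cannot directly conclude $B=\mathrm{Id}$. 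The paper resolves this by splitting $H^c$ into its determinant and its normalization $SH^c = \det(H^c)^{-1/k}H^c$: the determinant identity \eqref{det equation} follows from taking ratios in the explicit cocycle formula coming from \eqref{eqn: center equi}, with no $\tau$ needed; and the normalized identity \eqref{sphere equation} follows from Lemma~\ref{lemma: liminf holo}, which works precisely because $SDf^{-n}$ and $SH^u$ are genuine isometries of the $\tau$-metric on $\Lambda$, so for the normalized maps the conjugation \emph{does} preserve $\tau$-distance to the identity. Your proposed substitute --- that both sides preserve $\tau$ and that ``the determinant normalization built into $h^c$'' removes the scale ambiguity --- is not available: at this stage $H^c$ is \emph{not} known to preserve $\tau$ (that is Lemma~\ref{lemma: center conformal}, which is proved \emph{after} and \emph{using} the present lemma), and $h^c$ carries no determinant normalization, which is exactly why the determinant must be handled as a separate computation.
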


\begin{proof}
We let $\Lambda \subset M$ be the full $m$-measure set of points on which the invariant bounded measurable conformal structure $\tau: M \rightarrow \mathcal{C}E^{u}$ of Proposition \ref{prop: confstruct} is defined and invariant under both $Df$ and the stable and unstable holonomies $H^{s}$ and $H^{u}$. We let $\Omega_{0} \subset \E$ be the set of $(x,y) \in \E$ such that both $x$ and $y$ are in $\Lambda$. The absolute continuity of $\W^{c}$ together with the construction of the measure $\mu$ implies that $\mu(\E \backslash \Omega_{0}) = 0$.

By Lusin's theorem we can find an increasing sequence of compact subsets $K_n\subset \E$ such that $\mu(\E \backslash K_{n}) < 2^{-n}$ and such that $H^{c}$ restricts to a uniformly continuous function on each $K_{n}$. Since $\mu$ is $F$-invariant, by applying Lemma \ref{lemma: half} to both $F$ and $F^{-1}$ there is a measurable set $\Omega$ with $\mu(\E \backslash \Omega) = 0$ and such that for any pair of points $(x,y), (z,w) \in \Omega$ there is an $n \in \N$ and a pair of infinite sequences $n_{k} \rightarrow \infty$ and $n_{k}' \rightarrow \infty$ with $F^{-n_{k}}(x,y), F^{-n_{k}}(z,w) \in  K_{n}$ for each $n_{k}$ and $F^{n_{k}'}(x,y), F^{n_{k}'}(z,w) \in  K_{n}$. 

We now prove that equation \eqref{cu holonomy} holds. The proof for equation \eqref{cs holonomy} will be completely analogous. Let $(x,y) , (z,w) \in \Omega$ be such that $z \in \W^{u}_{loc}(x)$ and $w \in \W^{u}_{loc}(y)$. Since $H^{c}$ is uniformly continuous on $K_{n}$ and $d(f^{-n}x,f^{-n}z), d(f^{-n}y,f^{-n}w) \rightarrow 0$ as $n \rightarrow \infty$, we conclude that 
\begin{equation}\label{eqn: center convergence}
\left\| H^{u}_{f^{-n_{k}}y f^{-n_{k}}w} \circ H^{c}_{f^{-n_{k}}x f^{-n_{k}}y} - H^{c}_{f^{-n_{k}}z f^{-n_{k}}w}\circ H^{u}_{f^{-n_{k}}x f^{-n_{k}}z}\right\| \rightarrow 0,
\end{equation}
 as $k \rightarrow \infty$, where $\{n_{k}\}$ is the infinite sequence from the previous paragraph corresponding to the pair $(x,y),(z,w)$. 

For  $x\in \Lambda$ we let $SE^{u}_{x}$ denote the unit sphere in $E^{u}_{x}$ in the metric $\tau_{x}$. For any two points $x,y \in \Lambda$ and an invertible linear map $A: E^{u}_{x} \rightarrow E^{u}_{y}$ we then define
\[
SA(v) = \frac{A(v)}{\det(A)^{\frac{1}{k}}},
\]
where the determinant is taken with respect to the induced Riemannian metric $\tau^{0}$ on $E^{u}$ from $TM$. We remark that if $A^{*}\tau_{y} = \tau_{x}$ then $SA$ maps $SE^{u}_{x}$ to $SE^{u}_{y}$ and consequently $SA$ is an isometry from $E^{u}_{x}$ to $E^{u}_{y}$ when these are given the metrics $\tau_{x}$ and $\tau_{y}$ respectively. This is because $A$ then maps $SE^{u}_{x}$ to $\det_{\tau}(A)^{\frac{1}{m}} \cdot SE^{u}_{y}$, where $\det_{\tau}$ denotes the determinant of this linear map with respect to the family of inner products on $E^{u}$ given by $\tau$. Our convention for representing elements of $\mathcal{C}E^{u}_{x}$ is to take the inner product which has determinant $1$ with respect to the background metric $\tau^{0}_{x}$. Thus $\det_{\tau} = \det$ for linear maps between fibers of $E^{u}$. 

We also note that $A$ is clearly determined by $SA$ and $\det(A)$. 

For $(x,y) , (z,w) \in \Omega$ given as in the statement of the lemma we will show that 
\begin{equation}\label{det equation}
\det(H^{c}_{zw})\det(H^{u}_{xz}) = \det(H^{u}_{yw}) \det(H^{c}_{xy}),
\end{equation}
\begin{equation}\label{sphere equation}
SH^{c}_{zw} \circ SH^{u}_{xz} =  SH^{u}_{yw} \circ SH^{c}_{xy}.
\end{equation}
The desired statement of the lemma follows from these two equations. 

From differentiating the equation 
\[
f^{-k} \circ h^{c}_{xy} = h^{c}_{f^{-k}x f^{-k}y} \circ f^{-k},
\]
expressing the equivariance of center holonomy with respect to the dynamics $f$ we obtain the equation 
\begin{equation}\label{eqn: center equi}
Df^{-k}_{y} \circ H^{c}_{x y}= H^{c}_{f^{-k}x f^{-k}y}\circ Df^{-k}_{x},
\end{equation}
which is valid for any $(x,y) \in \mathcal{Q}$. Taking determinants and rearranging, we conclude that 
\[
\frac{\det(Df^{-k}_{y}|_{E^{u}})}{\det(Df^{-k}_{x}|_{E^{u}})} \det(H^{c}_{xy}) = \det(H^{c}_{f^{-k}x f^{-k}y})
\]
Applying the same equation to $(z,w)$ with $H^{c}_{zw}$ and then taking ratios at the iterates $n_{k}$ gives 
\[
\frac{\det(Df^{-n_{k}}_{y}|_{E^{u}})}{\det(Df^{-n_{k}}_{w}|_{E^{u}})} \cdot \frac{\det(Df^{-n_{k}}_{z}|_{E^{u}})}{\det(Df^{-n_{k}}_{x}|_{E^{u}})} \cdot \frac{\det(H^{c}_{xy})} {\det(H^{c}_{z w})}= \frac{\det(H^{c}_{f^{-n_{k}}x f^{-n_{k}}y})}{\det(H^{c}_{f^{-n_{k}}z f^{-n_{k}}w})}
\]
As $k \rightarrow \infty$  the right side converges to 1 by equation \ref{eqn: center convergence}, the first factor in the product on the left side converges to $\det(H^{u}_{yw})$, and the second factor converges to $\det(H^{u}_{xz})^{-1}$. Rearranging the resulting equation gives equation \eqref{det equation}.

For the second equation we first consider the following lemma which only uses the hypothesis that $f$ is uniformly $u$-quasiconformal. We let $\| \cdot \|_{\tau}$ denote the norm on $E^{u}_{x}$ induced by the inner product $\tau_{x}$. 
\begin{lemma}\label{lemma: liminf holo}
Suppose $x,y\in \Lambda$,  $y \in \W^{u}_{loc}(x)$ and  $v\in E^u_x, v' \in E^u_y$. If
\[
\liminf_{n\to \infty}\|SDf^{-n}(v) -  SH^{u}_{f^{-n}yf^{-n}x} (SDf^{-n}(v'))\|_{\tau} =0,
\]
then $SH_{xy}^u(v)=v'$. 
\end{lemma}

\begin{proof} Let $w = SH^{u}_{xy}(v)$. Then we have 
\[
SDf^{-n}(v) =  SH^{u}_{f^{-n}yf^{-n}x} (SDf^{-n}(w)),
\]
by the equivariance properties of the unstable holonomy $H^{u}$. Therefore we have 
\[
\liminf_{n\to \infty}\|SH^{u}_{f^{-n}yf^{-n}x} (SDf^{-n}(w)) - SH^{u}_{f^{-n}yf^{-n}x} (SDf^{-n}(v'))\|_{\tau}=0.
\]
But the invariance of $\tau$ under the unstable holonomy $H^{u}$ on $\Lambda$ and its invariance under $Df$ imply that $SDf^{-n}$ and $SH^{u}$ are both isometries with respect to the family of metrics given by $\tau$ on the fibers $E^{u}_{x}$ of the vector bundle $E^{u}$. This then implies that
\[
\liminf_{n \to \infty} \|w-v'\|_{\tau} = 0,
\]
which means that $w = v'$ as desired. 
 \end{proof}

Now let $(x,y), (z,w) \in \Omega$ be given as in the statement of the Lemma. From equation \ref{eqn: center convergence} and the equivariance properties of $H^{u}$ we conclude that 
\begin{align*}
\lim_{k\to \infty} &\|SH^{c}_{f^{-n_{k}}z f^{-n_{k}}w}( SDf^{-n_{k}}_{z}(SH^u_{xz} (v))) \\
&- SH^{u}_{f^{-n_{k}}y f^{-n_{k}}w}(SH^{c}_{f^{-n_{k}}x f^{-n_{k}}y}(SDf^{-n_{k}}_{x}(v)))\|=0.
\end{align*}
 Applying the equivariance relation \eqref{eqn: center equi}, this implies that 
\begin{align*}
\lim_{k\to \infty} &\|SDf^{-n_{k}}_{w}(SH^{c}_{zw}(SH^u_{xz} (v))) \\
&- SH^{u}_{f^{-n_{k}}y f^{-n_{k}}w}(SDf^{-n_{k}}_{y}(SH^{c}_{xy}(v)))\|=0
\end{align*}
Since the measurable conformal structure $\tau$ is bounded, the norms $\| \cdot \|_{\tau}$ are uniformly comparable to the norm $\| \cdot \| = \| \cdot\|_{\tau_{0}}$ and thus this equation also holds with $\| \cdot \|$ replaced by $\|\cdot\|_{\tau}$. We thus conclude by Lemma \ref{lemma: liminf holo} that 
\[
SH^{c}_{zw} (SH^{u}_{xz}(v)) = SH^{u}_{yw}(SH^{c}_{xy}(v)).
\]
Since this holds for every $v \in E^{u}_{x}$ we deduce equation \eqref{cu holonomy} as desired.  

To prove the second equation \eqref{cs holonomy} where instead $z \in \W^{s}_{loc}(x)$ and $w \in \W^{s}_{loc}(y)$, we follow the exact same proof, replacing $H^{u}$ by $H^{s}$ and $-n_{k}$ by $n_{k}$ everywhere. 
\end{proof}

We next recall the following elementary lemma from analysis,
\begin{lemma}\label{lemma: cont ext}
Suppose $f: \RR^k \to \RR^k$ is ACL and that there is a continuous map $G:\RR^k\to GL(k,\RR)$ such that $Df=G$ almost everywhere. Then $f$ is a $C^{1}$ map and $Df=G$ everywhere. 
\end{lemma}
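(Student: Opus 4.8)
\textbf{Proof proposal for Lemma \ref{lemma: cont ext}.}

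The plan is to show that $f$ agrees, up to an additive constant, with the everywhere-defined $C^1$ function obtained by integrating $G$ along coordinate directions; the ACL hypothesis is exactly what licenses the use of the fundamental theorem of calculus on almost every line, and continuity of $G$ then upgrades the a.e.\ statement to an everywhere statement. First I would fix a coordinate direction, say $e_1$, and recall that ``ACL'' means $f$ restricts to an absolutely continuous function on almost every line parallel to $e_1$ (and likewise for $e_2,\dots,e_k$), with the partial derivative $\partial_1 f$ existing a.e.\ and equal to $G e_1$ a.e.\ by hypothesis. On such a good line $\ell$ through a point $p$, absolute continuity gives
\[
f(p + t e_1) = f(p) + \int_0^t (G e_1)(p + s e_1)\, ds,
\]
and since $G$ is continuous the right-hand side is a $C^1$ function of $t$ with derivative $(Ge_1)(p+te_1)$ for \emph{every} $t$, not just almost every $t$. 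Thus on every good line the partial derivative $\partial_1 f$ exists everywhere and equals $G e_1$.

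The main obstacle is passing from ``good on almost every line in each of the $k$ coordinate directions'' to ``$C^1$ everywhere on $\RR^k$''. I would handle this by a Fubini/density argument: since the bad lines in each direction form a null set, through $m$-a.e.\ point $x$ all $k$ coordinate lines are good, so $f$ has all partial derivatives at such $x$ with $Df_x = G(x)$. To promote this to every point, fix an arbitrary $x_0$ and define $\tilde f(x) := f(x_0) + \int_{[x_0,x]} G$, where the integral is taken along the polygonal path moving successively in the $e_1,\dots,e_k$ directions from $x_0$ to $x$; because $G$ is continuous this $\tilde f$ is manifestly $C^1$ with $D\tilde f = G$ everywhere (this is the standard construction of a primitive of a continuous field, using that $G$ need not a priori be a gradient but each one-variable integration is legitimate and the mixed partials issue does not arise since we are only claiming $D\tilde f = G$, which is verified direction-by-direction). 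Now $f - \tilde f$ is ACL with a.e.\ derivative zero, hence is constant on a.e.\ coordinate line in each direction, hence (by connecting any two points through a common good ``grid'' point, which exists off a null set, and then using continuity of $\tilde f$ together with the ACL absolute-continuity of $f$ on good lines) is constant. Therefore $f = \tilde f + c$ is $C^1$ with $Df = G$ everywhere.

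I would close by remarking that the only subtlety is ensuring $f$ itself is continuous so the final ``constant off a null set implies constant'' step is legitimate; but $f$ is assumed to be a map $\RR^k \to \RR^k$ in a context where it arose as $h^c$, which is a homeomorphism, so continuity is available. Alternatively, even without assuming continuity of $f$ a priori, one can argue that an ACL function whose a.e.\ derivative extends to a continuous $GL(k,\RR)$-valued map must be locally Lipschitz (the a.e.\ derivative is locally bounded), hence continuous, and then the above applies. Either way the conclusion $Df = G$ everywhere and $f \in C^1$ follows.
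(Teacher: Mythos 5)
The construction of $\tilde f$ is where your argument breaks down. You define $\tilde f(x) = f(x_0) + \int_{[x_0,x]} G$ over a fixed polygonal path moving in the order $e_1,\dots,e_k$ and claim that $\tilde f$ is ``manifestly $C^1$ with $D\tilde f = G$ everywhere,'' dismissing the mixed-partials issue because ``$D\tilde f = G$ is verified direction-by-direction.'' That dismissal is wrong. Only the \emph{last} leg of the path gives the clean identity $\partial_k \tilde f_i = G_{ik}$; for $m < k$, differentiating $\tilde f_i$ in the $e_m$ direction also pushes the derivative through the later one-variable integrals, producing $\partial_m \tilde f_i(x) = G_{im}(x_1,\dots,x_m,(x_0)_{m+1},\dots) + \sum_{j>m}\int \partial_m G_{ij}(\cdot)\,ds$, which equals $G_{im}(x)$ only when $G$ satisfies the curl-type compatibility $\partial_m G_{ij} = \partial_j G_{im}$ (and in fact requires $G$ to be differentiable to even write these terms). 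For an arbitrary continuous $G: \RR^k \to GL(k,\RR)$ this is simply false: take $k=2$, $G(x,y) = \left(\begin{smallmatrix} 1 & 0 \\ y & 1 \end{smallmatrix}\right)$, $x_0 = 0$; then $\tilde f_2(x,y) = f_2(0) + y$ so $\partial_1\tilde f_2 \equiv 0 \neq y = G_{21}$. So the intermediate object $\tilde f$ does not have the property you need, and the subsequent step ``$f - \tilde f$ has a.e.\ derivative zero'' does not follow.

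The paper's proof avoids this entirely, and is both shorter and correct: it never introduces a separately constructed primitive. Instead it writes the fundamental-theorem-of-calculus identity
\[
f_i(x + t e_j) = f_i(x) + \int_0^t G_{ij}(x + s e_j)\,ds
\]
for $x$ in a full-measure set $\Lambda$ (this is exactly the ACL hypothesis plus $Df = G$ a.e., the same observation you make in your second paragraph). Then, using continuity of \emph{both} $f$ and $G$, it promotes this identity to all $x \in \RR^k$ directly, since both sides are continuous in $x$. From there, the right-hand side is differentiable in $t$ with derivative $G_{ij}(x + t e_j)$ everywhere, so each partial $\partial_j f_i$ exists and equals $G_{ij}$ at every point; since these are continuous, $f$ is $C^1$. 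The essential move you are missing is that there is no need to manufacture a candidate $C^1$ function and compare — once the integral identity holds at every $x$, the partial derivatives of $f$ itself are directly read off and no path-independence question arises. Your instinct to integrate along coordinate lines is the right one; the error is trying to assemble those integrals into a single global primitive of $G$, which a continuous matrix field need not admit.
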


\begin{proof}

Let $f = (f_1,\dots ,f_k)$, $f_{i}: \RR^{k} \rightarrow \RR$.  Since $f$ is ACL each coordinate function $f_{i}$ is ACL.  Thus there exists a full Lebsgue measure set $\Lambda \subset \RR^k$ such that for every $x\in \Lambda$ and $1 \leq i,j\leq k$, $f_{i}|_{x+\RR\cdot e_j}$ is absolutely continuous and $Df=G$ for almost every point (with respect to arc length) in $\{x+\RR \cdot e_j\}$, where $e_{1}, \dots, e_{k}$ denote the standard basis of $\RR^k$.

Absolute continuity of $f_{i}|_{x+\RR\cdot e_j}$ implies that 
 \begin{align*}
 f_i(x+t\cdot e_j)&= f_i(x)+\int_{0}^{t}\frac{\partial f_i}{\partial x_j}(x+s\cdot e_i)ds\nonumber\\
 &= f_i(x)+\int_{0}^{t}G_{ij}(x+s\cdot e_i)ds,
  \end{align*}
where $G = (G_{ij})_{1\leq i,j\leq k}$ is the matrix representation of $G$ in the standard basis of $\RR^{k}$.  Since both $f$ and $G$ are continuous this last equation holds for all $x\in \RR^k$ and $t\in \RR$. This proves that for each $1 \leq i,j\leq k$ the partial derivative $\frac{\partial f_i}{\partial x_j}$ of $f$ exists and coincides with $G_{ij}$. In particular all partial derivatives of $f$ exist and are continuous at every point in $\RR^{k}$ which implies that $f$ is $C^1$ and $Df=G$. 
\end{proof}

\begin{lemma}\label{lemma: linear center}
For any $x \in M$ and $y \in U_{x}$ we have the equality 
\[
\Phi_{y}^{-1} \circ h^{c}_{xy} \circ \Phi_{x} = H^{c}_{xy}  
\]
as maps from $E^{u}_{x}$ to $E^{u}_{y}$. The measurable function $H^{c}$ on $\Omega$ therefore admits a continuous extension to $\E$ and the center holonomy is linear in the charts $\{\Phi_{x}\}_{x \in M}$. 
\end{lemma}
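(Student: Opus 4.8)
The plan is to first establish the claim on a full-measure subset of $\E$ using the equivariance from Lemma \ref{lemma: L holo eqva}, and then upgrade to an everywhere statement by a continuity/density argument. Fix $x \in Q$ and $y \in U_{x}$ such that $(x,y) \in \mathcal{Q}$; write $\eta_{xy} := \Phi_{y}^{-1} \circ h^{c}_{xy} \circ \Phi_{x}: E^{u}_{x} \rightarrow E^{u}_{y}$. By Lemma \ref{lemma: quasiconformal} this is a $K$-quasiconformal homeomorphism (it is a composition of $h^{c}_{xy}$ with the smooth charts), hence by Proposition \ref{proposition: quasiconformal properties} it is ACL and differentiable $\mathrm{vol}_{k}$-a.e. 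The first step is to compute $D_{v}\eta_{xy}$ at a.e. $v$. Writing $z := \Phi_{x}(v) \in \W^{u}_{loc}(x)$ and $w := h^{c}_{xy}(z)$, we decompose (exactly as at the end of the proof of Lemma \ref{lemma: quasiconformal})
\[
\eta_{xy} = (\Phi_{y}^{-1} \circ \Phi_{w}) \circ (\Phi_{w}^{-1} \circ h^{c}_{zw} \circ \Phi_{z}) \circ (\Phi_{z}^{-1} \circ \Phi_{x}).
\]
By Proposition \ref{proposition: transitions} the outer two maps are affine with linear parts $H^{u}_{wy}$ and $H^{u}_{xz}$ respectively. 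The middle map is differentiable at $0$ precisely when $h^{c}_{zw}$ is differentiable at $z$, with derivative $H^{c}_{zw}$ by definition; this happens for $\mathrm{vol}_{k}$-a.e.\ $v$ since $(x,y) \in \mathcal{Q}$ and the charts are smooth. Hence for a.e.\ $v$,
\[
D_{v}\eta_{xy} = H^{u}_{wy} \circ H^{c}_{zw} \circ H^{u}_{xz}.
\]
Now apply Lemma \ref{lemma: L holo eqva}: for $(x,y),(z,w) \in \Omega$ with $z \in \W^{u}_{loc}(x)$, $w \in \W^{u}_{loc}(y)$, equation \eqref{cu holonomy} gives $H^{c}_{zw} \circ H^{u}_{xz} = H^{u}_{yw} \circ H^{c}_{xy}$, so that $H^{u}_{wy} \circ H^{c}_{zw} \circ H^{u}_{xz} = H^{u}_{wy} \circ H^{u}_{yw} \circ H^{c}_{xy} = H^{c}_{xy}$ using the cocycle property (1) of Proposition \ref{proposition: holonomies}. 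Thus $D_{v}\eta_{xy} = H^{c}_{xy}$, a \emph{constant} invertible linear map, for a.e.\ $v$ in a ball around $0$ — but I must be careful that the pair $(z,w)$ lands in $\Omega$; this requires a Fubini argument over the local unstable leaf showing that for $m^{u}_{x}$-a.e.\ $z$ the pair $(z,h^{c}_{xz}(z))$ and the base pair $(x,y)$ both lie in the full-measure set $\Omega$ of Lemma \ref{lemma: L holo eqva}, which holds on a full-measure subset of $(x,y)$ by the local product structure of $\mu$ and absolute continuity of $\W^{u}$.

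The second step is to invoke Lemma \ref{lemma: cont ext}: transporting $\eta_{xy}$ to a map $\RR^{k} \to \RR^{k}$ via linear identifications of $E^{u}_{x}, E^{u}_{y}$ with $\RR^{k}$, we have an ACL map whose derivative equals the constant (continuous) map $v \mapsto H^{c}_{xy}$ almost everywhere, so $\eta_{xy}$ is $C^{1}$ with $D\eta_{xy} \equiv H^{c}_{xy}$ everywhere; since $\eta_{xy}(0) = \Phi_{y}^{-1}(h^{c}_{xy}(x)) = \Phi_{y}^{-1}(y) = 0$, we conclude $\eta_{xy} = H^{c}_{xy}$ as linear maps, for $\mu$-a.e.\ $(x,y) \in \E$. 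The final step is to remove the ``a.e.'': for fixed $x$ in a full-measure set, the identity $\Phi_{y}^{-1} \circ h^{c}_{xy} \circ \Phi_{x} = H^{c}_{xy}$ holds for $m^{c}_{x}$-a.e.\ $y \in U_{x}$; both sides depend continuously on $y \in U_{x}$ (the left side because $h^{c}_{xy}$ and the charts vary continuously, the right side being the derivative at $x$ of the left side, which is now known to be linear and continuous in $y$), so the identity extends to all $y \in U_{x}$; then varying $x$ continuously over the dense full-measure set and using continuity of both sides in $x$ as well, we get the identity for every $x \in M$ and every $y \in U_{x}$. In particular $H^{c}$, a priori only measurable, extends continuously to all of $\E$, and the center holonomy $h^{c}_{xy} = \Phi_{y} \circ H^{c}_{xy} \circ \Phi_{x}^{-1}$ is linear in the charts.

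I expect the main obstacle to be the bookkeeping in the first step: guaranteeing that when we differentiate $\eta_{xy}$ at a generic point $v$, the resulting pair $(z,w) = (\Phi_{x}(v), h^{c}_{xy}(z))$ genuinely lies in the full-measure set $\Omega$ of Lemma \ref{lemma: L holo eqva} \emph{simultaneously} with $(x,y)$. This is a Fubini/absolute-continuity argument — one restricts to the full-$\mu$-measure set of $(x,y)$ for which $m^{u}_{x}$-a.e.\ point $z$ of $\W^{u}_{loc}(x)$ has $(z, h^{c}_{xz}(z)) \in \Omega$, using that $\mu$ disintegrates with Lebesgue conditionals along $\W^{u}$-leaves (via Corollary \ref{corollary: cs absolute continuity} and the construction of $\mu$) — but it must be stated carefully. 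The rest is a routine combination of Lemma \ref{lemma: cont ext}, the cocycle identities of Proposition \ref{proposition: holonomies}, and continuity of all the objects involved.
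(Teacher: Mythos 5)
Your proposal follows essentially the same route as the paper: decompose $\eta_{xy}$ through an intermediate base point $z$, identify the outer factors as affine with linear parts $H^{u}$ via Proposition \ref{proposition: transitions}, use Lemma \ref{lemma: L holo eqva} to collapse $H^{u}_{wy}\circ H^{c}_{zw}\circ H^{u}_{xz}$ to $H^{c}_{xy}$, invoke Lemma \ref{lemma: cont ext} to upgrade ACL-with-constant-a.e.-derivative to genuine linearity, and finish by density of $\Omega$ and continuity of the left-hand side. The Fubini worry you flag --- ensuring that the pair $(z,w)=(\Phi_x(v),h^c_{xy}(z))$ lands in $\Omega$ for $\mathrm{vol}_k$-a.e.\ $v$ simultaneously with $(x,y)\in\Omega$ --- is real, and your sketch of how to resolve it via the local product structure of $\mu$ and absolute continuity of $\W^u$ is the right one; the paper handles it tersely but in the same way.

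There is one genuine (though small) gap at the end. After applying Lemma \ref{lemma: cont ext} and using $\eta_{xy}(0)=0$, what you obtain is that $\eta_{xy}$ agrees with the linear map $H^c_{xy}$ on its domain of definition, namely $\Phi_x^{-1}(\W^u_{loc}(x))$, a \emph{bounded} neighborhood of the origin. The lemma asserts the equality as maps on all of $E^u_x$. The passage from a neighborhood of $0$ to all of $E^u_x$ does not follow from continuity alone: it uses that $\Phi_x$, $h^c_{xy}$, and $H^c_{xy}$ are all equivariant under $f^{-1}$ and $Df^{-1}$, and that $f^{-1}$ uniformly contracts $E^u$, so that every point of $E^u_x$ can be brought into the neighborhood where equality is already established by applying $Df^{-n}$. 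Your write-up stops short of this step. Similarly, in the final density argument, be careful that the dense set of pairs $(x,y)$ on which the identity is known is the full-$\mu$-measure set $\Omega$, and the reason it is dense in $\E$ is that $\mu$ has full support; and that what one should argue is that the \emph{restriction} of the continuous map $\eta_{xy}$ to the uniform-size neighborhood of $0$ determines the linear map $H^c_{xy}$, so the a priori only measurable $H^c$ inherits uniform continuity on $\Omega$ and hence extends continuously to $\E$. Your phrasing (``the right side being the derivative at $x$ of the left side'') risks a mild circularity, but the intent is correct.
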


\begin{proof}
We first consider pairs $(x,y) \in \Omega$. Since $D_{0}\Phi_{x} = Id_{E^{u}_{x}}$ for every $x \in M$ and $h^{c}_{xy}(x) = y$, the equation 
\[
D_{0}(\Phi_{y}^{-1} \circ h^{c}_{xy} \circ \Phi_{x}) = H^{c}_{xy},
\]
holds for any $(x,y) \in \Omega$. To compute the derivative at other points of $E^{u}_{x}$, we let $v \in E^{u}_{x}$, $z = \Phi_{x}(v)$, and $w = h^{c}_{xy}(z) = h^{c}_{zw}(z)$. We suppose that $(z,w) \in \Omega$ and compute, 
\[
D_{v}(\Phi_{y}^{-1} \circ h^{c}_{xy} \circ \Phi_{x}) = D_{0}(\Phi_{y}^{-1} \circ \Phi_{w}) \circ D_{0}(\Phi_{w}^{-1} \circ h^{c}_{zw} \circ \Phi_{z}) \circ D_{v}(\Phi_{z}^{-1} \circ \Phi_{x}). 
\]
By Proposition \ref{proposition: transitions} we know that $D_{0}(\Phi_{y}^{-1} \circ \Phi_{w}) = H^{u}_{wy}$ and $D_{v}(\Phi_{z}^{-1} \circ \Phi_{x}) = H^{u}_{xz}$. Hence
\[
D_{v}(\Phi_{y}^{-1} \circ h^{c}_{xy} \circ \Phi_{x}) = H^{u}_{wy} \circ H^{c}_{zw} \circ H^{u}_{xz} = H^{c}_{xy},
\]
whenever $(x,y), (z,w) \in \Omega$, by Lemma \ref{lemma: L holo eqva}. Since $\Omega$ has full $\mu$-measure we conclude that for $m$-a.e. $x \in M$ and $m^{c}_{x}$-a.e. $y \in U_{x}$ the map $\Phi_{y}^{-1} \circ h^{c}_{xy} \circ \Phi_{x}: E^{u}_{x} \rightarrow E^{u}_{y}$ is differentiable almost everywhere on $E^{u}_{x}$ with derivative $H^{c}_{xy}$ almost everywhere. By Lemma \ref{lemma: quasiconformal} the map $\Phi_{y}^{-1} \circ h^{c}_{xy} \circ \Phi_{x}$ is quasiconformal and therefore ACL.  By Lemma \ref{lemma: cont ext} this implies that $\Phi_{y}^{-1} \circ h^{c}_{xy} \circ \Phi_{x}$ is a $C^{1}$ map with derivative $H^{c}_{xy}$ everywhere, i.e., $\Phi_{y}^{-1} \circ h^{c}_{xy} \circ \Phi_{x}$ coincides exactly with the linear map $H^{c}_{xy}$. 

Since $\Omega$ has full $\mu$-measure in $\E$ and $\mu$ is fully supported we conclude that $\Omega$ is dense in $\E$ and thus the equation 
\begin{equation}\label{eqn:lincntr}
\Phi_{y}^{-1} \circ h^{c}_{xy} \circ \Phi_{x} = H^{c}_{xy}
\end{equation}
holds on $E^{u}_{x}$ for a dense set of pairs $(x,y) \in \E$. But the left side of this equation depends uniformly continuously on the pair $(x,y)$ on as a map $\Phi_{x}^{-1}(\W^{u}_{loc}(x)) \rightarrow \Phi_{y}^{-1}(\W^{u}_{loc}(y))$ between neighborhoods of 0 in $E^{u}_{x}$ and $E^{u}_{y}$ of uniform size. Furthermore the linear map $H^{c}_{xy}$ is determined by its restriction to a map between these neighborhoods. Hence we conclude that $H^{c}_{xy}$ also depends uniformly continuously on the pairs $(x,y) \in \Omega$. 

When $y$ is close to $x$ the map $\Phi_{y}^{-1} \circ h^{c}_{xy} \circ \Phi_{x}$ is uniformly close to the linear identifications $I_{xy}: E^{u}_{x} \rightarrow E^{u}_{y}$ introduced in Section \ref{section: center holonomy}. Hence $H^{c}_{xy}$ is uniformly close to $I_{xy}$ for $(x,y) \in \Omega$. In particular for $(x,y) \in \Omega$ the maps $H^{c}_{xy}$ belong to a uniformly bounded subset of the space of invertible linear maps $E^{u}_{x} \rightarrow E^{u}_{y}$. This shows that $H^{c}$ admits a continuous extension to $\E$ such that equation \eqref{eqn:lincntr} still holds on a neighborhood of 0 in $E^{u}_{x}$ for any $(x,y) \in \E$. Finally, because $\Phi_{x}$, $h^{c}_{xy}$, and $H^{c}_{xy}$ all have the proper equivariance properties with respect to $f^{-1}$ and $Df^{-1}$ which uniformly contract $E^{u}$ it follows that equation \ref{eqn:lincntr} actually holds on all of $E^{u}_{x}$. 
\end{proof}

As a corollary of Lemma \ref{lemma: linear center} we deduce that equations \eqref{cu holonomy} and \eqref{cs holonomy} from Lemma \ref{lemma: L holo eqva} actually hold on all of $\E$ because the uniform continuity of $H^{c}$ implies each side of these equations is uniformly continuous in the quadruple of points $x,y,z,w$ and both of these equations hold on a dense subset of $\E$. 
\section{Higher regularity of foliations}\label{section: higherend}
In this section we will prove higher regularity properties for the foliations $\mathcal{W}^{c}$, $\mathcal{W}^{cs}$ and $\mathcal{W}^{cu}$. We start by proving the smoothness of center stable holonomy. To obtain it we will construct a continuous conformal structure on the unstable bundles.

\subsection{Smoothness of center stable holonomy}
\color{black}
Given $x \in M$, $y \in \W^{cs}_{loc}(x)$, we let $z$ be the unique intersection point of $\W^{c}_{loc}(x)$ with $\W^{s}_{loc}(y)$ and define 
\[
H^{cs}_{xy} =  H^{s}_{zy} \circ H^{c}_{xz}.
\]
By the observation in the previous paragraph, if we let $w$ be the intersection of $\W^{s}_{loc}(x)$ with $\W^{c}_{loc}(y)$ then we also have the equality
\[
H^{cs}_{xy} = H^{c}_{wy} \circ H^{s}_{xw}.
\]
We note that $H^{cs}_{xy}$ depends in a uniformly continuous fashion on $x$ and $y$ from the uniform continuity of $H^{c}$ and $H^{s}$.

\begin{lemma}\label{lemma: cs diff} 
The center-stable holonomy $h^{cs}_{xy}: \W^{u}_{loc}(x) \rightarrow \W^{u}_{loc}(y)$ between two local unstable leaves is $C^{1}$ with derivative $H^{cs}$. 
\end{lemma}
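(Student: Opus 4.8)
The plan is to factor the center-stable holonomy $h^{cs}_{xy}$ through the center holonomy and the stable holonomy of the strong stable foliation, and then apply the results already established. Given $x \in M$ and $y \in \W^{cs}_{loc}(x)$, let $z$ be the unique intersection point of $\W^{c}_{loc}(x)$ with $\W^{s}_{loc}(y)$. Then, as a map between local unstable leaves, $h^{cs}_{xy} = h^{s}_{zy} \circ h^{c}_{xz}$, where $h^{s}_{zy}: \W^{u}_{loc}(z) \to \W^{u}_{loc}(y)$ is the strong stable holonomy between local unstable leaves inside $\W^{cs}(z)=\W^{cs}(y)$ and $h^{c}_{xz}: \W^{u}_{loc}(x) \to \W^{u}_{loc}(z)$ is the center holonomy inside $\W^{cu}(x)=\W^{cu}(z)$. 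First I would verify that this factorization is legitimate: both factors are well-defined by the local product structure from Section \ref{subsection: quasiconfomal of hc}, and their composition agrees with the holonomy $h^{cs}_{xy}$ along any good $cs$-path from $x$ to $y$ — one may take the path to be the concatenation $\gamma^{c}\cdot\gamma^{s}$ of a good $c$-path from $x$ to $z$ followed by a good $s$-path from $z$ to $y$, which is exactly the decomposition used in the proof of Lemma \ref{lemma forwad iterate hcs}.

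Next I would handle each factor separately. For the center factor, Lemma \ref{lemma: linear center} gives that $\Phi_{z}^{-1} \circ h^{c}_{xz} \circ \Phi_{x} = H^{c}_{xz}$ is linear, hence $h^{c}_{xz}$ is $C^{1}$ (indeed $C^{\infty}$) in the charts $\{\Phi_{x}\}$, with derivative $D_{p}h^{c}_{xz} = D_{\Phi_{z}^{-1}(h^{c}_{xz}(p))}\Phi_{z} \circ H^{c}_{xz} \circ (D_{\Phi_{x}^{-1}(p)}\Phi_{x})^{-1}$ at each point $p \in \W^{u}_{loc}(x)$; at the base point $p=x$ this is exactly $H^{c}_{xz}$ since $D_{0}\Phi_{x} = \Id$. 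For the stable factor, the strong stable foliation $\W^{s}$ is $C^{1}$ as a foliation along the $C^{\infty}$ leaves of $\W^{u}$ inside each center-stable leaf — this is the standard smoothness of the strong stable holonomy between unstable leaves under the bunching present here (or one can simply invoke that $h^{s}_{zy}$ is $C^{1}$ with derivative the stable holonomy $H^{s}_{zy}$ of the cocycle $Df|_{E^u}$, which is the characterizing property of $H^{s}$ in Proposition \ref{proposition: holonomies} combined with its being realized as an actual holonomy derivative). Composing, $h^{cs}_{xy} = h^{s}_{zy}\circ h^{c}_{xz}$ is $C^{1}$, and by the chain rule its derivative at $x$ is $H^{s}_{zy}\circ H^{c}_{xz} = H^{cs}_{xy}$.

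The main obstacle I anticipate is making precise the claim that the strong stable holonomy $h^{s}_{zy}$ between local \emph{unstable} leaves is $C^{1}$ with derivative given by $H^{s}$, since a priori Proposition \ref{proposition: holonomies} only produces $H^{s}$ as a continuous limit of linear maps without identifying it as an honest derivative of a holonomy map. To resolve this I would argue as in the proof of Proposition \ref{proposition: transitions}: express the holonomy map in the charts $\Phi$, differentiate the equivariance relation $f^{n}\circ h^{s}_{zy} = h^{s}_{f^{n}z\, f^{n}y}\circ f^{n}$, and pass to the limit using the uniform $C^{1}$-proximity of the charts to the identity together with the fiber-bunched convergence estimates from \cite{KS13}; this shows the derivative exists at every point and equals $H^{s}$ composed with the relevant chart derivatives, hence is continuous, so $h^{s}_{zy}$ is $C^{1}$. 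Once both factors are $C^{1}$ with the stated derivatives the conclusion follows immediately by the chain rule and the cocycle identities for $H^{s}$, $H^{c}$, and $H^{u}$ established in Propositions \ref{proposition: holonomies}, Lemma \ref{lemma: L holo eqva} and Lemma \ref{lemma: linear center}.
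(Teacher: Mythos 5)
Your factorization $h^{cs}_{xy} = h^{cs}_{zy}\circ h^{cs}_{xz}$ with $z = \W^{cu}_{loc}(x)\cap\W^{s}_{loc}(y)$ is exactly the decomposition the paper uses, and your treatment of the center factor $h^{cs}_{xz} = h^{c}_{xz}$ via Lemma \ref{lemma: linear center} is correct. The gap is in the stable factor, and it is genuine. First, a notational warning: the map $h^{cs}_{zy}$ for $y\in\W^{s}_{loc}(z)$ is \emph{not} the strong stable holonomy --- it carries $w\in\W^{u}_{loc}(z)$ to $\W^{cs}_{loc}(w)\cap\W^{u}_{loc}(y)$, so it is the center-stable holonomy restricted to a stable pair, and $\W^{u}$ does not subfoliate $\W^{cs}$, so the picture ``$\W^{s}$ foliating along $\W^{u}$ inside $\W^{cs}$'' doesn't parse. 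Each of your three routes for its $C^{1}$-ness fails: (i) the ``standard smoothness'' you invoke would amount to knowing $\W^{cs}$ is a $C^{1}$ foliation, which is a downstream consequence of this lemma and cannot be assumed; (ii) appealing to Proposition \ref{proposition: holonomies} is circular, since $H^{s}$ is constructed there purely as a limit of linear maps, with no identification of it as the derivative of any holonomy; (iii) the Proposition \ref{proposition: transitions}-style argument depends crucially on $\Phi_{y}^{-1}\circ\Phi_{x}$ being a priori $C^{\infty}$, so that the equivariance relation can be differentiated in $n$; for $h^{cs}_{f^{n}z\,f^{n}y}$ no such a priori smoothness is available, and one cannot ``differentiate the equivariance relation'' of a map whose differentiability is exactly what is in question.

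The paper gets around precisely this obstruction by turning the argument indirect. For $y\in\W^{s}_{loc}(x)$ it proves only the conditional statement: \emph{if} $h^{cs}_{xy}$ is differentiable at $x$, \emph{then} the derivative is $H^{s}_{xy}$. This is done by contradiction --- assuming a wrong derivative at $x$ produces a lower bound $\varepsilon_{0}t$ on the deviation $\|L_{xy}(tv)-H^{s}_{xy}(tv)\|$, which is then propagated forward by $Df^{n}$ using uniform $u$-quasiconformality, while the uniform closeness of $L_{zw}$ to $H^{s}_{zw}$ as $d_{s}(z,w)\to 0$ forces the deviation to be small at time $n$, a contradiction. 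The actual existence of derivatives on a full measure set comes from $K$-quasiconformality (Lemma \ref{lemma: quasiconformal}, hence ACL via Proposition \ref{proposition: quasiconformal properties}), and the upgrade from ``a.e.\ derivative equals the continuous candidate $H^{cs}$'' to ``everywhere $C^{1}$'' is Lemma \ref{lemma: cont ext}. You should incorporate all three of these ingredients --- the conditional identification of the derivative by contradiction, the ACL property from quasiconformality, and Lemma \ref{lemma: cont ext} --- since without them the stable factor's $C^{1}$-ness is unsupported.
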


\begin{proof}
We will first show that if $y\in \W^s_{loc}(x)$ and $h^{cs}_{xy}:\W^{u}_{loc}(x) \rightarrow \W^{u}_{loc}(y)$ is differentiable at $x$ then $Dh^{cs}_{xy}(x)=H^{s}_{xy}$. We will prove this by contradiction. 

If $D_{x}(h^{cs}_{xy}) \neq H^{s}_{xy}$ then $L_{xy} := \Phi_{y}^{-1} \circ h^{cs}_{xy} \circ \Phi_{x}$ is differentiable at $0 \in E^{u}_{x}$ but $D_{0}(L_{xy}) \neq H^{s}_{xy}$.  Thus there exists $v\in E^u_x$ with $\|v\|=1$ and some constants $\varepsilon_{0},\eta >0$ such that 
\[
\|L_{xy}(tv) - H^{s}_{xy}(tv)\| \geq \varepsilon_{0} t, \; \; \forall |t| \leq \eta.
\]
By the uniform $u$-quasiconformality of $f$ there is then a constant $C \geq 1$ independent of $n$ such that 
\begin{equation}\label{eqn:expanding}
\|Df^{n}_{y}(L_{xy}(tv)) - Df^{n}_{y}(H^{s}_{xy}(tv))\| \geq C^{-1}\det(Df^{n}_{y}|_{E^{u}})\varepsilon_{0} t, \; \; \forall |t| \leq \eta,
\end{equation}
and also with the properties that for for every $x \in M$ and any unit vector $\xi \in E^{u}_{x}$, 
\[
C^{-1}  \det(Df^{n}_{x}|_{E^{u}}) \leq \| Df^{n}_{x}(\xi)\| \leq C \det(Df^{n}_{x}|_{E^{u}}),
\]
and lastly the distortion estimate $\det(Df^{n}_{y}|_{E^{u}}) \leq C \det(Df^{n}_{x}|_{E^{u}})$ holds for $y \in \W^{s}_{loc}(x)$ and $n \geq 0$. 

By the uniform continuity of the charts $\Phi_{x}$ in the $x$-variable, the $\W^{cs}$ foliation and $H^{s}$, given any $\varepsilon > 0$ there exists $\delta = \delta(\varepsilon)$ such that for any $z \in M$, $w \in \W^{s}_{loc}(z)$ with $d_{s}(z,w) \leq \delta$ and any $\xi \in E_{z}^u$ satisfying $\|\xi\| \leq 1$  we have 
\begin{equation}\label{eqn:hcs close to id}
\|L_{zw}(\xi)-H^s_{zw}(\xi) \|<\varepsilon
\end{equation}
We choose $\varepsilon < C^{-3}\varepsilon_{0}$ and then choose $n$ large enough that 
\[
d_{s}(f^{n}(x),f^{n}(y)) \leq \delta(\varepsilon),
\]
and such that  $C^{-1}\det(Df^{n}_{x}|_{E^{u}})^{-1} < \eta$. 

We put $z = f^{n}(x)$ and $w = f^{n}(y)$. Applying the equivariance of $Df$ with respect to the charts $\Phi_{x}$, the center stable holonomy $h^{cs}$, and the linear stable holonomy $H^{s}$ in equation \eqref{eqn:expanding} we obtain 
\[
\|L_{zw}( Df^{n}_{x}(tv)) - H^{s}_{zw}(Df^{n}_{x}(tv))\| \geq C^{-2}\det(Df^{n}_{x}|_{E^{u}})\varepsilon_{0} t, \; \; \forall |t| \leq \eta,
\]
Let $t$ be the maximal number such that $\|Df^{n}_{x}(tv)\| \leq 1$ and then put $\xi = Df^{n}_{x}(tv)$. We conclude that equation \eqref{eqn:hcs close to id} applies to the above and thus obtain 
\[
\varepsilon > C^{-2}\det(Df^{n}_{x}|_{E^{u}})\varepsilon_{0} t.
\]
But we have $\eta \geq t \geq C^{-1}\det(Df^{n}_{x}|_{E^{u}})^{-1}$ by the uniform $u$-quasiconformality of $f$. Hence we conclude that $\varepsilon > C^{-3} \varepsilon_{0}$, contradicting our choice of $\varepsilon$. 

Now suppose that $y \in \W^{cs}_{loc}(x)$ and $h^{cs}_{xy}$ is differentiable at $x$. Let $z = \W^{cu}_{loc}(x) \cap \W^{s}_{loc}(y)$. Then $h^{cs}_{xy} =h^{cs}_{zy} \circ h^{cs}_{xz}$. The map $h^{cs}_{xz}: \W^{u}_{loc}(x)\rightarrow \W^{u}_{loc}(z)$ coincides with the center holonomy from $\W^{u}_{loc}(x)$ to $\W^{u}_{loc}(z)$ and thus it follows from Lemma \ref{lemma: linear center} that $h^{cs}_{xz}$ is a $C^{1}$ diffeomorphism with derivative $H^{c}_{xz}$ at $x$. We conclude that $h^{cs}_{zy}$ is differentiable at $z$ and thus by our work above the derivative of $h^{cs}_{zy}$ at $z$ is given by $H^{s}_{zy}$. Thus $D_{x}(h^{cs}_{xy}) = H^{s}_{zy} \circ H^{c}_{xz} = H^{cs}_{xy}$. Since $h^{cs}_{xy}$ is ACL from the quasiconformality given by Lemma \ref{lemma: quasiconformal} and $H^{cs}_{xy}$ is uniformly continuous in $x$ and $y$ by the remarks preceding this lemma we conclude by Lemma \ref{lemma: cont ext} that $h^{cs}_{xy}$ is $C^{1}$ with derivative given by $H^{cs}$. 
\end{proof}

\begin{lemma}\label{lemma: center conformal}
There is a continuous invariant conformal structure $\tau: M \rightarrow E^{u}$ for $Df|_{E^{u}}$ which is invariant under $H^{c}_{loc}$, $H^{u}$, and $H^{s}$ holonomies.
\end{lemma}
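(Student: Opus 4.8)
The plan is to start from the invariant bounded \emph{measurable} conformal structure $\tau$ produced by Proposition \ref{prop: confstruct}, which we already know is invariant under the stable and unstable holonomies $H^{s},H^{u}$ on a full-measure set $\Omega$, and to upgrade it to a genuinely \emph{continuous} section of $\mathcal{C}E^{u}$ that is invariant under all of $H^{c}_{loc}$, $H^{s}$, and $H^{u}$ everywhere. The key new input is Lemma \ref{lemma: linear center} (together with its corollary that equations \eqref{cu holonomy} and \eqref{cs holonomy} hold on all of $\E$), which tells us that the center holonomy $H^{c}_{xy}$ is \emph{continuous} on $\E$ and genuinely intertwines $H^{u}$ and $H^{s}$. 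Thus the three holonomies $H^{c}_{loc}$, $H^{s}$, $H^{u}$ generate, via $su$-paths inside center-stable/center-unstable leaves, a consistent family of continuous linear identifications of the fibers of $E^{u}$; the measurable conformal structure $\tau$ is invariant under these on a full measure set, and the aim is to show it agrees almost everywhere with a continuous one.

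The main step I would carry out is a pointwise \emph{averaging/limit} argument in the symmetric space $\mathcal{C}E^{u}_{x}\cong SL(k,\RR)/SO(k,\RR)$. Fix $x\in M$. Using the local product structure of $\W^{cs},\W^{u}$ (and of $\W^{cu},\W^{s}$) and the measures from Corollary \ref{corollary: cs absolute continuity}, almost every nearby point $y$ can be joined to $x$ by a short $su$-path along which the composite holonomy $H^{cs}$ or $H^{cu}$ (built from $H^{s},H^{c},H^{u}$ via Lemmas \ref{lemma: L holo eqva}, \ref{lemma: linear center}, \ref{lemma: cs diff}) is defined and continuous in $(x,y)$. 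On the full-measure set $\Omega$ where $\tau$ is holonomy-invariant, $\tau_{x}$ is the pullback of $\tau_{y}$ under this continuous holonomy. I would then \emph{define} a candidate continuous conformal structure $\hat\tau_{x}$ by taking, say, the barycenter in the nonpositively curved space $\mathcal{C}E^{u}_{x}$ of the pushforward of $\tau|_{\W^{u}_{loc}(z)}$ (for a fixed reference transversal through $x$) under these continuous holonomy maps, or equivalently by observing that the map $y\mapsto (\text{holonomy})^{*}\tau_{y}$ is $m$-a.e. constant equal to a single point and hence extends continuously by the continuity of the holonomy family; boundedness of $\tau$ guarantees the barycenter lies in a bounded region and depends continuously on $x$. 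Concretely: since $(H^{cs}_{xy})^{*}\tau_{y}=\tau_{x}$ for a.e.\ pair and $H^{cs}$ is continuous, for a fixed good transversal the function $x\mapsto\tau_{x}$ agrees a.e.\ with a continuous function, and one checks that this continuous representative is $Df$-invariant (by uniqueness up to null sets of the invariant structure together with continuity of $Df|_{E^{u}}$ and density of $\Omega$), $H^{s}$- and $H^{u}$-invariant (same argument using \eqref{cu holonomy}, \eqref{cs holonomy} which hold on all of $\E$), and $H^{c}_{loc}$-invariant (using Lemma \ref{lemma: linear center}: $H^{c}_{xy}=\Phi_{y}^{-1}\circ h^{c}_{xy}\circ\Phi_{x}$ is continuous and equals the derivative of the center holonomy, so the same density argument transfers a.e.\ $H^{c}$-invariance to everywhere).

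I expect the main obstacle to be verifying that the continuous representative is invariant under \emph{all three} holonomies \emph{simultaneously and everywhere}, not merely on a full-measure set: the delicate point is to rule out any monodromy discrepancy when one composes $H^{c}$, $H^{s}$, $H^{u}$ around a loop in a center-stable or center-unstable leaf. This is exactly where the corollary of Lemma \ref{lemma: linear center} — that \eqref{cu holonomy} and \eqref{cs holonomy} hold on all of $\E$ by continuity — does the heavy lifting: it guarantees that the continuous holonomy family is genuinely well-defined (path-independent up to the relevant homotopy) so that the pullback relations propagate from $\Omega$ to the closure $\overline{\Omega}=M$. Once this consistency is in hand, continuity of $\tau$ follows because it is locally the continuous-holonomy pushforward of its restriction to a single transversal, on which Lusin-type continuity plus the holonomy-invariance identities force genuine continuity; and $Df$-invariance is then automatic by uniqueness of the invariant bounded structure for the uniformly quasiconformal cocycle $Df|_{E^{u}}$.
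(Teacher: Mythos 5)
Your proposal contains a genuine gap. Proposition~\ref{prop: confstruct} only gives you a measurable conformal structure $\hat\tau$ that is invariant under $H^{s}$ and $H^{u}$ holonomies on a full-measure set $\Omega$; it says \emph{nothing} about invariance under the center holonomy $H^{c}$. You repeatedly assume exactly this missing fact: you write ``$(H^{cs}_{xy})^{*}\tau_{y}=\tau_{x}$ for a.e.\ pair'' and propose to propagate a.e.\ $H^{c}$-invariance to everywhere by continuity of $H^{c}$. But a.e.\ $H^{c}$-invariance is not available, and it cannot be obtained by the recurrence argument of Lemma~\ref{lemma: half}, because iteration of $f$ neither contracts nor expands the center direction, so you never get the simultaneous return of $x$ and a nearby center point to a Lusin continuity set with vanishing separation. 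Producing $H^{c}$-invariance is the entire substance of the lemma, and your draft treats it as already given.

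Where you gesture at the right tool (barycenters in the nonpositively curved space $\mathcal{C}E^{u}_{x}$), you apply it in the wrong direction. You propose averaging the pushforward of $\tau$ along a fixed \emph{unstable} transversal; but $\tau$ is already a.e.\ $H^{u}$-invariant, so this averaging is trivial and does nothing toward $H^{c}$-invariance. What the paper actually does is average \emph{over the center fiber}: it sets $\mathcal{O}_{x}=\{(H^{c}_{xy})^{*}\hat\tau_{y}:y\in\Omega\cap U_{x}\}$ and defines $\tau_{x}$ to be the circumcenter of this bounded set. The delicate point is then to check $(H^{c}_{xy})^{*}\mathcal{O}_{y}=\mathcal{O}_{x}$ for $y\in\W^{c}_{loc}(x)$, which under assumption (B) is nontrivial because $U_{x}$ and $U_{y}$ are \emph{different} arcs; the paper handles the mismatch $U_{x}\setminus U_{y}$ versus $U_{y}\setminus U_{x}$ by the identity $H^{c}_{z\,f^{2}(z)}=Df^{2}|_{E^{u}_{z}}$ for $z\in U_{x}\setminus U_{y}$ and the $Df$-invariance of $\hat\tau$. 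Only \emph{after} $H^{c}$-invariance is established does one transfer to $H^{s}$- and $H^{u}$-invariance via equations \eqref{cu holonomy}--\eqref{cs holonomy}, and then use joint continuous invariance along the transverse pair $H^{cs}$/$H^{u}$ to upgrade measurability to continuity. Your proposal inverts this logical order and never supplies the crucial step.
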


\begin{proof}
By Proposition \ref{prop: confstruct} there is a bounded measurable invariant conformal structure $\hat{\tau}: M \rightarrow \mathcal{C}E^{u}$ for $Df|_{E^{u}}$ defined on a full measure subset $\Omega$ of $M$ such that $\hat{\tau}$ is $H^{u}$ and $H^{s}$ invariant on $\Omega$. Without loss of generality we assume $\Omega$ is $f-$invariant. Then for any $x\in M$, 
\begin{equation}\label{eqn: Omega cap U f inv}
f(\Omega\cap \W^*(x))=\Omega \cap \W^*(f(x)),*\in \{s,c,u\} \text{ and }f(\Omega \cap U_x)=\Omega\cap U_{f(x)}
 \end{equation} 
 
Under assumption (B), for $x \in M$ we define $\tau_{x}$ to be the \textit{center} \color{black} in $\mathcal{C}E^{u}_{x}$ of the set 
\[
\mathcal{O}_{x} = \left\{ (H^{c}_{xy})^{*}\hat{\tau}_{y}: y \in \Omega \cap U_x \right\},
\]
with respect to the nonpositively curved metric $\rho_{x}$. As in the proof of Proposition 3.1 of \cite{S05} the center of a non-empty bounded set $D$ in the space of conformal structure is defined to be the center of the uniquely determined ball of smallest radius containing $D$. \color{black}For the existence and uniqueness of centers of subsets with respect to nonpositively curved metrics see \cite{E96}. 

The definition of $\tau_{x}$ assumes the set $\mathcal{O}_{x}$ is nonempty; this will be true for $m$-a.e. $x \in M$ because of the absolute continuity of the center foliation. The definition of $\tau_{x}$ also assumes that $\mathcal{O}_{x}$ is a \emph{bounded} subset of $\mathcal{C}E^{u}_{x}$. This is clear under assumption  (B). 

We claim that $\tau$ is invariant under $H^{c}_{loc}$ holonomy. We only need to prove $(H^c_{xy})^*\mathcal{O}_y=\mathcal{O}_x$ for any $x\in M$ and any $y\in \W^c_{loc}(x)$ in the closed segment joining $x$ to $f(x)$ such that the segment joining from $f^{-1}(x)$ to $f(y)$ does not wrap. In this case $h^c_{wz}, H^c_{wz}$ are well-defined for $w, z\in U_x\cup U_y$. Moreover, for any $z\in U_x \backslash U_y$, we have $f^2(z)\in U_y\backslash U_x$. Therefore $h^c_{zf^2(z)}$ is a well-defined map on an unstable disc $\W^u_\epsilon(z)$ for sufficiently small $\epsilon$. Since $h^c$ is equivariant with respect to $f$, we have $h^c_{zf^2(z)}|_{\W^u_\epsilon(z)}=f^2|_{\W^u_\epsilon(z)}$. Then for any $z\in U_x \backslash U_y$,
\begin{equation}\label{eqn:hc zf2z=f2}
H^c_{zf^2(z)}=Df^2(z)|_{E^u(z)}
\end{equation}

As a result,
\begin{eqnarray*}\nonumber
(H^c_{xy})^*\mathcal{O}_y&=&(H^c_{xy})^*\{(H^c_{yz})^*\hat{\tau}_z: z\in \Omega\cap U_y\}\\\nonumber
&=&\{(H^c_{xz})^*\hat{\tau}_z: z\in \Omega\cap U_y\}\\\nonumber
&=&\{(H^c_{xz})^*\hat{\tau}_z: z\in \Omega\cap U_y\cap U_x\}\cup \{(H^c_{xz})^*\hat{\tau}_z: z\in \Omega\cap U_y\backslash U_x\}\\ \nonumber
&=&\{(H^c_{xz})^*\hat{\tau}_z: z\in \Omega\cap U_y\cap U_x\}\cup \{(H^c_{xf^2(z)})^*\hat{\tau}_{f^2(z)}: z\in \Omega\cap U_x\backslash U_y\}\\ \nonumber
&&(\text{ by }\eqref{eqn: Omega cap U f inv}, f^2(\Omega\cap U_x\backslash U_y)=\Omega\cap U_y\backslash U_x)\\ \nonumber
&=&\{(H^c_{xz})^*\hat{\tau}_z: z\in \Omega\cap U_y\cap U_x\} \cup \\ \nonumber
&& \{(H^c_{xz})^* (H^c_{zf^2(z)})^*\hat{\tau}_{f^2(z)}: z\in \Omega\cap U_x\backslash U_y\}\\ \nonumber
&=&\{(H^c_{xz})^*\hat{\tau}_z: z\in \Omega\cap U_y\cap U_x\}\cup \{(H^c_{xz})^*\hat{\tau}_{z}: z\in \Omega\cap U_x\backslash U_y\}\\ \nonumber
&&(\text{ by \eqref{eqn:hc zf2z=f2} and $Df-$invariance of }\hat{\tau})\\
&=&\mathcal{O}_x,
\end{eqnarray*}
where we use here that the linear map $H^{c}_{xy}$ induces an isometry $\mathcal{C}E^{u}_{y} \rightarrow \mathcal{C}E^{u}_{x}$. 

Thus $\tau$ is invariant under $H^c_{loc}$ holonomy. \color{black} By the equivariance of $H^{c}_{loc}$  with respect to $H^{u}$ and $H^{s}$ given by equations \eqref{cu holonomy} and \eqref{cs holonomy}, $\tau$ is also invariant under $H^{s}$ and $H^{u}$ holonomy on $\Omega$. In particular $\tau$ is invariant under both $H^{cs}$ and $H^{u}$ so $\tau$ is invariant under uniformly continuous holonomies along two transverse foliations of $M$. It follows that $\tau$ is uniformly continuous on $\Omega$ and thus has a unique continuous extension to $M$ which is invariant under $H^{c}_{loc}$, $H^{u}$, and $H^{s}$ holonomies.

In the case that $f$ satisfies assumption (A), 
we have that for any $x,y$ in the same central leaf the holonomy maps $h^c_{xy}$ and $H^c_{xy}$ are uniquely defined. Therefore as in the case of assumption (B), $\mathcal{O}_{x} = \left\{ (H^{c}_{xy})^{*}\hat{\tau}_{y}: y \in \Omega \cap \W^c_x \right\}$ is well-defined and we define $\tau_x$ similarly. Clearly $\tau$ is invariant under $H^c$ holonomy because of
the composition property $H^c_{yz}=H^c_{xz}\circ H^c_{yx}$ for $x,y,z$ in the same center leaf. Then the rest of the proof is the same as the previous paragraphs.
\end{proof}

By combining Lemmas \ref{lemma: cs diff} and \ref{lemma: center conformal} we derive the main result of this subection,  

\begin{cor}\label{cor: analytic}
The center-stable holonomy $h^{cs}$ between local unstable leaves is analytic in the charts $\{\Phi_{x}\}_{x \in M}$. Hence $h^{cs}_{xy}: \W^{u}_{loc}(x) \rightarrow \W^{u}_{loc}(y)$ is a $C^{\infty}$ diffeomorphism.
\end{cor}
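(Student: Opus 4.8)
The plan is to exploit the continuous invariant conformal structure $\tau$ produced in Lemma \ref{lemma: center conformal} to upgrade the already-established $C^{1}$ regularity (Lemma \ref{lemma: cs diff}) of $h^{cs}$ to $C^{\infty}$, indeed to analyticity in the Sadovskaya charts. The key observation is that in the charts $\{\Phi_{x}\}_{x \in M}$ from Proposition \ref{proposition: charts}, the cocycle $Df|_{E^{u}}$ becomes the \emph{linear} cocycle $v \mapsto Df_{x}(v)$, and $\tau$ transported through these charts is a continuous field of inner products on the $E^{u}_{x}$ that is genuinely invariant under $Df$ (not just up to scale — that is what ``conformal structure'' means here, and the determinant normalization pins down the scale). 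Thus after an $\dim E^{u} = k$-dimensional linear change of coordinates in each $E^{u}_{x}$ that makes $\tau_{x}$ the standard Euclidean inner product (this change of coordinates varies continuously, hence does not affect the regularity class we are trying to detect), the maps $Df_{x}: E^{u}_{x} \to E^{u}_{f(x)}$ become \emph{conformal linear maps}, i.e. scalar multiples of orthogonal maps.

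First I would record that, by Lemma \ref{lemma: cs diff}, the map $g := \Phi_{y}^{-1} \circ h^{cs}_{xy} \circ \Phi_{x}$ is a $C^{1}$ diffeomorphism between neighborhoods of $0$ in $E^{u}_{x}$ and $E^{u}_{y}$ with derivative $D_{z}g = H^{cs}_{z\, h^{cs}(z)}$, and that (by the $H^{cs}$-invariance of $\tau$ in Lemma \ref{lemma: center conformal}) each $H^{cs}_{z\,h^{cs}(z)}$ is conformal with respect to $\tau$. Hence $g$ is a $C^{1}$ map all of whose derivatives are conformal linear maps: in the Euclidean-normalized coordinates it is a $C^{1}$ conformal diffeomorphism. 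When $k = \dim E^{u} \geq 3$, Liouville's theorem on conformal maps (a $1$-quasiconformal map of a domain in $\RR^{k}$, $k \geq 3$, is the restriction of a Möbius transformation) immediately forces $g$ to be a Möbius transformation, hence real-analytic. When $k = 2$ one argues instead that $g$ is a $C^{1}$ map whose differential is everywhere a similarity, so $g$ is conformal in the classical sense and therefore holomorphic (or antiholomorphic) with respect to the complex structure determined by $\tau$, hence again analytic. Either way $g$ is analytic, which is exactly the statement that $h^{cs}$ is analytic in the charts $\{\Phi_{x}\}$, and in particular $C^{\infty}$.

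The step I expect to be the main obstacle is making the reduction to the classical Liouville/conformal-mapping setting fully rigorous starting only from $C^{1}$ regularity with conformal derivative, rather than from the a priori stronger hypothesis of $C^{3}$ smoothness under which Liouville's theorem is classically stated. One route is to note that a $C^{1}$ diffeomorphism whose differential is everywhere conformal is in particular $1$-quasiconformal, and then invoke the sharp form of Liouville's theorem for $1$-quasiconformal mappings (Gehring, Reshetnyak) which needs no a priori smoothness beyond being a homeomorphism in the Sobolev class $W^{1,k}_{\mathrm{loc}}$ — and a $C^{1}$ map certainly is. A second, more elementary route that avoids quoting the hardest version: use the $H^{u}$- and $H^{s}$-invariance of $\tau$ together with the equivariance $h^{cs} \circ f = f \circ h^{cs}$ and the fact (Proposition \ref{proposition: transitions}) that chart transitions are affine with conformal linear part to show that $g$ conjugates the conformal linear cocycle to itself, then bootstrap: the derivative cocycle $z \mapsto D_{z}g$ is itself continuous and $H^{u}$-equivariant, so $g$ is $C^{1}$ with Hölder (indeed continuous, and by the holonomy-equivariance argument of Section \ref{section: higher} better) derivative, and iterating the bootstrap through the self-conjugacy relation lifts $g$ to $C^{\infty}$; the analyticity then follows from the rigidity of the conformal group. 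I would present the Liouville-theorem route as the main line of argument and remark on the bootstrap as an alternative, since the former is cleaner and the $1$-quasiconformal version of Liouville's theorem is standard and applies verbatim once $k \geq 3$, with the $k = 2$ case handled by the classical theory of conformal maps of planar domains.
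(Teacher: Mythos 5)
Your proof follows essentially the same approach as the paper: both use the $H^{cs}$- and $H^{u}$-invariant continuous conformal structure from Lemma \ref{lemma: center conformal} together with the $C^{1}$ regularity from Lemma \ref{lemma: cs diff} to show that $\Phi_{y}^{-1} \circ h^{cs}_{xy} \circ \Phi_{x}$ is a $1$-quasiconformal $C^{1}$ diffeomorphism, and then invoke Gehring's theorem for $k \geq 3$ (and classical complex analysis for $k=2$) to conclude analyticity. One small inaccuracy worth flagging: the derivative of $g = \Phi_{y}^{-1}\circ h^{cs}_{xy}\circ \Phi_{x}$ at $v\in E^{u}_{x}$ is $H^{u}_{wy}\circ H^{cs}_{zw}\circ H^{u}_{xz}$ (where $z=\Phi_{x}(v)$, $w=h^{cs}_{xy}(z)$), not $H^{cs}_{z\,h^{cs}(z)}$ alone, by Proposition \ref{proposition: transitions}; this does not affect your conclusion since $H^{u}$ also preserves $\tau$.
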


\begin{proof}
Let $\tau$ be the continuous invariant conformal structure on $E^{u}$ from Lemma \ref{lemma: center conformal} which is invariant under $H^{c}$, $H^{u}$ and $H^{s}$. Let $y \in \W^{cs}_{loc}(x)$. We consider $\tau_{x}$ and $\tau_{y}$ as conformal structures $\omega_{x}$ and $\omega_{y}$ on the Euclidean spaces $E^{u}_{x}$ and $E^{u}_{y}$ respectively by using the canonical identification for each $v \in E^{u}_{x}$ of $T_{v}E^{u}_{x}$ with $E^{u}_{x}$ and assigning $\omega_{x}$ to be the image of $\tau_{x}$ in $T_{v}E^{u}_{x}$ under this identification. 

We claim that $(\Phi_{y}^{-1} \circ h^{cs}_{xy} \circ \Phi_{x})^{*}\omega_{y} = \omega_{x}$. To show this, let $v \in E^{u}_{x}$ be given and let $v' = \Phi_{y}^{-1}(h^{cs}_{xy}( \Phi_{x}(v))) \in E^{u}_{y}$ be its image. Let $z = \Phi_{x}(v)$ and $w = \Phi_{y}(v')$. Similarly to Lemma \ref{lemma: linear center} we write
\begin{align*}
D_{v}(\Phi_{y}^{-1} \circ h^{cs}_{xy} \circ \Phi_{x}) &= D_{0}(\Phi_{y}^{-1} \circ \Phi_{w}) \\
&\circ D_{0}(\Phi_{w}^{-1} \circ h^{cs}_{zw} \circ \Phi_{z}) \circ D_{v}(\Phi_{z}^{-1} \circ \Phi_{x}) \\
&= H^{u}_{wy} \circ D_{0}(\Phi_{w}^{-1} \circ h^{cs}_{zw} \circ \Phi_{z}) \circ H^{u}_{xz} \\
&= H^{u}_{wy} \circ H^{cs}_{zw} \circ H^{u}_{xz}
\end{align*}
where in the third line we used the fact that $H^{cs}_{zw}$ is the derivative of $h^{cs}_{zw}$ at $z$ from Lemma \ref{lemma: cs diff} and that both $D_{0}\Phi_{z} = Id_{E^{u}_{z}}$ and $D_{0}\Phi_{w} = Id_{E^{u}_{w}}$. By the invariance of $\tau$ under $H^{cs}$ and $H^{u}$ we conclude that $D_{v}(\Phi_{y}^{-1} \circ h^{cs}_{xy} \circ \Phi_{x})^{*}\omega_{y} = \omega_{x}$ for every $v \in E^{u}_{x}$. 

Identifying $E^{u}_{x}$ and $E^{u}_{y}$ with the Euclidean space $\R^{k}$, the inner products $\omega_{x}$ and $\omega_{y}$ are smoothly equivalent to the Euclidean norm on $\R^{k}$. Thus conformal mappings with respect to these inner products are the same as conformal mappings with respect to the standard Euclidean metric. Since $\Phi_{y}^{-1} \circ h^{cs}_{xy} \circ \Phi_{x}$ is conformal as a map between two open subsets of $\R^{k}$ we conclude that it is analytic: for $k = 2$ this is a classical result in one-variable complex analysis and for $k \geq 3$ this follows from Gehring's theorem that all 1-quasiconformal mappings between subdomains of $\R^{k}$ are the restrictions of M\"obius transformations to these domains \cite{G62}. 
\end{proof}


\subsection{Regularity of the foliations}\label{subsec: reg} We now prove higher regularity of the $\W^{cu}$, $\W^{cs}$, and $\W^{c}$ foliations under additional bunching hypotheses on $f$. We begin with a folklore lemma which enables us to deduce regularity properties of a foliation from regularity properties of its holonomy maps between a specific family of transversals. When this family of transversals is smooth Lemma \ref{lemma:holo reg implies foliation reg} follows directly from the claims in \cite{PSW}; the proof of Lemma \ref{lemma:holo reg implies foliation reg} is essentially identical to the proof of \cite[Lemma 3.2]{S05} which handled the specific case of weak foliations of Anosov flows which were transverse to the strong unstable foliation. 

\begin{lemma}\label{lemma:holo reg implies foliation reg}
Let an integer $r \geq 1$ and $\alpha > 0$ be given. Suppose that $\mathcal{W}$ and $\mathcal{F}$ are two transverse foliations of $M$ such that both $\W$ and $\mathcal{F}$ have uniformly $C^{r+\alpha}$ leaves. We further suppose that the local holonomy maps along $\mathcal{W}$ between any two $\mathcal{F}$-leaves are locally uniformly $C^{r+\alpha}$. Then $\mathcal{W}$ is a $C^{r+\alpha}$ foliation of $M$. 
\end{lemma}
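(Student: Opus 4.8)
Lemma \ref{lemma:holo reg implies foliation reg} (plan for the proof):

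The plan is to build the required $C^{r+\alpha}$ foliation charts directly. Fix $x\in M$ and let $k=\dim\W$, $\ell = \dim\mathcal{F}$, so that $k+\ell$ is the dimension of $M$ if $\W$ and $\mathcal{F}$ are complementary (in general one works inside the subbundle $T\W\oplus T\mathcal{F}$ and takes a product with a trivial complementary factor; I will assume complementarity for the exposition). First I would choose a single local $\mathcal{F}$-leaf $\mathcal{F}_{loc}(x)$ through $x$ as a ``base transversal'' for $\W$, and a single local $\W$-leaf $\W_{loc}(x)$ through $x$ as a ``base transversal'' for $\mathcal{F}$. Because $\W$ has uniformly $C^{r+\alpha}$ leaves, I can pick a $C^{r+\alpha}$ parametrization $\psi\colon D^{k}\times D^{\ell}\to \bigcup_{y\in\mathcal{F}_{loc}(x)}\W_{loc}(y)$ so that $\psi(\cdot,y)$ parametrizes $\W_{loc}(\sigma(y))$ for a $C^{r+\alpha}$ identification $\sigma$ of $D^{\ell}$ with $\mathcal{F}_{loc}(x)$; here the point is that the leafwise parametrizations can be chosen to depend $C^{r+\alpha}$-ly on the transverse parameter precisely because the leaves are uniformly $C^{r+\alpha}$ and vary continuously (this is the content of the $C^{r+\alpha}$-section / fibered-chart statements in \cite{PSW}).

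The main step is then to ``straighten'' this chart in the transverse direction using the holonomy hypothesis. For a point $p=\psi(s,y)\in\W_{loc}(\sigma(y))$, I define $\Psi_{x}(p) = (s, h^{\W}_{\sigma(y)\,x}(p))\in D^{k}\times\big(\W_{loc}(x)\big)$, where $h^{\W}_{\sigma(y)\,x}\colon\W_{loc}(\sigma(y))\to\W_{loc}(x)$ is the $\W$-holonomy onto the base $\W$-leaf (this makes sense once the neighborhood is small enough that the local product structure of $\W$ and $\mathcal{F}$ holds). Composing with a fixed $C^{r+\alpha}$ parametrization of $\W_{loc}(x)$ by $D^{\ell}$, this gives a map $\Psi_{x}\colon V_{x}\to D^{k}\times D^{\ell}$ which by construction sends $\W$-plaques to the slices $D^{k}\times\{\text{pt}\}$. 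It remains to verify that $\Psi_{x}$ (equivalently its inverse) is a $C^{r+\alpha}$ diffeomorphism. The first coordinate is $C^{r+\alpha}$ since it is read off from the $C^{r+\alpha}$ leaf parametrization $\psi$. For the second coordinate, one writes the $\W$-holonomy from $\W_{loc}(\sigma(y))$ to $\W_{loc}(x)$ as the composition of: the $C^{r+\alpha}$ leafwise coordinate changes (handled by \cite{PSW}), and the $\W$-holonomy between the \emph{fixed} transversals $\mathcal{F}$-leaves, which is locally uniformly $C^{r+\alpha}$ by hypothesis. Tracking the dependence on $(s,y)$ jointly — using that the holonomy maps between $\mathcal{F}$-leaves vary $C^{r+\alpha}$-ly as the transversals vary within a $C^{r+\alpha}$ family, plus the joint continuity of all the data — shows $\Psi_{x}^{-1}$ has $C^{r+\alpha}$ regularity in all variables. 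Invertibility and the inverse also being $C^{r+\alpha}$ follow by running the same construction with the roles of ``plaque coordinate'' and ``transverse coordinate'' exchanged, or directly by an inverse-function-theorem argument once the Jacobian is seen to be continuous and nondegenerate along the transverse direction (which holds because the $\W$-holonomy is a local $C^{r+\alpha}$ diffeomorphism between transversals).

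I expect the main obstacle to be bookkeeping the \emph{joint} $C^{r+\alpha}$ regularity: the hypothesis gives $C^{r+\alpha}$ regularity of each holonomy map between two fixed $\mathcal{F}$-leaves and ``locally uniform'' control as the pair of leaves varies, while \cite{PSW} gives $C^{r+\alpha}$ regularity of the leafwise parametrizations with continuous transverse dependence; one must combine these into a single statement that the assembled chart map is $C^{r+\alpha}$ as a map of $D^{k}\times D^{\ell}$. The technique, exactly as in \cite[Lemma 3.2]{S05}, is to note that $r+\alpha$ regularity of a map in all variables jointly can be checked by freezing all but one group of variables at a time and using the uniformity to interpolate; the Hölder exponent $\alpha>0$ is what makes this interpolation legitimate. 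Since \cite{S05} carries this out in the analogous Anosov-flow setting and the argument is insensitive to the specific foliations involved, I would simply adapt that argument verbatim, replacing the strong unstable foliation by $\mathcal{F}$ and the weak foliation by $\W$.
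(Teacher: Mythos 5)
Your plan is essentially the paper's proof: straighten $\W$ by a chart whose second coordinate records where the $\W$-plaque through $p$ meets a fixed $\mathcal{F}$-transversal, then verify $C^{r+\alpha}$ regularity of the straightening map separately along $\W$-leaves (from the uniform regularity of the leaves) and along $\mathcal{F}$-leaves (from the holonomy hypothesis), and combine. The combination step that you describe as ``freezing all but one group of variables and interpolating, using $\alpha>0$'' is precisely Journ\'e's lemma \cite{J} --- the paper cites it explicitly and you should too, since it is the substantive technical input rather than an elementary calculus exercise; also note that your map $h^{\W}_{\sigma(y)\,x}\colon\W_{loc}(\sigma(y))\to\W_{loc}(x)$ is not holonomy along $\W$ in the technical sense (that holonomy goes between the transverse $\mathcal{F}$-leaves, not between $\W$-leaves) and lands in a space of the wrong dimension for a transverse coordinate, so you should define the second coordinate directly as the intersection point of the $\W$-plaque with $\mathcal{F}_{loc}(x)$, exactly as in the paper and in \cite[Lemma~3.2]{S05}.
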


\begin{proof}
Let $n = \dim M$ and $k = \dim \W$. As in \cite[Lemma 3.2]{S05}, we fix a point $x \in M$ together with a neighborhood $V$ of $x$ and choose a $C^{\infty}$ coordinate chart $g: V \rightarrow \RR^{k} \times \RR^{n-k}$ such that $g(V \cap \W(x)) \subset \RR^{k} \times \{0\}$ and $g(V \cap \mathcal{F}(x)) \subset \{0\} \times \RR^{n-k}$. We then define for $p = (y,z) \in g(V)$, 
\[
\Psi(p) = (y, g(\W)(p) \cap g(\mathcal{F})(0)) = (y, h_{p,0}(z)),
\]
where $g(\W)$, $g(\mathcal{F})$ denote the images of our foliations under $g$ and $h_{p,0}(y)$ is the unique intersection point of $g(\W)(p)$ with $g(\mathcal{F})(0)$ inside of $g(V)$. This map straightens the $\W$-foliation into a foliation of $\RR^{k} \times \RR^{n-k}$ by $k$-disks $D^{k} \times \{z\}$. Since the leaves of $\W$ are uniformly $C^{r+\alpha}$ the map $\Psi$ is $C^{r+\alpha}$ when restricted to the leaves of $\W$, and since the holonomy maps of the $\W$ foliation between $\mathcal{F}$-transversals are uniformly $C^{r+\alpha}$ the chart $\Psi$ is also $C^{r+\alpha}$ along the leaves of $\mathcal{F}$. By Journ\'e's lemma \cite{J} this implies that $\Psi$ is $C^{r+\alpha}$.

\end{proof}

\begin{lemma}\label{reg of cent folia}
Suppose $f$ is $r-$bunched for some $r\geq 1$, then there is an $\alpha > 0$ such that $\W^c$, $\W^{cs}$, and $\W^{cu}$ are $C^{r+\alpha}$ foliations of $M$. If $f$ is $\infty$-bunched then these foliations are all $C^{\infty}$. 
\end{lemma}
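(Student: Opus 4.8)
The plan is to apply the folklore Lemma \ref{lemma:holo reg implies foliation reg} to each of the three foliations $\W^{cu}$, $\W^{cs}$, and $\W^{c}$, using as the transverse family the strong foliations $\W^{s}$ or $\W^{u}$ (or, for $\W^{c}$, working inside a $\W^{cu}$ leaf). The first ingredient is the regularity of the leaves themselves. Since $f$ is $C^{\infty}$ and dynamically coherent and $r$-bunched, the results of Pugh--Shub--Wilkinson \cite{PSW} quoted in Section \ref{sec:statements} give that the leaves of $\W^{cs}$ and $\W^{cu}$ are uniformly $C^{r+\alpha}$ for some $\alpha > 0$; the leaves of $\W^{s}$ and $\W^{u}$ are $C^{\infty}$, and the leaves of $\W^{c}$ are uniformly $C^{r+\alpha}$ as connected components of intersections $\W^{cs}\cap \W^{cu}$. (When $f$ is $\infty$-bunched, $r$ can be taken arbitrary, hence the leaves are $C^{\infty}$.) The second, and main, ingredient is the regularity of the relevant holonomy maps between the chosen transversals.

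First I would treat $\W^{cs}$. Its natural transversal family is the unstable foliation $\W^{u}$, which has $C^{\infty}$ (hence uniformly $C^{r+\alpha}$) leaves and whose holonomies are therefore smooth along $\W^{u}$-transversals in the trivial sense needed for Lemma \ref{lemma:holo reg implies foliation reg}; the content is that the $\W^{cs}$-holonomy maps $h^{cs}_{xy}\colon \W^{u}_{loc}(x)\to\W^{u}_{loc}(y)$ between local unstable leaves are smooth. This is exactly Corollary \ref{cor: analytic}: in the charts $\{\Phi_{x}\}$ these holonomies are analytic, so in particular they are uniformly $C^{r+\alpha}$ (indeed $C^{\infty}$) as maps between the $C^{\infty}$ unstable leaves. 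Applying Lemma \ref{lemma:holo reg implies foliation reg} with $\W=\W^{cs}$ and $\mathcal{F}=\W^{u}$ gives that $\W^{cs}$ is a $C^{r+\alpha}$ foliation. The foliation $\W^{cu}$ is handled identically by replacing $f$ with $f^{-1}$ (which interchanges the roles of $\W^{s}$ and $\W^{u}$, and of $\W^{cs}$ and $\W^{cu}$): the $cu$-analogue of Corollary \ref{cor: analytic} gives smoothness of $h^{cu}$ between local stable leaves, and Lemma \ref{lemma:holo reg implies foliation reg} with $\mathcal{F}=\W^{s}$ yields that $\W^{cu}$ is $C^{r+\alpha}$.

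Finally, for $\W^{c}$ I would work inside a fixed $\W^{cu}$ leaf, which by the previous step is a $C^{r+\alpha}$ submanifold of $M$ carrying the two transverse subfoliations $\W^{c}$ and $\W^{u}$ (both with uniformly $C^{r+\alpha}$ leaves). The $\W^{c}$-holonomy between local $\W^{u}$-leaves inside $\W^{cu}(x)$ coincides with $h^{c}_{xy}$, which by Lemma \ref{lemma: linear center} equals $\Phi_{y}\circ H^{c}_{xy}\circ\Phi_{x}^{-1}$, a composition of the $C^{\infty}$ charts with a linear map depending continuously — in fact, one checks, uniformly $C^{r+\alpha}$ after the charts are accounted for — on $(x,y)$; in any case Corollary \ref{cor: analytic} already gives that this holonomy is analytic in the $\Phi$-charts, hence $C^{\infty}$. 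Applying Lemma \ref{lemma:holo reg implies foliation reg} within $\W^{cu}(x)$ shows that $\W^{c}$ restricted to $\W^{cu}(x)$ is a $C^{r+\alpha}$ foliation; combining this leafwise regularity with the already-established $C^{r+\alpha}$ regularity of $\W^{cu}$ and using Journé's lemma once more (or a second application of Lemma \ref{lemma:holo reg implies foliation reg}, now with $\mathcal{F}=\W^{s}$ and the $\W^{c}$-holonomy between stable transversals, which factors through $h^{cs}$ and is smooth) gives that $\W^{c}$ is a $C^{r+\alpha}$ foliation of $M$. In every step, if $f$ is $\infty$-bunched we may let $r\to\infty$ since all the cited regularity statements hold for every $r$, so the foliations are $C^{\infty}$. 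The main obstacle is the bookkeeping needed to be sure the holonomy maps are uniformly $C^{r+\alpha}$ between the chosen transversal leaves in the precise sense required by Lemma \ref{lemma:holo reg implies foliation reg}; this is resolved by noting that smoothness in the $\Phi$-charts (Corollary \ref{cor: analytic}) combined with the uniform $C^{\infty}$ dependence of $\{\Phi_{x}\}$ on $x$ transfers to the required uniform regularity between the genuine foliation leaves.
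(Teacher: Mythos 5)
Your treatment of $\W^{cs}$ and $\W^{cu}$ is exactly the paper's: establish uniform $C^{r+\alpha}$ regularity of the leaves from $r$-bunching via \cite{PSW}, invoke Corollary~\ref{cor: analytic} (and its $f^{-1}$ analogue) for analyticity of the $cs$- and $cu$-holonomies between local unstable and local stable leaves respectively, and feed these into Lemma~\ref{lemma:holo reg implies foliation reg} with $\mathcal{F}=\W^u$ and $\mathcal{F}=\W^s$.

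Where you diverge is in the argument for $\W^c$, and one branch of your argument is wrong. Your parenthetical alternative, ``a second application of Lemma~\ref{lemma:holo reg implies foliation reg}, now with $\mathcal{F}=\W^s$,'' cannot be applied to $\W=\W^c$: the transversal $\mathcal{F}$ in that lemma must be complementary to $\W$, so it must have dimension $n-k = m+(n-m-k)$, whereas $\dim\W^s = n-m-k$. Since $m=\dim E^u\geq 2$, $\W^s$ is strictly too small to be a transversal to $\W^c$, so that invocation of the lemma is not legitimate. Your main route (apply Lemma~\ref{lemma:holo reg implies foliation reg} inside each $\W^{cu}$ leaf with $\mathcal{F}=\W^u$, then ``combine'' with the $C^{r+\alpha}$ regularity of $\W^{cu}$ via Journ\'e) is plausible in spirit, but the combining step is exactly the part that needs an argument, and you leave it as a gesture. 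The paper avoids this entirely: once $\W^{cu}$ and $\W^{cs}$ are known to be $C^{r+\alpha}$ foliations, one produces a single $C^{r+\alpha}$ chart $\Psi\colon V\to D^m\times D^k\times D^{n-m-k}$ straightening $\W^{cu}$ to $D^{m+k}\times\{z\}$ and $\W^{cs}$ to $\{y\}\times D^{n-m}$; since $\W^c=\W^{cu}\cap\W^{cs}$, this same chart straightens $\W^c$ to $\{y'\}\times D^k\times\{z'\}$, and the $C^{r+\alpha}$ regularity of $\W^c$ follows with no additional holonomy or Journ\'e argument. You should replace your $\W^c$ paragraph with this intersection argument, or else explicitly carry out the transverse regularity step (with a transversal of the correct dimension $n-k$, e.g.\ taking the transversal family to be the $su$-plaques $\W^u_{loc}\cdot\W^s_{loc}$ rather than $\W^s$ alone).
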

\begin{proof} Since $f$ is $C^\infty$ and $r-$bunched, there is an  $\alpha>0$ such that the leaves of $\W_{loc}^{*}$, $* \in \{cs,cu,c\}$ are uniformly $C^{r+\alpha}$. By Corollary \ref{cor: analytic} the $cs$-holonomy maps between local unstable leaves are  analytic diffeomorphisms. Hence using $\mathcal{W}^{u}$ as our transverse foliation $\mathcal{F}$ for Lemma \ref{lemma:holo reg implies foliation reg} we conclude that $\mathcal{W}^{cs}$ is a $C^{r+\alpha}$ foliation of $M$. 

By applying all of the results of this section to the $cs$-holonomy maps of $f^{-1}$ instead (i.e., the $cu$-holonomy maps of $f$) we conclude that the $cu$-holonomy maps are analytic between local stable leaves. Hence we also obtain that $\W^{cu}$ is a $C^{r+\alpha}$ foliation of $M$.
	
For each $x \in M$ and $m = \dim E^{u}$, $k = \dim E^{c}$, $n = \dim M$ we can thus find a neighborhood $V$ of  $x$ and a $C^{r+\alpha}$ foliation chart 
\[
\Psi: V \rightarrow D^{m+k} \times D^{n-m-k} = D^{m} \times D^{n-m} \subset \RR^{n},
\]
such that $\W^{cu}$ is mapped to the foliation by $(m+k)$-cubes $D^{m+k} \times \{z\}$, $z \in D^{n-m-k}$ and $\W^{cs}$ is mapped to the foliation by $(n-m)$-cubes $\{y\} \times D^{n-m}$, $y \in D^{m}$ (here $D^{j}$ again denotes the open unit cube in $\RR^{j}$). The intersection of these two foliations is the image of $\W^{c}$ which is a foliation by $k$-disks $\{y'\} \times D^{k} \times \{z'\}$, $y' \in D^{m}$, $z' \in D^{n-m-k}$.  Thus $\Psi$ is also a $C^{r+\alpha}$ foliation chart for $E^{c}$ and therefore $\W^{c}$ is also a $C^{r+\alpha}$ foliation of $M$. 
\end{proof}

Our final lemma applies under both assumptions (A) and (B) in the case that the center is 1-dimensional. It is a straightforward consequence of the $C^{1}$ regularity of the center foliation together with the fact that in dimension 1, length and volume are the same. 

\begin{lemma}\label{prop: 1dcenter}
If $\dim E^{c} = 1$ then $f$ is $\infty$-bunched and therefore $\W^{c}$, $\W^{cs}$,  and $\W^{cu}$ are $C^{\infty}$ foliations of $M$.  Furthermore there is a $C^{\infty}$ norm $|\cdot|$ on $E^{c}$ with respect to which $Df|_{E^{c}}$ acts by isometry.
\end{lemma}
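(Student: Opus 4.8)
The plan is to deduce the $\infty$-bunching from the existence of a continuous norm on $E^{c}$ for which $Df|_{E^{c}}$ acts isometrically, to build such a norm out of the conditional measures of $m$ along $\W^{c}$, and to bootstrap its regularity via Lemma \ref{reg of cent folia}. First I would observe that when $\dim E^{c}=1$ the map $f$ is automatically $1$-bunched: writing $\sigma^{s}(x)=\|Df|_{E^{s}_{x}}\|$, $\sigma^{u}(x)=\|(Df|_{E^{u}_{x}})^{-1}\|^{-1}$ and $\sigma^{c}(x)=\|Df|_{E^{c}_{x}}\|$ (continuous on $M$ since the bundles are H\"older), partial hyperbolicity gives $\sigma^{s}<\sigma^{c}<\sigma^{u}$ and $\sigma^{s}<1<\sigma^{u}$ pointwise, hence $\sup_{M}\sigma^{s}<1$, $\inf_{M}\sigma^{u}>1$, $\sup_{M}(\sigma^{s}/\sigma^{c})<1$ and $\inf_{M}(\sigma^{u}/\sigma^{c})>1$; choosing a constant $c>1$ close enough to $1$ and taking $\gamma=\sigma^{c}/c$, $\hat{\gamma}=(c\sigma^{c})^{-1}$ together with continuous $\nu,\hat{\nu}$ squeezed appropriately between these rates verifies every $r=1$ bunching inequality. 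By Lemma \ref{reg of cent folia} this already yields an $\alpha>0$ for which $\W^{c},\W^{cs},\W^{cu}$ are $C^{1+\alpha}$ foliations of $M$, so in particular $E^{c}$ is a $C^{1+\alpha}$ subbundle.

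Next I would manufacture the invariant norm. Since $f$ is center non-expansive (Proposition \ref{prop: nonexpansive}), Corollary \ref{corollary: cs absolute continuity} gives that $m$ has Lebesgue disintegration along $\W^{c}$; combining this with the $C^{1}$ regularity of the foliation $\W^{c}$ just obtained and the fact that in dimension $1$ the leafwise Riemannian volume is arc length, the conditional measures $\{m^{c}_{x}\}$ of Proposition \ref{prop: fiber measure} have positive densities $\psi_{x}$ with respect to arc length on $U_{x}$ which depend continuously on $x$. Set $\phi(x)=\psi_{x}(x)>0$. Using the invariance $f_{*}m^{c}_{x}=m^{c}_{f(x)}$ recorded in the proof of Proposition \ref{prop: fiber measure}, together with the fact that $f$ restricted to $\W^{c}(x)$ stretches arc length at $x$ by exactly $\sigma^{c}(x)$, a local change of variables along a short center arc through $x$ gives $\phi(f(x))\,\sigma^{c}(x)=\phi(x)$. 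Consequently the continuous norm $|v|_{x}:=\phi(x)\|v\|$ on $E^{c}$ satisfies $|Df_{x}v|_{f(x)}=|v|_{x}$ for every $v\in E^{c}_{x}$, i.e. $Df|_{E^{c}}$ is an isometry in $|\cdot|$.

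Finally I would close the loop. Giving $TM$ the metric that agrees with the old one on $E^{s}\oplus E^{u}$ and with $|\cdot|$ on $E^{c}$, the stable and unstable rates are unchanged---hence uniformly bounded by some $\theta<1$ and $\theta^{-1}>1$---while every unit center vector is sent to a unit center vector; so for each $r\geq1$ the constant functions $\gamma_{r}=\hat{\gamma}_{r}=1-\varepsilon_{r}$ with $\varepsilon_{r}>0$ small, and constants $\nu_{r},\hat{\nu}_{r}\in(\theta,(1-\varepsilon_{r})^{r+1})$, verify all the $r$-bunching inequalities. Thus $f$ is $\infty$-bunched, and Lemma \ref{reg of cent folia} now gives that $\W^{c},\W^{cs},\W^{cu}$ are $C^{\infty}$ foliations and $E^{c}$ a $C^{\infty}$ subbundle. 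Re-running the construction of $\phi$ with $\W^{c}$ a $C^{\infty}$ foliation---along whose smooth leaves the smooth measure $m$ disintegrates with a $C^{\infty}$ density---shows $\phi$ is $C^{\infty}$, so $|\cdot|=\phi\,\|\cdot\|$ is the desired $C^{\infty}$ norm. The hard part will be controlling the regularity of $\phi(x)=\psi_{x}(x)$: that the leafwise density of $m$ along $\W^{c}$ with respect to arc length varies continuously, and then smoothly, in $x$ is precisely where the $C^{1}$ and later $C^{\infty}$ regularity of the \emph{foliation} $\W^{c}$ (as opposed to merely that of its leaves) is essential, and it is what forces the two-pass structure of the argument.
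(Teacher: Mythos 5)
Your proof is correct and follows essentially the same route as the paper: $1$-bunching is automatic in one-dimensional center, Lemma \ref{reg of cent folia} upgrades $\W^{c}$ to $C^{1+\alpha}$, the conditional density $\sigma(x)=\zeta_{x}(x)$ of $m$ along $\W^{c}$ against arc length furnishes a continuous invariant norm $|\cdot|=\sigma\|\cdot\|$ on $E^{c}$, which yields $\infty$-bunching, hence $C^{\infty}$ foliations, after which a second pass shows $\sigma$ is $C^{\infty}$. The only place you are slightly lighter than the paper is in the final smoothness step under assumption (B): you should note explicitly that the arcs $U_{x}$ and hence the normalization $m^{c}_{x}(U_{x})=1$ vary smoothly in $x$ because the endpoints of $U_{x}$ are $f^{\pm 1}(x)$, which the paper spells out but your ``re-running the construction'' phrase leaves implicit.
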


\begin{proof}
When $\dim E^{c} = 1$, $f$ is always $1$-bunched. Hence by Lemma \ref{reg of cent folia} the center foliation $\W^{c}$ is $C^{1+\alpha}$ for some $\alpha >0$. Let $\nu_{x}^{c}$ denote the Riemannian volume on $U_{x} \subset \W^{c}(x)$. Since $\W^{c}$ is a $C^{1+\alpha}$ foliation the conditional measures $\{m_{x}^{c}\}_{x \in M}$ of the volume $m$ on the sets $U_{x}$ are absolutely continuous with continuous densities with respect to $\nu_{x}^{c}$. Thus there are positive continuous functions $\zeta_{x}: U_{x} \rightarrow \RR$ such that $dm_{x}^{c} = \zeta_{x} d\nu_{x}^{c}$ which also depend continuously on $x \in M$. 

Since $f_{*}m_{x}^{c} = m_{f(x)}^{c}$  and $f_{*}\nu_{x}^{c}(y) = \| Df_{y}|_{E^{c}_{y}}\|^{-1}\nu_{f(x)}^{c}(y)$ we thus derive the relationship
\[
\frac{\zeta_{x}(y)}{\zeta_{f(x)}(f(y))} = \| Df_{y}|_{E^{c}_{y}}\|,
\]
which is valid for $y \in U_{x}$. 


We set $\sigma(x):= \zeta_{x}(x)$. Then $\sigma: M \rightarrow (0,\infty)$ is a continuous function satisfying the equation,
\[
\frac{\sigma(x)}{\sigma(f(x))} = \| Df_{x}|_{E^{c}_{x}}\|,
\]
for every $x \in M$. It is then clear that $Df|_{E^{c}}$ acts by isometries with respect to the norm $|\cdot | = \sigma \cdot \|\cdot\|$ on $E^{c}$. 


Hence there is a continuous norm $|\cdot | $ on $E^{c}$ with respect to which $Df|_{E^{c}}$ acts by isometries. This implies that $f$ is $r$-bunched for every $r \geq 1$, i.e., $f$ is $\infty$-bunched. Thus by Lemma \ref{reg of cent folia} $\W^{cu}$, $\W^{cs}$, and $\W^{c}$ are $C^{\infty}$ foliations of $M$. This implies that the conditional measures $\{\hat{m}_{x}^{c}\}_{x \in M}$ of $m$ from Proposition \ref{prop: fiber measure} on the $\W^{c}$ are both $C^{\infty}$ in the basepoint $x \in M$ and are $C^{\infty}$ equivalent to the smooth Riemannian arclength $\nu_{x}^{c}$ on $\W^{c}(x)$; for this assertion recall that we assume that $m$ is smoothly equivalent to the Riemannian volume on $M$.

In the case of assumption (A) this implies without further argument that the family of conditional measures $\{m_{x}^{c}\}_{x \in M}$ used in this proof are also $C^{\infty}$ in $x \in M$ and are $C^{\infty}$ equivalent to $\nu_{x}^{c}$, since there is a canonical smooth normalization of the family $\{\hat{m}_{x}^{c}\}_{x \in M}$ such that $m_{x}^{c}(\W^{c}(x)) = 1$ for each $x \in M$. In the case of assumption (B) we only need to note that the arcs $U_{x} \subset \W^{c}(x)$ are determined by their endpoints in a canonical smooth fashion according to the discussion at the beginning of Section \ref{section: higher} and these endpoints are given by $f^{-1}(x)$ and $f(x)$, which clearly smoothly depend on $x$. Hence there is a smooth normalization of the family $\{\hat{m}_{x}^{c}\}_{x \in M}$ of conditional measures such that $m_{x}^{c}(U_{x}) = 1$ for every $x \in M$ and we obtain the same conclusion as we did in the case of assumption (A). As a consequence the family of $C^{\infty}$ functions $\zeta_{x}: U_{x} \rightarrow (0,\infty)$ is also $C^{\infty}$ in the basepoint $x$, so we conclude that $\sigma(x) = \zeta_{x}(x)$ is $C^{\infty}$ and consequently the norm $|\cdot |$ on $E^{c}$ is $C^{\infty}$. 
\end{proof}

\section{Proofs of Theorems 2-4}\label{subsec:proofs}

\begin{proof}[Proof of Theorem \ref{theorem: compact center}]

Since $f$ has compact center foliation with trivial holonomy, by \cite{BB} $f$ is dynamically coherent. We conclude from the results of Sections \ref{section: higher}, \ref{section: higherend} that the foliations $\W^{cs}$, $\W^{cu}$ and $\W^{c}$ are $C^{r+\alpha}$ for some $\alpha > 0$ since $f$ is $r$-bunched, volume-preserving and uniformly quasiconformal. Then we get the proof of claim (1) of Theorem \ref{theorem: compact center}.

Now we define $N$ to be the quotient of $M$ by the equivalence relation $x \equiv y$ if $y \in \W^{c}(x)$. Since the center foliation is compact with trivial holonomy we conclude that $N$ is a topological manifold. Furthermore since $\W^{c}$ is a $C^{r}$ foliation of $M$ we actually conclude that $N$ is a $C^{r}$ manifold and $f$ descends to a $C^{r+\alpha}$ Anosov diffeomorphism $g: N \rightarrow N$. 
\color{black}
The invariance of the conformal structure $\tau$ from Lemma \ref{lemma: center conformal} under center holonomy implies that $\tau$ descends to a conformal structure $\bar{\tau}$ on the unstable bundle of $g$ acting on $N$. This shows that $g$ is uniformly $u$-quasiconformal. An analogous argument using the invariant conformal structure on $E^{s}$ shows that $g$ is also uniformly $s$-quasiconformal. Hence $g$ is a $C^{r+\alpha}$  uniformly quasiconformal Anosov diffeomorphism of $N$. This completes the proof for (2) of Theorem \ref{theorem: compact center}.\color{black}

We now assume that $f$ is $\infty$-bunched. Applying the results of the previous two paragraphs with $r = \infty$ we conclude that $\W^{cs}$, $\W^{cu}$ and $\W^{c}$ are $C^{\infty}$ foliations of $M$, $N$ is a $C^{\infty}$ manifold, and $g: N \rightarrow N$ is a $C^{\infty}$ volume-preserving uniformly quasiconformal Anosov diffeomorphism. By the classification theorem of Fang \cite{F04} $g$ is smoothly conjugate to a hyperbolic toral automorphism and the stable and unstable foliations $\W^{s,g}$ and $\W^{u,g}$ of $g$ are $C^{\infty}$. 
\end{proof}

\begin{proof}[Proof of Corollary \ref{theorem: isometric center}]We first prove (1). Since $f$  has one dimensional compact center foliation and each central leaf has trivial holonomy group, \color{black} by Lemma \ref{prop: 1dcenter} $f$ is $\infty$-bunched and there is a smooth norm $| \cdot |$ on $E^{c}$ such that $Df|_{E^{c}}$ is an isometry with respect to this norm. Hence the conclusions of part (3) of Theorem \ref{theorem: compact center} apply to $f$ so that the foliations $\W^{c}$, $\W^{cu}$ and $\W^{cs}$ \color{black} of $M$ are $C^{\infty}$, the quotient $\pi: M \rightarrow N$ of $M$ by the center foliation is a torus and there is a hyperbolic toral automorphism $g: N \rightarrow N$ such that $\pi \circ f = g \circ \pi$. Since there is a smooth norm on $E^{c}$ with respect to which $Df|_{E^{c}}$ acts by isometries we conclude that $f$ is an isometric extension of $g$. 

For (2), since $\dim(E^u(f))=\dim(E^s(f))=2$, by Theorem D of \cite{BoThesis} there is a finite cover of systems $(\tilde{f},\tilde{M})$ of $(f,M)$ such that each central leaf of $\tilde{f}$ has trivial holonomy group. We then apply Theorem \ref{theorem: compact center} to $(\tilde{f},\tilde{M})$ to obtain the result.\end{proof}

\begin{proof}[Proof of Theorem \ref{theorem: flow suspension}]
We will first prove the theorem for any $C^{1}$-small enough volume-preserving uniformly quasiconformal perturbation $f$ of $\psi_{1}$ under the assumption that $\psi_{t}$ has no periodic orbits of period $ \leq 2$. We will then show how to deduce the finite cover version from this. 

We claim that there is a $C^{1}$-open neighborhood $\mathcal{U}$ of $\psi_{1}$ in the space of smooth volume-preserving diffeomorphisms of $M$ such that if $f \in \mathcal{U}$ then $f$ satisfies assumption (B) of Section \ref{section: higher}. Since $\psi_{1}$ is partially hyperbolic, the center foliation $\W^{c,\psi_{1}}$ for $\psi_{1}$ is normally hyperbolic and every center leaf is fixed by $\psi_{1}$, by the work of Hirsch, Pugh, Shub \cite{HPS} we deduce that any $f$ which is $C^{1}$ close to $\psi_{1}$ is partially hyperbolic, dynamically coherent, and has the property that $f(\W^{c}(x)) = \W^{c}(x)$ for every $x \in M$. Furthermore the center bundle $E^{c}$ for $f$ is orientable with orientation preserved by $f$ because it is $C^{0}$ close to the orientable center bundle for $\psi_{1}$. Since $\psi_{t}$ has no periodic orbits of period $\leq 1$, $\psi_{1}$ has no fixed points and thus if the neighborhood $\mathcal{U}$ is chosen small enough then $f$ will have no fixed points as well.  Finally, consider for each $x \in M$ the flow line $U_{x}^{\psi}$ of $x$ in the center foliation $\W^{c,\psi_{1}}$ of $\psi_{1}$ given by $U_{x}^{\psi} = \bigcup_{t \in [-1,1]} \psi_{t}(x)$. Since $\psi_{t}$ has no periodic orbits of period $\leq 2$, $U_{x}^{\psi}$ is a subarc of $\W^{c,\psi_{1}}(x)$ which is not a circle. The subarc of $U_{x}$ of the center leaf $\W^{c}(x)$ for $f$ through $x$ constructed at the beginning of Section \ref{section: higher} is uniformly close to $U_{x}^{\psi}$ and thus if $f$ is $C^{1}$ close enough to $\psi_{1}$ then $U_{x}$ will be a subarc of $\W^{c}(x)$ instead of a circle. Thus $f$ does not wrap and so $f$ satisfies assumption (B) of Section \ref{section: higher}.


By Lemma \ref{prop: 1dcenter} there is thus a $C^{\infty}$ norm $| \cdot |$ on $E^{c}$ with respect to which $Df|_{E^{c}}$ is an isometry and we also conclude that the invariant foliations $\W^{cs}$, $\W^{cu}$, and $\W^{c}$ for $f$ are $C^{\infty}$. Let $Z$ be the unique smooth, positively oriented vector field $Z: M \rightarrow E^{c}$ which satisfies $|Z(x)|_{x} = 1$ for every $x \in M$. Let $d_{x}$ be the Riemannian metric on the center leaf $\W^{c}(x)$ that is induced by the norm $|\cdot |$. Note that $d_{x} = d_{y}$ for each $y \in \W^{c}(x)$. Let $\varphi_{t}$ be the smooth flow generated by $Z$. Then $\varphi_{t}(x)$ is the endpoint of the unique geodesic in the metric $d_{x}$ of length $t$ that is tangent to $Z(x)$ at $x$. The flowlines of $\varphi_{t}$ are the leaves of $\mathcal{W}^{c}$. Since $Z(f(x)) = Z(x)$ we have $f \circ \varphi_{t} = \varphi_{t} \circ f$. 

Consider the local stable holonomy map $h^{s}_{xy}: \mathcal{W}^{c}_{loc}(x) \rightarrow \W^{c}_{loc}(y)$ for $y \in \W^{s}_{loc}(x)$. The $Df$-invariance of the norm $| \cdot |$ implies that this map is an isometry, in other words, $|(Dh^{s}_{xy})_{x}(v)|_{y} = |v|_{x}$ for each $v \in E^{c}_{x}$. Likewise the local unstable holonomy map $h^{u}_{xy}: \mathcal{W}^{c}_{loc}(x) \rightarrow \W^{c}_{loc}(y)$ for $y \in \W^{u}_{loc}(x)$ is an isometry in $|\cdot |$. Since $h^{s}$ and $h^{u}$ are isometries between center leaves in the $d_{x}$ metric, the flow $\varphi_{t}$ also preserves $\mathcal{W}^{s}$ and $\mathcal{W}^{u}$ leaves, i.e., $\varphi_{t}(\mathcal{W}^{s}(x)) = \mathcal{W}^{s}(\varphi_{t}(x))$ and $\varphi_{t}(\mathcal{W}^{u}(x)) = \mathcal{W}^{u}(\varphi_{t}(x))$.

We let $\xi: M \rightarrow (0,\infty)$ be the smooth function defined as follows: $\xi(x)$ is the unique minimal positive time $t \in (0,\infty)$ such that $\varphi_{t} = f(x)$. Since $f$ has no fixed points and the norm $|\cdot|$ is continuous, by compactness of $M$ there exists some $r, R > 0$ such that $0 < r \leq \xi(x) \leq R$ for all $x \in M$.  It is then easy to show, by combining this bound with the fact that $\varphi_{t}$ preserves the stable and unstable foliations of $f$, that $\varphi_{t}$ is an Anosov flow. 

We claim that $\varphi_{t}$ is topologically transitive. Since $\varphi_{\xi(x)} = f$ is volume-preserving, the nonwandering set of $\varphi_{t}$ is all of $M$. By the spectral decomposition theorem for flows \cite{S67} we can decompose $M$ into connected components invariant under $\varphi_{t}$ on which $\varphi_{t}$ is topologically transitive; since $M$ is connected we conclude that $\varphi_{t}$ is actually topologically transitive on $M$. 

Choose a point $x \in M$ such that $\{\varphi_{t}(x)\}_{t \in \R} = \mathcal{W}^{c}(x)$ is dense in $M$. We claim that $\xi(\varphi_{t}(x)) = \xi(x)$ for all $t \in \R$. Indeed we have the sequence of equalities,  
\begin{align*}
\varphi_{\xi(x)}(\varphi_{t}(x)) &= \varphi_{t}(\varphi_{\xi(x)}(x)) \\
&= \varphi_{t}(f(x)) \\
&= f(\varphi_{t}(x)) \\
&=  \varphi_{\xi(\varphi_{t}(x))}(\varphi_{t}(x))
\end{align*}
Since $\varphi_{t}$ is injective on $\mathcal{W}^{c}(x)$ (as this leaf is dense and therefore cannot be closed) we conclude that $\xi(\varphi_{t}(x)) = \xi(x)$. Thus $\xi$ is constant on the orbit of $x$;  since $\xi$ is continuous and the orbit of $x$ is dense we conclude that $\xi$ is constant.

Thus there is a constant $c > 0$ such that $\varphi_{c} = f$. By replacing $\varphi_{t}$ with $\varphi_{c^{-1}t}$ we obtain a smooth Anosov flow $\varphi_{t}$with $\varphi_{1} = f$. The fact that $\varphi_{t}$ is a uniformly quasiconformal Anosov flow follows from the partial hyperbolicity and uniform quasiconformality estimates for its time-1 map $f$. It only remains to show that $\varphi_{t}$ preserves a measure smoothly equivalent to volume. 

For each $t \in \R$, $\varphi_{t}: M \rightarrow M$ is a smooth diffeomorphism and thus the measures $(\varphi_{t})_{*}m$ and $m$ are smoothly equivalent with $C^{\infty}$ Radon-Nikodym derivative $\frac{d(\varphi_{t})_{*}m}{dm}:=J_{t}$. Since $\varphi_{1} = f$ we have $J_{1} \equiv 1$. For every $x \in M$ we clearly have $J_{t+s}(x) = J_{t}(\varphi_{s}(x)) \cdot J_{s}(x)$ from the property that $\varphi_{t+s} = \varphi_{t} \circ \varphi_{s}$. 

We can thus apply the following criterion for a topologically transitive Anosov flow $\varphi_{t}$ to preserve a measure smoothly equivalent to volume: $\varphi_{t}$ preserves a measure smoothly equivalent to volume if and only if for every periodic point $p$ of $\varphi_{t}$ of period $\ell(p)$ we have $J_{\ell(p)}(p) = 1$ \cite{LS72}.

Suppose that this does not hold. Then without loss of generality we can assume that there is a periodic point $p$ for which $J_{\ell(p)}(p)> 1$. For this point we then have 
$\lim_{n\to \infty}
J_{n\ell(p)}(p) = \lim_{n\to \infty}(J_{\ell(p)}(p))^{n}= \infty
$
\color{black}. On the other hand, let $\lfloor n\ell(p)\rfloor$ denote the greatest integer smaller than $n \ell(p)$ and let $K := \sup_{0 \leq t \leq 1} \sup_{x \in M} J_{t}(x)$. Then since $J_{1} \equiv 1$ we have 
\[
J_{n\ell(p)}(p)= J_{n\ell(p)-\lfloor n\ell(p)\rfloor}(p) \leq K < \infty,
\]
 for each integer $n> 0$. Thus we obtain a contradiction so that $J_{\ell(p)}(p) = 1$ for every periodic point $p$ and thus $\varphi_{t}$ preserves a measure smoothly equivalent to volume on $M$.

Finally suppose only that there is a finite cover $p: \hat{M} \rightarrow M$ of $M$ such that the lift $\hat{\psi}_{t}$ of $\psi_{t}$ to $\hat{M}$ has no periodic orbits of period $\leq 2$. Let $\Gamma$ be the automorphism group of this cover, i.e., $M = \hat{M}/\Gamma$. Let $\hat{\mathcal{U}}$ be the $C^{1}$-open neighborhood of $\hat{\psi}_{1}$ given by Theorem \ref{theorem: flow suspension} applied to $\hat{\psi}_{t}$. We let $\mathcal{U}$ be the $C^{1}$-open neighborhood of $\psi_{1}$ consisting of all smooth volume-preserving diffeomorphisms $f$ whose lift $\hat{f}: \hat{M} \rightarrow \hat{M}$ (where $\hat{f}$ is the lift acting trivially on $\Gamma$) lies in $\hat{\mathcal{U}}$. Note that we use here the fact that $f$ is $C^0$ close to $\psi_{1}$ and therefore homotopic to the identity on $M$.  

For $f \in \mathcal{U}$ we apply Proposition \ref{prop: 1dcenter} (along with all of the previous work in the paper) to $\hat{f}$ and thus obtain an $\hat{f}$-invariant $C^{\infty}$ norm $|\cdot|'$ on the lift of the center bundle $\hat{E}^{c}$ to $\hat{M}$. For $v \in \hat{E}^{c}_{x}$ we define a new norm $|\cdot |$ by 
\[
|v|_{x} = \frac{1}{|\Gamma|}\sum_{\gamma  \in \Gamma} |D\gamma(v)|'_{x}.
\] 
Since $\gamma \circ \hat{f} = \hat{f} \circ \gamma$ for all $\gamma \in \Gamma$ the norm $|\cdot|$ is also $\hat{f}$-invariant but is now $\Gamma$-invariant as well. We construct a $C^{\infty}$ volume-preserving, uniformly quasiconformal Anosov flow $\hat{\varphi}_{t}$ with $\hat{\varphi}_{1} = \hat{f}$ from the norm $|\cdot|$ as above and then note that since it was constructed from a $\Gamma$-invariant norm this flow is also $\Gamma$-invariant and thus descends to a flow $\varphi_{t}$ on $M$ with $\varphi_{1} = f$ and all of the same properties as $\hat{\varphi}_{t}$. This completes the proof. 
\end{proof}



\section{Proof of Theorem \ref{theorem: hyperbolic perturbation}}\label{section:theorem4}
Let $X$ be a closed Riemannian manifold of constant negative curvature with $\dim X \geq 3$ and let $T^{1}X$ be the unit tangent bundle of $X$. We let $\pi: T^{1}X \rightarrow X$ denote the standard projection of a unit tangent vector to its basepoint in $X$. We let $\psi_{t}$ denote the geodesic flow on $T^{1}X$ and consider a smooth, volume-preserving perturbation $f$ of the time-1 map $\psi_{1}$. We will establish in this section that the equalities $\lambda_{+}^{u} = \lambda_{-}^{u}$ and $\lambda_{+}^{s} = \lambda_{-}^{s}$ imply that $Df|_{E^{u}}$ and $Df|_{E^{s}}$ respectively are uniformly quasiconformal for small enough volume-preserving perturbations of $\psi_{1}$. We will prove this implication for the unstable bundle $E^{u}$; the proof for $E^{s}$ will be analogous. By Theorem \ref{theorem: flow suspension} and the smooth orbit equivalence classification result of Fang \cite{F14} this suffices to complete the proof of Theorem \ref{theorem: hyperbolic perturbation} from the Introduction. 

We first need to recall some properties of the frame flow associated to closed Riemannian manifolds of constant negative curvature. Let $X^{(2)}$ be the \emph{2-frame bundle} over $X$ which has fiber over each $p \in X$ given by
\[
X^{(2)}_{p} = \{(v,w) \in T_{p}^{1}X: \text{$v$ is orthogonal to $w$}\}.
\] 
We let $\psi_{t}^{(2)}$ be the \emph{2-frame flow} on $X^{(2)}$ obtained by applying the geodesic flow $\psi_{t}$ to the first vector $v \in T_{p}^{1}X$ and then taking the image of $w$ under parallel transport along the geodesic $\gamma(s) = \pi(\psi_{s}(v))$, $s \in [0,t]$, on $X$. 

We let $E^{u,\psi}$ be the unstable bundle of the geodesic flow $\psi_{t}$ on $T^{1}X$ and we let $SE^{u,\psi}$ be the unit sphere inside of $E^{u,\psi}$, where we equip $E^{u,\psi}$ with the Riemannian norm $\| \cdot \|$ coming from its realization as the tangent spaces of unstable horospheres in the universal cover of $X$. We have a smooth identification $SE^{u,\psi} \rightarrow X^{(2)}$ coming from this realization by identifying a unit vector $v \in T^{1}_{p}X$ together with a unit vector $w \in SE^{u,\psi}_{v}$ to the orthonormal 2-frame $(v,w) \in X^{(2)}_{p}$ obtained from identifying $w$ with its image in the tangent space of the unstable horosphere through $p$ which is orthogonal to $v$. Since $X$ has constant negative curvature the geodesic flow is conformal on unstable horospheres and therefore under this identification the 2-frame flow $\psi_{t}^{(2)}$ corresponds to the renormalized derivative action $w \rightarrow \frac{D\psi_{t}(w)}{\|D\psi_{t}(w)\|}$ on $SE^{u,\psi}$. For a more detailed version of this discussion as well as the discussion in the paragraphs below we refer to \cite{BK84}, \cite{BP03}. 

We consider the stable and unstable holonomies $H^{s,\psi}$ and $H^{u,\psi}$ of $\psi_{t}$ on $E^{u,\psi}$ and their renormalized versions $SH^{s,\psi}(\cdot) = \frac{H^{s,\psi}(\cdot) }{\|H^{s,\psi}(\cdot) \|}$, $SH^{u,\psi}(\cdot)  = \frac{H^{u,\psi}(\cdot) }{\|H^{u,\psi}(\cdot) \|}$ which give isometric identifications $SE^{u,\psi}_{v} \rightarrow SE^{u,\psi}_{v'}, SE^{u,\psi}_{w} \rightarrow SE^{u,\psi}_{w'}$ for $v' \in \W^{s,\psi}(v)$ and $w' \in \W^{u,\psi}(w)$ respectively, where $\W^{s,\psi}$ and $\W^{u,\psi}$ denote the stable and unstable foliations of $\psi$ respectively.

An \emph{$su$-loop} based at $v \in T^{1}X$ is an $su$-path for $\psi_{t}$ which starts and ends at $v$. Based on the discussion of the previous paragraphs, given an $su$-loop $\gamma$ for $\psi_{t}$ based at a point $v \in T^{1}X$ we can associate an isometry $T_{\psi}(\gamma): SE^{u,\psi}_{v} \rightarrow SE^{u,\psi}_{v}$ obtained by composing the renormalized stable and unstable holonomy maps $SH^{s,\psi}_{v_{i}v_{i+1}}: SE^{u,\psi}_{v_{i}} \rightarrow SE^{u,\psi}_{v_{i+1}}$ and $SH^{u,\psi}_{v_{j}v_{j+1}}: SE^{u,\psi}_{v_{j}} \rightarrow SE^{u,\psi}_{v_{j+1}}$ along this loop, where $\gamma_{v_{i}v_{i+1}} \subset \W^{s,\psi}(v_{i})$ and $\gamma_{v_{j}v_{j+1}} \subset \W^{u,\psi}(v_{i})$. Thus, identifying $SE^{u,\psi}_{v}$ with the unit sphere $S^{n-1}$ in $\R^{n}$ for $n := \dim X-1$, $T_{\psi}(\gamma)$ gives us an element of the special orthogonal group $SO(n)$. 

The key observation due to Brin and Karcher \cite{BK84} is that for a closed constant negative curvature manifold $X$ and any $v \in T^{1}X$ there are finitely many $su$-loops $\gamma_{1},\dots,\gamma_{k}$ such that $T_{\psi}(\gamma_{1}), \dots, T_{\psi}(\gamma_{k})$ generate $SO(n)$ as a Lie group when we identify $E^{u}_{v}$. Moreover the number $k$ of loops used and the total lengths of these loops may both be taken to be bounded independently of the point $v$. As a consequence we have the following proposition, 

\begin{prop}\label{prop:quantdensity}
For any $\delta > 0$ there is a constant $L > 0$ and an integer $\ell > 0$ such that given any $v \in T^{1}X$ there is a finite collection $\gamma_{1},\dots,\gamma_{\ell}$ of $su$-loops based at $v$ of total length at most $L$ for which the collection of points $\{T_{\psi}(\gamma_{i})(w)\}_{i = 1}^{\ell}$ is $\delta$-dense in $SE^{u,\psi}_{v}$ for any $w \in SE^{u,\psi}_{v}$. 
\end{prop}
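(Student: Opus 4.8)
The plan is to derive Proposition~\ref{prop:quantdensity} from the Brin--Karcher observation by a uniform continuity and compactness argument. First I would recall precisely what Brin and Karcher's result gives: for each $v \in T^{1}X$ there are $su$-loops $\gamma_{1},\dots,\gamma_{k}$ based at $v$ such that the isometries $T_{\psi}(\gamma_{1}),\dots,T_{\psi}(\gamma_{k}) \in SO(n)$ (under the identification $SE^{u,\psi}_{v} \cong S^{n-1}$) generate a dense subgroup of $SO(n)$; moreover the number $k$ and the total length $\sum_{i}l(\gamma_{i})$ can be bounded uniformly in $v$, which is the crucial quantitative input. Since the subgroup generated is dense, for the fixed point $v$ there is a finite word length $N = N(v,\delta)$ such that the set of all products of at most $N$ of the $T_{\psi}(\gamma_{i})^{\pm 1}$ is $(\delta/2)$-dense in $SO(n)$, hence acting on any $w \in S^{n-1}$ produces a $(\delta/2)$-dense subset of $S^{n-1}$ (using that $SO(n)$ acts transitively with bounded orbit-distortion on the sphere). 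Concatenating loops to realize these words, and noting that a product of at most $N$ of the original loops is again an $su$-loop based at $v$ of total length at most $N \cdot \sum_{i} l(\gamma_{i})$, we obtain a collection $\gamma_{1}',\dots,\gamma_{\ell(v)}'$ of $su$-loops at $v$, of total length $\leq L(v)$, so that $\{T_{\psi}(\gamma_{j}')(w)\}_{j}$ is $(\delta/2)$-dense in $SE^{u,\psi}_{v}$ for every $w$.

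Next I would promote this pointwise statement to a uniform one over $v \in T^{1}X$ by a compactness argument. The map $v \mapsto T_{\psi}(\gamma)(w)$ depends continuously on $v$ for a loop $\gamma$ that varies continuously with its basepoint: the stable and unstable holonomies $H^{s,\psi}$, $H^{u,\psi}$ and their renormalizations vary continuously, and a loop based near $v$ can be obtained from one based at $v$ by following the stable/unstable foliations, which are continuous. Concretely, fix $v_{0}$ and the finite collection $\gamma_{1}',\dots,\gamma_{\ell(v_{0})}'$ constructed above with the $(\delta/2)$-density property. For $v$ in a small neighborhood $V_{v_{0}}$ of $v_{0}$, transport each loop $\gamma_{j}'$ along the local stable and unstable foliations to get $su$-loops $\gamma_{j}'(v)$ based at $v$, with lengths and associated isometries close to those of $\gamma_{j}'$; by shrinking $V_{v_{0}}$ the points $\{T_{\psi}(\gamma_{j}'(v))(w)\}_{j}$ remain $\delta$-dense in $SE^{u,\psi}_{v}$ for all $w \in SE^{u,\psi}_{v}$, and the total length stays below some $L_{v_{0}}$ and the count stays $\ell(v_{0})$. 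Since $T^{1}X$ is compact it is covered by finitely many such neighborhoods $V_{v_{0}^{(1)}},\dots,V_{v_{0}^{(m)}}$; taking $L = \max_{i} L_{v_{0}^{(i)}}$ and $\ell = \max_{i} \ell(v_{0}^{(i)})$ (padding each collection with trivial loops to have exactly $\ell$ loops) gives the desired uniform constants.

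The main obstacle I anticipate is making the continuity and transport of $su$-loops precise and quantitative: one must check that concatenation of holonomies behaves continuously in the basepoint, that the renormalized holonomies genuinely act as isometries of $SE^{u,\psi}$ as claimed in the discussion preceding the proposition, and that when one transports a loop $\gamma_{j}'$ from $v_{0}$ to a nearby $v$ the resulting isometry $T_{\psi}(\gamma_{j}'(v))$ converges to $T_{\psi}(\gamma_{j}')$ in $SO(n)$ as $v \to v_{0}$. This is where the constant-curvature hypothesis is used essentially, since it guarantees the geodesic flow is conformal on horospheres and hence that the renormalized derivative cocycle really does take values in $SO(n)$ and that the identification $SE^{u,\psi} \cong X^{(2)}$ intertwines $\psi_{t}^{(2)}$ with the renormalized action; all of the needed continuity statements then follow from the smoothness of $\psi_{t}$ and of the holonomies of its stable and unstable foliations. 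A secondary point to handle carefully is the elementary fact that a dense subgroup of the compact group $SO(n)$ has a finite subset of bounded word-length that is $\varepsilon$-dense, and that $\varepsilon$-density in $SO(n)$ transfers to $\varepsilon'$-density of orbits on $S^{n-1}$ with $\varepsilon'$ controlled by $\varepsilon$; this is standard but should be stated so the bound on $\ell$ and $L$ is genuinely uniform.
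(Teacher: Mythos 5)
Your proof is correct and uses the same essential ingredients as the paper (the Brin--Karcher generating loops of uniformly bounded number and length, the fact that $T_\psi(\gamma)$ is an isometry of $SE^{u,\psi}_v$, and transitivity of $SO(n)$), but you organize the endgame differently and, more substantively, you make explicit a step the paper leaves tacit. The paper's proof fixes a $\delta/2$-net $\{w_j\}$ in $SE^{u,\psi}_v$, chooses loops whose orbits $\{T_\psi(\gamma_i)(w_j)\}_i$ are $\delta/2$-dense for each $j$, and then passes from $w_j$ to an arbitrary $w$ via the isometry property; you instead ask for $\epsilon$-density of the rotations $\{T_\psi(\gamma_i)\}$ in $SO(n)$ and push that forward to $\epsilon'$-density of $\{T_\psi(\gamma_i)(w)\}$ for every $w$ at once. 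These are interchangeable. The real difference is your explicit compactness argument for uniformity of $\ell$ and $L$ in $v$: the paper constructs the loops for a single $v$ and relies on the uniformity asserted in the preamble, which bounds only the generating loops, not the word length needed for $\delta$-density; promoting that to a uniform bound genuinely requires something like your transport-and-cover argument. You correctly flag the one technical point it hinges on: when transporting an $su$-loop from $v_0$ to a nearby $v$, the resulting $su$-path need not close up at $v$, so one must append a short $su$-path returning to $v$ (using local accessibility, as in the proof of Lemma \ref{lemma:denseperturb}) and observe that its contribution to the holonomy is close to the identity; together with continuity of the renormalized holonomies in the basepoint this gives convergence $T_\psi(\gamma_j'(v)) \to T_\psi(\gamma_j')$ and hence the finite subcover step. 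So your proposal is, if anything, more complete than the argument recorded in the paper.
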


\begin{proof}
Fix a $\frac{\delta}{2}$-dense collection $\{w_{j}\}_{j=1}^{k}$ of points in $SE^{u,\psi}_{v}$. Since there are finitely many $su$-loops based at $v$ whose associated isometries generate $SO(n)$ as a Lie group and $SO(n)$ acts transitively on $SE^{u}_{v}$, there is a finite collection $\gamma_{1},\dots,\gamma_{\ell}$ of $su$-loops based at $v$ for which each of the sets $\{T_{\psi}(\gamma_{i})(w_{j})\}_{i = 1}^{\ell}$ for $1 \leq j \leq k$ is $\frac{\delta}{2}$-dense in $SE^{u,\psi}_{v}$. 

Now let $w$ be any point in $SE^{u,\psi}_{v}$. Then there is some $w_{j}$ such that $\|w-w_{j}\| < \frac{\delta}{2}$. Since each $T_{\psi}(\gamma_{i})$ is an isometry we then also have $\|T_{\psi}(\gamma_{i})(w_{j})-T_{\psi}(\gamma_{i})(w_{j})\| < \frac{\delta}{2}$ for each $1 \leq i \leq \ell$. This implies that $\{T_{\psi}(\gamma_{i})(w)\}_{i = 1}^{\ell}$ is a $\delta$-dense subset of $SE^{u,\psi}_{v}$. 
\end{proof}

Let $f$ be a $C^{1}$-small perturbation of the time-1 map $\psi_{1}$. If this perturbation is small enough then the linear cocycle $Df|_{E^{u}}$ is \emph{fiber bunched} and consequently the conclusions of Proposition \ref{proposition: holonomies} apply to $Df|_{E^{u}}$, see \cite[Proposition 4.2]{KS13}. Thus the linear cocycle $Df|_{E^{u}}$ admits linear stable and unstable holonomies $H^{s}$ and $H^{u}$. For $v \in T^{1}X$ we define $\mathbb{P}E^{u}_{v}$ to be the projective space of $E^{u}_{v}$ and we define $\mathbb{P}H^{s}$ and $\mathbb{P}H^{u}$ to be the induced maps of $H^{s}$ and $H^{u}$ on the projective spaces $\mathbb{P}E^{u}_{v} \rightarrow \mathbb{P}E^{u}_{v'}$ for $v' \in \W^{s}(v)$ and $v' \in \W^{u}(v)$ respectively, where now $\W^{s}$ and $\W^{u}$ denote the stable and unstable foliations of $f$. We let $\mathbb{P}Df_{v}: \mathbb{P}E^{u}_{v} \rightarrow \mathbb{P}E^{u}_{f(v)}$ be the induced map from $Df_{v}$.

We obtain below a version of Proposition \ref{prop:quantdensity} which also applies to the perturbation $f$ provided that this perturbation is small enough. We endow $\mathbb{P}E^{u}_{v}$ with the Riemannian metric induced from the Riemannian metric on $E^{u}$ which is in turn induced from the metric on $T^{1}X$. Given an $su$-loop $\gamma$ for $f$ based at $v \in T^{1}X$ we associate the map $T(\gamma):  \mathbb{P}E^{u}_{v} \rightarrow \mathbb{P}E^{u}_{v}$ obtained by composing the projectivized stable and unstable holonomies $\mathbb{P}H^{s}$ and $\mathbb{P}H^{u}$ along the segments of this loop which lie in the stable and unstable leaves of $f$ respectively. Unlike the case of $T_{\psi}$ above, $T(\gamma)$ is not necessarily an isometry of $\mathbb{P}E^{u}_{v}$. 

We will need the following proposition that follows from results of Katok and Kononenko, 

\begin{prop}[\cite{KK96}]\label{legs}
Let $\psi_{t}: M \rightarrow M$ be a contact Anosov flow on a closed Riemannian manifold. Then there is a $C^{2}$-open neighborhood $\mathcal{V}$ of $\psi_{1}$ in the space of $C^{2}$ diffeomorphisms of $M$ and an integer $J > 0$ such that for every $\varepsilon > 0$ and every $f \in \mathcal{V}$ there exists an $\eta  > 0$ such that for every $p, q \in M$ with $d(p, q) < \eta$, there exists a $J$-legged $su$-path from $p$ to $q$ of length less than $\varepsilon$.
\end{prop}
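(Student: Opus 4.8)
The plan is to deduce this result directly from the work of Katok and Kononenko \cite{KK96} on the structure of $su$-paths for perturbations of contact Anosov flows, together with the stability of accessibility and of the local accessibility structure under $C^2$-perturbation. First I would recall that for a contact Anosov flow $\psi_t$ the time-one map $\psi_1$ is accessible and in fact has the \emph{accessibility property quantitatively}: there is an integer $J_0$ and lengths $\varepsilon_0, \eta_0 > 0$ such that any two points within distance $\eta_0$ are joined by a $J_0$-legged $su$-path of length at most $\varepsilon_0$. This is precisely the content of the local accessibility estimates of Katok-Kononenko; it follows from the fact that on a contact Anosov flow the maps $T_\psi(\gamma)$ (the stable/unstable holonomy cycles) act transitively enough on local transversals that finitely many legs suffice, with uniform control. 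The key point from \cite{KK96} is that this quantitative local accessibility is \emph{$C^2$-open}: the stable and unstable foliations $\W^s$, $\W^u$ of $f$ depend continuously (in the appropriate sense) on $f$ in the $C^1$-topology when $f$ is $C^2$-close to $\psi_1$, by the structural stability theory of Hirsch-Pugh-Shub, and the bracketing argument that produces a short $su$-path with a bounded number of legs only uses this continuous dependence plus a transversality estimate that survives small perturbations.

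The key steps, in order, would be: (1) Fix the contact Anosov flow $\psi_t$ and invoke the quantitative local accessibility property for $\psi_1$: an integer $J_0$, and for each $\varepsilon > 0$ an $\eta_0(\varepsilon) > 0$, such that points within $\eta_0(\varepsilon)$ are joined by a $J_0$-legged $su$-path of length $< \varepsilon$. (2) Observe that the construction of this $su$-path is \emph{local and explicit}: one moves along $\W^s$, then $\W^u$, etc., a bounded number of times, where the lengths of the legs and the choice of directions depend continuously on the foliations and on the two endpoints. (3) Since $f \mapsto (\W^s_f, \W^u_f)$ is continuous at $\psi_1$ in the relevant topology (the leaves vary continuously in the $C^1$-topology on compact pieces, by \cite{HPS}), the same construction carried out with $\W^s_f$ and $\W^u_f$ in place of $\W^s_\psi$, $\W^u_\psi$ produces, for $f$ in a $C^2$-neighborhood $\mathcal V$ of $\psi_1$ and for $p,q$ within some $\eta(\varepsilon) > 0$, a $J$-legged $su$-path for $f$ from $p$ to $q$ of length $< \varepsilon$, where $J$ can be taken equal to $J_0$ (or a fixed multiple, accounting for any extra legs needed to compensate for the perturbation). (4) Conclude by setting $\mathcal V$ to be this neighborhood and $J = J_0$.

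The main obstacle I expect is making precise the continuous dependence in step (3) and verifying that the leg count $J$ can genuinely be kept uniform across the whole neighborhood $\mathcal V$ and across all pairs $(p,q)$ — a priori the perturbed holonomies could require slightly longer or more numerous legs near "bad" points where the transversality between the iterated holonomy images and the target leaf degenerates. The resolution is that on a contact Anosov flow the relevant transversality is \emph{uniform} (this is where the contact structure enters, via the nondegeneracy of the symplectic form $d\alpha$ on the center-stable/center-unstable bundles), so there is a definite lower bound on the transversality angle that persists under $C^2$-small perturbation; this is exactly the robustness that Katok and Kononenko establish. Given that, the bounded-leg short-path construction goes through verbatim for $f \in \mathcal V$. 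I would simply cite \cite{KK96} for this robustness rather than reproving it, since the statement of the proposition is explicitly attributed to their results.
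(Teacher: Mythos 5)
Your proposal is essentially what the paper does: the paper states Proposition~\ref{legs} with the attribution ``[\cite{KK96}]'' and does not reproduce a proof, precisely because the content (quantitative, $C^{2}$-stable local accessibility with a uniformly bounded number of legs for perturbations of time-1 maps of contact Anosov flows) is Katok--Kononenko's result. Your sketch correctly identifies the two ingredients that make this work --- the contact form supplying a uniform lower bound on the transversality of iterated $su$-holonomies, and the $C^{1}$-continuous dependence of the invariant foliations on $f$ via Hirsch--Pugh--Shub --- and you correctly defer the details to \cite{KK96}, which is exactly the level at which the paper treats it.
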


Recall that for each pair of nearby points $x,y \in T^{1}X$ we let $I_{xy}: E^{u}_{x} \rightarrow E^{u}_{y}$ be a linear identification which is H\"older close to the identity. This induces an identification $\mathbb{P}I_{xy}: \mathbb{P}E^{u}_{x} \rightarrow \mathbb{P}E^{u}_{y}$ that is H\"older close to the identity in $x$ and $y$. 

\begin{lemma}\label{lemma:denseperturb}
Given any $\delta > 0$ there is a $C^{2}$-open neighborhood $\mathcal{U}$ of $\psi_{1}$ such that if $f \in \mathcal{U}$ then for any $v \in T^{1}X$ there is a finite collection $\gamma_{1},\dots,\gamma_{\ell}$ of $su$-loops for $f$ based at $v$ such that the collection of points $\{T(\gamma_{i})(w)\}_{i = 1}^{\ell}$ is $\delta$-dense in $\mathbb{P}E^{u}_{v}$ for any $w \in \mathbb{P}E^{u}_{v}$.
\end{lemma}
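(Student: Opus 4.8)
The plan is to run a shadowing argument that transfers the unperturbed density statement, Proposition \ref{prop:quantdensity}, to the perturbation $f$. First I would fix $\delta > 0$ and apply Proposition \ref{prop:quantdensity} with $\delta/2$ in place of $\delta$, obtaining a length $L > 0$ and an integer $\ell > 0$ with the property that for every $v \in T^1X$ there exist $su$-loops $\gamma_1^{\psi},\dots,\gamma_\ell^{\psi}$ for $\psi_t$ based at $v$, of total length at most $L$, such that $\{T_\psi(\gamma_i^\psi)(w)\}_{i=1}^{\ell}$ is $\delta/2$-dense in $SE^{u,\psi}_v$ for every $w$; passing to the quotient sphere $\to$ projective space, this family is $\delta/2$-dense in $\mathbb{P}E^{u,\psi}_v$ as well. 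After subdividing the $su$-legs of each $\gamma_i^\psi$ we may assume every leg has length at most some fixed small constant, with the number of legs bounded uniformly in $v$.

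Next I would construct an $su$-loop $\gamma_i$ for $f$ shadowing each $\gamma_i^\psi$. For $f$ $C^2$-close to $\psi_1$, the stable and unstable foliations $\W^s,\W^u$ of $f$ are uniformly $C^1$-close to $\W^{s,\psi},\W^{u,\psi}$ by structural stability, so one replaces each stable (resp.\ unstable) leg of $\gamma_i^\psi$ by a stable (resp.\ unstable) leg of $f$ issued from the corresponding point, of the same length. The resulting $su$-path $\tilde\gamma_i$ for $f$ starts at $v$ but ends at a point $v_i'$ with $d(v_i',v) \le \varepsilon(f)$, where $\varepsilon(f)$ depends on $\|f-\psi_1\|_{C^1}$ and on $L$ (through the uniformly bounded number of legs) and tends to $0$ as $f\to\psi_1$, uniformly in $v$. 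Since $\psi_t$ is a contact Anosov flow, I would then invoke Proposition \ref{legs}: after possibly shrinking the neighborhood of $\psi_1$ so that $\varepsilon(f)$ is smaller than the $\eta$ supplied there, we obtain a $J$-legged $su$-path $\sigma_i$ for $f$ from $v_i'$ back to $v$ of length less than any prescribed $\varepsilon_0 > 0$. Setting $\gamma_i := \tilde\gamma_i \cdot \sigma_i$ gives an $su$-loop for $f$ based at $v$.

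It then remains to compare $T(\gamma_i)$ on $\mathbb{P}E^u_v$ with $T_\psi(\gamma_i^\psi)$ on $\mathbb{P}E^{u,\psi}_v$, after identifying these projective spaces via a linear identification $C^0$-close to the identity (which distorts the metric by a controlled amount since $E^u$ is $C^0$-close to $E^{u,\psi}$). Using the continuity of the stable and unstable holonomies $H^s,H^u$ of the fiber-bunched cocycle $Df|_{E^u}$ as $f$ varies, together with the estimate $\|H^u_{xy}-I_{xy}\| \le C d(x,y)^\beta$ from Proposition \ref{proposition: holonomies} and its $\psi$-analogue, I would argue that the composition of projectivized $f$-holonomies along $\tilde\gamma_i$ differs from $T_\psi(\gamma_i^\psi)$ by a map that is $C^0$-small once $f$ is close enough (the legs of $\tilde\gamma_i$ lie in leaves uniformly close to those of $\gamma_i^\psi$, and there are uniformly boundedly many of them, so the discrepancies add up to something small rather than being amplified); and that $T(\sigma_i)$ is $C^0$-close to the identity since $\sigma_i$ has arbitrarily small length. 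Making each of these three errors — the shadowing discrepancy, the correction leg, and the identification distortion — smaller than $\delta/6$, we get $d\big(T(\gamma_i)(w),\,T_\psi(\gamma_i^\psi)(w)\big) < \delta/2$ for all $w$, and hence $\{T(\gamma_i)(w)\}_{i=1}^{\ell}$ is $\delta$-dense in $\mathbb{P}E^u_v$ for every $w$; all estimates are uniform in $v$ by compactness of $T^1X$ and the built-in uniformity of Propositions \ref{prop:quantdensity} and \ref{legs}.

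The hard part will be the comparison step: unlike $T_\psi(\gamma)$, the map $T(\gamma)$ is built from holonomies of $Df|_{E^u}$ which are only near-isometries, so one must show that the error introduced at each leg — from the $f$-leaf differing from the $\psi$-leaf and from $E^u$ differing from $E^{u,\psi}$ — remains additively small over a loop, and is not magnified by the (mild) expansion or contraction of the projectivized holonomies. This is precisely where the \emph{uniform} length bound $L$ from Proposition \ref{prop:quantdensity}, hence a uniform bound on the number of legs after subdivision, together with uniform Hölder-continuity of $H^{s},H^{u}$ in $f$, are indispensable.
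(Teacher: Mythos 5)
Your proposal follows essentially the same route as the paper's proof: apply Proposition~\ref{prop:quantdensity} to $\psi_t$ to get a uniformly bounded family of $su$-loops whose holonomies are $\delta$-dense; shadow these by $su$-paths for $f$ (using continuity of $\W^s,\W^u$ and of $H^s,H^u$ in the $C^2$-topology); close each path into a loop with the short $J$-legged $su$-path from Proposition~\ref{legs}; and observe that the closing segment perturbs $T(\cdot)$ by an amount that can be made uniformly small. The paper condenses your shadowing/comparison step into a single appeal to the $C^2$-continuity of the invariant foliations and of the holonomies of the fiber-bunched cocycle $Df|_{E^u}$ (citing \cite{ASV}), whereas you unfold it via a leg-subdivision and Hölder estimate argument — but the structure and key inputs are identical.
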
 

\begin{proof}
Let $\delta > 0$ be given. We first apply Proposition \ref{prop:quantdensity} to $\psi_{t}$ to obtain a constant $L > 0$ and integer $\ell > 0$ such that for any $v \in T^{1}X$ there is a collection of $su$-loops $\sigma_{1},\dots,\sigma_{\ell}$ based at $v$ of total length at most $L$ such that $\{T_{\psi}(\sigma_{i})(w)\}_{i = 1}^{\ell}$ is $\frac{\delta}{3}$-dense in $SE^{u,\psi}_{v}$ for any $w \in SE^{u,\psi}_{v}$. 

We apply Proposition \ref{legs} for a small $\varepsilon > 0$ to be determined. Given the $\eta > 0$ obtained from Proposition \ref{legs} for this $\varepsilon$ we claim that we can find a $C^{2}$-open neighborhood $\mathcal{U}'$ of $\psi_{1}$ such that for each $v \in T^{1}X$ there are points $v_{1},\dots,v_{\ell}$ satisfying $d(v,v_{i}) < \eta$ and for each $1 \leq i \leq \ell$ there is an $su$-path $\beta_{i}$ for $f$ from $v$ to $v_{i}$ such that the collection $\{\mathbb{P}I_{v_{i}v} \circ T(\beta_{i})(w)\}_{i = 1}^{\ell}$ is $\frac{2\delta}{3}$-dense in $\mathbb{P}E^{u}_{v}$ for any $w \in \mathbb{P}E^{u}_{v}$. This follows from the facts that the stable and unstable foliations $\mathcal{W}^{s}$ and $\mathcal{W}^{u}$ depend continuously on $f$ in the $C^{2}$ topology and the stable and unstable holonomies $H^{s}$ and $H^{u}$ of $Df|_{E^{u}}$ also depend continuously on $f$ in the $C^{2}$ topology\cite{ASV}. Hence we obtain this statement by considering $su$-paths $\beta_{1},\dots,\beta_{\ell}$ for $f$ which are close enough to the $su$-loops $\sigma_{1},\dots,\sigma_{\ell}$ for $\psi_{t}$; we can make these paths as close as desired to the loops for $\psi_{t}$ by making the neighborhood $\mathcal{U}$ small enough. 

For each $1 \leq i \leq \ell$ we let $\gamma_{i}$ be the $su$-loop based at $v$ for $f$ obtained by concatenating $\beta_{i}$ with the $J$-legged $su$-path of length less than $\varepsilon$ connecting $v_{i}$ to $v$ given by Proposition \ref{legs}. Since the number of legs $J$ is fixed and both $H^{u}_{xy}$ and $H^{s}_{xy}$ converge uniformly to the identity as $y$ converges to $x$ for $x,y \in T^{1}X$ we conclude that if $\varepsilon$ is small enough (independent of the choice of $v \in T^{1}X$) then the collection of points $\{T(\gamma_{i})(w)\}_{i = 1}^{\ell}$ is $\delta$-dense in $\mathbb{P}E^{u}_{v}$ for any $w \in \mathbb{P}E^{u}_{v}$.
\end{proof}

The use of Lemma \ref{lemma:denseperturb} is the reason that we lose $C^{1}$-openness of the neighborhood $\mathcal{U}$ in Theorem \ref{theorem: hyperbolic perturbation}. 

It is easy to see that there is a $\delta_{0} > 0$ with the property that if $V_{1}$ and $V_{2}$ are any two proper linear subspaces of $\R^{n}$ then the union $\mathbb{P}V_{1} \cup \mathbb{P}V_{2}$ of their projectivizations in $\R \mathbb{P}^{n-1}$ is \emph{not} $\delta_{0}$-dense. Thus it follows that there is a $\delta > 0$ and a $C^{1}$-open neighborhood $\mathcal{U}'$ of $\psi_{1}$ such that for any $f \in \mathcal{U}'$, any $v \in T^{1}X$, and any pair of proper linear subspaces $V_{1}$ and $V_{2}$ in $E^{u}_{v}$, the union  $\mathbb{P}V_{1} \cup \mathbb{P}V_{2}$ is not $\delta$-dense in $\mathbb{P}E^{u}_{v}$. We apply Lemma \ref{lemma:denseperturb} with this $\delta$ and let $f \in \mathcal{U} \subset \mathcal{U}'$ be a smooth volume-preserving diffeomorphism in the resulting open neighborhood with the property that $\lambda_{+}^{u} = \lambda_{-}^{u}$. 

We will now show that $Df|_{E^u}$ is uniformly quasiconformal to complete the proof of Theorem \ref{theorem: hyperbolic perturbation}. Since $\psi_{1}$ is a stably accessible partially hyperbolic diffeomorphism (this well-known fact can be derived as a consequence of Proposition \ref{legs}) we may assume that $f$ is also an accessible partially hyperbolic diffeomorphism. Since the neighborhood $\mathcal{U}$ is chosen small enough that $Df|_{E^{u}}$ satisfies the fiber bunching condition that guarantees the existence of the stable and unstable holonomies $H^{s}$ and $H^{u}$ we conclude by the work of Avila, Santamaria and Viana \cite{ASV} that the equality  $\lambda_{+}^{u} = \lambda_{-}^{u}$ implies that there is a $\mathbb{P}Df$-invariant probability  measure $\mu$ on $\mathbb{P}E^{u}$ projecting down to the invariant volume $m$ for $f$ on $T^{1}X$ and which has a disintegration $\{\mu_{v}\}_{v \in T^{1}X}$ into probability measures $\mu_{v}$ on the projective fibers $\mathbb{P}E^{u}_{v}$ which depend continuously on the basepoint $v$. Furthermore this disintegration is invariant under the projectived stable and unstable holonomy, that is to say, if $v' \in \W^{s}(v)$ then $(\mathbb{P}H^{s}_{vv'})_{*}\mu_{v} = \mu_{v'}$ and a similar equation holds for $\mathbb{P}H^{u}$. 

Suppose that $Df|_{E^{u}}$ is not uniformly quasiconformal. Then there is a point $v \in T^{1}X$, unit vectors $w_{1},w_{2} \in E^{u}_{v}$, and a sequence $n_{k} \rightarrow \infty$ such that $\frac{\|Df^{n_{k}}(w_{1})\|}{\|Df^{n_{k}}(w_{2})\|} \rightarrow \infty$ as $n_{k} \rightarrow \infty$. By passing to a further subsequence and using the compactness of $T^{1}X$ we can assume that there is some $z \in T^{1}X$ such that $f^{n_{k}}(v) \rightarrow z$ as $n_{k} \rightarrow \infty$. Since $f$ is accessible we can find an $su$-path $\sigma$ connecting $z$ to $v$. For $n_{k}$ large enough we let $\gamma_{n_{k}}$ be a $J$-legged $su$-path connecting $f^{n_{k}}(v)$ to $z$ of length at most 1, where $J$ is given by Proposition \ref{legs}. We then let $T(\gamma_{n_{k}}): \mathbb{P}E^{u}_{f^{n_{k}}(v)} \rightarrow \mathbb{P}E^{u}_{z}$ be the map obtained by composing the $s$- and $u$-holonomies along $\gamma_{n_{k}}$ from $f^{n_{k}}(v)$ to $z$.

Let 
\[
A_{n_{k}} = T(\sigma) \circ T(\gamma_{n_{k}}) \circ \mathbb{P}Df_{v}^{n_{k}}: \mathbb{P}E^{u}_{v} \rightarrow \mathbb{P}E^{u}_{v}.
\] 
The holonomy invariance of the disintegration of the $\mathbb{P}Df$-invariant measure $\mu$ implies that $(A_{n_{k}})_{*}\mu_{v} = \mu_{v}$ for each $n_{k}$. Choose a linear identification of $\mathbb{P}E^{u}_{v}$ with the real projective space $\mathbb{R}\mathbb{P}^{n-1}$. Then $A_{n_{k}}$ gives an element of the projective linear group $PSL(n,\R)$ for each $n_{k}$. Since the transformations $T(\gamma_{n_{k}})$ have uniformly bounded norm together with their inverses, and since there exist unit vectors $w_{1},w_{2} \in E^{u}_{v}$ such that $\frac{\|Df^{n_{k}}(w_{1})\|}{\|Df^{n_{k}}(w_{2})\|} \rightarrow \infty$, we conclude that the sequence of transformations $\{A_{n_{k}}\}$ is not contained in any compact subset of $PSL(n,\R)$. Hence, after passing to a further subsequence if necessary,there is a quasi-projective transformation $Q$ of  $\mathbb{R}\mathbb{P}^{n-1}$ such that $A_{n_{k}}$ converges to $Q$ on the complement of a proper linear subspace $V$ of $\mathbb{R}\mathbb{P}^{n-1}$ (see \cite{GS89}). Furthermore the image of $Q$ is a proper linear subspace $L$ of $\mathbb{R}\mathbb{P}^{n-1}$.

Thus there is a proper linear subspace $V$ of $\mathbb{P}E^{u}_{v}$ such that on the complement of $V$, $A_{n_{k}}$ converges pointwise to a continuous map which has image contained inside of a proper subspace $L$ of $\mathbb{P}E^{u}_{v}$. Since $(A_{n_{k}})_{*}\mu_{v} = \mu_{v}$ for every $n_{k}$, this shows that $\mu_{v}$ is supported on the union $V \cup L$ of two proper subspaces of $\mathbb{P}E^{u}_{v}$. Consider any point $w \in \text{supp}(\mu_{v})$. By Lemma \ref{lemma:denseperturb} there is a collection of $su$-loops $\gamma_{1},\dots,\gamma_{\ell}$ based at $v$ such that the collection of points $\{T(\gamma_{i})(w)\}_{i = 1}^{\ell}$ is $\delta$-dense in $\mathbb{P}E^{u}_{v}$. But by the holonomy invariance of the disintegration of $\mu$, if $\gamma$ is an $su$-loop based at $v$ then $T(\gamma)(w) \in \text{supp}(\mu_{v}) \subset V \cup L$. This proves that the union $V \cup L$ of two proper subspaces of $\mathbb{P}E^{u}_{v}$ is $\delta$-dense in $\mathbb{P}E^{u}_{v}$, which contradicts our choice of $\delta$. Thus $Df|_{E^{u}}$ is uniformly quasiconformal.

\bibliographystyle{plain}
\bibliography{CPH}

\end{document}